\documentclass[a4paper,reqno]{amsart}
\newif\ifjournalversion

\usepackage[english]{babel}
\newcommand{\changelocaltocdepth}[1]{%
  \addtocontents{toc}{\protect\setcounter{tocdepth}{#1}}%
  \setcounter{tocdepth}{#1}%
}

\usepackage{thmtools}
\usepackage{amsmath}
\usepackage{amssymb}
\usepackage{amsthm}
\usepackage{amsfonts}
\usepackage{graphicx}
\usepackage{comment}
\usepackage{enumitem}
\usepackage{color}
\usepackage[colorlinks=true]{hyperref}
\usepackage{cleveref}
\usepackage{todonotes}
\usepackage{oldgerm}
\usepackage{longtable}

\newcommand{\dummy}{\blacktriangle} 

\declaretheorem[name=Theorem,numberwithin=section]{thm}
\newtheorem{lem}[thm]{Lemma}
\newtheorem{Lemma}[thm]{Lemma}
\newtheorem{prop}[thm]{Proposition}
\newtheorem{Prop}[thm]{Proposition}
\newtheorem{claim}{Claim}[thm]
\newtheorem*{claim*}{Claim}
\declaretheorem[name=Corollary,sibling=thm]{Cor}
\newtheorem{Question}[thm]{Question}
\newtheorem{RP}[thm]{Research Proposal}

\theoremstyle{definition}
\newtheorem{defn}[thm]{Definition}
\newtheorem{Def}[thm]{Definition}
\newtheorem{exmp}[thm]{Example}
\newtheorem{remark}[thm]{Remark}
\newtheorem{Remark}[thm]{Remark}

\newtheorem{fact_n_def}[thm]{Fact and Definition}
\newtheorem{Fact}[thm]{Fact}

\newcommand{\subd}{\mathrel{\triangleleft}}
\newcommand{\PartEmb}{\operatorname{PartEmb}}

\newcommand{\Lk}{\operatorname{Lk}}
\newcommand{\id}{\operatorname{id}}
\newcommand{\supp}{\operatorname{supp}}

\DeclareMathOperator{\graph}{graph}

\DeclareMathOperator{\prj}{pr}

\DeclareMathOperator{\dom}{dom}

\DeclareMathOperator{\pr}{pr}
\DeclareMathOperator{\Emb}{Emb}

\renewcommand{\mid}{\,:\,}
\newenvironment{enumerate-(a)}{\begin{enumerate}[label={\upshape (\alph*)}, leftmargin=2pc]}{\end{enumerate}}
\newenvironment{enumerate-(a)-r}{\begin{enumerate}[label={\upshape (\alph*)}, leftmargin=2pc,resume]}{\end{enumerate}}
\newenvironment{enumerate-(a)-5}{\begin{enumerate}[label={\upshape (\alph*)}, leftmargin=2pc,start=5]}{\end{enumerate}}
\newenvironment{enumerate-(A)}{\begin{enumerate}[label={\upshape (\Alph*)}, leftmargin=2pc]}{\end{enumerate}}
\newenvironment{enumerate-(A)-r}{\begin{enumerate}[label={\upshape (\Alph*)}, leftmargin=2pc,resume]}{\end{enumerate}}
\newenvironment{enumerate-(i)}{\begin{enumerate}[label={\upshape (\roman*)}, leftmargin=2pc]}{\end{enumerate}}
\newenvironment{enumerate-(i)-r}{\begin{enumerate}[label={\upshape (\roman*)}, leftmargin=2pc,resume]}{\end{enumerate}}
\newenvironment{enumerate-(I)}{\begin{enumerate}[label={\upshape (\Roman*)}, leftmargin=2pc]}{\end{enumerate}}
\newenvironment{enumerate-(I)-r}{\begin{enumerate}[label={\upshape (\Roman*)}, leftmargin=2pc,resume]}{\end{enumerate}}
\newenvironment{enumerate-(1)}{\begin{enumerate}[label={\upshape (\arabic*)}, leftmargin=2pc]}{\end{enumerate}}
\newenvironment{enumerate-(1)-r}{\begin{enumerate}[label={\upshape (\arabic*)}, leftmargin=2pc,resume]}{\end{enumerate}}

\newenvironment{enumerate-(star)}{\begin{enumerate}[label={\upshape{(\( \star_{ \arabic*} \))}}, leftmargin=2pc]}{\end{enumerate}}

\renewcommand{\b}{\beta}
\renewcommand{\o}{\omega}
\newcommand{\lo}{<\!\omega{}}
\newcommand{\f}{\varphi}

\newcommand{\bW}{\mathbf{W}}
\newcommand{\vv}{\mathbf{v}}

\newcommand{\ee}{\mathbf{e}}

\renewcommand{\d}{\delta}

\newcommand{\Q}{\mathbb{Q}} 
\newcommand{\R}{\mathbb{R}} 
 
\newcommand{\N}{\mathbb{N}} 
\newcommand{\U}{\mathbb{U}} 

\newcommand{\UU}{\mathcal{U}}

\newcommand{\PP}{\mathcal{P}}
\newcommand{\DD}{\mathcal{D}}
\newcommand{\FF}{\mathcal{F}}

\newcommand{\MM}{\mathcal{M}}
\newcommand{\BB}{\mathcal{B}}

\newcommand{\RR}{\mathcal{R}}

\renewcommand{\AA}{\mathcal{A}}

\newcommand{\TT}{\mathcal{T}} 
\newcommand{\CC}{\mathcal{C}} 

\newcommand{\AAA}{\mathfrak{A}} %
\newcommand{\BBB}{\mathfrak{B}} 
\newcommand{\MMM}{\mathfrak{M}} 
\newcommand{\SSS}{\mathfrak{S}} 
\newcommand{\TTT}{\mathfrak{T}} 
\newcommand{\PPP}{\mathfrak{P}} %
\newcommand{\CCC}{\mathfrak{C}} 

\newcommand{\bzero}{\mathbf{0}}
\newcommand{\bone}{\mathbf{1}}

\newcommand{\bDelta}{\mathbf{\Delta}}

\newcommand{\Carr}{\operatorname{Carr}}
\newcommand{\interior}{\operatorname{int}}
\newcommand{\scl}{\operatorname{scl}}
\renewcommand{\int}{\operatorname{int}}

\newcommand{\Mod}{\operatorname{Mod}}

\allowdisplaybreaks

\mathchardef\mhyphen="2D
\newcommand{\rest}{\!\restriction\!}
\newcommand{\homeo}{\approx}

\newcommand{\PL}{\operatorname{PL}}
\newcommand{\St}{\operatorname{St}}

\newcommand{\es}{\varnothing}
\renewcommand{\ge}{\geqslant}
\renewcommand{\le}{\leqslant}
\newcommand{\nle}{\nleqslant}
\renewcommand{\geq}{\geqslant}
\renewcommand{\leq}{\leqslant}
\newcommand{\e}{\varepsilon}
\renewcommand{\Cap}{\bigcap}
\renewcommand{\Cup}{\bigcup}
\newcommand{\la}{\langle}
\newcommand{\ra}{\rangle}
\newcommand{\cat}{{}^\frown}
\newcommand{\ran}{\operatorname{ran}}

\renewcommand{\subseteq}{\subset}

\newcommand{\obDelta}{\overset{\circ}{\mathbf{\Delta}}{}}
\newcommand{\uobDelta}{\overset{\circ}{\underline{\mathbf{\Delta}}}{}}
\newcommand{\ubDelta}{{\underline{\mathbf{\Delta}}}{}}

\title[Simplicial complexes and non-compact 2- and 3-manifolds]{Borel classification of simplicial complexes and non-compact 2- and 3-manifolds} 

\author[M.~Iannella]{Martina Iannella} 
\address{Department of Discrete Mathematics and Geometry, TU Wien.~Wiedner Hauptstr.~8--10, 1040 Vienna, Austria}
\email{martina.iannella@tuwien.ac.at}
\author[V.~Weinstein]{Vadim Weinstein} 
\address{Center for Ubiquitous Computing\\
Erkki Koiso-Kanttilan katu 3\\ 
door E P.O Box 4500\\
FI-90014 University of Oulu} 
\email{vadim.weinstein@iki.fi}

\subjclass[2020]{Primary: 03E15, 06E75, 57K20, 57K30, 57Q05. Secondary: 54H05, 57Q15, 57M30}

\thanks{The first author was partially supported by the Italian PRIN 2017 Grant ``Mathematical Logic: models, sets, computability", and by the Austrian Science Fund (FWF) [10.55776/ESP1625325]. The second author was supported by a European Research Council Advanced Grant (ERC AdG, ILLUSIVE: Foundations of Perception Engineering, 101020977)}

\begin{document}

\begin{abstract}
  We generalize the Stone space of ultrafilters on Boolean algebras and prove a generalization of Stone duality which is applicable to locally compact Polish spaces. Using this, we obtain complete invariants for simplicial complexes up to PL-homeomorphism and for non-compact $2$- and $3$-manifolds up to homeomorphism. We prove that the homeomorphism relation on non-compact $2$-manifolds without boundary, the homeomorphism relation on non-compact $3$-manifolds with or without boundary, the homeomorphism relation on open subsets of $\mathbb{R}^2$ and $\mathbb{R}^3$, and conjugacy of Cantor sets in $\mathbb{R}^3$ are classifiable by countable structures. Together with known lower bounds, this implies that these relations are Borel bireducible with isomorphism of countable graphs. We also show that PL-homeomorphism of Heine-Borel simplicial complexes and PL-homeomorphism of PL $n$-manifolds, for every $n$, are classifiable by countable structures.
\end{abstract}

\maketitle

\tableofcontents

\section{Introduction}




    

The classification of non-compact manifolds has long been a subject of extensive mathematical interest. An algebraic classification of non-compact topological $2$-manifolds without boundary was obtained by Goldman in 1971 \cite{Goldman71}, and a more general result encompassing topological $2$-manifolds with boundary was proved by Brown and Messer in 1979 \cite{BM79}. The corresponding problem for $3$-manifolds, however, has long been regarded as substantially more difficult, largely because of the existence of Whitehead manifolds and other pathological examples; see \cite{whitehead1935open,scott1983geometries} for classical results and \cite{Maillot2008} for a discussion. This naturally raises the question of whether non-compact topological $3$-manifolds admit complete algebraic invariants.

Descriptive set theory provides a framework in which such questions can be formulated precisely. The complexity of classification problems is measured using \emph{Borel reducibility} (see \cite{Hjo00,Gao09}). Given equivalence relations $E$ on a standard Borel space $X$ and $F$ on a standard Borel space $Y$, we say that $E$ is \emph{Borel reducible} to $F$ if there exists a Borel map $f\colon X\to Y$ such that
\(
x\,E\,y \Longleftrightarrow f(x)\,F\,f(y).
\)
When $F$ is the isomorphism relation on a class of countable structures, we say that $E$ is \emph{classifiable by countable structures}. 
 We say that $E$ and $F$ are \textit{Borel bireducible} if $E$ is Borel reducible to $F$ and $F$ is Borel reducible to $E$.
 Borel reducibility thus yields a hierarchy of classification problems according to their complexity and provides a framework for proving that classification by certain classes of invariants is ``impossible".

The isomorphism relation on countable structures, denoted $\cong$, occupies a distinguished place in this hierarchy and is often regarded as the dividing line between ``classifiable'' and ``non-classifiable'' equivalence relations. Among countable structures, the isomorphism relation $\cong_{\mathcal{G}}$ between countable graphs has maximal Borel complexity. Many natural classification problems have been shown to lie strictly above this benchmark. For example, neither conjugacy of autohomeomorphisms of the unit square nor homeomorphism of compact Polish spaces is classifiable by countable structures (see \cite{Hjo00}); the same holds for equivalence of wild knots, proved by one of the present authors in \cite{Kul17}\footnote{Note the author's name change from Kulikov to Weinstein.}. On the other hand, Riemann surfaces up to conformal equivalence are classifiable by countable sets of reals, and hence by countable structures (see \cite{HjoKec2000}).

Throughout the paper, manifolds are assumed to be topological unless stated otherwise. 
From the perspective of descriptive set theory, the classification problem for manifolds up to homeomorphism has received little attention. It appears only tangentially in \cite{GKB13,Kul17}. While the classification problem of compact manifolds up to homeomorphism has a trivial solution from the descriptive-set-theoretic perspective, since there are only countably many homeomorphism types \cite{CK70}, it is known that for $n\ge2$ graph isomorphism Borel reduces to the homeomorphism relation on non-compact $n$-manifolds (see Proposition \ref{thm:CongToHomeo}). This is based on the fact that every homeomorphism between totally disconnected subsets of $\mathbb{R}^n$ extends to a homeomorphism of the ambient space (a consequence of \cite[Theorem~4.1]{CM83}), while homeomorphism of such subsets has exactly the complexity of $\cong_{\mathcal{G}}$ (see \cite{CamGao2001}). 
See \cite[Corollary 6.5]{GKB13} for a different proof in dimension $3$. 
We emphasize that the classifications of Goldman and Brown-Messer predate the development of Borel reducibility, and therefore do not address the descriptive complexity of these problems. Prior to the results presented here and the related recent developments discussed below, no matching upper bound was known. In independent work, Bergfalk and Smythe proved that the homeomorphism relation for non-compact connected $2$-manifolds without boundary is classifiable by countable structures, and so Borel bireducible with graph isomorphism (see \cite{BS25}); they also investigated classification problems involving complete hyperbolic manifolds up to isometry. Hoganson and Zomback proved that the homeomorphism relation for orientable non-compact surfaces equipped with a pants decomposition is Borel bireducible with $\cong_{\mathcal{G}}$ (see \cite{HZ24}). In ongoing work, Gompf and Panagiotopoulos study the descriptive complexity of the classification of manifolds in several dimensions and categories. In particular, they have announced an approach to the upper bound for the homeomorphism relation on topological $n$-manifolds and the diffeomorphism relation for smooth $n$-manifolds, for $n>1$, by isomorphism of countable structures (see \cite{GP26}).

Our methods provide a uniform algebraic framework for the classification of non-compact 2- and 3-manifolds. They yield complete invariants for manifolds both with and without boundary. At the level of Borel reducibility, they establish classification by countable structures for non-compact 2-manifolds without boundary and for non-compact 3-manifolds, both with and without boundary. Our main theorems show that, despite the striking differences between the classical classification theories in dimensions 2 and 3, the corresponding homeomorphism relations have the same descriptive-set-theoretic complexity. While the $2$-dimensional case was also obtained by Bergfalk and Smythe, our methods yield the first such result for $3$-manifolds. 
Denote by $\homeo_2$, $\homeo_3$ and $\homeo_3^\partial$ the homeomorphism relation on non-compact $2$-manifolds without boundary, non-compact $3$-manifolds without boundary, and non-compact $3$-manifolds with boundary, respectively.

\begin{thm}
The relations $\homeo_2$, $\homeo_3$ and $\homeo_3^\partial$ are classifiable by countable structures. Equivalently, they are Borel bireducible with $\cong_{\mathcal{G}}$.
\end{thm}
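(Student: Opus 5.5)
The proof splits into a lower bound and an upper bound.

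The lower bound is already available. By Proposition~\ref{thm:CongToHomeo}, $\cong_{\mathcal{G}}$ Borel reduces to homeomorphism of non-compact $n$-manifolds for $n\ge 2$. For dimension~$3$ with boundary, I would adapt that reduction by removing a closed half-space, or by taking products with $[0,1)$ appropriately. Alternatively I would check that the complements of totally disconnected closed subsets of $\mathbb{R}^3$ give the reduction there too, since the boundary case only needs some reduction. So the content of the theorem is the upper bound: each of $\homeo_2$, $\homeo_3$, $\homeo_3^\partial$ Borel reduces to isomorphism of countable structures.

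The plan for the upper bound goes through triangulations and the generalized Stone duality promised in the abstract.

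\textbf{Step 1 (triangulation).} Every $2$-manifold (Rad\'o) and every $3$-manifold, with or without boundary (Moise, Bing), is triangulable. Moreover, the Hauptvermutung holds in dimensions $\le 3$: homeomorphic manifolds are PL-homeomorphic. Hence homeomorphism of these manifolds coincides with PL-homeomorphism of the countable simplicial complexes triangulating them. I would produce a Borel map from the standard Borel space of manifolds (say, closed subsets of some universal space, or locally compact Polish spaces coded suitably) to countable simplicial complexes triangulating them. This requires a Borel-uniform version of the triangulation theorems. The most concrete approach is to cover the manifold Borel-uniformly by a countable locally finite atlas of coordinate balls, and then apply the proofs of Moise/Bing in a way whose choices can be made in a Borel fashion. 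Alternatively, I would invoke a Borel selection theorem, since the set of pairs (manifold, triangulation together with a homeomorphism) is analytic with $\sigma$-compact-like sections. I expect this uniformization to be the main technical obstacle, and I would first try to reduce it to Borel selection for a relation with $K_\sigma$ sections (Arsenin--Kunugui).

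\textbf{Step 2 (PL classification of complexes by countable structures).} A countable locally finite complex $K$ is not determined by its isomorphism type, since subdivisions matter. Instead, I would attach to $K$ a countable algebraic invariant. One candidate is the Boolean-algebra-like structure of PL subpolyhedra of $|K|$ that are compact or closed, with their face relations, taken up to PL homeomorphism of pieces. More precisely, I would take the countable structure whose elements are the finite subcomplexes of iterated subdivisions of $K$ (these are countably many), quotiented so that the structure depends only on the PL type. The generalized Stone duality, which handles locally compact Polish spaces, should then recover $|K|$, with its PL structure, from this structure up to isomorphism. Isomorphism of the invariants then matches PL-homeomorphism, and the map $K\mapsto$ invariant is clearly Borel.

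Composing Steps~1 and~2 gives the Borel reduction to $\cong$ of countable structures, and hence to $\cong_{\mathcal{G}}$. Combined with the lower bound, this yields bireducibility.
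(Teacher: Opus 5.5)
Your overall architecture (Borel triangulation, then a countable PL-invariant, plus the Cantor-set lower bound) matches the paper's. Your lower bound is fine: for $\homeo_3^\partial$, crossing $\R^2\setminus K$ with $[0,1)$ works because homeomorphisms preserve boundaries. But both halves of the upper bound lack the idea that makes them work.

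\textbf{Step~2.} The family of ``finite subcomplexes of iterated subdivisions of $K$'' has two problems. If all subdivisions are allowed, it is uncountable. If you fix, say, iterated barycentric subdivisions, it is tied to the particular triangulation: there is no reason for a PL-homeomorphism $|K|\to|K'|$ to carry those polyhedra of $K$ onto those of $K'$. ``Quotiented so that the structure depends only on the PL type'' is exactly the statement that needs proof, not a construction. Moreover, identifying pieces by their own PL type forgets how they sit in $|K|$, so no duality could recover $|K|$ afterwards.

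The paper fixes the countable set $\Q(T)$ of rational points of the simplexes. The basic sets are the interiors of compact and co-compact $\Q$-polyhedra, i.e.\ subcomplexes of subdivisions with vertices in $\Q(T)$. The key lemma is that PL-homeomorphic complexes are $\Q$-PL-homeomorphic (Theorem~\ref{thm:QPL-homeo}). It is proved via common stellar subdivisions and a controlled simplicial adjustment that pushes the stellar points into $\Q(T)$. Hence some PL-homeomorphism maps $\Q$-polyhedra bijectively onto $\Q$-polyhedra, which makes the invariant well defined.

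For the converse, the blurry duality (Theorem~\ref{thm:BlurryDuality}) applies only to CCLCP basis spaces. These consist of regular open sets, closed under $b\mapsto X\setminus\overline{b}$, each precompact or co-compact, with interpolation; a structure of subcomplexes with face relations is not of this form. The duality also returns only a homeomorphism preserving the basic sets. You still need that a homeomorphism preserving all $\Q$-polyhedra is PL (Theorem~\ref{thm:ClassificationOFPL-complexesByBasisSpaces}).

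\textbf{Step~1.} This step only names the obstacle. Your Arsenin--Kunugui fallback fails as stated. The sections of the relation you describe (all triangulations of a fixed manifold, with or without the accompanying homeomorphisms) are neither countable nor $K_\sigma$. Already the admissible parametrizations of a single simplex form a nowhere locally compact space of embeddings. To get uniformizable sections you must cut down to PL, or as in the paper $\Q$-PL, embeddings relative to the charts. That forces you through the Moise--Bing approximation and extension lemmas one at a time.

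The paper does exactly this. The $\Q$-PL embeddings of a compact polyhedron into $\bDelta^3$ are countable, so Lusin--Novikov uniformization \cite[Theorem 18.10]{Kec95} gives Borel versions of those lemmas. An exhaustion of the chart overlaps then straightens the atlas, chart by chart, into a PL-compatible one (Theorem~\ref{thm:GetPLcomp}). Bing's explicit construction, made Borel, then triangulates it (Theorem~\ref{thm:GetTrianulation}). Dimension~$2$ instead uses the Borel triangulation of \cite{BS25}. For $\homeo_3^\partial$, the paper triangulates the boundary, extends over a collar, and then extends inward.

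Finally, the relations in the statement live on the atlas spaces $\MMM_2$, $\MMM_3$, $\MMM_3^\partial$. Coding manifolds instead as closed subsets of a universal space, as you suggest, would first require showing that this set is Borel, which is unclear.
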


Our proof proceeds by establishing a more general classification theorem for polyhedral objects. We first show the following for the classification of simplicial complexes up to PL-homeomorphism.

\begin{thm}
Simplicial complexes up to PL-homeomorphism are classifiable by countable structures.
\end{thm}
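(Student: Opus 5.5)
We plan to attach to each (countable, locally finite) simplicial complex $K$ a countable structure $\mathfrak{A}(K)$, computed in a Borel way from a code of $K$, such that $\mathfrak{A}(K)\cong\mathfrak{A}(L)$ if and only if $|K|$ and $|L|$ are PL-homeomorphic. The key input is the generalized Stone duality announced in the abstract. A locally compact Polish space is recovered from a suitable countable lattice of compact (or regular open) subsets, and homeomorphisms correspond to isomorphisms of these lattices. In the PL setting we use the following lattice. Let $\mathcal{P}(K)$ be the collection of all compact subpolyhedra of $|K|$, meaning supports of finite subcomplexes of rational subdivisions of $K$. Order it by inclusion and add, for each $n$, the predicate ``$P$ is PL-homeomorphic to the standard $n$-simplex'', or equivalently ``$P$ is a PL $n$-ball''. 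The set $\mathcal{P}(K)$ is countable, because there are only countably many rational subdivisions and finite subcomplexes. It can also be enumerated in a Borel way from $K$.

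The easy direction is completeness of the invariant. A PL-homeomorphism $h\colon|K|\to|L|$ sends compact subpolyhedra to compact subpolyhedra and PL balls to PL balls. However, $h$ need not preserve rationality of vertices. We therefore do not use rational subdivisions literally. Instead we let $\mathcal{P}(K)$ be the abstract lattice of isomorphism-invariant objects: PL-ball subpolyhedra up to the relation ``same underlying set''. We then fix countability by choosing a countable PL-dense sublattice $\mathcal{P}_0(K)$ that is closed under union, intersection, and closure of complements. Two such countable dense sublattices should be isomorphic over any PL-homeomorphism. To show this we run a back-and-forth argument, using PL isotopy, the homogeneity of PL balls, and the Alexander trick. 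This reduces the problem to an ultrahomogeneity statement for the pair (lattice, ball predicate), so that the Scott-type invariant is the isomorphism type.

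The reverse direction is the heart of the argument. Given an isomorphism $\phi\colon\mathcal{P}_0(K)\to\mathcal{P}_0(L)$, we recover a homeomorphism using the generalized Stone duality, as follows.
\begin{enumerate-(i)}
\item Points correspond to suitable ``ultrafilters'' of compact polyhedra, namely decreasing sequences of balls with singleton intersection.
\item Compactness and local compactness are encoded by the order.
\end{enumerate-(i)}
This produces a homeomorphism $f\colon|K|\to|L|$ with $f(P)=\phi(P)$ for every $P$.

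Next we must upgrade $f$ to a PL map. We take a subdivision of $K$ whose simplices lie in $\mathcal{P}_0(K)$ as PL balls meeting in faces. Their images are PL balls in $|L|$ with the same incidence pattern. Using the predicate on all faces, each image forms a ball complex structure on $|L|$ combinatorially isomorphic to the subdivision, so $|K|\cong_{\PL}|L|$. The ball predicate on intersections is what makes this cellular correspondence work.

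We expect the main obstacle to be choosing $\mathcal{P}_0$ so that it is both invariant up to isomorphism (needed for the back-and-forth) and rich enough for duality. If the back-and-forth proves intractable, a fallback is available. We encode $K$ by the countable structure of all its finite subdivisions together with simplicial maps and the stellar-move relation. We then use Newman's theorem that PL-homeomorphic complexes are related by stellar moves, and take the isomorphism type of the resulting direct system as the invariant.
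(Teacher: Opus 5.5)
\paragraph{Gap: no invariant structure.} The gap is in the direction you call easy. You never exhibit a countable structure that is attached to $K$ canonically (and Borel-ly) and is invariant under PL-homeomorphism.

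You rightly notice that a PL-homeomorphism $h\colon|K|\to|L|$ need not send rational polyhedra of $K$ to rational polyhedra of $L$. Your repair is to choose some countable dense sublattice $\mathcal{P}_0(K)$ and assert that any two such are matched by a back-and-forth over $h$. That assertion is the crux, and it is left unproved. With an arbitrary choice, the isomorphism type may depend on the choice, and nothing tells you how to select $\mathcal{P}_0(K)$ as a Borel function of $K$.

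The forth step is also unjustified. It needs a PL-homeomorphism $h'$ with $h'(Q)=h(Q)$ \emph{exactly} for the finitely many polyhedra $Q$ already matched, and with $h'(P)$ \emph{exactly} in $\mathcal{P}_0(L)$. PL isotopy, homogeneity of balls and the Alexander trick give approximations, not exact membership in a prescribed countable family. A merely dense lattice need not contain any polyhedron realizing the required configuration relative to the sets $h(Q)$.

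The Newman fallback does not close the gap either. Newman's theorem concerns finite complexes. The family of all subdivisions of an infinite complex is uncountable unless you again restrict to rational ones, which brings back the same problem. You also do not say why isomorphism of the resulting system would force PL-homeomorphism.

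Separately, plain inclusion is too weak for the Stone-type reconstruction. In $\R$ the chains of balls $[x,x+1/n]$ and $[x-1/n,x]$ give different filters with the same singleton intersection, and ``singleton intersection'' is not expressed by the order. The paper instead uses ``closure contained in interior'', together with complements and blurry filters.

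\paragraph{What the paper does instead.} The missing idea is that no choice is needed, because the canonical rational family is already invariant. Every PL-homeomorphism can be perturbed to a $\Q$-PL-homeomorphism (Theorem~\ref{thm:QPL-homeo}). Such a map is simplicial between $\Q$-subdivisions, so it maps $\Q$-polyhedra onto $\Q$-polyhedra. The proof runs as follows.
\begin{enumerate-(i)}
\item Pull $T$ back along $h$ to get two triangulations of $|T'|$ with a common subdivision.
\item Replace this by a common $\omega$-stellar subdivision $S*\bar s=T'*\bar t$ (Theorem~\ref{thm:CommonStellarSubdivision}).
\item Slide each stellar point, one at a time, to a nearby point of $\Q(\cdot)$ in the interior of the same carrier simplex. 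Such a slide induces a simplicial isomorphism between the two stellar subdivisions (Lemma~\ref{lemma:Collection2}, Proposition~\ref{prop:ExtendSimplicial}, Lemma~\ref{lemma:Adjust}).
\end{enumerate-(i)}

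Consequently $h$ can be replaced by a homeomorphism carrying $\beta_T$ onto $\beta_{T'}$ exactly. So the canonical countable structure $\AA_T$ (or $(P_T,\le_T)$) is a PL invariant, and it is easily enumerated in a Borel way from a code of $T$. The converse follows from the blurry duality theorem, plus the fact that a homeomorphism preserving all $\Q$-polyhedra is PL.

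Your cell-by-cell argument would be a legitimate substitute for that last step: images of simplices are PL balls forming a ball complex, and you extend by coning. But it only becomes relevant once the invariance is in place.
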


This result yields, in particular, the corresponding classification theorem for PL-manifolds in arbitrary dimension.

\begin{thm}
For every $n$, the PL-homeomorphism relation on PL $n$-manifolds is classifiable by countable structures.
\end{thm}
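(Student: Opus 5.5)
We derive the statement from the preceding theorem, that simplicial complexes up to PL-homeomorphism are classifiable by countable structures. The plan has three steps: fix a standard Borel space of PL $n$-manifolds, show that this space is a Borel subset of the space of simplicial complexes, and restrict the reduction from the preceding theorem to it.

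First we fix the coding. A PL $n$-manifold (with or without boundary, possibly non-compact) is, up to PL-homeomorphism, the same thing as a countable locally finite simplicial complex $K$ with one extra property. That property is that the link of every $k$-simplex is PL-homeomorphic to either the standard $(n-k-1)$-sphere or the standard $(n-k-1)$-ball. By the standard PL theory, one may equivalently require that every vertex link is a PL $(n-1)$-sphere or a PL $(n-1)$-ball. We code such complexes as countable structures on $\N$: a family of finite subsets of $\N$, closed under subsets and locally finite. This is the same standard Borel space of simplicial complexes used for the preceding theorem. In that coding, the PL $n$-manifolds form the subset $M_n$ consisting of complexes whose vertex links are all PL spheres or balls.

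The main step is to check that $M_n$ is Borel. For a fixed vertex $v$, the link $\Lk(v)$ is a finite complex, and it can be computed from the code in a Borel way. A finite complex $L$ is a PL $(n-1)$-sphere or ball if and only if it has a subdivision simplicially isomorphic to a subdivision of the boundary of the standard $n$-simplex, or of the standard $(n-1)$-simplex. There are only countably many finite complexes up to isomorphism, so this condition says that $L$ belongs to a countable set $S$ of isomorphism types. Recognizing this set is undecidable for large $n$, but it is still a countable set of finite objects. Hence "$\Lk(v)$ has isomorphism type in $S$" is a countable union of closed conditions, and $M_n$ is a countable intersection over $v$ of Borel sets, hence Borel.

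Finally, we restrict the Borel reduction from the preceding theorem to $M_n$. For complexes, PL-homeomorphism of the polyhedra coincides with PL-homeomorphism of the PL $n$-manifolds they represent. Therefore this restriction is a Borel reduction of PL-homeomorphism on PL $n$-manifolds to isomorphism of countable structures.

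If instead PL $n$-manifolds are presented as topological manifolds with a PL atlas, we first need a Borel triangulation map. We would build it by choosing, in a Borel way, a countable locally finite triangulation compatible with the atlas, using countable enumerations of charts and rational simplices. I expect this step, if required, to be the main obstacle. With the combinatorial coding above, the only delicate point is the Borelness of $M_n$ handled in the main step.
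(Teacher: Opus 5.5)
Your combinatorial argument is correct for the coding you chose, with one adjustment. The preceding theorem in the paper is proved on $\TTT$, which consists of Heine--Borel sequences of embedded simplexes $\bDelta^m\to\U$, not of abstract complexes on $\N$. So you need a Borel geometric-realization map from your codes into $\TTT$. This is routine: for example, the weighted $\ell^1$-realization from the proof of Theorem~\ref{thm:MainBorelReduction2} works in every dimension. Your Borelness computation for $M_n$ is fine, since ``$\Lk(v)\cong L_0$'' fixes finitely many simplexes and forbids all other neighbours of $v$, so it is $F_\sigma$. However, in your coding the triangulation is part of the input, so the result is only a restriction of the complex theorem. The paper formalizes the statement differently: it is about $\homeo^{PL}_n$ on $\MMM^{PL}_n$, i.e.\ PL manifolds given by locally PL-compatible atlases (Corollary~\ref{cor:HomeoPL_on_Man_red_cong}, and Theorem~\ref{thm:PLman_withboundary} for manifolds with boundary). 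Classifiability by countable structures is a property of an equivalence relation on a specific standard Borel space. You cannot switch codings without a Borel translation between them.

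That translation is exactly the step you defer as ``the main obstacle''. It is the whole content of the paper's proof, and your sketch does not supply it. What is needed is a Borel map $\MMM^{PL}_n\to\TTT$ that produces a triangulation compatible with every chart. Enumerating charts and rational simplexes does not by itself give a triangulation of a union of overlapping chart images. A Borel \emph{choice} is not automatic either: a non-compact manifold has uncountably many compatible triangulations, so uniformization for Borel sets with countable sections does not apply. The paper instead makes the construction explicit, in three steps:
\begin{enumerate-(1)}
\item Proposition~\ref{prop:TriangulationStep} merges a triangulation of $\Cup_{k\le i}|\overset{\circ}\f_k|$ with the image of the standard triangulation of $\obDelta^n$ under the next chart, using Bing's construction. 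PL-compatibility lets Bing's adjusting homeomorphism be the identity, which is what makes this work in every dimension. The enumeration is arranged so that each step is continuous and simplexes disjoint from the new chart are left untouched (condition~\eqref{eq:tauistaup}).
\item Theorem~\ref{thm:GetTrianulation} uses local finiteness, condition (M3), to show that the iterates stabilize simplex by simplex. Hence the limit is a Borel compatible triangulation.
\item Proposition~\ref{prop:PLiffPL} shows that chart-wise PL-homeomorphism of atlases coincides with PL-homeomorphism of these triangulations. Only then does one compose with Theorem~\ref{thm:BorelClassificationOfPLtoISO}.
\end{enumerate-(1)}
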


For the classification of $2$- and $3$-manifolds, we refine this approach by assigning a \emph{sorted complemented algebra} (Section~\ref{ssec:SCA}) to each manifold and proving that this algebra is a complete invariant. Equivalently, the homeomorphism type of a $2$- or $3$-manifold is completely determined by the partial order of compact polyhedra with ``rational'' vertex points of one of their triangulations ordered by $p<q$ if and only if the closure of $p$ is contained in the interior of~$q$ (Corollary~\ref{cor:Manifolds-class-simple}). We further show that this assignment can be carried out in a Borel way, yielding the descriptive-set-theoretic classification above.

These results have several consequences. The relations $\homeo_2$ and $\homeo_3$ are strictly simpler than homeomorphism of compact or locally compact Polish spaces and than equivalence of wild knots (see \cite{ZIELINSKI2016635,Kul17}). They are Borel bireducible with graph isomorphism and, by \cite{PaoliniShelah2024}, also with isomorphism of torsion-free abelian groups. In particular, $\homeo_2$ and $\homeo_3$ have the same Borel complexity. For $n>3$, the methods developed here do not determine the descriptive complexity of $\homeo_n$. The upper bound problem is being investigated in ongoing work of Gompf and Panagiotopoulos (see Section~\ref{sec:Final}).

Moreover, our methods yield several further classification results. We answer \cite[Question~5.5]{GKB13} by proving that the conjugacy relation on Cantor sets in $\mathbb{R}^3$ is Borel reducible to isomorphism of countable structures (Theorem~\ref{thm:CantorBorelRed}). As an additional application, we show that
open subsets of $\mathbb{R}^2$ and $\mathbb{R}^3$ up to homeomorphism are classifiable by countable structures (Corollary \ref{cor:RedFromOpenToMan}).

The conceptual core of the paper is a generalization of Stone duality, which we call the \emph{blurry duality theorem} (Theorem~\ref{thm:BlurryDuality}). Classical Stone duality is based on the observation that a homeomorphism between compact totally disconnected spaces preserves their Boolean algebras of clopen sets. Consequently, two such spaces are homeomorphic if and only if their Boolean algebras of clopen sets are isomorphic \cite{CamGao2001}. Our approach follows the same philosophy, replacing Boolean algebras of clopen sets by sorted complemented algebras arising from polyhedral bases.

Unlike clopen bases, a countable polyhedral basis of a manifold is not generally preserved by an arbitrary homeomorphism. In dimensions $2$ and $3$, however, the uniqueness of PL structures, due to the classical work of Moise and Bing \cite{Bi59,Mo52,Mo54} (see Fact~\ref{fact:MoiseBing}), implies that every homeomorphism can be ``tweaked" so as to preserve rational polyhedra (for a formal statement, see Theorem~\ref{thm:QPL-homeo}).

A second aspect of classical Stone duality is the reconstruction of a compact totally disconnected space from the Boolean algebra of its clopen basis. The Stone space of this Boolean algebra, consisting of its ultrafilters, is homeomorphic to the original space. This construction cannot be applied directly to polyhedral bases, since the Stone space is always totally disconnected. To overcome this difficulty, we introduce \emph{blurry filters} (Section~\ref{ssec:SCA}), a natural generalization of ultrafilters. The space of blurry filters associated with a sorted complemented algebra reconstructs the corresponding topological space whenever suitable axioms are satisfied (Section~\ref{ssec:Duality}). These axioms hold for $2$- and $3$-manifolds because of their special PL-topological properties. See Section~\ref{sec:Final} for further discussion.

Another essential ingredient of the proof is a Borel version of triangulation for $2$- and $3$-manifolds (Section~\ref{ssec:BorelTriangulation}). In dimension $2$, we use the triangulation result obtained in \cite{BS25}. In dimension $3$, we adapt the classical triangulation arguments of Moise and Bing, \cite{Mo52,Bing54}, to the descriptive-set-theoretic setting, thereby strengthening the classical theorem by producing triangulations in a Borel way.

At present, we do not know whether the invariants introduced here admit a direct geometric interpretation. Thus, as far as we are aware, Maillot's observation that ``For open $3$-manifolds [...] there is not even a conjectural description [...] in terms of geometric ones'' \cite{Maillot2008} remains valid. Nevertheless, our results remove a possible descriptive-set-theoretic obstruction to such a classification: since non-compact $2$- and $3$-manifolds are classifiable by countable structures, there is no obstruction of this kind to the existence of geometrically meaningful complete invariants.

In Section~\ref{sec:Final} we discuss further research directions suggested by this work. More broadly, our results show that manifolds admit canonical presentations as countable structures. This suggests that classical algebraic invariants, such as homology and fundamental groups, may become logically definable within these presentations. It is therefore conceivable that major classification results, such as the classification of simply connected closed $3$-manifolds (now a theorem by Perelman \cite{Perelman2002,Perelman2003a,Perelman2003b,CaoZhu2006}), could admit alternative proofs using methods of this kind.

The paper is structured as follows. In
Section~\ref{sec:Classification_non_borel} we prove that the sorted
complemented algebras (SCA) obtained from polyhedra are a complete
invariant for the homeomorphism on $2$- and $3$-manifolds with or without
boundary. We do not do any descriptive set theoretic work in this
section.  First we introduce the algebras and blurry filters
(Section~\ref{ssec:SCA}), then we introduce \emph{basis spaces}
(Section~\ref{ssec:BasisSpaces}) and prove the blurry duality
theorem~(Section \ref{ssec:Duality}) which is a generalization of the
classical Stone duality. In Section~\ref{ssec:PointAdjustment} we
develop the machinery of PL-geometry and show that PL-homeomorphic
manifolds are $\Q$-PL-homeomorphic, i.e.  that the homeomorphism can
always be chosen so that it preserves a suitably chosen countable set
of polyhedra. In Section~\ref{ssec:Classification_NON_BOREL} we prove
that this leads to complete
invariants. Section~\ref{sec:Borelification} presents the same results
in the paradigm of Borel reducibility. We start by introducing the
relevant concepts and preliminary results from descriptive set theory
(Section~\ref{ssec:BasicDST}), then we show how to present SCAs and simplicial complexes as standard Borel spaces and how to obtain a Borel classification by countable structures of the PL-homeomorphism on Heine-Borel simplicial complexes
(Sections~\ref{ssec:ManToBS}). In Section \ref{sec:manifolds} we show that many classes of non-compact $n$-manifolds are standard Borel spaces and how to obtain Borel
classifications by countable structures of their corresponding homeomorphism relations. In Section \ref{ssec:BorelTriangulation} we also prove that $3$-manifolds can be triangulated in a
Borel way (this is perhaps technically the most
challenging part of the paper). We then prove that the homeomorphism relation on open subsets of $\R^n$, $n\in\{2,3\}$, and
conjugacy relation of Cantor sets in \(\R^3\) are classifiable by countable structures (Sections \ref{ssec:OpenSubsets} and \ref{ssec:BorelClassCantor}). Finally, we discuss other
applications and open problems in Section~\ref{sec:Final}.

\vspace{10pt}

\section{Classification without Borel reducibility}
\label{sec:Classification_non_borel}

The goal of this section is to obtain complete invariants for
simplicial complexes up to PL-homeomorphism, and for 2- and
3-manifolds up to homeomorphism. At first invariants will be
\emph{sorted complemented algebras}
(Theorems~\ref{thm:ClassificationOfSimplicialComplexes} and
\ref{thm:ClassificationOfManifolds}) but then simplified to partial
orders (Theorems~\ref{thm:PL-class-simple} and Corollary
\ref{cor:Manifolds-class-simple}).  In Section \ref{ssec:SCA} we
introduce sorted complemented algebras and blurry filters, in Section
\ref{ssec:BasisSpaces} basis spaces are defined, and in Section
\ref{ssec:Duality} we prove the blurry duality theorem connecting the
two and generalizing the Stone duality
(Theorem~\ref{thm:BlurryDuality}). Then, in Section
\ref{ssec:PointAdjustment} we develop the necessary theory of
PL-topology and in Section~\ref{sssec:PLClass} we show how to obtain
sorted complemented algebras from simplicial complexes and prove that
they are complete invariants of PL-homeomorphism. We then apply this
to characterize $2$- and $3$-manifolds with and without boundary up to
homeomorphism (Section~\ref{sssec:Classification23man}), and Cantor
sets up to conjugacy (Section~\ref{sssec:Cantor1}).  We will turn to
the descriptive theoretic treatment in
Section~\ref{sec:Borelification}.

\vspace{10pt}

\subsection{Notation}

By $A\subset B$ we mean that $A$ is a subset of $B$, possibly $A=B$.
If we want to emphasize that $A$ is a proper subset, we write
$A\subsetneq B$. If $f\colon X\to Y$ is a map and $A$ is a set, we
denote by $fA$ or $f[A]=f[A\cap X]$ the image of $A$ in~$Y$, $fA=\{f(x)\mid x\in A\cap X\}$.
Interchangeably, we
denote the composition of functions $f\colon X\to Y$ and
$g\colon Y\to Z$ either by $g\circ f$ or~$gf$. We denote both finite
and infinite sequences as $\bar z=(z_i)_{i\in I}$ for e.g. $I=\N$. 
By $l(\bar z)$ we denote the length of $\bar z$,
and we start indexing from
$0$ unless mentioned otherwise.
If
each element of the sequence is an element of $A$, we denote
$\bar z\subset A$. Slightly abusing notation, $|\bar z|$ is
the range of the sequence, $|\bar z|=\{z_i\mid i\in I\}$.
The concatenation of sequences is denoted by $\bar z\cat \bar w$.
If $\bar s=\bar z\cat \bar w$, we write $\bar z\subset \bar s$
and say that $\bar s$ is an \textbf{end-extension} of $\bar z$.
If $\bar s^0\subset \bar s^1\subset \cdots$ is a sequence of
sequences such that $\bar s^{i+1}$ is an end-extension of $\bar s^i$,
then we denote by $\Cup_{i\in\N}\bar s^i$ the sequence $\bar s$
whose $j$-th element equals the $j$-th element of some (any) $\bar s_i$
with $l(s_i)>j$.

If $X$ is a topological space and $Y \subseteq X$, then by $\interior_X(Y)$ and $\partial_X Y$ we denote, respectively, the interior and boundary of $Y$ in $X$. If $X$ is clear from the context, it is dropped from subscript. Sometimes the interior of $Y$ is denoted also by $\overset{\circ}{Y}$. The closure of $Y$ is denoted by~$\overline{Y}$.

Open and closed intervals in $\R$ are denoted respectively by
$(a,b)$ and $[a,b]$.
Given a metric space $(X,d)$, by 
\begin{align*}
    & B_X(x,\e)=\{y \in X \mid d(x,y)<\e\}, \text{ and}\\
    & \overline{B_X(x,\e)}=\{y \in X \mid d(x,y)\le \e\}
\end{align*}
we always denote, respectively, the open and closed ball in $X$ with center $x \in X$ 
and radius $\e \in \R_+$, where $\R_+$ is the space of positive real numbers. 
We drop 
$X$ from the lower case when it is clear from the context.

\vspace{10pt}

\subsection{Sorted complemented algebras}
\label{ssec:SCA}


\begin{Def}\label{def:algebra}
  Let $L=\{\le, \bzero, \bone, c\}$ be a first-order vocabulary
  with one binary predicate symbol $\le$, two constant symbols
  $\bzero$ and $\bone$, and one unary function symbol~$c$. A 
  \textbf{complemented algebra}, briefly \textbf{CA},
  is an $L$-model
  $\AA=(A,\le^{\AA},\bzero^{\AA}, \bone^{\AA}, c^{\AA})$
  satisfying the following axioms:
  \begin{enumerate}[label={\upshape (CA\arabic*)}, leftmargin=3pc]
  \item\label{alg1} The relation $\le^{\AA}$ is a partial order on $A$, i.e., a reflexive, antisymmetric, and transitive relation on $A$.
  \item\label{alg2} $\bone^{\AA}$ is the unique $\le^{\AA}$-maximal element, 
    and $\bzero^{\AA}$ is the unique $\le^{\AA}$-minimal element.
  \item\label{alg3} For each $a\in A$, there is a unique 
    $x\in A$ which satisfies the two formulas
    \begin{align*}
      &\forall y((y\le a\land y\le x)\rightarrow y=\bzero)\\
      &\forall y((y\ge a\land y\ge x)\rightarrow y=\bone)
    \end{align*}
    and this element $x$ equals~$c^{\AA}(a)$. It is called the ``complement'' of $a$.
  \item\label{alg4} For every $a \in A$, $c^\AA(c^\AA(a))=a$.
  \item\label{alg5} For all $a_0,a_1\in A$, $a_0\le^{\AA}a_1\iff c^{\AA}(a_1)\le^{\AA}c^{\AA}(a_0)$.
  \end{enumerate} 
  We denote by $<^\AA$ the strict relation associated with $\leq^\AA$,
  which is defined by $a <^\AA a'$ if and only if $a \le^\AA a'$ and
  $a' \nle^\AA a$. To simplify the notation, we write
  $\AA=(A,\le,\bzero, \bone, c)$ omitting the superscript $\AA$ when
  it is clear from the context.

  Let $L^+=L\cup \{K\}$ where $K$ is a unary predicate. Let
  $\AA=(A,\le^{\AA},\bzero^{\AA}, \bone^{\AA}, c^{\AA},K^{\AA})$ be an $L^+$-model
  such that $\AA\rest L$ is a~CA. Then we say that $\AA$ is a
  \textbf{sorted complemented algebra}, briefly
  \textbf{SCA}, if the following additional axioms hold:
  \begin{enumerate}[label={\upshape (CA\arabic*)}, leftmargin=3pc, resume]
  \item\label{alg5_} Exactly one of the following holds:
    \begin{enumerate-(i)}
    \item $\forall a\in A\big(a\in K^\AA\leftrightarrow c^\AA(a)\notin K^\AA\big)$, and there is no $\le^\AA$-largest element in $K^\AA$,\label{alg5_i}
    \item $K^\AA=A$.\label{alg5_ii}
    \end{enumerate-(i)}
  \item \label{alg6_} $\bzero^\AA\in K^\AA$ and
   $K^\AA$ is downward closed with respect to $\le^\AA$.
  \item\label{alg7_} For every $a_0\in K^\AA$ and $a_1\in A$, the following equivalence
  holds:
  $a_0<^\AA a_1$ if and only if there is $b\in K^\AA$ such that
  $a_0<^\AA b$ and the only element $\le^\AA$-below both $b$ and $c(a_1)$ is~$\bzero^\AA$. 
  \end{enumerate}
\end{Def}

If $(B,\lor,\land,\bzero,\bone,c)$ is a Boolean algebra,
then $(B,\le,\bzero,\bone,c)$ is a complemented algebra
where $\le$ is defined by
$b_0\le b_1\iff b_0\land b_1=b_0$, for all \(b_0, b_1 \in B\). Thus,
a Boolean algebra is a special case of a CA.
The converse fails as demonstrated by Example~\ref{ex:SCA} below.
The axioms \ref{alg1}--\ref{alg7_} are first-order over the
vocabulary~$L^+$ (and \ref{alg1}--\ref{alg5} are first-order
over~$L$).  The constants $\bone$, $\bzero$, and the function $c$ are
all first-order definable from $\le$, so we could have defined the CA
over the vocabulary~$\{\le\}$ and SCA over $\{\le,K\}$. We have added
the extra symbols for the sake of clarity, but see
Theorem~\ref{thm:PL-class-simple} and Corollary \ref{cor:Manifolds-class-simple} where this feature is exploited.  We now
introduce the notion of blurry filters on complemented algebras, which
is one of the main ingredients leading to a generalization of the
Stone duality.

\begin{Def}\label{def:blurry_filter}
  Let $\AA=(A,\le,\bzero,\bone,c)$ be a CA.
  A set $F\subseteq A$ is a \textbf{filter} if:
  \begin{enumerate}[label={\upshape (F\arabic*)}, leftmargin=2pc]
  \item\label{blurry_filter_1} The $\le$-maximal element is in the filter: $\bone\in F$.
  \item\label{blurry_filter_2} For all $a_0,a_1\in A$, if
    $a_0\le a_1$ and $a_0\in F$, then $a_1\in F$.
  \item\label{blurry_filter_3} For all $a_0, a_1\in F$
    there is $a_2\in F$ such that $a_2\le a_0$ and $a_2\le a_1$.   
  \end{enumerate}   
  $F$ is \textbf{proper}, if:
  \begin{enumerate}[label={\upshape (F\arabic*)}, leftmargin=2pc,resume]
  \item\label{blurry_filter_4} The minimal element is not in the filter: $\bzero\notin F$.
  \end{enumerate}
  A proper filter $F$ is \textbf{blurry}, if:
  \begin{enumerate}[label={\upshape (F\arabic*)}, leftmargin=2pc,resume]
  \item\label{blurry_filter_5} For all $a\in A$, if $a\notin F$ and $c(a)\notin F$, 
    then for all $a_0\in A$, if $a<a_0$, then $a_0\in F$.
  \end{enumerate}
  If $\AA$ is an SCA, we say that $F$ is a (proper, blurry) filter
  on $\AA$, if $F$ is a (proper, blurry) filter on $\AA\rest L$.
  It is a \textbf{$K$-filter} if:
  \begin{enumerate}[label={\upshape (F\arabic*)}, leftmargin=2pc,resume]
  \item \label{blurry_filter_6} $F\cap K\ne \es$.
  \end{enumerate}
  We denote the set of all proper blurry \(K\)-filters on \(\AA\) by \(\FF(\AA)\).
\end{Def}

\begin{Remark}\label{remark:FilterFinite}
  The property \ref{blurry_filter_3} can be iteratively applied
  to obtain a more general formulation: 
  \begin{itemize}
  \item[(F3')] For all $a_0,\dots,a_{n-1}\in F$ there is $a\in F$ such that
  $a\le a_i$ for all $i\in \{0,\dots,n-1\}$. 
  \end{itemize}
\end{Remark}

Let us examine an example highlighting
the difference between a Boolean 
algebra and a (sorted) complemented
algebra as well as between a blurry filter
and an ultrafilter.

\begin{exmp}\label{ex:SCA}
  We will first define a CA and then turn it into an SCA. 
  Let $A_0$ be the set of bounded non-empty 
  open intervals of $\R$ with rational endpoints,
  and let 
  $$A=A_0\cup\{\R\setminus \bar a\mid a\in A_0\}\cup\{\es,\R\}$$
  where $\bar a$ is the closure of $a$. Let $\le^\AA$ be defined by
  $a\le a'$ if and only if either $a=a'$ or the closure of $a$ is
  contained in $a'$.  Let
  $c^\AA(a)=\R\setminus \bar a$.  
  Let $\bone^\AA = \R$
  and $\bzero^\AA=\es$.  Let $\AA=(A,\le^\AA,\bzero^\AA,\bone^\AA,c^\AA)$. 
  One easily
  checks conditions \ref{alg1}--\ref{alg5}, so $\AA$ is a CA. 
  It is not a Boolean algebra for the following
  reason. Let $x=(0,2)$ and $y=(1,3)$. Then
  there is no $\le^\AA$-greatest lower bound of $x$ and $y$:
  every element $\le^\AA$-below both $x$ and $y$
  is of the form $z=(\alpha,\beta)$ where
  $1<\alpha<\beta<2$, so 
  $z'=((1+\alpha)/2,(2+\beta)/2)$ is $\le^\AA$-greater
  than $z$ but still below both $x$ and $y$.
  Similarly, $x$ and $y$ do not have a least upper bound.
  As usual, call a
  filter $F\subseteq A$ an \textbf{ultrafilter}, if it satisfies
  conditions \ref{blurry_filter_1}--\ref{blurry_filter_4} together
  with a stronger version of condition~\ref{blurry_filter_5}, namely
  \begin{itemize}
  \item[(F5')] For all $a\in A$, either $a\in F$ or $c(a)\in F$.
  \end{itemize}
  Fix $x\in\R$ and let $F=\{a\in A\mid x\in a\}$. Then $F$ is a proper
  blurry filter. If $x$ is irrational, then $F$ is an ultrafilter. If
  $x$ is rational, then $F$ is not an ultrafilter, because then
  $a=(x,x+1)\notin F$, but also $\R\setminus [x,x+1]=c(a)\notin F$.
  We can make $\AA$ into an SCA by interpreting $K$ either as $K=A$,
  or as $K=A_0\cup \{\es\}$.  In both cases it is easy to verify
  conditions \ref{alg5_}--\ref{alg7_}.  In both cases $F$, as defined
  above, is a $K$-filter, because $(\alpha,\beta)\in F\cap K$ for all
  $\alpha,\beta\in\Q$ such that $\alpha<x<\beta$.
\end{exmp}

\begin{Lemma}\label{lemma:bonecbzero}
  Let $\AA=(A,\le,\bzero,\bone, c)$ be a CA. Then $c(\bone)=\bzero$
  and $c(\bzero)=\bone$.
\end{Lemma}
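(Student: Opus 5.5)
The plan is to use only the uniqueness clause of \ref{alg3} together with \ref{alg2}. To show $c(\bone)=\bzero$, it suffices to check that $x=\bzero$ satisfies the two formulas of \ref{alg3} for $a=\bone$. Uniqueness then forces $c(\bone)=\bzero$.

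The first formula asks: if $y\le\bone$ and $y\le\bzero$, then $y=\bzero$. By \ref{alg2}, $\bzero$ is the unique minimal element, and I claim it is in fact the least element. Indeed, if $y\le\bzero$, then $y=\bzero$: otherwise $y<\bzero$, and then $y$ would lie below the minimal element $\bzero$, contradicting minimality. The second formula asks: if $y\ge\bone$ and $y\ge\bzero$, then $y=\bone$. This follows in the same way, since $y\ge\bone$ and maximality of $\bone$ give $y=\bone$. So $\bzero$ satisfies both formulas, and hence $c(\bone)=\bzero$.

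For $c(\bzero)=\bone$ there are two routes. One is to apply \ref{alg4}: $c(\bzero)=c(c(\bone))=\bone$. The other is the symmetric direct check that $\bone$ satisfies the formulas for $a=\bzero$. There $y\le\bzero$ forces $y=\bzero$, and $y\ge\bone$ forces $y=\bone$.

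I do not expect any real obstacle. The only subtle point is to use just minimality and maximality in the form ``$y\le\bzero\Rightarrow y=\bzero$'' and ``$y\ge\bone\Rightarrow y=\bone$''. These follow immediately from antisymmetry and the definitions, and they avoid any appeal to $\bzero$ being below everything, which would need finiteness or another argument.
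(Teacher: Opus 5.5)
Your proof is correct, but it takes a different route from the paper's.

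The paper argues by order reversal. It uses \ref{alg2} to get $c(a)\le\bone$ for every $a$. Then \ref{alg5} and \ref{alg4} give $c(\bone)\le c(c(a))=a$ for all $a$, so in particular $c(\bone)\le\bzero$, and minimality yields $c(\bone)=\bzero$; the other identity is obtained symmetrically.

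You never use \ref{alg5}. You check directly that $\bzero$ satisfies the two defining formulas of \ref{alg3} for $a=\bone$, invoke the uniqueness clause there, and then get $c(\bzero)=\bone$ from \ref{alg4}.

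The advantage of your route is that it uses only that $\bzero$ is minimal and $\bone$ is maximal, which is literally what \ref{alg2} says. The paper's first step tacitly reads \ref{alg2} as ``$\bone$ is the greatest element''. In an infinite poset a unique maximal element need not be greatest, so under the literal wording that step needs extra justification. It is harmless in the algebras $\psi(X,\beta)$ actually used, since $\overline{b_n}\subseteq X$ for every $n$. The paper's route, in exchange, shows as a by-product that $\bzero$ is the least element whenever $\bone$ is the greatest.

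One small wording fix: drop the sentence claiming that $\bzero$ ``is in fact the least element''. What you prove right after it is only that $y\le\bzero$ implies $y=\bzero$, i.e.\ minimality. That is all your argument needs, as your own last paragraph correctly observes.
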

\begin{proof}
  By \ref{alg2}, for any $a$ we have $c(a)\le \bone$,
  so by \ref{alg5}, $c(\bone)\le c(c(a))=a$ where the
  latter equality is by \ref{alg4}. By \ref{alg2},
  $c(\bone)=\bzero$. Symmetrically, $c(\bzero)=\bone$.
\end{proof}

Next we will show that the isomorphism
type of the restricted structure $(K^\AA,\le^\AA)$
completely determines the isomorphism type of~$\AA$.

\begin{Prop}\label{prop:IsoOnKIsEnough}
  Let $\AA=(A,\le^{\AA},\bzero^{\AA},\bone^{\AA},c^{\AA},K^{\AA})$ and $\BB=(B,\le^{\BB},\bzero^{\BB},\bone^{\BB},c^{\BB},K^{\BB})$
  be two sorted complemented algebras. The partial orders
  $(K^\AA,\le^\AA)$ and $(K^\BB,\le^\BB)$ are isomorphic if and only if $\AA\cong\BB$.
\end{Prop}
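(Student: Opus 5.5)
The backward direction is immediate: an isomorphism $\AA\cong\BB$ of $L^+$-structures maps $K^\AA$ onto $K^\BB$ and preserves $\le$. For the forward direction, let $f\colon (K^\AA,\le^\AA)\to(K^\BB,\le^\BB)$ be an order isomorphism. The plan is to show that $K$ together with $\le$ on $K$ determines $\AA$, and to extend $f$ along this reconstruction. We split according to axiom \ref{alg5_}. First, $K^\AA=A$ happens exactly when $(K^\AA,\le)$ has a largest element, namely $\bone$; in case \ref{alg5_i} there is no largest element of $K$. Hence $\AA$ and $\BB$ fall into the same case. In case \ref{alg5_ii} the map $f$ is an isomorphism of the whole partial orders. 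Since $\bzero$, $\bone$ and $c$ are first-order definable from $\le$ by \ref{alg2} and \ref{alg3}, $f$ preserves them and is an $L^+$-isomorphism.

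In case \ref{alg5_i}, $A$ is the disjoint union of $K^\AA$ and $c[K^\AA]$, because $a\in K$ iff $c(a)\notin K$ and $c$ is an involution by \ref{alg4}. We define $\hat f\colon A\to B$ by $\hat f(a)=f(a)$ for $a\in K^\AA$ and $\hat f(c(a))=c^\BB(f(a))$ for $a\in K^\AA$. This is a well-defined bijection, since $c^\BB$ maps $K^\BB$ bijectively onto $B\setminus K^\BB$. It commutes with $c$ by construction and by \ref{alg4}, and it maps $K$ onto $K$. Since $\bzero\in K$ is the minimum of $K$ by \ref{alg6_}, we get $f(\bzero^\AA)=\bzero^\BB$, and hence $\hat f(\bone^\AA)=\hat f(c(\bzero^\AA))=\bone^\BB$ by Lemma~\ref{lemma:bonecbzero}.

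It remains to show that $\hat f$ preserves $\le$, which splits into four cases for $a_0\le a_1$ according to membership in $K$.
\begin{enumerate-(i)}
\item Both $a_0,a_1$ lie in $K$: this is given by $f$.
\item Both lie in the complement of $K$: apply \ref{alg5} to reduce to case (i).
\item $a_0\notin K$ and $a_1\in K$: this cannot happen, since $K$ is downward closed by \ref{alg6_}.
\item $a_0\in K$ and $a_1=c(b)$ with $b\in K$: this is the mixed case, handled below.
\end{enumerate-(i)}

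The mixed case is the main obstacle, and axiom \ref{alg7_} is designed for it. Equality is impossible there since the sorts differ, so $a_0\le c(b)$ means $a_0<c(b)$. By \ref{alg7_} this holds iff there is $k\in K$ with $a_0<k$ such that the only element below both $k$ and $c(c(b))=b$ is $\bzero$. Every element below $k$ or below $b$ lies in $K$ by downward closure, so this condition refers only to the order on $K$. It is therefore transported by $f$, giving $f(a_0)<c^\BB(f(b))=\hat f(a_1)$. The converse implications follow by the same argument applied to $f^{-1}$, whose extension is $\hat f^{-1}$. So $\hat f$ is an order isomorphism, and it preserves $\bzero$, $\bone$, $c$ and $K$. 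Hence $\AA\cong\BB$.
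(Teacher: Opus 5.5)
Your proof is correct and follows the paper's argument essentially step for step. Both use the existence of a largest element of $K$ to place $\AA$ and $\BB$ in the same case of \ref{alg5_}, extend $f$ to $\hat f$ through complements in case \ref{alg5_}\ref{alg5_i}, and handle the mixed case with \ref{alg7_} plus the downward closure of $K$. Your explicit treatment of the reverse implication, via the observation $\widehat{f^{-1}}=\hat f^{-1}$, fills a small point that the paper leaves implicit.
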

\begin{proof}
  For the non-obvious direction, let $f\colon (K^\AA,\le^\AA)\to (K^\BB,\le^\BB)$ be an isomorphism.
  We will show that it extends to an isomorphism
  $\hat f$ from $\AA$ to $\BB$. We will
  divide the analysis into cases depending on 
  which one of the conditions in \ref{alg5_} is 
  satisfied. If $K^\AA$ satisfies \ref{alg5_}\ref{alg5_i} in $\AA$,
  then it has no largest element in $K^{\AA}$. Since $f$
  is an isomorphism, $K^\BB$ has no largest element, so
  $\BB\models \bone^\BB\notin K^\BB$. Therefore
  $K^\BB\ne B$ 
  and hence also satisfies \ref{alg5_}\ref{alg5_i} (in $\BB$).
  By symmetry, $K^\AA$ satisfies \ref{alg5_}\ref{alg5_i}
  if and only if $K^\BB$ satisfies \ref{alg5_}\ref{alg5_i}.
  Let us call \textbf{case (i)} the case in which
  both satisfy \ref{alg5_}\ref{alg5_i} and \textbf{case (ii)}
  in which both satisfy \ref{alg5_}\ref{alg5_ii}.
  
  In case (ii), let $\hat f:=f$. 
  Otherwise, in case (i), define $\hat f$ by
  \begin{equation}\label{eq:defofhatf}
  \hat f(x)=
  \begin{cases}
      f(x),&\text{ if }x\in K^\AA,\\
      c^\BB(f(c^\AA(x))&\text{ if }x\notin K^\AA.
  \end{cases}
  \end{equation}
  By \ref{alg5_}\ref{alg5_i}, $\hat f$ is well-defined.
  We claim
  that $\hat f$ is an isomorphism from $\AA$ to~$\BB$. 
  First observe
  that \ref{alg5_} guarantees that for all $a\in A$,
  \begin{equation}
    a\in K^{\AA}\text{ if and only if }\hat f(a)\in K^{\BB}.\label{eq:iffinK}
  \end{equation}
  Thus, $\hat f$ preserves
  the predicate~$K$.
  Let us next show that $\hat f$ is one-to-one. 
  In case (ii), it is clear. Assume case (i).
  Injectivity on $K^\AA$ follows because $\hat f\rest K^\AA=f$ is an isomorphism. Suppose $x\in K^{\AA}$ and
  $y\notin K^{\AA}$. Then it follows from \eqref{eq:iffinK}
  that $\hat f(x)\ne \hat f(y)$.
  Suppose $x,y\in A\setminus K^{\AA}$ and assume
  that $\hat f(x)=\hat f(y)$. Then applying \ref{alg4} we have
  $$c^{\BB}(f(c^{\AA}(x)))=\hat f(x)=\hat f(y)=c^{\BB}(f(c^{\AA}(y))) \ \Longrightarrow\ f(c^{\AA}(x))=f(c^{\AA}(y))$$
  which, by the fact that $f$ is an isomorphism, implies
  $c^{\AA}(x)=c^{\AA}(y)$, and applying \ref{alg4} again, we have
  $x=y$. Thus $\hat f$ is one-to-one. It is easy to see that \(\hat f\) is also surjective. Next, let us show that $f$ preserves the
  function~$c$. First consider case (ii).
  Let $x\in A$.
  The elements $\bzero$, $\bone$,
  and therefore by \ref{alg3} also $c^\AA(x)$, 
  are definable from $x$ 
  using a first-order
  formula over the vocabulary~$\{\le\}$. 
  Since $f$ is an isomorphism between the $\{\le\}$-structures 
  $(K^\AA,\le^\AA)$ and $(K^\BB,\le^\BB)$, 
  the element $c^\BB(f(x))$ is definable from $f(x)$ using
  the same formula. By the uniqueness assumption in \ref{alg3},
  $c^\BB(f(x))$ must be equal to $f(c^\AA(x))$.
  Consider now case (i). Assume $x\in K^\AA$.
  Then by \ref{alg5_}\ref{alg5_i}, $c^\AA(x)\notin K^\AA$, so we have
  $$\hat f(c^{\AA}(x))\stackrel{\eqref{eq:defofhatf}}{=}c^{\BB}( f(c^{\AA}(c^{\AA}(x))))\stackrel{\text{\ref{alg4}}}{=}c^{\BB}(f(x))\stackrel{\eqref{eq:defofhatf}}{=}c^\BB(\hat f(x)).$$
  On the other hand, suppose $x\notin K^{\AA}$. Then $c^{\AA}(x)\in K^{\AA}$, so  
  $$\hat f(c^{\AA}(x))\stackrel{\eqref{eq:defofhatf}}{=}f(c^{\AA}(x))\stackrel{\text{\ref{alg4}}}{=}c^{\BB}(c^{\BB}(f(c^{\AA}(x))))\stackrel{\eqref{eq:defofhatf}}{=}c^{\BB}(\hat f(x)).$$
  Next, let us show that $\bzero$ and $\bone$ are preserved.
  Since $\bzero^{\AA}\in K^{\AA}$ and $\bzero^{\BB}\in K^{\BB}$ (condition~\ref{alg6_})
  are the $\le_{\AA}$- and $\le_{\BB}$-minimal elements respectively, it follows
  that $\hat f(\bzero^{\AA})=f(\bzero^{\AA})=\bzero^{\BB}$
  by using that $f$ is an isomorphism.
  By Lemma~\ref{lemma:bonecbzero}, and by the fact that 
  $\hat f$ preserves $c$,
  we have
  $$\hat f(\bone^\AA)=\hat f(c^{\AA}(\bzero^{\AA}))=c^{\BB}(f(\bzero^{\AA}))=c^{\BB}(\bzero^{\BB})=\bone^{\BB}.$$
  It remains to show that $\hat f$ preserves the relation
  $\le$. Suppose $x,y\in \AA$ and $x\le y$.  There are three options:
  (a) $x,y\in K^\AA$, (b) $x,y\notin K^\AA$, and (c) $x\in K^\AA$,
  $y\notin K^\AA$, so in particular $x<y$; the fourth option with
  $x\notin K^\AA$ and $y\in K^\AA$ is excluded by \ref{alg6_}.  In
  cases (a) and (b), the preservation of $\le$ follows from
  \eqref{eq:defofhatf} and \ref{alg5}. In case (c), by
  \ref{alg7_} there is $z\in K^\AA$ such that $x<^\AA z$ and the only
  element $\le^\AA$-below both $z$ and $c^\AA(y)$ is
  $\bzero^\AA$. Since $x,z,c(y)\in K^\AA$, this property is preserved
  by $f$, so $f(z)$ witnesses the right side of the ``if and only if'' of \ref{alg7_} in
  $\BB$, so we have $f(x)<^\BB f(y)$.
\end{proof}
\vspace{10pt}

\subsection{Basis spaces}
\label{ssec:BasisSpaces}

\begin{defn}\label{def:LCBS}
  A \textbf{basis space} is a pair $(X,\beta)$, where $X$ is a set and
  $\beta\subseteq\PP(X)$ is any set of subsets.
  Sometimes, if $\beta$ is countable,
  we assume that it is indexed by natural numbers and $\beta=(b_n)_{n\in\N}$.
  If no confusion ensues, we use both the set and the sequence representation interchangeably.
  We say that two basis spaces $(X,\beta)$ and $(X',\beta')$, are
  \textbf{equivalent}, and write $(X,\beta)\equiv (X',\beta')$, if
  there is a bijection $h\colon X\to X'$ such that for all $b\in\beta$
  there is $b'\in \beta'$ such that $h[b]=b'$ and for all $b'\in\beta'$ 
  there is $b\in\beta$ such that $h^{-1}[b']=b$.
\end{defn} 

Given a basis space $(X,\beta)$, we denote by $\la\beta\ra$ the
topology generated by~$\beta$ on \(X\), i.e. the smallest topology
in which each $b\in\beta$ is open.  In the sequel, all the operations of closure and
interior for subsets of a basis space 
$X$ are defined using this topology. In
particular, we say that a subset $C\subseteq X$ is \textbf{compact} if
it is compact in the subspace topology induced 
by~$\la\beta\ra$.  The following is an immediate
consequence of the definitions.

\begin{Fact}\label{fact:Homeo}
  Suppose $h$ is a map witnessing 
  the equivalence between $(X,\beta)$ and $(X',\beta')$.
  Then $h$ is a homeomorphism from $(X,\la\beta\ra)$ to $(X',\la\beta'\ra)$. \qed
\end{Fact}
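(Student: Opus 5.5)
We need to show that $h$ is continuous and open with respect to the generated topologies; since $h$ is a bijection, this makes it a homeomorphism. The plan is to reduce everything to the generating families $\beta$ and $\beta'$.

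First, recall that $\la\beta\ra$ consists exactly of arbitrary unions of finite intersections of members of $\beta$, together with $X$ itself (the empty intersection). Hence a map into $(X',\la\beta'\ra)$ is continuous as soon as preimages of members of the subbasis $\beta'$ are open. This holds because preimages commute with finite intersections and arbitrary unions, and $h^{-1}[X']=X$.

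Now take $b'\in\beta'$. By the definition of equivalence there is $b\in\beta$ with $h^{-1}[b']=b$. This $b$ is open in $\la\beta\ra$, so $h$ is continuous.

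Symmetrically, consider $h^{-1}\colon X'\to X$. For $b\in\beta$ there is $b'\in\beta'$ with $h[b]=b'$. Since $h$ is a bijection, $(h^{-1})^{-1}[b]=h[b]=b'$, which is open in $\la\beta'\ra$. So $h^{-1}$ is continuous by the same subbasis argument, and therefore $h$ is a homeomorphism.

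There is no real obstacle. The only point needing care is the observation that checking preimages of a subbasis suffices for continuity, and that for a bijection images under $h$ coincide with preimages under $h^{-1}$.
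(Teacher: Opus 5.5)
Your proof is correct and matches the paper's intent. The paper states this as an immediate consequence of the definitions and writes no proof; your argument supplies that routine check, namely preimages of members of the subbasis $\beta'$ lie in $\beta$ and images of members of $\beta$ lie in $\beta'$.
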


\begin{exmp}\label{ex:BasisSpacesNonEq}
  Let $X=\R$ and let $A_0$ be as in Example~\ref{ex:SCA}.
  Let $\beta=A_0\cup\{\es\}$. Then $(X,\beta)$ is a basis space.
  Let $\beta'=\{a\cup b\mid a,b\in \beta\}$. Then $(X,\beta')$
  is also a basis space. The topologies generated by $\beta$
  and $\beta'$ coincide: the sets in $\beta'$ are open in 
  $\la\beta\ra$ on the one hand and $\beta\subseteq\beta'$ on the other. 
  In fact, both $\beta$ and $\beta'$ are bases for
  the standard topology on~$\R$. However, 
  $(X,\beta)\not\equiv (X,\beta')$. To see this, let $h$
  be a bijection $\R\to\R$ witnessing the counterassumption
  that they are equivalent.
  By Fact~\ref{fact:Homeo}, $h$ is a homeomorphism.
  Therefore $h^{-1}[(0,1)\cup (2,3)]$ must be disconnected
  and bounded in~$\R$. But there is no disconnected
  and bounded set in $\beta$ while 
  $(0,1)\cup (2,3)\in \beta'$, a contradiction.
\end{exmp}

A \textbf{Polish space} is a topological space which is separable and admits a compatible complete metric.
For our purposes, let us define a special type of basis spaces.
\begin{Def}\label{def:CLCP}
  A basis space $(X,\beta)$ is \textbf{countable complemented locally compact
  Polish}, briefly \textbf{CCLCP}, if 
    $\beta=(b_n)_{n\in\N}$ is countable and satisfies the following
  conditions:
  \begin{enumerate}[label={\upshape (B\arabic*)}, leftmargin=2pc]
  \item \label{def:BC3} $(X,\la\beta\ra)$ is a Polish space.
  \item \label{def:BC1} For all $n \in \N$ there is $m \in \N$
    such that $X\setminus \overline{b_n}=b_m$.
  \item \label{def:BC8} For all $n \in \N$, either $\overline{b_n}$ or $X\setminus b_n$ is compact (possibly both are compact).
  \item \label{def:BC6} For each $x\in X$ there is $n \in \N$ such
    that $x\in b_n$ and $\overline{b_n}$ is compact.
  \item \label{def:BC2} $\es\in \beta$ and $X\in\beta$.
  \item \label{def:BC7} For each $n \in \N$, we have
    $X\setminus b_n=\overline{X\setminus \overline{b_n}}$.
  \item \label{def:BC4} For all $n\ne m$, $b_n\ne b_m$.
  \item \label{def:BC9} For all $n,m\in \N$, if $\overline{b_n}$ is
    compact and $\overline{b_n}\subset b_m$, then there is $k$ such
    that $\overline{b_k}$ is compact, $\overline{b_n}\subset b_k$, and
    $\overline{b_k}\subset b_m$.
  \item \label{def:BC5} For all $n_1,\dots,n_k\in\N$ and all
    $x\in b_{n_1}\cap\cdots\cap b_{n_k}$ there is $m \in \N$ such that
    $x\in b_m$ and
    $\overline{b_m}\subseteq b_{n_1}\cap\cdots\cap b_{n_k}$.
  \end{enumerate}
\end{Def}

\begin{Remark}\label{remark:RegStr}
  Notice that condition \ref{def:BC2}, which guarantees that
  \(\Cup_{n \in \N} b_n=X\), and condition \ref{def:BC5} imply that
  \(\beta\) is a basis for the topology generated by \(\beta\).  
\end{Remark}

A subset of a topological space is \textbf{regular open} if it is equal to the interior of its
closure and \textbf{regular closed} if it is equal to the closure of
its interior.

\begin{Fact}\label{fact:InteriorReg}
  Let $X$ be a topological space. The interior of any closed subset of $X$ is regular open and the closure of
  any open subset of $X$ is regular closed. Moreover condition~\ref{def:BC7} is
  equivalent to $b_n$ being regular open.
\end{Fact}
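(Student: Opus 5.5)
The plan is to prove the three assertions of Fact~\ref{fact:InteriorReg} in turn, using only the monotonicity and idempotence of interior and closure. Throughout, write $\operatorname{int}$ for interior and $\overline{\,\cdot\,}$ for closure in $X$.

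For the first assertion, let $C$ be closed and $U=\int C$. Since $U\subseteq \overline U$ and $U$ is open, we get $U\subseteq \int\overline U$. Conversely, $U\subseteq C$ with $C$ closed gives $\overline U\subseteq C$, so $\int\overline U\subseteq \int C=U$. Hence $U=\int\overline U$, i.e.\ $U$ is regular open.

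The second assertion is dual: for open $V$, the set $\overline V$ is regular closed. One route is to apply the first assertion to the closed set $X\setminus V$ and use the identities $X\setminus \overline{Y}=\int(X\setminus Y)$ and $X\setminus\int Y=\overline{X\setminus Y}$. Alternatively one can argue directly. We have $V\subseteq\int\overline V$, hence $\overline V\subseteq\overline{\int\overline V}$. In the other direction, $\int\overline V\subseteq\overline V$ gives $\overline{\int\overline V}\subseteq\overline V$.

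For the ``moreover'' part, fix $b_n$ and use the complement identity $X\setminus\overline{Y}=\int(X\setminus Y)$. Applied to $Y=X\setminus\overline{b_n}$, it gives
\[
X\setminus \overline{X\setminus\overline{b_n}}=\int\bigl(X\setminus(X\setminus\overline{b_n})\bigr)=\int\overline{b_n}.
\]
Taking complements in \ref{def:BC7}, the condition $X\setminus b_n=\overline{X\setminus\overline{b_n}}$ is therefore equivalent to $b_n=\int\overline{b_n}$, which is exactly regular openness of $b_n$.

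There is no real obstacle here. The only point needing care is bookkeeping the complement identities correctly, and these hold in any topological space. In particular, the argument does not use the fact that $b_n$ is open in $\la\beta\ra$.
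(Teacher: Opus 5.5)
Your proof is correct, and for the first two assertions it matches the paper's argument step for step. For the ``moreover'' part, the paper argues pointwise: in the converse direction it implicitly uses that $b_n$ is open, in order to pick a point of $\int(\overline{b_n})\setminus b_n$. Your complement identity $X\setminus\overline{X\setminus\overline{b_n}}=\int\overline{b_n}$ packages the same observation so that both directions follow at once, without needing openness.
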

\begin{proof}
  This is a standard exercise but we will give the proof for the sake
  of completeness. Let $A\subset X$ be a closed set and let $B=\int(A)$. Since $B\subset \overline{B}$, we have
  $\int B\subset\int(\overline{B})$.  But $B$ is open, so $\int(B)=B$ and so
  $B\subset \int(\overline{B})$. On the other hand $B\subset A$ and since $A$
  is closed, $\overline{B}\subset A$, so $\int(\overline{B})\subset A$.  Since the
  former is open, it follows that $\int(\overline{B})\subset\int(A)=B$.  This
  proves that $B$ is regular open. 
  
  Suppose $A$ is open and $C=\overline{A}$. We want to show that $C=\overline{\int(C)}$. Since $\int(C)\subset C$, we have
  $\overline{\int(C)}\subset \overline{C}=C$. Since $A\subset C$,
  $A=\int(A)\subset \int(C)$. Taking closure on both sides, we have
  $\overline{A}\subset \overline{\int(C)}$, but $\overline{A}=C$. This proves that
  $C$ is regular closed. 
  
  Let us prove the last statement of the fact.
  Suppose $b_n$ is regular open. Then $x\notin X\setminus b_n$ iff
  $x\notin X\setminus \int(\overline{b_n})$ iff $x\in\int(\overline{b_n})$ iff $x$
  has a neighborhood inside $\overline{b_n}$ iff $x$ is not in the
  closure of $X\setminus \overline{b_n}$. For the other direction, suppose
  that $b_n$ is not regular and let
  $x\in \int(\overline{b_n})\setminus b_n$.  Then $x\in X\setminus b_n$, but
  $x\notin X\setminus \int(\overline{b_n})$, so it is in $\int(\overline{b_n})$
  and has a neighborhood inside $\overline{b_n}$.  So it is not in the
  closure of $X\setminus \overline{b_n}$. Thus,
  $X\setminus b_n\ne \overline{X\setminus\overline{b_n}}$.
\end{proof}

\begin{Lemma}\label{lemma:FindNinC}
    Suppose $(X,\beta)$ is a CCLCP basis space.
    Let $x\in X$ and $C\subset X$ be
    a closed neighborhood of $x$ in $\la\beta\ra$.
    Then there is $n\in \N$ with 
    $x\in b_n\subset C$. 
\end{Lemma}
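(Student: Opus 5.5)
Since $C$ is a neighborhood of $x$, there is an open set $U$ of $\la\beta\ra$ with $x\in U\subset C$. By Remark~\ref{remark:RegStr}, $\beta$ is a basis for $\la\beta\ra$: condition \ref{def:BC2} gives $X\in\beta$, so $\beta$ covers $X$, and condition \ref{def:BC5} gives the intersection property of a basis. Hence there is $n\in\N$ with $x\in b_n\subset U\subset C$, which proves the lemma.

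For completeness I would spell out the basis argument directly, since the topology generated by $\beta$ is a priori only the one generated by $\beta$ as a subbasis. Every open set of $\la\beta\ra$ is a union of finite intersections $b_{n_1}\cap\cdots\cap b_{n_k}$; the empty intersection is $X$, which is $b_m$ for some $m$ by \ref{def:BC2}. So $x$ lies in some finite intersection $b_{n_1}\cap\cdots\cap b_{n_k}\subset U$. Applying \ref{def:BC5} then yields $m$ with $x\in b_m\subset\overline{b_m}\subset b_{n_1}\cap\cdots\cap b_{n_k}\subset U\subset C$.

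I expect no real obstacle here. The only point needing care is that a neighborhood must contain an open set around $x$; closedness of $C$ is not needed, although the stronger inclusion $\overline{b_m}\subset C$ also holds and follows from closedness of $C$ in any case.
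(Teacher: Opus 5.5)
Your proof is correct and is essentially the paper's argument: choose an open set around $x$ inside $C$ (the paper takes $\int(C)$), then use that $\beta$ is a basis (Remark~\ref{remark:RegStr}) to find $b_n$ with $x\in b_n\subset C$. Your explicit derivation of the basis property from \ref{def:BC2} and \ref{def:BC5} simply unpacks what the paper cites from that remark.
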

\begin{proof}
    Let $U=\int(C)$. Then $U$ is an open neighborhood
    of~$x$.
    Since $\beta$ is a basis,
    $U$ is the union of some sets $b_{n_1}, b_{n_2},\ldots$,
    so $x\in b_{n_k}\subset U \subset C$ for some $k$. 
\end{proof}

\begin{Lemma}\label{lemma:Infinite}
    Suppose $(X,\beta)$ is a CCLCP basis space. Then $X$ is infinite.
\end{Lemma}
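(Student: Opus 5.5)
The plan is a pure counting argument, using only the indexing convention in Definition~\ref{def:CLCP} together with condition~\ref{def:BC4}. The topological conditions should not be needed.

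In the definition of a CCLCP basis space, $\beta$ is presented as a sequence $\beta=(b_n)_{n\in\N}$ indexed by all natural numbers. Condition~\ref{def:BC4} says that $b_n\ne b_m$ whenever $n\ne m$. Hence $n\mapsto b_n$ is an injection from $\N$ into $\beta\subseteq\PP(X)$, and so $\PP(X)$ is infinite.

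Now argue by contradiction. Suppose $X$ is finite, say $|X|=k$; this includes the case $X=\es$. Then $|\PP(X)|=2^k$ is finite, which contradicts the injection $\N\to\PP(X)$ just obtained. Therefore $X$ is infinite.

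The only step that needs care is interpretive. One must check that ``countable'' in Definition~\ref{def:CLCP} really means that $\beta$ is indexed by all of $\N$ (as written, $\beta=(b_n)_{n\in\N}$), rather than that $\beta$ is merely at most countable. If finite indexing were allowed, this argument would fail. In that case I would fall back on the topology: a finite Polish space is $T_1$ and hence discrete, and one would then need to rule out a finite discrete $X$ using conditions~\ref{def:BC1}--\ref{def:BC5}. However, a finite discrete space together with $\beta=\PP(X)$ appears to satisfy all of those conditions. So the lemma genuinely relies on the $\N$-indexing combined with~\ref{def:BC4}, and that is the reading I would adopt.
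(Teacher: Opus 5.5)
Your proof is correct and is essentially the paper's argument: the $\N$-indexing of $\beta$ together with condition~\ref{def:BC4} yields infinitely many distinct subsets of $X$, which is impossible if $X$ is finite. Your side remark is also right: without the $\N$-indexing, a finite discrete space with $\beta=\PP(X)$ would satisfy all the remaining conditions, so the lemma genuinely rests on that convention.
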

\begin{proof}
  From Condition \ref{def:BC4} and the definition of $\beta$
  as a sequence indexed by natural numbers, it follows
  that $\beta$ has infinitely many distinct sets.
  Therefore $X$ is infinite.  
\end{proof}
  


\vspace{10pt}

\subsection{Blurry duality theorem}
\label{ssec:Duality}


We define a map which connects basis spaces with sorted complemented algebras.
Recall that $L^+=\{\le,\bzero,\bone,c,K\}$ is the first-order vocabulary of
SCA (Definition~\ref{def:algebra}).

\begin{Def}\label{def:map_from_BBB_to_AAA}
  Given a CCLCP basis space $(X,\beta)$, let $\psi(X,\beta)$ be the $L^+$-model
  $$\AA=(A,\le^{\AA},\bzero^{\AA},\bone^{\AA},c^{\AA},K^{\AA})$$
  defined as follows:
  \begin{enumerate}[label={\upshape (D\arabic*)}, leftmargin=2.3pc]
  \item \label{def:Psi_item0} $A=\N$,
  \item \label{def:Psi_item5} $K^\AA=\{n \in \N\mid\overline{b_n}\text{ is compact}\}$.
  \end{enumerate}
  For all $n,m\in \N$:
  \begin{enumerate}[label={\upshape (D\arabic*)}, leftmargin=2.3pc,resume]
  \item \label{def:Psi_item1} $n\le^\AA m\iff b_n=b_m\lor \overline{b_n}\subseteq b_m$,
  \item \label{def:Psi_item2} $\bone^\AA=n\iff b_n=X$,
  \item \label{def:Psi_item3} $\bzero^\AA=n\iff b_n=\es$,
  \item \label{def:Psi_item4} $c^\AA(n)=m \iff X\setminus \overline{b_n}=b_m$.
    \end{enumerate}
\end{Def}


\begin{Lemma}\label{lemma:CCLCP_is_AAA}
  For any CCLCP basis space $(X,\beta)$, the $L^+$-model 
  $\AA=\psi(X,\beta)$ is well-defined and is a sorted complemented algebra.
\end{Lemma}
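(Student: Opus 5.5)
The plan is to first check that $\psi(X,\beta)$ is well-defined, and then verify the axioms \ref{alg1}--\ref{alg7_} one by one, using the topological conditions \ref{def:BC3}--\ref{def:BC5} throughout.

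\emph{Well-definedness.} By \ref{def:BC4} the sets $b_n$ are pairwise distinct. By \ref{def:BC2} there are unique indices $n,n'$ with $b_n=X$ and $b_{n'}=\es$, so \ref{def:Psi_item2} and \ref{def:Psi_item3} define $\bone^\AA$ and $\bzero^\AA$ unambiguously. By \ref{def:BC1} and \ref{def:BC4}, for each $n$ there is exactly one $m$ with $X\setminus\overline{b_n}=b_m$, so $c^\AA$ is a function.

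\emph{The CA axioms.}
\begin{enumerate-(1)}
\item For \ref{alg1}, reflexivity is built into \ref{def:Psi_item1}. Transitivity follows from $\overline{b_n}\subset b_m\subset\overline{b_m}\subset b_k$. Antisymmetry holds because $\overline{b_n}\subset b_m$ and $\overline{b_m}\subset b_n$ give $b_n\subset b_m\subset b_n$, and then \ref{def:BC4} applies.
\item For \ref{alg2}, note that $\overline{\es}=\es\subset b_m$ and $\overline{b_n}\subset X$ for every $n,m$. Hence $\bzero^\AA$ lies below everything and $\bone^\AA$ above everything, which makes them the unique minimal and maximal elements.
\item Complements are regular open by Fact~\ref{fact:InteriorReg}, so each $b_n$ is regular open by \ref{def:BC7}. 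Then $X\setminus\overline{X\setminus\overline{b_n}}=\int(\overline{b_n})=b_n$, which gives \ref{alg4}.
\item For \ref{alg5}, first show $\overline{b_n}\subset b_m\iff \overline{X\setminus\overline{b_m}}\subset X\setminus\overline{b_n}$. By \ref{def:BC7} the right-hand side reads $X\setminus b_m\subset X\setminus\overline{b_n}$, which is equivalent to the left-hand side. The case $b_n=b_m$ is trivial.
\item For \ref{alg3}, let $x=c(a)$. If $y\le a$ and $y\le x$, then $\overline{b_y}\subset b_a\cap(X\setminus\overline{b_a})=\es$, or $b_y$ equals one of them, which by regularity forces $b_y=\es$. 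The dual statement for $y\ge a,x$ follows by applying \ref{alg5} and \ref{alg4}.
\end{enumerate-(1)}

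\emph{Uniqueness in \ref{alg3} is the main obstacle.} Suppose $x'$ satisfies both formulas but $b_{x'}\ne X\setminus\overline{b_a}$. I would try two cases.
\begin{enumerate-(a)}
\item If $b_{x'}\cap b_a\ne\es$, pick a point in the intersection. By \ref{def:BC5} and \ref{def:BC6} there is a nonempty $b_m$ with compact closure and $\overline{b_m}\subset b_{x'}\cap b_a$. Then $m\le a$ and $m\le x'$ with $m\ne\bzero$, a contradiction.
\item Otherwise $b_{x'}\subset X\setminus\overline{b_a}$. Since $b_{x'}$ is regular open, the inclusion must be proper at the level of closures, so some point $p$ lies in $(X\setminus\overline{b_a})\setminus\overline{b_{x'}}$. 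The dual argument then applies: take $b_m$ around $p$ with $\overline{b_m}\subset X\setminus(\overline{b_a}\cup\overline{b_{x'}})$. Its complement $c(m)$ is $\ge a$ and $\ge x'$ (by \ref{alg5}-style reasoning) but is not $X$, a contradiction.
\end{enumerate-(a)}
This also yields the auxiliary fact that $a\wedge y=\bzero$ in the order sense iff $b_a\cap b_y=\es$, which I will reuse.

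\emph{The SCA axioms.}
\begin{enumerate-(1)}
\item For \ref{alg6_}, $\overline{\es}$ is compact. Downward closure holds since $\overline{b_n}\subset b_m\subset\overline{b_m}$ with $\overline{b_m}$ compact makes $\overline{b_n}$ a closed subset of a compact set.
\item For \ref{alg5_}, split on whether $X$ is compact. If it is, every closure is compact and $K=A$. If it is not, then by \ref{def:BC8} at least one of $\overline{b_n}$ and $X\setminus b_n=\overline{b_{c(n)}}$ is compact. Both cannot be compact, since their union would be all of $X$. There is no largest element of $K$: if $n$ were largest, cover the compact set $\overline{b_n}$ by finitely many basic sets with compact closure (\ref{def:BC6}, \ref{def:BC5}), pick a point outside it, and use \ref{def:BC9}-type enlargement to get some $k\in K$ with $\overline{b_n}\subsetneq b_k$. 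This step is delicate; I would build such a $b_k$ from a compact neighborhood of $\overline{b_n}$ inside $X$, using \ref{def:BC9} with $m=\bone$.
\item For \ref{alg7_}, take $a_0\in K$ and $a_0<a_1$, so $\overline{b_{a_0}}\subset b_{a_1}$. Applying \ref{def:BC9} gives $b$ with compact closure and $\overline{b_{a_0}}\subset b\subset\overline{b}\subset b_{a_1}$. Then $b$ is disjoint from $b_{c(a_1)}$ by regularity, so by the auxiliary fact only $\bzero$ lies below both. Conversely, such a $b$ satisfies $b_b\cap (X\setminus\overline{b_{a_1}})=\es$, hence $b_b\subset\int\overline{b_{a_1}}=b_{a_1}$, and then $\overline{b_{a_0}}\subset b_b\subset b_{a_1}$.
\end{enumerate-(1)}
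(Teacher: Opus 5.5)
Your checks of well-definedness, \ref{alg1}--\ref{alg5}, \ref{alg6_} and the dichotomy in \ref{alg5_} are sound and follow the paper; your case split for uniqueness in \ref{alg3} is a minor variant of the paper's ``disjoint, and the closures cover $X$'' argument.

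The genuine gap is the left-to-right direction of \ref{alg7_}. From $a_0<a_1$, \ref{def:BC9} gives $k\in K$ with $\overline{b_{a_0}}\subset b_k$ and $\overline{b_k}\subset b_{a_1}$. That only yields $a_0\le k$, whereas the axiom requires $a_0<k$, and \ref{def:BC9} may return $k=a_0$ when $b_{a_0}$ is clopen. Two cases are fine:
\begin{enumerate-(a)}
\item if $b_{a_0}$ is not closed, then $\overline{b_{a_0}}\subset b_k$ forces $k\ne a_0$ and your argument works;
\item if $a_0=\bzero$, you can instead use \ref{def:BC6} and \ref{def:BC5} to get a nonempty $b_k$ with compact closure inside $b_{a_1}$.
\end{enumerate-(a)}
For nonempty clopen $b_{a_0}$, however, the step cannot be repaired from \ref{def:BC3}--\ref{def:BC5}. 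Take $X=\N$ discrete and let $\beta$ consist of $\es$, $\N$, all singletons and all sets $\N\setminus\{i\}$. Every set is clopen and \ref{def:BC3}--\ref{def:BC5} are immediate; for \ref{def:BC9} take $k=n$. Here $K^\AA$ consists of the indices of $\es$ and of the singletons. For $b_{a_0}=\{0\}$ and $b_{a_1}=\N\setminus\{1\}$ we have $a_0<a_1$. Yet the only $k\in K^\AA$ with $a_0\le k$ is $a_0$ itself, so \ref{alg7_} fails and $\psi(X,\beta)$ is not an SCA.

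So the lemma needs an extra hypothesis. One option is a strict form of \ref{def:BC9}: if moreover $b_n\ne b_m$, then $k$ can be chosen with $b_k\ne b_n$. This can be arranged for $\beta_T$ by adjoining to $\overline{b_n}$ a small star of a point of $b_m\setminus\overline{b_n}$, and with it your argument goes through. The paper's own proof (``follows from \ref{def:BC9}'') has exactly the same hole.

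Your plan for ``no largest element of $K$'' in case (i) also aims at something that is not available. In the example above, $\{0\}$ has no strict upper bound in $K^\AA$, so no construction of $k\in K$ with $\overline{b_n}\subsetneq b_k$ works in general. Likewise \ref{def:BC9} with $m=\bone$ may return $k=n$, which is the defect in the paper's version of this step. You do not need a strict upper bound, only an element of $K$ not below $n$. Since $\overline{b_n}$ is compact and $X$ is not, pick $p\notin\overline{b_n}$ and use \ref{def:BC6} to get $j\in K$ with $p\in b_j$. Then $b_j\not\subset b_n$, so $j\nle n$.

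Finally, in the right-to-left direction of \ref{alg7_} you only derive $a_0\le a_1$. Strictness holds because $a_1=a_0$ would give $\overline{b_{a_0}}\subset b_k\subset b_{a_1}=b_{a_0}$ for the witness $k$. Hence $b_k=b_{a_0}$, contradicting $a_0<k$.
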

Below \ref{def:BC3}--\ref{def:BC5}
refer to Definition~\ref{def:CLCP}, 
\ref{def:Psi_item0}--\ref{def:Psi_item4} refer to Definition \ref{def:map_from_BBB_to_AAA}, and~\ref{alg1}--\ref{alg7_}
to Definition~\ref{def:algebra}.
\begin{proof}
  By \ref{def:BC2} and \ref{def:BC4}, 
  there are unique $n$ and $m$ 
  such that $b_n=\es$ and $b_m=X$, so $\bzero^\AA$ and $\bone^\AA$ are well-defined
  by \ref{def:Psi_item2} and \ref{def:Psi_item3}. By \ref{def:BC1} and 
  \ref{def:BC4}, for each $n$ there is a unique $m$ such that 
  $b_m=X\setminus \overline{b_n}$,
  so $c^\AA$ is also well-defined by \ref{def:Psi_item4}. This proves that $\psi(X,\beta)$
  is well-defined. It remains to check that \ref{alg1}--\ref{alg7_}
  are satisfied. 

  Reflexivity, transitivity, and antisymmetry of $\le^\AA$ clearly follow from
  \ref{def:Psi_item1} and \ref{def:BC3}.
  This proves \ref{alg1}.
  We already covered the existence and
  uniqueness of $\bone^\AA$, $\bzero^\AA$. That they are
  respectively $\le^\AA$-maximal and minimal follows from the
  definition of $\le^\AA$ (note that $\overline{\es}=\es$). This proves~\ref{alg2}.  

  Before \ref{alg3}, we will prove \ref{alg4} and \ref{alg5}.
  Let $n\in A$.
  Then by \ref{def:BC7}
  and \ref{def:Psi_item4}, we have
  $$m=c^\AA(c^\AA(n))\iff b_m=X\setminus\overline {X\setminus \overline{b_n} }=X\setminus (X\setminus b_n)=b_n$$
  which by \ref{def:BC4} implies $n=m$
  proving condition \ref{alg4}.
    For \ref{alg5}, suppose $n\le^\AA m$. Then $\overline{b_n}\subset b_m$,
  so $X\setminus b_m\subset X\setminus \overline{b_n}$.
  Now \ref{def:BC7} implies that $\overline{X\setminus \overline{b_m}}\subset X\setminus \overline{b_n}$ which means $c^{\AA}(m)\le^\AA c^\AA(n)$.
  
  To prove \ref{alg3}, let $n\in A$ be arbitrary. 
  By \ref{def:BC1} there is $m$ such that $b_m=X\setminus \overline{b_n}$
  and by \ref{def:BC4} it is unique. By \ref{def:Psi_item4}, 
  $m=c^{\AA}(n)$.
  Let us show that $m$ satisfies the formulas of \ref{alg3}.
  The first formula 
  $\forall y((y\le n\land y\le m)\rightarrow y=\bzero)$
  is satisfied because $b_n\cap b_m=\es$, and the second one
  $\forall y((n\le y\land m\le y)\rightarrow y=\bone)$
  is satisfied because $\overline{b_n}\cup \overline{b_m}=X$.
  On the other hand, if $m$ satisfies the first formula, then
  $b_n\cap b_m=\es$ by \ref{def:BC5}. Analogously, if $m$ satisfies the second formula, then $\overline{b_n}\cup \overline{b_m}=X$ because
  otherwise $c^{\AA}(n)$ and $c^{\AA}(m)$ 
  satisfy the first formula (using \ref{alg5}) but $(X\setminus \overline{b_n})\cap (X\setminus\overline{b_m})\ne\es$ which by \ref{def:Psi_item4} is a contradiction. 
  So if $m'$ is another element satisfying both formulas, 
  then $b_{m'}\cap b_n=\es$ implies $m'\le^\AA m$ and
  $\overline{b_{m'}}\cup\overline{b_n}=X$ implies $m\le^{\AA}m'$,
  so $b_m=b_{m'}$ which by \ref{def:BC4} implies $m=m'$. 
  
  Condition \ref{alg6_} follows from \ref{def:Psi_item5},
  because the empty set is compact and
  a closed subset of a compact set
  in a Polish space (see \ref{def:BC3}) is compact.
  For \ref{alg5_}, if $(X,\la \beta\ra)$ is compact, 
  then by \ref{def:Psi_item5}, $K^\AA=A=\N$, 
  and all elements in $\beta$ are pre-compact,
  so \ref{alg5_}\ref{alg5_ii} holds. 
  Otherwise, by \ref{def:BC8} and \ref{def:Psi_item4}, 
  exactly one of $n$ and $c^\AA(n)$ is an element of
  $K^\AA$, for each $n \in \N$. Let us show that in this case there is no largest element 
  in
  $K^\AA$. Let $n\in K^\AA$. Note
  that $n\ne \bone^\AA$, because $\bone^\AA\notin K^\AA$ by \ref{def:Psi_item2} and the fact that $(X,\langle \beta\rangle)$ is not compact. Thus, $n<^\AA \bone^\AA$.
  Now, apply \ref{def:BC9} to $n$ and $\bone^\AA$ to choose $k$ such that $\overline{b_k}$ is compact and
  $\overline{b_n}\subset b_k$. Then $k\in K^\AA$ and 
  $n\le^\AA k$. 
  Finally, let us show \ref{alg7_}. The direction from left-to-right
  follows from \ref{def:BC9}. Suppose that the right-hand-side holds
  for some $n_0\in K^\AA$ and $n_1\in A$, i.e. there is $m\in K^\AA$
  such that $n_0<^\AA m$ and the only element $\le^\AA$-below $m$
  and $c(n_1)$ is $\bzero^\AA$. This implies
  that $\overline{b_m}$ is compact and $\overline{b_{n_0}}\subset b_m$
  (using \ref{def:Psi_item5} and \ref{def:Psi_item1}). By
  \ref{def:BC5}, we have
  $b_m\cap (X\setminus \overline{b_{n_1}})=\es$. 
  But $b_m$ is open, so $b_m\cap \overline{X\setminus \overline{b_{n_1}}}=\es$.
  By \ref{def:BC7}, $b_m\cap (X\setminus b_{n_1})=\es$,
  so 
  $b_m \subset b_{n_1}$. 
  Since $\overline{b_{n_0}}\subset b_m$, we have 
  $n_0<^\AA n_1$ by applying \ref{def:Psi_item1}.
\end{proof}

Next, let us define a map $\f$ which attaches a basis space 
to a given sorted complemented algebra. We will show in 
Theorem~\ref{thm:BlurryDuality} that this $\f$ is a left inverse of~$\psi$.

\begin{Def}\label{def:phi_map}
  Given a sorted complemented algebra $\AA$, 
  let $X_\AA=\FF(\AA)$ be the set of all blurry
  $K$-filters on $\AA$ (Definition~\ref{def:blurry_filter}),
  and let $\beta_{\AA}=\{b[a]\mid a\in A\}$, where
  $$b[a]:=\{F\in\FF(\AA)\mid a\in F\}.$$ Then 
  define $\f(\AA)=(X_{\AA},\beta_{\AA})$.
\end{Def}

\begin{Lemma}\label{lem:from_alg_to_basisspace}
  If $\AA$ is a sorted complemented algebra, then $\f(\AA)$ is a basis space.
\end{Lemma}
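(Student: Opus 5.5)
The claim that $\f(\AA)$ is a basis space is almost immediate from Definition~\ref{def:LCBS}, since a basis space is just a pair $(X,\beta)$ with $X$ a set and $\beta\subseteq\PP(X)$. So the plan is to verify the two components of $\f(\AA)=(X_\AA,\beta_\AA)$ separately.

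First, $X_\AA=\FF(\AA)$ is a set. It is the collection of those subsets $F\subseteq A$ satisfying \ref{blurry_filter_1}--\ref{blurry_filter_6}, so it is a subset of $\PP(A)$ given by separation. Second, for each $a\in A$ the set $b[a]=\{F\in\FF(\AA)\mid a\in F\}$ is, by its very definition, a subset of $\FF(\AA)=X_\AA$. Hence $\beta_\AA=\{b[a]\mid a\in A\}\subseteq\PP(X_\AA)$, and $\beta_\AA$ is a set, being the image of $A$ under $a\mapsto b[a]$. This already establishes that $(X_\AA,\beta_\AA)$ is a basis space.

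I would also note, though the statement does not require it, that $\beta_\AA$ is countable whenever $A$ is countable. This lets it be indexed as $(b[a_n])_{n\in\N}$ in the sequence representation of Definition~\ref{def:LCBS}. One must keep in mind that distinct $a$ may a priori give the same $b[a]$, so the indexing is only a surjection at this stage; the injectivity corresponding to \ref{def:BC4} is left to the later analysis leading to Theorem~\ref{thm:BlurryDuality}.

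There is no real obstacle here. The only point needing care is not to overclaim: the statement asserts nothing about CCLCP properties, nonemptiness of $X_\AA$, or the topology $\la\beta_\AA\ra$ being Polish. Those require the genuine work (e.g.\ existence of blurry $K$-filters) and belong to subsequent lemmas.
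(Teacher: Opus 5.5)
Your proof is correct and is essentially the paper's argument, which consists of the single observation that $\beta_\AA\subseteq\PP(X_\AA)$ holds by construction. One small caution about your side remark: the paper never establishes the injectivity condition \ref{def:BC4} or any other CCLCP property for $\f(\AA)$ in general, and it explicitly says this is open, so such properties are recovered only when $\AA=\psi(X,\beta)$, through Theorem~\ref{thm:BlurryDuality}.
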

\begin{proof}
   Clearly $\beta_\AA\subseteq \PP(X_\AA)$.
\end{proof}


\begin{Lemma}\label{lemma:atmost1}
  Suppose that $(X,\beta)$ is a CCLCP basis space, with $\beta=(b_n)_{n \in \N}$. Let
  $\AA=\psi(X,\beta)=(A,\le^\AA,\bzero^\AA,\bone^\AA,c^\AA,K^\AA)$,
  and assume that $F$ is a blurry
  $K$-filter on~$\AA$.
  Let $F^*=\{b_n\mid n\in F\}$ and $\overline{F^*}=\{\overline{b_n}\mid n\in F\}$.
  Then the following hold:
  \begin{enumerate-(i)}
  \item \label{lemma:atmost1:0} $\Cap F^*\subseteq \Cap \overline{F^*}$.
  \item \label{lemma:atmost1:1} There is at most one element in the intersection $\Cap\overline{F^*}$.
  \item \label{lemma:atmost1:2} The intersection $\Cap F^*$ is non-empty. 
  \item \label{lemma:atmost1:3} $F$ is of the form 
   $\{n\in A\mid x\in b_n\}$ for some~$x\in X$.
  \end{enumerate-(i)}
\end{Lemma}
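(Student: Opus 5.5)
The four items will be proved in order, with (iv) drawing on the earlier three.

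For (i), fix $x\in\Cap F^*$ and $n\in F$. Then $x\in b_n\subseteq\overline{b_n}$ directly, so there is nothing more to do.

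For (ii), suppose $x\ne y$ both lie in $\Cap\overline{F^*}$. Since $X$ is Polish, hence Hausdorff and regular, choose disjoint neighbourhoods of $x$ and $y$. Using \ref{def:BC5} and \ref{def:BC6}, pick $n$ with $x\in b_n$, $\overline{b_n}$ compact and $y\notin\overline{b_n}$.
\begin{enumerate-(a)}
\item If $n\in F$, then $y\notin\overline{b_n}$ contradicts $y\in\Cap\overline{F^*}$.
\item If $c(n)\in F$, then $\overline{b_{c(n)}}=\overline{X\setminus\overline{b_n}}=X\setminus b_n$ by \ref{def:BC7}. This misses $x$, a contradiction.
\item If neither $n$ nor $c(n)$ is in $F$, we use \ref{blurry_filter_5}. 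Pick, via \ref{def:BC9} and \ref{def:BC5}, some $m$ with $\overline{b_n}\subset b_m$ and $y\notin\overline{b_m}$. This is possible by first shrinking: choose $n$ so that a slightly larger basic set still avoids $y$. Then $n<m$, so $m\in F$, and $y\notin\overline{b_m}$ is again a contradiction.
\end{enumerate-(a)}
So I will choose $n$ from the start together with an $m$ satisfying $\overline{b_n}\subset b_m$, $\overline{b_m}\not\ni y$, and run the three cases.

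For (iii), first produce a compact member. By \ref{blurry_filter_6} pick $k\in F\cap K$, so $\overline{b_k}$ is compact. The family $\{\overline{b_n}\cap\overline{b_k}:n\in F\}$ has the finite intersection property: by (F3') any finitely many elements have a common lower bound $a\in F$, and $a\ne\bzero$ by \ref{blurry_filter_4}, so $b_a\ne\es$. By compactness, $\Cap\overline{F^*}\ne\es$, say it equals $\{x\}$ by (ii).

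It remains to show $x\in b_n$ for every $n\in F$; I expect this to be the main obstacle. The idea is that every $n\in F$ has some $n'\in F$ with $n'\le n$ and $n'\ne n$, i.e.\ $\overline{b_{n'}}\subset b_n$, and then $x\in\overline{b_{n'}}\subset b_n$.

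To get such an $n'$, take $a\in F$ below both $n$ and $k$. Then $a\in K$ by \ref{alg6_}.
\begin{enumerate-(a)}
\item If $a\ne n$, then $\overline{b_a}\subseteq b_n$ and we are done.
\item If $a=n$, then $n\in K$ and $n\ne\bone$ when $K\neq A$. I would apply \ref{alg7_} in the direction ``$n<\bone$'' to get $b\in K$ with $n<b$, and try to argue that $F$ cannot have $n$ as a $\le$-minimal element. Suppose $n$ were minimal in $F$. Use \ref{def:BC9}/\ref{def:BC5} to write $\overline{b_n}$ as covered by small basic sets around points of $\overline{b_n}$. The blurriness condition \ref{blurry_filter_5}, applied to a set $a'$ with $\overline{b_{a'}}\subset b_n$ and neither $a'$ nor $c(a')$ in $F$, would force elements strictly between to lie in $F$, contradicting minimality. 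Meanwhile $c(a')\in F$ would force $b_{c(a')}\cap b_n\ne\es$-type constraints. If this case analysis gets messy, I would instead directly show $x\in b_n$. Assume $x\in\overline{b_n}\setminus b_n$, pick a small compact $b_m\ni x$, and use \ref{blurry_filter_5} on $m$ to derive that some $m'>m$ lies in $F$ while being disjoint from a member of $F$, contradicting \ref{blurry_filter_3} and properness.
\end{enumerate-(a)}

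Finally, for (iv), let $x$ be the point from (iii) and set $G=\{n:x\in b_n\}$; we need $F=G$.
\begin{enumerate-(a)}
\item $F\subseteq G$ is (iii).
\item For $G\subseteq F$, let $x\in b_n$. Pick $m$ with $x\in b_m$ and $\overline{b_m}\subset b_n$ by \ref{def:BC5}, and then $m'$ with $x\in b_{m'}$ and $\overline{b_{m'}}\subset b_m$. If $m\in F$, then $n\in F$ by upward closure. Otherwise, if $c(m)\in F$, then $x\in b_{c(m)}=X\setminus\overline{b_m}$ by (iii), a contradiction. Hence neither $m$ nor $c(m)$ is in $F$, and \ref{blurry_filter_5} gives $n\in F$ since $m<n$.
\end{enumerate-(a)}
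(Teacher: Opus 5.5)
Your arguments for (i), for cases (a) and (b) of (ii), and for the compactness half of (iii) (which gives $\Cap\overline{F^*}=\{x\}$) are fine. Two steps, however, do not go through as written.

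\textbf{First gap: the key step of (iii).} You need to show $x\in b_n$ for every $n\in F$, and you flagged this yourself. It is never actually proved: case (b) and the fallback do not contain an argument. Moreover, trying to show that $F$ has no $\le$-minimal element aims at something false. If $x$ is isolated, the index of $\{x\}\in\beta$ is the minimum of $F$.

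The missing idea is to use (ii) to produce a member of $F$ strictly below $n$. If $b_n=\overline{b_n}$, there is nothing to prove. Otherwise $b_n$ is open but not closed, so it contains two points $x_0\ne x_1$. By (i) and (ii), these cannot both lie in every member of $F^*$. So some $n_1\in F$ has $b_{n_1}$ missing one of them, and hence $n\not\le n_1$. A common lower bound $n_2\in F$ of $n$ and $n_1$ is then different from $n$. By \ref{def:Psi_item1} and \ref{def:BC4} this gives $\overline{b_{n_2}}\subseteq b_n$, and so $x\in\overline{b_{n_2}}\subseteq b_n$.

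\textbf{Second gap: strictness in (iv) and in case (c) of (ii).} You pass from $\overline{b_m}\subseteq b_n$ to $m<n$. That inclusion only gives $m\le n$, and \ref{blurry_filter_5} needs $m\ne n$.

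In (iv), if $b_n$ contains a point $z\ne x$, you can choose $b_m$ to miss $z$, and then your argument is correct. This is a slightly different route from the paper, which applies \ref{blurry_filter_5} to $c(n)$. But if $x$ is isolated and $b_n=\{x\}$, then by \ref{def:BC4} the only admissible $m$ is $n$ itself, and \ref{blurry_filter_5} yields nothing. This case needs a new argument that uses the $K$-filter property. The paper's argument runs as follows.
\begin{enumerate-(a)}
\item Take $n_K\in F\cap K$.
\item The compact sets $\overline{b_{c(n)}}\cap\overline{b_{n_K}}\cap\overline{b_m}$, for $m\in F$, have empty intersection, because $\Cap\overline{F^*}=\{x\}$ while $\overline{b_{c(n)}}=X\setminus\{x\}$.
\item Hence finitely many $m_i\in F$ suffice, and by (iii) this gives $b_{n_K}\cap\Cap_i b_{m_i}=\{x\}$.
\item A common lower bound $j\in F$ of $n_K$ and the $m_i$ satisfies $\es\ne b_j\subseteq\{x\}$, which forces $j=n$ and so $n\in F$.
\end{enumerate-(a)}

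In case (c) of (ii), strictness can be rescued more easily, as the paper does, using a third point $x_2$. Choose $n$ and a compact neighbourhood $b_{n_1}$ of $y$ whose closures are disjoint and both miss $x_2$, and put $m=c(n_1)$. Then $\overline{b_n}\subseteq b_m$, $y\notin\overline{b_m}=X\setminus b_{n_1}$, and $x_2\in b_m\setminus b_n$, so $n<m$.
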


Below \ref{blurry_filter_1}--\ref{blurry_filter_6}
refer to Definition~\ref{def:blurry_filter}.

\begin{proof}
  Recall that $A=\N$. 
  Observe that the following holds for all
  $n \in \N$:
  \begin{equation}
    \label{eq:Complementation}
    b_{c^\AA(n)}=X\setminus \overline{b_{n}}\quad\text{ and }\quad \overline{b_{c^\AA(n)}}=X\setminus b_{n},
  \end{equation}
  the first part by \ref{def:Psi_item4}, and the second part follows
  from \ref{def:BC7} (regularity).
  As before, denote $n<^\AA m$, if $n\le^\AA m$ and $n\ne m$.
  \begin{enumerate-(i)}
  \item This follows immediately from the definitions, because $b_n\subset \overline{b_n}$ for each \(n \in \N\).
  \item Suppose $x_0\ne x_1$ and $x_0,x_1\in \Cap\overline{F^*}$. 
    Since $X$ is infinite
    (Lemma~\ref{lemma:Infinite}), there is $x_2\in X$ distinct from both $x_0$ and~$x_1$. Using \ref{def:BC3}, pick disjoint closed neighborhoods 
    $C_0$, $C_1$, and $C_2$ of $x_0$, $x_1$, and $x_2$ respectively
    in the topology generated by $\beta$. 
    By \ref{def:BC6}, Lemma \ref{lemma:FindNinC} and \ref{def:BC5},
    there are indices
    $n_0,n_1,n_2\in \N$ such that $x_k\in b_{n_k}\subseteq C_k$
    and $\overline{b_{n_k}}$ is compact for
    $k\in\{0,1,2\}$. By \eqref{eq:Complementation}
    we have for all $i,j\in \{0,1,2\}$:
    \begin{equation}
        x_i\in \overline{b_{n_j}}
        \iff x_i\notin 
        X\setminus b_{n_j} 
        \stackrel{\eqref{eq:Complementation}}{\iff} x_i\notin \overline{b_{c^\AA(n_j)}}
        \iff i=j.\label{eq:x0x1x2}
    \end{equation}
    By the assumption, $x_0$ and $x_1$ belong to all elements of
    $\overline{F^*}$. Since $x_1\notin \overline{b_{n_0}}$, $\overline{b_{n_0}}$ cannot be an element
    of $\overline{F^*}$. Therefore $b_{n_0}$ is not an element of
    $F^*$ and $n_0$ is not an element of $F$. We have
    established:
    \begin{equation*}
      \label{eq:n0_notin_F}
      n_0\notin F.
    \end{equation*}
    On the other hand, $x_0\notin \overline{b_{c^\AA(n_0)}}$, so 
    by the same chain of inferences, we also have:
    \begin{equation*}
      \label{eq:cn_1_notin_F}
      c^\AA(n_0)\notin F.
    \end{equation*}
    Since $\overline{b_{n_0}}\cap \overline{b_{n_1}}=\es$,
    we have $\overline{b_{n_0}}\subseteq X\setminus\overline{b_{n_1}}=b_{c^\AA(n_1)}$.
    By \ref{def:Psi_item1} we now have
    $n_0\le^\AA  c^\AA(n_1)$.  
    On the other hand $b_{n_0}\ne b_{c^\AA(n_1)}$
    because $x_2\in b_{c^\AA(n_1)}\setminus b_{n_0}$. 
    This implies that $n_0\ne c^\AA(n_1)$, and so
    we have established:
    \begin{equation*}
      \label{eq:n_0_ne_cn_1}
      n_0 <^\AA c^\AA(n_1).
    \end{equation*}
    By \ref{blurry_filter_5}
    we must have $c^\AA(n_1)\in F$. But by
    \eqref{eq:x0x1x2}, $x_1\notin \overline{b_{c^\AA(n_1)}}$, a contradiction.
  \item Let $n_K\in F\cap K$ which exists by \ref{blurry_filter_6}.
  Then by \ref{def:Psi_item5} $\overline{b_{n_K}}$ is compact.
    Let $Z=\{\overline{b}\cap \overline{b_{n_K}}\mid b\in F^*\}$.
    Let us show that $\Cap Z$ is non-empty. 
    If it is empty, then by 
    the compactness of $\overline{b_{n_K}}$ there
    are $n_0,\dots,n_{k-1}\in F$ such that
    \[
      \overline{b_{n_0}}\cap\cdots\cap \overline{b_{n_{k-1}}}\cap \overline{b_{n_K}}=\es,
    \]
    implying
    \[
      b_{n_0}\cap\cdots\cap b_{n_{k-1}}\cap b_{n_K}=\es.
    \]
    By \ref{blurry_filter_3} (see also Remark \ref{remark:FilterFinite}) there is $j\in F$ such that
    $j\le^{\AA} n_i$ for all $i<k$, and $j\le^{\AA} n_K$. 
    Then by \ref{def:Psi_item1}
    $b_j\subseteq \overline{b_j}\subseteq b_{n_0}\cap\cdots\cap b_{n_{k-1}}\cap b_{n_K}=\es$, so $b_j=\es$, and hence by \ref{def:Psi_item3} $j=\bzero^\AA$.
    This contradicts the properness of~$F$, \ref{blurry_filter_4}. 
    Thus, we have established that $\Cap Z\ne\es$.
    
    So fix $x\in \Cap Z$. We want to show that 
    $x\in\Cap F^*$. To this end, let $m_0\in F$ be arbitrary.
    It is now enough to prove that $x\in b_{m_0}$. By the definition of
    $Z$ we know that
    \begin{equation*}
      \label{eq:xinbm0stuff}
      x\in \overline{b_{m_0}}\cap \overline{b_{n_K}}\subseteq\overline{b_{m_0}}.
    \end{equation*}
    If $b_{m_0}=\overline{b_{m_0}}$, we are done. Otherwise $b_{m_0}$ is open and
    not closed, so it cannot be a singleton (using \ref{def:BC3}), and there must be
    $x_0,x_1\in b_{m_0}$ with $x_0\ne x_1$.  But then there is 
    $m_1\in F$ such that either $x_0\notin b_{m_1}$ or $x_1\notin b_{m_1}$ because otherwise
    $\{x_0,x_1\}\subseteq \Cap F^*\subseteq \Cap \overline{F^*}$, contradicting
    \ref{lemma:atmost1:1}. In particular 
    \begin{equation}
      m_0\not\le^\AA  m_1.\label{eq:m0m1}
    \end{equation}  
    Let $m_2\in F$ be such that
    $m_2\le^\AA  m_0$ and $m_2\le^\AA  m_1$ which exists by 
    \ref{blurry_filter_3}. By \eqref{eq:m0m1} 
    $m_2\ne m_0$, so we have established that
    $m_2<^\AA m_0$ and hence $\overline{b_{m_2}}\subseteq b_{m_0}$.    
    By the definition
    of $Z$ and the choice of $x$,
    we now have 
    $$
    x\in\overline{b_{m_2}}\cap \overline{b_{n_K}}\subseteq \overline{b_{m_2}}\subseteq b_{m_0},
    $$
    which was to be proven.
  \item By \ref{lemma:atmost1:0}, \ref{lemma:atmost1:1}, and \ref{lemma:atmost1:2} there is $x$ such that
    \begin{equation}
      \label{eq:singleton}
      \Cap F^*=\Cap \overline{F^*} =\{x\}.
    \end{equation}
    Thus $F^*\subseteq \{b_n \mid x\in b_n\}$, or equivalently
    $F\subseteq \{n\in\N\mid x\in b_n\}$.
    Let us show the converse, namely that
    $\{n\in\N\mid x\in b_n\}\subseteq F$ which will complete the proof.
    Suppose $n$ is such that $x\in b_n$, but $n\notin F$. 
    Then $x\notin b_{c^\AA(n)}$ and so we also have 
    $c^\AA(n)\notin F$.
    Suppose there is $k$ such that $x\in b_k$ and $n\not\le^\AA  k$. 
    By \ref{def:BC5} find $m\in\N$ such that 
    $x\in b_m$ and $\overline{b_m}\subseteq b_n\cap b_k$. In particular
    $m\le^\AA  n$ and  $m\le^\AA  k$. By the assumption $n\not\le^\AA  k$
    we also have $m\ne n$, so $m<^\AA n$.
    By \ref{def:BC7}, $c^\AA(n)<^\AA c^\AA(m)$.
    Applying \ref{blurry_filter_5} to $c^\AA(n)$ and \ref{alg4} we conclude that $c^\AA(m)\in F$.
    This is a contradiction because $x\notin b_{c^\AA(m)}$.
    So there is no $k$ with $x\in b_k$ and 
    $n\not\le^\AA  k$. This means that every element of the basis
    $\beta$ which contains $x$ must also contain $\overline{b_n}$,
    which means by \ref{def:BC5} that $\overline{b_n}$ is the singleton $\overline{b_n}=\{x\}$,
    and $b_n=\overline{b_n}$. In particular $x$ is an isolated point
    and $\{x\}$ is clopen. Consequently we also
    have $b_{c^\AA(n)}=\overline{b_{c^\AA(n)}}=X\setminus \{x\}$.
    By \eqref{eq:singleton} we have that
    \begin{equation}
        \overline{b_{c^\AA(n)}}\cap \Cap \overline{F^*}=\es.\label{eq:emptyinter}
    \end{equation}
    As in the previous proof, let $n_K\in F\cap K$ be an index
    such that $\overline{b_{n_K}}$ is compact. Then from \eqref{eq:emptyinter} we have
    $$\overline{b_{c^\AA(n)}}\cap \overline{b_{n_K}}\cap\Cap \overline{F^*}=\es.$$
    This can be re-written as
    $$\Cap Z=\es$$
    where
    $$Z=\{\overline{b_{c^\AA(n)}}\cap \overline{b_{n_K}}\cap\overline{b_m}\mid m\in F\}$$
    But each set in $Z$ is compact, so there is a finite sequence
    $m_1,\dots,m_{k-1}$ in $F$ such that
    $$\overline{b_{c^\AA(n)}}\cap \overline{b_{n_K}}\cap \overline{b_{m_0}}\cap\dots\cap \overline{b_{m_{k-1}}}=\es.$$
    By removing the closures we get
    $$b_{c^\AA(n)}\cap b_{n_K}\cap b_{m_0}\cap\dots\cap b_{m_{k-1}}=\es.$$
    The only set in this intersection which does not contain $x$
    is $b_{c^\AA(n)}=X\setminus \{x\}$. This means that
    $$b_{n_K}\cap b_{m_0}\cap\dots\cap b_{m_{k-1}}=\{x\}.$$
    Now each of the numbers $n_K,m_0,\dots,m_{k-1}$ is in $F$, so by
    \ref{blurry_filter_3} (see also Remark~\ref{remark:FilterFinite})
    there is $j\in F$ such that $j\le n_K$, and $j\le m_i$ for all
    $i<k$.  So we have
    $\overline{b_j}\subseteq b_{n_K}\cap b_{m_0}\cap\dots\cap
    b_{m_{k-1}}=\{x\}.$ By the properness of $F$, $j\ne\bzero$ and so
    $b_j\ne\es$ by \ref{def:Psi_item3}. Thus, we must have
    $b_j=b_n=\{x\}$ and finally by \ref{def:BC4} we have $j=n$, so
    $n\in F$, a contradiction.  \qedhere
  \end{enumerate-(i)}
\end{proof}

The following result is the analogue of the Stone's
representation theorem (see \cite{Ko89} for a 
review). 
In our setup a sorted complemented algebra plays the role of a Boolean algebra and
the set of its blurry $K$-filters the role of the set of ultrafilters.

\begin{thm}[Blurry duality theorem]%
  \label{thm:BlurryDuality} 
  Let $(X,\beta)$ be a CCLCP basis space. Then
  $(X,\beta)\equiv \f(\psi(X,\beta))$.
\end{thm}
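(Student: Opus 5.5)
The natural witness for the equivalence is the map $h\colon X\to X_\AA$, where $\AA=\psi(X,\beta)$, defined by
\[
h(x)=F_x:=\{n\in\N\mid x\in b_n\}.
\]
I would first check that $F_x$ is a proper blurry $K$-filter. Axiom \ref{blurry_filter_1} holds because $b_{\bone^\AA}=X$. Axiom \ref{blurry_filter_2} holds because $n\le^\AA m$ implies $b_n\subseteq b_m$ by \ref{def:Psi_item1}. For \ref{blurry_filter_3}, given $x\in b_{n_0}\cap b_{n_1}$, use \ref{def:BC5} to find $m$ with $x\in b_m$ and $\overline{b_m}\subseteq b_{n_0}\cap b_{n_1}$; then $m\le^\AA n_0,n_1$. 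Axiom \ref{blurry_filter_4} holds since $x\notin\es$. Axiom \ref{blurry_filter_6} follows from \ref{def:BC6} and \ref{def:Psi_item5}.

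For \ref{blurry_filter_5}, suppose $n\notin F_x$ and $c^\AA(n)\notin F_x$. Then $x\notin b_n$ and $x\notin X\setminus\overline{b_n}$, so $x\in\overline{b_n}$. If $n<^\AA m$, then $\overline{b_n}\subseteq b_m$ (the case $b_n=b_m$ is excluded by \ref{def:BC4}), so $x\in b_m$, i.e.\ $m\in F_x$.

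Next I would show $h$ is a bijection. Surjectivity is exactly Lemma~\ref{lemma:atmost1}\ref{lemma:atmost1:3}. For injectivity, if $x\ne y$, the space is Polish and hence Hausdorff, so by Remark~\ref{remark:RegStr} some $b_n$ contains $x$ but not $y$. Thus $F_x\ne F_y$.

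Finally I would verify the equivalence condition of Definition~\ref{def:LCBS}. For each $n$,
\[
h[b_n]=\{F_x\mid x\in b_n\}=\{F\in\FF(\AA)\mid n\in F\}=b[n],
\]
where the middle equality uses that every $F\in\FF(\AA)$ is some $F_x$. This gives both directions: $h[b_n]=b[n]\in\beta_\AA$, and every element of $\beta_\AA$ has the form $b[n]$ with $h^{-1}[b[n]]=b_n$.

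The substantive work is surjectivity, and it is already contained in Lemma~\ref{lemma:atmost1}. The remaining steps are routine verifications. The one point needing care is the strictness in \ref{blurry_filter_5}, which requires $n\ne m$ to yield the inclusion $\overline{b_n}\subseteq b_m$; this is handled by \ref{def:BC4} as noted above.
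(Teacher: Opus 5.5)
Your proof is correct and follows the paper's argument essentially step for step. You use the same map $h(x)=\{n\mid x\in b_n\}$, the same verification of \ref{blurry_filter_1}--\ref{blurry_filter_6}, surjectivity from Lemma~\ref{lemma:atmost1}\ref{lemma:atmost1:3}, and the same computation $h[b_n]=b[n]$. The only deviation is injectivity: you obtain it directly from the Hausdorff property and the fact that $\beta$ is a basis, whereas the paper invokes Lemma~\ref{lemma:atmost1}\ref{lemma:atmost1:0}--\ref{lemma:atmost1:1}, and both arguments work.
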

\begin{proof}
  Let $\beta=(b_n)_{n\in\N}$, and set $\psi(X,\beta)= \AA$ and
  $\f(\AA)= (X_\AA,\beta_{\AA})$.  We need to show that
  $(X,\beta)\equiv (X_{\AA},\beta_{\AA})$.  Recall that
  $X_{\AA}=\FF(\AA)$.  Let \(h \colon X \to X_{\AA}\) be defined by
  \begin{equation}
    \label{eq:def_of_h}
    h(x)=\{n \in A \mid x \in b_n\}.
  \end{equation}
  We show that $h$ witnesses the equivalence between $(X,\beta)$ and
  $(X_{\AA},\beta_{\AA})$.  First we need to check that indeed
  \(h(x) \in X_{\AA}\) for all $x\in X$, that is $h(x)$ is a blurry
  $K$-filter on~$\AA$.
  \begin{claim}\label{ClaimBl}
    \(h(x) \in \FF(\AA)\), for
    every \(x \in X\).
  \end{claim}
  \begin{proof}
    Fix \(x \in X\). We check that \(h(x)\) satisfies all conditions
    \ref{blurry_filter_1}--\ref{blurry_filter_6}.  Let
    $n=\bone^\AA$. Then by \ref{def:Psi_item2} and \ref{def:BC2},
    $b_n=X$, so $x\in b_n$ and $\bone^\AA\in h(x)$. This proves
    \ref{blurry_filter_1}.  To prove \ref{blurry_filter_2}, let
    \(n_0\) and \(n_1\) be such that \(n_0 \le^\AA n_1\) and
    \(n_0 \in h(x)\). We want to show that $n_1\in h(x)$.  By
    \ref{def:Psi_item1}, we have either \(b_{n_0} = b_{n_1}\) or
    \(\overline{b_{n_0}} \subseteq b_{n_1}\).  In the former case we
    are done.  Otherwise, by \eqref{eq:def_of_h}, \(x \in b_{n_0}\),
    implying \(x \in b_{n_1}\), and so \(n_1 \in h(x)\).  To prove
    \ref{blurry_filter_3}, let \(n_0, n_1 \in h(x)\).  By definition
    of $h(x)$, we have $x\in b_{n_0}\cap b_{n_1}$.  By \ref{def:BC5}
    there is $m\in \N$ such that $x\in b_m$ and
    $\overline{b_m}\subset b_{n_0}\cap b_{n_1}$. So by
    \ref{def:Psi_item1} we have $m\le n_0$ and $m\le n_1$ as well as
    $m\in h(x)$.
    
    Let $n=\bzero^\AA$. Then $b_n=\es$ by \ref{def:Psi_item3}.
    Clearly $x\notin \es$, so $\bzero \notin h(x)$.  Thus, $h(x)$
    satisfies \ref{blurry_filter_4} and is a proper filter.
		
    To check that \(h(x)\) is a blurry filter, let \(n \in \N\) be
    such that \(n \notin h(x)\) and \(c^\AA(n)\notin h(x)\), and pick
    \(n_0 \in \N\) such that \( n <^\AA n_0\). Now
    $x\notin b_{c^\AA(n)}=X\setminus \overline{b_n}$ (by
    \ref{def:Psi_item4}), so $x\in \overline{b_n}\subset b_{n_0}$ (by
    \ref{def:Psi_item1}).  Thus, \(n_0\in h(x)\), as desired.  This
    proves \ref{blurry_filter_5}.  Finally, by \ref{def:BC6} and
    \ref{def:Psi_item5} there is $n\in h(x)\cap K$. Thus, \(h(x)\)
    satisfies \ref{blurry_filter_6} and is a \(K\)-filter.
  \end{proof}	
  
  Let us show next that \(h\) is a bijection.  Suppose $F\in X_{\AA}$.
  By Lemma~\ref{lemma:atmost1}\ref{lemma:atmost1:3} we have
  \(F=\{n \in A \mid x \in b_n\}\) for some \(x \in X\), i.e. $F=h(x)$
  by~\eqref{eq:def_of_h}.  This shows that \(h\) is onto. To see that
  $h$ is one-to-one, let $x_0, x_1\in X$ and assume that
  $h(x_0)=h(x_1)=F$. By Claim~\ref{ClaimBl}, $F$ is a blurry
  $K$-filter. Let $F^*=\{b_n\mid n\in F\}$ be as in
  Lemma~\ref{lemma:atmost1}. By \eqref{eq:def_of_h} we then have
  $\{x_0,x_1\}\in \Cap F^*$, so by Lemma
  \ref{lemma:atmost1}\ref{lemma:atmost1:0}-\ref{lemma:atmost1:1} it
  follows that $x_0=x_1$.  This completes the proof that $h$ is a
  bijection.
  
  Let us now prove that \(h[b] \in \b_{\AA}\) for every \(b \in \b\),
  and \(h^{-1}[b']\in \b\) for every \(b' \in \b_{\AA}\).
  Let \(b \in \b\). Then $b=b_n$ for some $n$ and
  \begin{align*}
    h[b_n] &=\{h(x)\mid x\in b_n\}\\
           &=\{h(x)\mid n\in h(x)\} &\text{by \eqref{eq:def_of_h}}\\
           &=\{F\in X_{\AA}\mid n\in F\} &\text{by surjectivity of }h\\
           &=b[n]\in \beta_{\AA}&\text{by Definition \ref{def:phi_map}}.
  \end{align*}
  On the other hand, let \(b' \in \beta_\AA\).  By
  Definition~\ref{def:phi_map}, $b'=b[n]=\{F\in\FF(\AA)\mid n\in F\}$
  for some $n\in A$.  We have:
  \begin{align*}
    h^{-1}[b[n]] &=\{x\in X\mid h(x)\in b[n]\}\\
                 &=\{x\in X\mid h(x)\in \{F\in\FF(\AA)\mid n\in F\}\}\\
                 &=\{x\in X\mid n\in h(x)\}\\
                 &=\{x\in X\mid x\in b_n\}&\text{by \eqref{eq:def_of_h}}\\
                 &=b_n\in \beta,
  \end{align*}
  which completes the proof.
\end{proof}

As in the classical Stone duality, one would like to prove also that, for every SCA $\AA$, $\AA \cong \psi(\f(\AA))$.
The main obstacle is that it is not known whether $\f(\AA)$ is a CCLCP basis space (see Lemma~\ref{lem:from_alg_to_basisspace}). Consequently, we do not obtain an analogue of Theorem~\ref{thm:PsiReduction} characterizing $\cong_{\AAA}$ in terms of $\equiv$.

\begin{Lemma}\label{lemma:BasisEqIso}
  Suppose that $(X,\beta)$ and $(X',\beta')$ are equivalent CCLCP
  basis spaces.  Then $\psi(X,\beta)$ and $\psi(X',\beta')$ are
  isomorphic sorted complemented algebras.
\end{Lemma}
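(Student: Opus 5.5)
The plan is to take a bijection $h\colon X\to X'$ witnessing $(X,\beta)\equiv(X',\beta')$ and use it to define a map between the index sets. Let $\beta=(b_n)_{n\in\N}$ and $\beta'=(b'_n)_{n\in\N}$, and write $\AA=\psi(X,\beta)$ and $\BB=\psi(X',\beta')$; both have universe $\N$ by \ref{def:Psi_item0}. For each $n$, equivalence gives some $m$ with $h[b_n]=b'_m$. By \ref{def:BC4} this $m$ is unique, so we may define $g(n)=m$.

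First I will show that $g$ is a bijection of $\N$. For injectivity: if $g(n)=g(n')$, then $h[b_n]=h[b_{n'}]$. Since $h$ is a bijection, $b_n=b_{n'}$, and \ref{def:BC4} gives $n=n'$. For surjectivity: given $m$, the second clause of equivalence yields $n$ with $h^{-1}[b'_m]=b_n$, so $h[b_n]=b'_m$ and $g(n)=m$.

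Next I will check that $g$ preserves each symbol of $L^+$. The key input is Fact~\ref{fact:Homeo}: $h$ is a homeomorphism between $(X,\la\beta\ra)$ and $(X',\la\beta'\ra)$. A homeomorphism commutes with closure and complement, preserves inclusion and equality of sets, and maps compact sets to compact sets (as does its inverse). With this, each symbol is handled by the corresponding clause of Definition~\ref{def:map_from_BBB_to_AAA}.
\begin{itemize}
\item For \ref{def:Psi_item1}: $b_n=b_m$ or $\overline{b_n}\subseteq b_m$ holds if and only if $h[b_n]=h[b_m]$ or $\overline{h[b_n]}=h[\overline{b_n}]\subseteq h[b_m]$. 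This gives $n\le^\AA m\iff g(n)\le^\BB g(m)$.
\item For \ref{def:Psi_item2} and \ref{def:Psi_item3}: $h[X]=X'$ and $h[\es]=\es$, so $g(\bone^\AA)=\bone^\BB$ and $g(\bzero^\AA)=\bzero^\BB$.
\item For \ref{def:Psi_item4}: $h[X\setminus\overline{b_n}]=X'\setminus\overline{h[b_n]}$, so $g(c^\AA(n))=c^\BB(g(n))$.
\item For \ref{def:Psi_item5}: $\overline{b_n}$ is compact if and only if $h[\overline{b_n}]=\overline{b'_{g(n)}}$ is compact, so $n\in K^\AA\iff g(n)\in K^\BB$.
\end{itemize}

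Therefore $g$ is an isomorphism of $L^+$-structures. By Lemma~\ref{lemma:CCLCP_is_AAA}, both $\AA$ and $\BB$ are sorted complemented algebras, which completes the argument. I do not expect a real obstacle. The only point needing care is that the definition of equivalence gives only set-level correspondences, so the map on indices is well-defined only because of the injectivity condition \ref{def:BC4}.
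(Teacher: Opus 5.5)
Your proposal is correct and takes essentially the same route as the paper. Both define $g(n)$ as the unique (by \ref{def:BC4}) index with $h[b_n]=b'_{g(n)}$ and use Fact~\ref{fact:Homeo} to transfer $\le$, $\bzero$, $\bone$, $c$, and $K$ clause by clause. Your explicit check that $g$ is a bijection of $\N$, using both halves of the definition of equivalence, fills in a step the paper leaves implicit. Your direct computation $h[X\setminus\overline{b_n}]=X'\setminus\overline{h[b_n]}$ for complements is slightly more streamlined than the paper's detour through \ref{def:BC1}.
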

\begin{proof}
  Suppose $\beta=(b_n)_{n\in\N}$ and $\beta'=(b'_n)_{n\in\N}$.  Denote
  $\AA=\psi(X,\beta)$ and $\AA'=\psi(X',\beta')$.  Let
  $h\colon X\to X'$ witness the equivalence.  Recall that $A=A'=\N$ by
  \ref{def:Psi_item0}.  Define $g\colon \N\to \N$ so that $g(n)$ is
  the unique (by \ref{def:BC4}) index $m$ such that $h[b_n]=b'_m$.  In
  particular, for all $n\in A$
  \begin{equation}
    \label{eq:commutehbn}
    h[b_n]=b'_{g(n)}.
  \end{equation}
  We check that $g$ is an isomorphism.  Since $h$ is a homeomorphism
  from $(X,\la\beta\ra)$ to $(X',\la\beta'\ra)$ (see
  Fact~\ref{fact:Homeo}), for all $n_0,n_1 \in A$ we have:
  \begin{align*}
         &n_0\le^{\AA}n_1\\
    \iff & \overline{b_{n_0}}\subseteq b_{n_1}&\text{by \ref{def:Psi_item1}}\\
    \iff &\overline{h[b_{n_0}]}\subseteq h[b_{n_1}]&h\text{ is a homeomorphism}\\
    \iff &\overline{b'_{g(n_0)}}\subseteq b'_{g(n_1)}&\text{by \eqref{eq:commutehbn}}\\
    \iff &g(n_0)\le^{\AA'} g(n_1)&\text{by \ref{def:Psi_item1}}.
  \end{align*}
  By \ref{def:BC2}, \ref{def:BC4}, \ref{def:Psi_item2} and
  \ref{def:Psi_item3}, and the fact that $h$ is a homeomorphism,
  $g(\bone^{\AA})=\bone^{\AA'}$ and $g(\bzero^{\AA})=\bzero^{\AA'}$.
  For complementation, fix $n\in A$ and let $m$ be such that
  $g(c^{\AA}(n))=m.$ So $h[X\setminus \overline{b_n}]=b'_m$ by
  \ref{def:Psi_item4} and \eqref{eq:commutehbn}. Since $h$ is a
  homeomorphism, $h[b_n]=X'\setminus \overline{b'_m}$.  By
  \ref{def:BC1}, let $k$ be such that
  $b'_k=X'\setminus \overline{b'_m}$.  Then $b'_k=h[b_n]$, so
  $g(n)=k$.  But also $k=c^{\AA'}(m)$ and $m=c^{\AA'}(k)$.  Thus,
  $g(c^\AA(n))=c^{\AA'}(g(n))$.
  
  Finally, since $h$ is a homeomorphism, we have that $\overline{b_n}$
  is compact if and only if $\overline{h[b_{n}]}=\overline{b'_{g(n)}}$
  is, so using \ref{def:Psi_item5}, we have
  $n\in K^{\AA}\iff g(n)\in K^{\AA'}$.
\end{proof}

\begin{Lemma}\label{lemma:BasisEqIsoBack}
  Suppose that two sorted complemented algebras $\AA$ and $\AA'$ are
  isomorphic. Then $\f(\AA)$ and $\f(\AA')$ are equivalent as basis
  spaces.
\end{Lemma}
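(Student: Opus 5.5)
Let $g\colon \AA\to\AA'$ be an isomorphism of sorted complemented algebras. The plan is to transport blurry $K$-filters along $g$, i.e.\ to define
\[
h\colon X_\AA=\FF(\AA)\to X_{\AA'}=\FF(\AA'),\qquad h(F)=g[F]=\{g(a)\mid a\in F\},
\]
and to show that $h$ witnesses $\f(\AA)\equiv\f(\AA')$ in the sense of Definition~\ref{def:LCBS}.

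The first step is to check that $h$ is well defined, that is, $g[F]$ is a proper blurry $K$-filter on $\AA'$ whenever $F$ is one on $\AA$. Each condition \ref{blurry_filter_1}--\ref{blurry_filter_6} is phrased only in terms of $\le$, $\bzero$, $\bone$, $c$ and $K$. All of these are preserved by $g$ in both directions, and $g$ is a bijection on the underlying sets. So each condition transfers verbatim. For instance, for \ref{blurry_filter_5}: suppose $a'\notin g[F]$, $c(a')\notin g[F]$ and $a'<a'_0$. Write $a'=g(a)$ and $a'_0=g(a_0)$. Then $a\notin F$, and $c(a)\notin F$ because $g(c(a))=c(g(a))$. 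Also $a<a_0$ because $g^{-1}$ preserves $\le$. Hence $a_0\in F$, so $a'_0\in g[F]$. The same argument applied to $g^{-1}$ shows that $F'\mapsto g^{-1}[F']$ maps $\FF(\AA')$ into $\FF(\AA)$, and it is clearly inverse to $h$. Therefore $h$ is a bijection.

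The second step is to check the compatibility with the bases. For $a\in A$ we have
\[
h[b[a]]=\{g[F]\mid a\in F\}=\{F'\in\FF(\AA')\mid g(a)\in F'\}=b[g(a)]\in\beta_{\AA'},
\]
using surjectivity of $h$ and the equivalence $a\in F\iff g(a)\in g[F]$. Symmetrically, $h^{-1}[b[a']]=b[g^{-1}(a')]\in\beta_\AA$. This is exactly Definition~\ref{def:LCBS}.

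There is no real obstacle here. The only point needing care is to verify that every filter axiom, including strictness in \ref{blurry_filter_5} and the $K$-condition \ref{blurry_filter_6}, uses only symbols preserved by isomorphisms. One should also note that $h$ is defined on all of $\FF(\AA)$ without needing $\f(\AA)$ to be CCLCP.
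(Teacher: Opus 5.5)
Your proposal is correct and follows essentially the same route as the paper: you transport blurry $K$-filters along the isomorphism via $h(F)=g[F]$ and verify $h[b[a]]=b[g(a)]$, with the symmetric argument for $h^{-1}$. Your explicit check that $g[F]$ is again a proper blurry $K$-filter, including the strictness in \ref{blurry_filter_5} and condition \ref{blurry_filter_6}, is a step the paper leaves implicit when it asserts that $h$ is a bijection.
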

\begin{proof}
  Suppose $f\colon A\to A'$ is an isomorphism from $\AA$ to~$\AA'$.
  As in Definition \ref{def:phi_map}, denote
  $\f(\AA)=(X_{\AA},\beta_{\AA})$ and
  $\f(\AA')=(X_{\AA'},\beta_{\AA'})$ where $X_{\AA}=\FF(\AA)$ and
  $X_{\AA'}=\FF(\AA')$.  Define $h\colon X_{\AA}\to X_{\AA'}$ by
  $h(F)=f[F]=\{f(n)\mid n\in F\}$.
  
  Let us check that this is an equivalence between the basis spaces.
  That $h$ is indeed a bijection follows from the fact that $f$ is an
  isomorphism between $\AA$ and~$\AA'$.  Suppose now that
  $b[n]\in \beta_{\AA}$ for some $n\in\N$ where
  $b[n]=\{F\in\FF(\AA)\mid n\in F\}$ (see
  Definition~\ref{def:phi_map}).  Let $m=f(n)$. We claim that
  $h[b[n]]=b'[m]$, where
  $b'[m]=\{F\in\FF(\AA')\mid m\in F\}\in \beta_{\AA'}$:
  \begin{align*}
    h[b[n]]&=\{f[F] \mid F\in \FF(\AA)\land F\in b[n]\}\\
           &=\{f[F] \mid F\in \FF(\AA)\land n\in F\}\\
           &=\{f[F] \mid F\in \FF(\AA)\land m\in f[F]\}\\
           &=\{h(F) \mid F\in \FF(\AA)\land m\in h(F)\} \\
           &=\{F \mid F\in \FF(\AA')\land m\in F\} &h\text{ is a bijection}\\
           &=b'[m].
  \end{align*}
  The other direction, namely that for all $b'\in \beta_{\AA'}$,
  $h^{-1}[b']\in\beta_{\AA}$ is shown symmetrically.
\end{proof}

\begin{thm}[Characterization of $\equiv$ by $\,\cong_\AAA$]%
  \label{thm:PsiReduction}
  For all CCLCP basis spaces $(X,\beta)$ and $(X',\beta')$, it holds
  that $(X,\beta)\equiv (X',\beta')$ if and only if
  $\psi(X,\beta)\cong \psi(X',\beta')$.
\end{thm}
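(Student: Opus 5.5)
The plan is to assemble the three preceding results: Lemma~\ref{lemma:BasisEqIso}, Lemma~\ref{lemma:BasisEqIsoBack}, and the blurry duality theorem (Theorem~\ref{thm:BlurryDuality}). The forward direction is exactly Lemma~\ref{lemma:BasisEqIso}. If $(X,\beta)\equiv(X',\beta')$ are CCLCP basis spaces, then $\psi(X,\beta)\cong\psi(X',\beta')$, and nothing further is needed.

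For the converse, assume $\psi(X,\beta)\cong\psi(X',\beta')$. Lemma~\ref{lemma:CCLCP_is_AAA} shows that both are sorted complemented algebras, so Lemma~\ref{lemma:BasisEqIsoBack} applies and gives
\[
\f(\psi(X,\beta))\equiv\f(\psi(X',\beta')).
\]
By Theorem~\ref{thm:BlurryDuality} we also have
\[
(X,\beta)\equiv\f(\psi(X,\beta))
\qquad\text{and}\qquad
(X',\beta')\equiv\f(\psi(X',\beta')).
\]
Chaining these three equivalences will give $(X,\beta)\equiv(X',\beta')$.

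The only point needing a check is that $\equiv$ is symmetric and transitive on arbitrary basis spaces. This matters because we do not know that $\f(\AA)$ is CCLCP. The check is immediate from Definition~\ref{def:LCBS}. The condition on a witness $h$ is symmetric in $h$ and $h^{-1}$, so $h^{-1}$ witnesses the reverse equivalence. If $h$ and $h'$ are witnesses, then $h'\circ h$ is a bijection: it sends basic sets to basic sets, and its inverse $h^{-1}\circ h'^{-1}$ pulls basic sets back to basic sets.

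I expect no real obstacle here. The substantive work sits in Lemma~\ref{lemma:atmost1} and Theorem~\ref{thm:BlurryDuality}, and the only subtlety is the transitivity check just described.
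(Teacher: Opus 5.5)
Your proposal is correct and is exactly the paper's argument: the forward direction is Lemma~\ref{lemma:BasisEqIso}, and the converse chains Lemma~\ref{lemma:BasisEqIsoBack} with Theorem~\ref{thm:BlurryDuality} applied to both spaces. The paper uses silently that $\equiv$ is symmetric and transitive on arbitrary basis spaces, which matters because $\f(\AA)$ is not known to be CCLCP, and your explicit verification of this is right.
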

\begin{proof}
  If $\psi(X,\beta)$ and $\psi(X',\beta')$ are isomorphic,
  then by Lemma~\ref{lemma:BasisEqIsoBack} we obtain
  \begin{equation}
    \label{eq:fpsiequiv}
    \f(\psi(X,\beta))\equiv\f(\psi(X',\beta')).
  \end{equation}
  Moreover, by Theorem~\ref{thm:BlurryDuality} we have
  $(X,\beta)\equiv \f(\psi(X,\beta))$ and
  $(X',\beta')\equiv \f(\psi(X',\beta'))$. Thus,
  $(X,\beta)\equiv (X',\beta')$.  Suppose on the other hand that
  $(X,\beta)$ and $(X',\beta')$ are equivalent as basis spaces. Then
  by Lemma~\ref{lemma:BasisEqIso} the algebras $\psi(X,\beta)$ and
  $\psi(X',\beta')$ are isomorphic.
\end{proof}

\begin{Def}\label{def:map_from_BBB_to_PPP}
  Given a CCLCP basis space $(X,\beta)$, let $\psi^*(X,\beta)$  
  be the partial order
  $(K^\AA,\le^\AA)$
  where $\AA=\psi(X,\beta)$.
\end{Def}

\begin{prop}\label{prop:BasisToPPP}
    Let $(X,\beta)$ and $(X',\beta')$ be basis spaces.
    Then $(X,\beta)\equiv (X',\beta')$ if and only if
    $\psi^*(X,\beta)\cong \psi^*(X',\beta)$.
\end{prop}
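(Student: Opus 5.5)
The statement is a direct combination of Theorem~\ref{thm:PsiReduction} and Proposition~\ref{prop:IsoOnKIsEnough}. We read it with the implicit hypothesis that $(X,\beta)$ and $(X',\beta')$ are CCLCP basis spaces, since $\psi^*$ is only defined for those (Definition~\ref{def:map_from_BBB_to_PPP}). We also read the typo $\psi^*(X',\beta)$ as $\psi^*(X',\beta')$.

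Set $\AA=\psi(X,\beta)$ and $\AA'=\psi(X',\beta')$. By Lemma~\ref{lemma:CCLCP_is_AAA}, both $\AA$ and $\AA'$ are sorted complemented algebras. Two equivalences then chain together:
\[
(X,\beta)\equiv(X',\beta')\iff \AA\cong\AA'\iff (K^{\AA},\le^{\AA})\cong(K^{\AA'},\le^{\AA'}).
\]
The first equivalence is Theorem~\ref{thm:PsiReduction}. The second is Proposition~\ref{prop:IsoOnKIsEnough} applied to the SCAs $\AA$ and $\AA'$. The right-hand side is exactly $\psi^*(X,\beta)\cong\psi^*(X',\beta')$, which finishes the argument.

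One point needs care. In the second equivalence, ``$\cong$'' on the partial orders means isomorphism of $\{\le\}$-structures, restricting $\le^{\AA}$ to $K^{\AA}$. That is precisely how $\psi^*$ is defined, so the two notions agree.

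I do not expect any real obstacle here, since all the work was done in the blurry duality theorem and in Proposition~\ref{prop:IsoOnKIsEnough}. The only thing to check is that the hypotheses of those results hold, namely that the spaces are CCLCP so that $\psi$ produces SCAs. If the statement were meant for arbitrary basis spaces, $\psi^*$ would be undefined, so the CCLCP reading is forced.
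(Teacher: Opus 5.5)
Your proposal is correct and is the paper's own proof: the paper also just chains Theorem~\ref{thm:PsiReduction} with Proposition~\ref{prop:IsoOnKIsEnough}, with Lemma~\ref{lemma:CCLCP_is_AAA} guaranteeing that $\psi$ produces sorted complemented algebras. Your two readings of the statement are the intended ones: the hypothesis means ``CCLCP basis spaces'', and $\psi^*(X',\beta)$ is a typo for $\psi^*(X',\beta')$.
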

\begin{proof}
    Follows from Theorem~\ref{thm:PsiReduction}
    and Proposition~\ref{prop:IsoOnKIsEnough}.
\end{proof}



\vspace{10pt}

\subsection{Classification of simplicial complexes up to PL-homeomorphism}
\label{ssec:Classification_NON_BOREL}

\subsubsection{Simplicial complexes and point adjustment}
\label{ssec:PointAdjustment}

We begin this section by recalling some basic definitions of
PL-geometry and fixing notation.

\begin{defn}\label{def:Simplex}
  The \textbf{standard $n$-simplex} is the convex hull of the standard
  basis $\{\ee_k\mid 0\le k\le n\}$ of $\R^{n+1}$ denoted
  $\bDelta^n$. An \textbf{$n$-simplex} in a topological space $X$ is a
  (continuous) embedding $\kappa\colon \bDelta^n\to X$. The number $n$
  is the \textbf{dimension} of $\kappa$ and is denoted by
  $\dim(\kappa)$.  If $X=\R^m$ and $\kappa$ is linear, then $\kappa$
  is a \textbf{rectilinear} $n$-simplex.
\end{defn}


\begin{defn}\label{def:Faces}
  A \textbf{standard (proper)} $k$-\textbf{face} of the standard
  $n$-simplex is the convex hull of a (proper) subset of
  $\{\ee_0,\dots,\ee_n\}$ of size $k+1$. Given a simplex
  $\kappa\colon\bDelta^n\to X$ in a topological space $X$, its
  \textbf{$k$-face} is a $k$-simplex $\lambda\colon \bDelta^k\to X$
  such that $\kappa^{-1}\lambda=\kappa^{-1}\circ\lambda$ is an
  isometry from $\bDelta^k$ onto a standard $k$-face of $\bDelta^n$
  (in particular $\kappa^{-1}\lambda$ is rectilinear). The $0$-faces
  (both standard and otherwise) are called
  \textbf{vertices}. Technically a vertex is a function from the
  singleton $\{\ee_k\}$, for $k \in \{0, \dots, n\}$, to $X$, but we
  usually identify it with the unique point $\kappa(\ee_k)$ in its
  range.  The set of vertices of a simplex $\kappa$ is denoted by
  $V(\kappa)$.  Let $\partial\bDelta^n$ be the union of all standard
  proper faces of $\bDelta^n$. Let
  $\obDelta^n=\bDelta^n\setminus \partial\bDelta^n$.  For any simplex
  $\kappa\colon \bDelta^n\to X$, denote
  \begin{equation}
    \overset{\circ}{\kappa}:=\kappa\rest \obDelta^n\text{ and }\partial\kappa:=\kappa\rest\partial\bDelta^n.\label{eq:oversetcirc}
  \end{equation}
  Note that according to this definition, for a vertex $v$, we have
  $\overset{\circ}v=v$ and $\partial v=\es$, because the only proper face of 
  a vertex is empty.
\end{defn}



In what follows, if $f\colon Z\to X$ is a map, we denote its
range by $|f|$.

\begin{defn}\label{def:EquivalenceOfSimplexes}
  Two $n$-simplexes $\kappa,\lambda\colon \bDelta^n\to X$ are considered
  \textbf{equivalent}, if $|\kappa|=|\lambda|$ and
  $\kappa^{-1}\lambda$ (and hence also $\lambda^{-1}\kappa$)
  is an isometry of $\bDelta^n$ onto itself.  \textbf{From now on we
    consider simplexes up to this equivalence relation unless stated
    otherwise.}
\end{defn}

\begin{defn}\label{def:simplicial_complex}\cite[Chapter III]{hudson1969pl}
  Let \(X\) be a topological space. A \textbf{simplicial complex in
    \(X\)} is a (possibly finite) countable set $T$ of simplexes in
  $X$ such that the following hold up to the equivalence relation
  of Definition~\ref{def:EquivalenceOfSimplexes}:
  \begin{enumerate-(1)}
  \item\label{def:simplicial_complex-1} Every face of every simplex in
    $T$ is in $T$.
  \item\label{def:simplicial_complex-2} If $\kappa_0,\kappa_1\in T$
    and $|\kappa_0|\cap |\kappa_1|\ne\es$, then there is $\lambda\in T$
    such that $|\lambda|=|\kappa_0|\cap |\kappa_1|$ and $\lambda$
    is a face of both $\kappa_0$ and $\kappa_1$.
  \item\label{def:simplicial_complex-3} For all
    $\kappa\in T$ there is a neighbourhood $U\subset X$ of $|\kappa|$
    such that there are at most finitely many $\lambda\in T$
    with $|\lambda|\cap U\ne\es$.
  \end{enumerate-(1)}
  Sometimes we call a simplicial complex just \textbf{complex}. If
  $T_0\subseteq T$ is also a simplicial complex, then $T_0$ is called
  a \textbf{subcomplex} of~$T$.  We denote by
  \(V(T)=\Cup_{\kappa\in T} V(\kappa)\) the set of all $0$-simplexes
  (vertices) of~\(T\). The set $|T| = \Cup_{\kappa\in T} |\kappa|$ is the \textbf{realization}
  of~$T$ and is endowed with the topology induced from \(X\). If
  $|T|=X$, we say that $T$ is a \textbf{triangulation} of~$X$.  Thus,
  $T$ is always a triangulation of $|T|$. If $X\subseteq \R^m$ and
  each simplex $\kappa\in T$ is rectilinear, then we say that $T$ is a
  \textbf{rectilinear} triangulation (or complex). 
  Note that condition \ref{def:simplicial_complex-3} implies that
  $|T|$ is locally compact.
  
  If $S\subset T$,
  let $\scl(S)$ be the smallest subcomplex of $T$ which
  contains~$S$. So, for a simplex $\kappa$, by $\scl(\kappa)$ we denote the
  complex which consists of $\kappa$ and all its faces. A special case
  is $\scl(\bDelta^n)$ which consists of
  the identity map $\bDelta^n\to\bDelta^n$
  and \textit{its} faces.
\end{defn}

We recall some standard relations among simplicial complexes. 
\begin{defn}\label{def:rel_complexes}
  Let \(S\) and $T$ be simplicial complexes.
  \begin{enumerate-(i)}
  \item \label{def:Subd} $S$ is a \textbf{subdivision} of $T$, denoted
    $S\subd T$, if $|S|=|T|$, and for all $\sigma\in S$
    there is $\kappa\in T$ such that $|\sigma|\subseteq |\kappa|$ and
    $\kappa^{-1}\sigma$ is rectilinear.
  \item A map $h\colon |S|\to |T|$ is \textbf{simplicial} from $S$ to
    $T$, often written $h\colon S\to T$, if it is a homeomorphism and
    for all $\sigma\in S$ we have $h\sigma= h\circ\sigma\in T$. If a
    simplicial map from $S$ to $T$ exists, then $S$ and $T$ are
    \textbf{PL-isomorphic}.
  \item A map $h\colon |S|\to |T|$, also denoted $h\colon S\to T$,
    is a \textbf{PL-homeomorphism} from $S$ to $T$, if there are
    subdivisions $S'\subd S$ and $T'\subd T$ such that
    $h\colon S'\to T'$ is simplicial. Then $S$ and $T$ are called
    \textbf{PL-homeomorphic}. This is denoted by~$S\homeo^{PL}T$.
  \item A map $h\colon |S|\to |T|$ is a \textbf{PL-embedding} if there
    is a subcomplex $T_0$ of a subdivision of $T$ such that $h$ is a
    PL-homeomorphism from $S$ to $T_0$.
  \end{enumerate-(i)}
\end{defn}

\begin{Lemma}\label{lemma:AnySimplexRect}
  Suppose that \(S\) and \(T\) are simplicial complexes such that $S\subd T$, and $\sigma\in S$.  Suppose that $\lambda\in T$
  is \emph{any} simplex such that $|\sigma|\subset |\lambda|$. Then
  $\lambda^{-1}\sigma$ is rectilinear. 
\end{Lemma}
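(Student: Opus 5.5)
Since $S\subd T$, there is some $\kappa\in T$ with $|\sigma|\subseteq|\kappa|$ and $\kappa^{-1}\sigma$ rectilinear. The plan is to transfer rectilinearity from $\kappa$ to the arbitrary $\lambda$ through a common face of $\kappa$ and $\lambda$.

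First, because $|\sigma|\subseteq|\kappa|\cap|\lambda|$ is nonempty, condition \ref{def:simplicial_complex-2} of Definition~\ref{def:simplicial_complex} gives $\mu\in T$ with $|\mu|=|\kappa|\cap|\lambda|$, where $\mu$ is a face of both $\kappa$ and $\lambda$. Hence $|\sigma|\subseteq|\mu|$. By Definition~\ref{def:Faces}, $\kappa^{-1}\mu$ is an isometry of $\bDelta^k$ onto a standard $k$-face of $\bDelta^n$; in particular it is an affine injection. The same holds for $\lambda^{-1}\mu$. (We may first pass to equivalent representatives via Definition~\ref{def:EquivalenceOfSimplexes}; this only composes with isometries of standard simplices, which are affine.)

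Next I write $\lambda^{-1}\sigma$ as a composition of affine maps. Since $|\sigma|\subseteq|\mu|$, we have
\[
\lambda^{-1}\sigma=(\lambda^{-1}\mu)\circ(\mu^{-1}\kappa)\circ(\kappa^{-1}\sigma),
\]
where $\mu^{-1}\kappa$ is understood as restricted to $\kappa^{-1}[|\mu|]$, the standard face of $\bDelta^n$ onto which $\kappa^{-1}\mu$ maps. These are maps on points, so the equality holds because $\kappa$, $\lambda$ and $\mu$ are injective. Now $\kappa^{-1}\sigma$ is rectilinear by hypothesis and has image in $\kappa^{-1}[|\mu|]$. On that standard face, $\mu^{-1}\kappa$ is the inverse of the affine isometry $\kappa^{-1}\mu$, so it is affine. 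Finally, $\lambda^{-1}\mu$ is affine. A composition of affine maps is affine, so $\lambda^{-1}\sigma$ is rectilinear.

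The only delicate point is bookkeeping. One must make sure that the image of $\kappa^{-1}\sigma$ lies in the face $\kappa^{-1}[|\mu|]$, where the inverse of $\kappa^{-1}\mu$ is defined. This follows from $|\sigma|\subseteq|\mu|$. One must also check that ``rectilinear'' is taken in the sense of a linear (affine) map into the ambient Euclidean space of $\bDelta$, so that it is closed under these compositions. Beyond that, I expect no real obstacle.
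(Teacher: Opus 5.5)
Your proposal is correct and follows the paper's proof essentially step for step. You take $\kappa$ from the definition of $S\subd T$, pass to the common face of $\kappa$ and $\lambda$ given by condition (2) of the definition of a simplicial complex, and write $\lambda^{-1}\sigma=(\lambda^{-1}\mu)\circ(\mu^{-1}\kappa)\circ(\kappa^{-1}\sigma)$ as a composition of linear maps, checking, as the paper also does, that the image of $\kappa^{-1}\sigma$ lies in the face where $\mu^{-1}\kappa$ is defined.
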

\begin{proof}
  Let $\kappa\in T$ be as given by \ref{def:Subd} of Definition
  \ref{def:rel_complexes}. So $|\sigma|\subset |\lambda|\cap |\kappa|$
  and $\kappa^{-1}\sigma$ is rectilinear.  Let $\tau$ be the common
  face of $\lambda$ and $\kappa$, so $|\sigma|\subset |\tau|$ and
  $\kappa^{-1}\tau$ is rectilinear by the definition of a face.  The
  range of $\kappa^{-1}\tau$ contains $|\kappa^{-1}\sigma|$, so on
  that set the inverse $\tau^{-1}\kappa$ is defined and is also
  rectilinear.  Since $\tau$ is also a face of $\lambda$,
  $\lambda^{-1}\tau$ is rectilinear too and its domain contains
  $|\tau^{-1}\kappa\rest |\kappa^{-1}\sigma||$.  Now
  $\lambda^{-1}\sigma =
  (\lambda^{-1}\tau)\circ(\tau^{-1}\kappa)\circ(\kappa^{-1}\sigma)$ is
  linear as a composition of linear maps.
\end{proof}



\begin{Def}\label{def:Star}
  Let $T$ be a simplicial complex and $A$ a set. The \textbf{star} of
  $A$ in $T$, denoted $\St_T(A)$, is the smallest subcomplex $S$ of
  $T$ with $A\cap |T|\subset |S|$. The \textbf{link} of $A$ in $T$,
  denoted $\Lk_T(A)$, is the set
  $$\Lk_{T}(A):=\{\kappa\in \St_T(A)\mid A\cap |\kappa|=\es\}.$$
  It is also a subcomplex of~$T$. For a singleton, we denote
  $\St_{T}(\{z\})=\St_{T}(z)$ and $\Lk_T(\{z\})=\Lk_T(z)$.
\end{Def}

The above definition is written with the following possibility in
mind.  Suppose $S$ is a subcomplex of $T$ and $A\subset |T|$. Then
$\St_{S}(A)$ and $\Lk_S(A)$ are well-defined. In particular, if
$A\cap |S|=\es$, then $\St_{S}(A)=\Lk_S(A)=\es$, and if
$A\subset |S|$, then $\St_S(A)=\St_T(A)$ and $\Lk_S(A)=\Lk_T(A)$.

\begin{Fact}\label{fact:LinkStarVert}
  Suppose $z\in |T|$.  If $\lambda\in\Lk_{T}(z)$, then there is
  $\kappa\in\St_T(z)$ such that $\lambda$ is a face of $\kappa$ and
  $z\in |\kappa|$.  In particular
  $V(\lambda)\cup \{z\}\subset |\kappa|$. Note also that the set of
  vectors $\{\kappa^{-1}(v)-\kappa^{-1}(z)\mid v\in V(\lambda)\}$ is
  linearly independent, because $\kappa^{-1}(z)$ is either in the
  interior of $\bDelta^{\dim(\kappa)}$ or a vertex of \(\kappa\) that
  does not belong to \(V(\lambda)\), while $\kappa^{-1}(v)$ are
  vertices of \(\kappa\) for $v\in V(\lambda)$.
\end{Fact}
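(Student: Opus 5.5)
The first claim comes straight out of the definitions of star and link; after that the content reduces to a statement about faces of a single simplex. Let $\lambda\in\Lk_T(z)$. By Definition~\ref{def:Star}, $\St_T(z)$ is the smallest subcomplex containing $z$. I would first identify it concretely: let $\kappa_z$ be the unique simplex of $T$ with $z\in|\overset{\circ}{\kappa_z}|$ (the carrier). Then $\St_T(z)$ is the set of simplexes of $T$ having $\kappa_z$ as a face, together with all faces of such simplexes. I would check this by noting that the collection just described is closed under faces (condition~\ref{def:simplicial_complex-1}) and intersections (condition~\ref{def:simplicial_complex-2}), and that it contains $z$. A complex that contains $z$ must contain some simplex whose realization contains $z$. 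I would argue that such a simplex has $\kappa_z$ as a face. The minimality statement needs care, because a subcomplex containing $z$ only needs to contain $\kappa_z$ and its faces. If the smallest subcomplex is in fact just $\scl(\kappa_z)$, the argument is even simpler: every $\lambda$ in the star is then a face of a simplex $\kappa$ that contains $z$.

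In either reading, every $\lambda\in\St_T(z)$ is a face of some $\kappa\in\St_T(z)$ with $z\in|\kappa|$, where $\kappa$ is either $\kappa_z$ itself or a simplex having $\kappa_z$ as a face. This is the first claim. Since vertices of a face are vertices of the larger simplex, $V(\lambda)\subset|\kappa|$, and with $z\in|\kappa|$ this gives $V(\lambda)\cup\{z\}\subset|\kappa|$.

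For linear independence I would pull everything back through $\kappa^{-1}$ into $\bDelta^n$, $n=\dim\kappa$. By Definition~\ref{def:Faces}, $\kappa^{-1}\lambda$ is an isometry onto a standard face, so the points $\kappa^{-1}(v)$ for $v\in V(\lambda)$ are distinct standard basis vectors $\ee_i$, $i\in I$. Write $p=\kappa^{-1}(z)=\sum_j t_j\ee_j$ with $t_j\ge0$ and $\sum_j t_j=1$. Since $\lambda$ is in the link, $z\notin|\lambda|$, so $p$ is not in the face spanned by $\{\ee_i: i\in I\}$. Hence some $t_j>0$ with $j\notin I$. This is the intended dichotomy: $p$ lies in the interior of $\bDelta^n$ or is a vertex outside $V(\lambda)$. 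In fact the weaker condition "some $t_j>0$ with $j\notin I$" is all that is needed.

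Now suppose $\sum_{i\in I}a_i(\ee_i-p)=0$ and set $s=\sum_i a_i$. Comparing the coordinate $j$ gives $-s\,t_j=0$, so $s=0$. Then the coordinate $i$ gives $a_i-s\,t_i=a_i=0$. This proves independence.

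The main obstacle is pinning down $\St_T(z)$ precisely enough that every link simplex is a face of a simplex containing $z$. If needed, I would simply take $\kappa$ to be a simplex of $\St_T(z)$ containing $\lambda$ as a face that is maximal in the complex, and argue by minimality that it must meet $z$.
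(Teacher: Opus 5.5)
Your proposal is correct and follows the same outline as the paper, which gives no separate proof beyond the reason embedded in the statement. The first claim is read off from the definition of the star. Independence comes from pulling back through $\kappa^{-1}$ and looking at where $\kappa^{-1}(z)$ sits relative to the standard face $\kappa^{-1}\lambda$.

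The one real difference is in that last step, and it favours your version. The paper's stated reason is that $\kappa^{-1}(z)$ is either interior to $\bDelta^{\dim(\kappa)}$ or a vertex outside $V(\lambda)$. That dichotomy is not exhaustive for the closed star the paper actually uses. Take $\kappa$ a triangle, $\lambda$ one of its vertices and $z$ the midpoint of the opposite edge. Then $\lambda\in\Lk_T(z)$, the triangle is the only possible choice of $\kappa$, and $\kappa^{-1}(z)$ is the midpoint of an edge of $\bDelta^2$, which is neither interior nor a vertex. Your condition, a positive barycentric coordinate $t_j$ with $j\notin I$, follows from $z\notin|\lambda|$ alone. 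Your coordinate computation ($-s\,t_j=0$, then $a_i=0$) then covers every case.

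On the star: as you suspected, the literal ``smallest subcomplex'' wording gives $\scl(\Carr_T(z))$. So your first-paragraph identification of $\St_T(z)$ with the closed star cannot be derived from minimality. The closed star is clearly what the paper intends, as its use in the definition of $T*z$ and in Lemma~\ref{lemma:Collection2} shows. As you note, the first claim holds under either reading, so your argument does not depend on resolving this.
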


\begin{fact_n_def}\label{def:DeltaInRm}
  A rectilinear simplex $\kappa\colon \bDelta^n\to \R^m$ is uniquely
  (up to the equivalence relation of Definition \ref{def:EquivalenceOfSimplexes}) determined by its
  vertices $V(\kappa)=\{\kappa(\ee_i)\mid 0\le i\le n\}$.  Conversely,
  if $W=\{\vv_0,\dots,\vv_n\}$ are points in $\R^m$, there is a unique
  linear map from $\bDelta^n$ to $\R^m$ with $\ee_i\mapsto \vv_i$ for
  all $0\le i \le n$.  This map is denoted by
  $\Delta[W]=\Delta[\vv_0,\dots,\vv_n]$.  Note that $|\Delta[W]|$ is
  the convex hull of~$W$. The map $\Delta[W]$ is a rectilinear
  $n$-simplex if and only if the vectors $\vv_i-\vv_0$ are linearly
  independent for $1\le i\le n$. In this case, clearly
  $V(\Delta[W])=W$.
\end{fact_n_def}

The following is a generalization of the operation $\Delta$ for any
simplex.

\begin{Def}\label{def:DeltaInComplex}
  Suppose that $T$ is a complex and $x_0,\dots,x_k\in |T|$. If there
  exists $\kappa$ such that $x_0,\dots,x_k\in |\kappa|$, then, abusing
  notation, let
  $$\Delta[x_0,\dots,x_k]=\kappa\circ\Delta[\kappa^{-1}(x_0),\dots,\kappa^{-1}(x_k)].$$
  Note that $\Delta[x_0,\dots,x_k]$ is independent on the choice
  of~$\kappa\in T$ as long as $x_0,\dots,x_k\in |\kappa|$. Like in
  Definition \ref{def:DeltaInRm}, $\Delta[x_0,\dots,x_k]$ is a
  $k$-simplex if and only if $\kappa^{-1}(x_m)-\kappa^{-1}(x_0)$ are
  linearly independent for $1\le m\le k$, a property which is, again,
  independent on the choice of~$\kappa$.  In this case, clearly
  \begin{equation}
    V(\Delta[x_0,\dots,x_k])=\{x_0,\dots,x_k\}. \label{eq:VerticesofDelta}
  \end{equation}
  Also note that if $\lambda$ is a simplex in $|T|$ (but not
  necessarily $\lambda\in T$) with $|\lambda|\subset |\kappa|$ for
  some $\kappa\in T$ such that $\kappa^{-1}\lambda$ is rectilinear,
  then $\lambda = \Delta[V(\lambda)]$.
\end{Def}

\begin{Def}\label{def:StellarSubdivision0}
  A \textbf{stellar subdivision} of a complex $T$ \textbf{at}
  $z\in |T|$, denoted $T*z$, is the set
  \begin{equation}
    T*z=\big(T\setminus \{\kappa\in T\mid z\in |\kappa|\}\big)\cup \{\Delta[V(\lambda)\cup \{z\}]\mid \lambda\in \Lk_T(z)\},\label{eq:DefOfStellarSubd}  
  \end{equation}  
  By Fact~\ref{fact:LinkStarVert}, $T*z$ is a well-defined.  We skip
  the proof that $T*z$ is a subdivision of~$T$ (see
  \cite{hudson1969pl} for a standard reference).
\end{Def}

\begin{Def}[\textbf{Carrier}]
  Let $T$ be a simplicial complex.  Let the \textbf{face relation}
  be defined by
  $$\tau\le^T_F\kappa\ \text{ if and only if }\ \tau\text{ is a face of }\kappa.$$ 
  Then $\le^T_F$ is a well-founded partial order on $T$. For every
  $x\in |T|$, let $\Carr_T(x)$ be the unique minimal element of the
  set $\{\kappa\in T\mid x\in |\kappa|\}$ with respect to~$\le^T_F$.
\end{Def}

\begin{Lemma}\label{lemma:CarrierRelation}
  Let $S$ be a complex and $y\in |S|$. Let $\tau=\Carr_S(y)$.
  Then for all $x\in \overset{\circ}{\tau}$, $\Carr_S(x)=\tau$.
  If $T$ is another complex and $f\colon S\to T$ is simplicial,
  then $\Carr_T(f(y))=f\tau$.
\end{Lemma}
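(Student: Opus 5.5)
The first claim follows from the definition of the carrier as the $\le^S_F$-minimal simplex containing a point. Fix $x\in\overset{\circ}{\tau}$ and let $\kappa=\Carr_S(x)$. Since $x\in|\tau|$, minimality gives $\kappa\le^S_F\tau$, so $\kappa$ is a face of $\tau$. I will show $\kappa=\tau$. Suppose $\kappa$ were a proper face of $\tau$. Then $\tau^{-1}\kappa$ is an isometry onto a standard proper face of $\bDelta^{\dim\tau}$, so $|\kappa|\subset\tau[\partial\bDelta^{\dim\tau}]$. Because $\tau$ is an embedding, a point of $\tau[\obDelta^{\dim\tau}]$ cannot lie in $\tau[\partial\bDelta^{\dim\tau}]$. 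This contradicts $x\in|\kappa|$. Hence $\Carr_S(x)=\tau$.

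A small preliminary is that $\Carr_S(y)$ is well defined, i.e.\ the set $\{\kappa\in S\mid y\in|\kappa|\}$ has a unique minimal element. This follows from condition~\ref{def:simplicial_complex-2} of Definition~\ref{def:simplicial_complex}: any two simplices containing $y$ meet in a common face that also contains $y$. Together with well-foundedness of $\le^S_F$, this gives uniqueness of the minimum. I also record that $y\in\overset{\circ}{\tau}$. Otherwise $y$ lies in the image of a proper face of $\tau$, which belongs to $S$ by condition~\ref{def:simplicial_complex-1}, contradicting minimality.

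For the second claim, let $f\colon S\to T$ be simplicial. Then $f$ is a homeomorphism $|S|\to|T|$ and $f\sigma\in T$ for every $\sigma\in S$. I will first check that $f$ preserves the face relation in both directions. If $\lambda$ is a face of $\kappa$ in $S$, then $(f\kappa)^{-1}(f\lambda)=\kappa^{-1}\lambda$ is an isometry onto a standard face, so $f\lambda$ is a face of $f\kappa$. Next I will check that $\sigma\mapsto f\sigma$ is a bijection $S\to T$ up to equivalence. The inverse $f^{-1}$ is also simplicial: any $\rho\in T$ with $|\rho|\subset|T|=f|S|$ must equal some $f\sigma$. The argument is that $f^{-1}|\rho|$ is covered by simplices of $S$, and comparing carriers in $T$ forces $\rho=f\sigma$ for the carrier $\sigma$ of an interior point.

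This bijectivity step is the main obstacle. If proving it directly is awkward, I will avoid it altogether with a more local argument. By the first part, $y\in\overset{\circ}{\tau}$, so $f(y)\in (f\tau)[\obDelta^{\dim\tau}]$, i.e.\ $f(y)$ lies in the open part of the simplex $f\tau\in T$. Applying the reasoning of the first part inside $T$ requires a small lemma: if a point lies in the interior of a simplex $\rho$ of a complex, then $\rho$ is its carrier. To prove it, let $\mu=\Carr_T(f(y))$. Then $\mu$ is a face of $\rho$ by uniqueness of minimal elements. By the argument in the first paragraph, $\mu$ cannot be a proper face, since $f(y)$ is interior to $\rho$. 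Applying this lemma with $\rho=f\tau$ yields $\Carr_T(f(y))=f\tau$.
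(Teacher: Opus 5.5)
Your proof is correct. The first part is the same argument as the paper's: the carrier of $x$ is a face of $\tau$, and a proper face lies in $\tau[\partial\bDelta^{\dim\tau}]$, which misses $x$.

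For the second part you take a genuinely different route. The paper only says that a simplicial map preserves the face relation. To turn that into $\Carr_T(f(y))=f\tau$, one needs $\sigma\mapsto f\sigma$ to be an isomorphism of face posets from $S$ onto $T$. In particular, $\Carr_T(f(y))$ must be of the form $f\sigma$, so that minimality can be pulled back to $S$. This is exactly the bijectivity step you flagged as the obstacle.

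Your local argument avoids this entirely. You first show $y\in\tau[\obDelta^{\dim\tau}]$, since otherwise a proper face of $\tau$ (which lies in $S$) would contain $y$. Hence $f(y)$ lies in the open part of the simplex $f\tau\in T$. The first-part reasoning, applied in $T$ to an arbitrary simplex rather than to a carrier, then gives $\Carr_T(f(y))=f\tau$.

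This uses only $f\tau\in T$ and reduces the second claim to the first, so it is more self-contained than the paper's one-liner. The surjectivity onto $T$ that the paper's phrasing implicitly needs also follows from this interior-point characterization, so your sketched first route would go through as well. Finally, you correctly justify $\kappa\le^S_F\tau$ by noting that condition~\ref{def:simplicial_complex-2} of Definition~\ref{def:simplicial_complex}, together with well-foundedness, makes the carrier the least (not merely a minimal) element.
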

\begin{proof}
  Clearly $\Carr_S(x)\subset\tau$. So if $\Carr_S(x)\ne\tau$,
  then $\Carr_S(x)$ must be a face of $\tau$ which contradicts
  the assumption that $x$ is in $\overset{\circ}{\tau}$.
  The second part follows from the fact that a
  simplicial map preserves the face relation.
\end{proof}

The following is a collection of some useful facts.

\begin{Lemma}\label{lemma:Collection2}
Let $S$ and $T$ be two simplicial complexes.
  \begin{enumerate-(1)}  \item\label{fact:ObviousSimplicialStellar} If
    $h\colon S\to T$ is simplicial and $z\in |S|$, then $h$ is a
    simplicial map from $S*z$ to $T*h(z)$.
  \item \label{fact:Adjust} Suppose $z,w\in |S|$ are such that
    $\Carr_S(z)=\Carr_S(w)$.  Then there is a simplicial map
    $g\colon S*z\to S*w$ such that $g(z)=w$ and $g$ is identity
    outside~$|\St_S(z)|$ and for all $x\in |S|$, $d(x,g(x))\le d(z,w)$.
  \end{enumerate-(1)}
\end{Lemma}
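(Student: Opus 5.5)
For \ref{fact:ObviousSimplicialStellar} the plan is to compare the two stellar subdivisions directly via \eqref{eq:DefOfStellarSubd}. Since $h$ is a simplicial homeomorphism, it maps $\{\kappa\in S\mid z\in|\kappa|\}$ bijectively onto $\{\kappa'\in T\mid h(z)\in|\kappa'|\}$. It also maps $\St_S(z)$ onto $\St_T(h(z))$ and $\Lk_S(z)$ onto $\Lk_T(h(z))$, because it preserves faces and incidence with the point. So $h$ carries the untouched simplexes of $S*z$ onto the untouched simplexes of $T*h(z)$. For a new simplex $\Delta[V(\lambda)\cup\{z\}]$, choose $\kappa\in S$ with $\lambda$ a face of $\kappa$ and $z\in|\kappa|$ (Fact~\ref{fact:LinkStarVert}). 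Then $h\kappa\in T$, and since $(h\kappa)^{-1}\circ h=\kappa^{-1}$ on $|\kappa|$, Definition~\ref{def:DeltaInComplex} gives
\[
h\circ\Delta[V(\lambda)\cup\{z\}]=h\kappa\circ\Delta[\kappa^{-1}(V(\lambda)\cup\{z\})]=\Delta[V(h\lambda)\cup\{h(z)\}].
\]
Hence $h$ sends each simplex of $S*z$ to a simplex of $T*h(z)$. Applying the same argument to $h^{-1}$ shows the correspondence is onto, so $h$ is simplicial from $S*z$ to $T*h(z)$.

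For \ref{fact:Adjust}, let $\tau=\Carr_S(z)=\Carr_S(w)$. By Lemma~\ref{lemma:CarrierRelation}, both $z$ and $w$ lie in $\overset{\circ}{\tau}$, and $\St_S(z)=\St_S(w)$ and $\Lk_S(z)=\Lk_S(w)$, since both are determined by $\tau$ (the simplexes containing $z$ are exactly those having $\tau$ as a face). Define $g$ as the identity on every simplex of $S$ not containing $z$, equivalently not containing $\tau$. On each new simplex $\Delta[V(\lambda)\cup\{z\}]$ with $\lambda\in\Lk_S(z)$, define $g$ as the map that is affine in the chart $\kappa^{-1}$ and sends $z\mapsto w$ and $v\mapsto v$ for $v\in V(\lambda)$. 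Here $\kappa\in\St_S(z)$ is a simplex containing $\lambda$ and $\tau$. By Fact~\ref{fact:LinkStarVert} applied to $w$, $\Delta[V(\lambda)\cup\{w\}]$ is again a simplex, so $g$ maps each simplex of $S*z$ onto a simplex of $S*w$. The definitions agree on common faces (they fix shared vertices and are affine in a common chart), so $g$ is a well-defined continuous bijection. Compactness on each simplex and local finiteness then make it a homeomorphism, hence simplicial, with $g(z)=w$ and $g$ the identity outside $|\St_S(z)|$.

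The main obstacle is the distance estimate, and first I must settle what $d$ means. I would read it in a chart: the intended setting is rectilinear complexes in $\R^m$, or $d$ measured through $\kappa^{-1}$. Write a point $x$ of a new simplex in barycentric form $x=tz+\sum_{v}t_vv$. Since $g$ is affine there, $g(x)=tw+\sum_v t_vv$, so $x-g(x)=t(z-w)$ and $d(x,g(x))=t\,d(z,w)\le d(z,w)$. This identity is valid precisely because the maps are linear in a rectilinear realization. For a non-rectilinear complex in a general $X$, I would add an explicit hypothesis that $S$ is rectilinear in $\R^m$, or that $d$ is computed via the charts. Outside $|\St_S(z)|$ the distance is $0$.
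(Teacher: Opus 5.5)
Your argument is essentially the paper's. The paper dismisses part (1) as obvious from the definitions. For part (2) it likewise sets $g=\Delta[V(\lambda)\cup\{w\}]\circ\Delta[V(\lambda)\cup\{z\}]^{-1}$ on each cone over $\lambda\in\Lk_S(z)=\Lk_S(w)$, takes $g$ to be the identity elsewhere, and gets $d(x,g(x))\le d(z,w)$ ``by linearity'' along chords from $z$ to $|\Lk_S(z)|$.

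Your caveat about that last step is justified and applies equally to the paper's sketch. For simplices that are only topologically embedded the inequality can fail: take an embedded edge that doubles back so that $z$ and $w$ are metrically close, while $g$ slides other points of the edge a long way. So one either needs rectilinearity with respect to $d$, or must settle for a uniform-continuity version, namely $\sup_x d(x,g(x))\to 0$ as $w\to z$ within $\overset{\circ}{\tau}$. That weaker version suffices for the later use in Lemma~\ref{lemma:Adjust}, provided each $q_i$ there is chosen close enough to $h_i(t_i)$.
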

\begin{proof}
  \ref{fact:ObviousSimplicialStellar} Is
  obvious from the definitions.
  We now sketch a proof for \ref{fact:Adjust}.  Note that
  $\Lk_S(z)=\Lk_S(w)$, and denote it by \(L\). For each
  $\lambda\in L$, let
  $\Delta_{\lambda,z}:=\Delta[V(\lambda)\cup \{z\}]$ and
  $\Delta_{\lambda,w}:=\Delta[V(\lambda)\cup \{w\}]$, and define
  $g_{\lambda}\colon |\Delta_{\lambda,z}|\to |\Delta_{\lambda,w}|$ by
  $g_{\lambda}=\Delta_{\lambda,w}\circ
  \Delta_{\lambda,z}^{-1}$. Notice that
  \(|\St_S(z)|=|\Cup_{\lambda \in \Lk_S(z)} \Delta_{\lambda,z}|\).
  Let $g\colon S*z\to S*w$ be defined by $g(x)=x$ if
  $x\notin |\St_S(z)|$, and $g(x)=g_{\lambda}(x)$ if
  $x\in |\Delta_{\lambda,z}|$.  Restricted to $|\St_S(z)|$ we can view
  $g$ as the unique function which maps $z$ to $w$ and is linearly
  interpolated along chords connecting $z$ to $|L|$. Let
  $x\in |\St_S(z)|$.  Then $x$ is on a chord connecting $|L|$ to $z$. At
  one end of that chord $g$ is identity and at the other end it maps
  $z$ to $w$ so by linearity $d(g(x),x)\le d(z,w)$.
\end{proof}

\begin{Prop}\label{prop:ExtendSimplicial}
  Suppose $f\colon S\to T$ is a simplicial map, and that $s\in |S|$ and
  $t\in |T|$ are such that $\Carr_{T}(f(s))=\Carr_{T}(t)$. Then there
  is a simplicial $h\colon S*s\to T*t$ such that (1) $h(s)=t$, 
  (2) $h(x)=f(x)$ for all $x\in |S|\setminus |\St_S(s)|$.
  If additionally $|S|=|T|$, then (3) for all $x\in |S|$,
  $d(x,h(x))\le d(x,f(x))+d(t,f(s))$.
\end{Prop}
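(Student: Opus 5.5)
The plan is to compose the two items of Lemma~\ref{lemma:Collection2}. First I will push the stellar subdivision forward along $f$, then correct the new vertex within its carrier. Set $z:=f(s)\in|T|$.

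\textbf{Step 1.} By Lemma~\ref{lemma:Collection2}\ref{fact:ObviousSimplicialStellar}, $f$ is a simplicial map from $S*s$ to $T*z$, and $f(s)=z$.

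\textbf{Step 2.} By hypothesis $\Carr_T(z)=\Carr_T(t)$. Lemma~\ref{lemma:Collection2}\ref{fact:Adjust}, applied in $T$ to the points $z$ and $t$, gives a simplicial $g\colon T*z\to T*t$ with the following properties: $g(z)=t$, $g$ is the identity outside $|\St_T(z)|$, and $d(y,g(y))\le d(z,t)$ for all $y\in|T|$.

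\textbf{Step 3.} Put $h:=g\circ f$. Then $h$ is simplicial from $S*s$ to $T*t$, being a composition of homeomorphisms that each send simplexes to simplexes. Property (1) holds because $h(s)=g(z)=t$.

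\textbf{Property (2).} We need $h(x)=f(x)$ for $x\notin|\St_S(s)|$. It suffices to show that $f$ maps $|S|\setminus|\St_S(s)|$ into $|T|\setminus|\St_T(z)|$. Since $f$ is a simplicial isomorphism $S\to T$ with $f(s)=z$, it carries each simplex containing $s$ to a simplex containing $z$, and conversely. Hence $f\St_S(s)=\St_T(z)$, and because $f$ is a bijection $|S|\to|T|$, the complements correspond as well. On the complement $g$ is the identity, so $h=f$ there.

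\textbf{Property (3).} When $|S|=|T|$, the triangle inequality gives
\[
d(x,h(x))\le d(x,f(x))+d(f(x),g(f(x)))\le d(x,f(x))+d(f(s),t).
\]
Here $d$ is the ambient metric, used in Lemma~\ref{lemma:Collection2}\ref{fact:Adjust} for points of $|T|=|S|$.

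\textbf{Expected obstacle.} The only real subtlety is the identity $f\St_S(s)=\St_T(z)$, because stars are defined as smallest subcomplexes covering a set. I will settle this through carriers, using Lemma~\ref{lemma:CarrierRelation}. A simplex lies in $\St_S(s)$ exactly when it is a face of a simplex containing $s$, equivalently a face of a simplex having $\Carr_S(s)$ as a face. Lemma~\ref{lemma:CarrierRelation} gives $\Carr_T(f(s))=f\Carr_S(s)$, and $f$ preserves the face relation, so the two stars correspond.

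A secondary point is the composition in Step 3. Lemma~\ref{lemma:Collection2}\ref{fact:ObviousSimplicialStellar} produces a map whose target is $T*z$, and this is exactly the domain of $g$, so the composition type-checks.
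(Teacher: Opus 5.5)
Your proposal is correct and is essentially the paper's own proof. The paper likewise sets $h=g\circ f$, with $f$ viewed as a simplicial map $S*s\to T*f(s)$ via Lemma~\ref{lemma:Collection2}\ref{fact:ObviousSimplicialStellar} and $g\colon T*f(s)\to T*t$ the adjustment map of Lemma~\ref{lemma:Collection2}\ref{fact:Adjust}, then gets (2) from $f^{-1}|\St_T(f(s))|=|\St_S(s)|$ and (3) from the triangle inequality, exactly as you do.
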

\begin{proof}
  Denote $y=f(s)$. By the hypothesis $\Carr_{T}(y)=\Carr_{T}(t)$, so
  by Lemma~\ref{lemma:Collection2}\ref{fact:Adjust}, there is a
  simplicial map $g\colon T*y\to T*t$ such that $g$ is the identity
  outside $|\St_{T}(y)|$ and $d(x,g(x))\le d(y,t)$ for all $x\in |T|$.
  Since $f$ is simplicial, $f^{-1}|\St_{T}(y)|=|\St_S(s)|$.  Let
  $h=g\circ f$. Using
  Lemma~\ref{lemma:Collection2}\ref{fact:ObviousSimplicialStellar}, it
  is easy to check that $h$ is as desired. Suppose now $|S|=|T|$. Then
  $d(x,h(x))\le d(x,f(x))+d(f(x),g(f(x)))\le d(x,f(x))+d(t,f(s))$.
  The latter follows by the choice of~$g$.
\end{proof}

We will now refer to our paper \cite{IW26}.
Consider iterated stellar subdivisions.  Let
$\bar z=(z_0,\dots,z_{n-1})$ be a finite sequence of elements of
\(|T|\). Define by induction $T*\bar z=T$ for $n=0$ (empty $\bar z$),
and for $n> 0$, $T*\bar z=(T*z_0*\cdots*z_{n-2})*z_{n-1}$. We call
\(T * \bar z\) a \textbf{$\lo$-stellar subdivision} of~$T$.

Let now $\bar z=(z_i)_{i\in \N}$ be an infinite sequence in a set $X$,
and let $A \subset X$. We denote by
\begin{equation}
  \bar z \cap A \label{eq:CapSeq}
\end{equation}
the subsequence $(z_i)_{i\in I}$, where $I=\{i\in\N \mid z_i \in A\}$.
If $T$ is a complex and $\bar z \subset |T|$ is a sequence such that
$\bar z \cap C$ is finite for every compact $C \subset |T|$
(equivalently, by \cite[Lemma 2.11]{IW26} $\bar z \cap |S|$ is finite for every finite subcomplex
$S \subset T$), then we say that
$\bar z$ is \textbf{locally finite}.
 
\begin{Def}\cite[Definition 2.12]{IW26}\label{def:StellarSubdivision}
  Suppose that $\bar z=(z_0,z_1,\dots)$ is an infinite locally finite
  sequence. Define
  \begin{equation}
    T*\bar z:=\Cup_{i\in\N}\Cap_{j\ge i}T*\bar z^{j}\label{eq:OmegaSubd}  
  \end{equation}
  where $\bar z^j=(z_0,\dots,z_{j-1})$ is the initial segment of $\bar z$ of length $j$.
  Equivalently, $\sigma\in T*\bar z$ if and only if there exists $i\in\N$
  such that for all $j\ge i$, $\sigma\in T*\bar z^j$.
  We say that $T*\bar z$ is an \textbf{$\o$-stellar subdivision of~$T$}.
\end{Def}
We showed in \cite[Lemma 2.13]{IW26} that an $\o$-stellar
subdivision of \(T\) is indeed a subdivision of \(T\).  Below we
collect several results on stellar subdivisions that will be used in
the sequel.

\begin{Lemma}\label{lemma:ExtedingSubdivisions}\cite[Lemma 3.1]{IW26}
  Let $S$ and $T$ be different finite triangulations of the same space
  $X$. Suppose that $S_0$ and $S_1$ are subcomplexes of $S$, and $T_0$
  and $T_1$ are subcomplexes of $T$ such that $|S_0|=|T_0|$,
  $|S_1|=|T_1|$, $S=S_0\cup S_1$, and $T=T_0\cup T_1$.  Suppose
  further that $\bar s_0\subset |S_0|$, $\bar s_1\subset |S_1|$,
  $\bar t_0\subset |T_0|$, and $\bar t_1\subset |T_1|$ are finite
  sequences such that
  \begin{align}
    S_0*\bar s_0&=T_0*\bar t_0\label{eq:LemmaESAssumption1}\\
    S_1*\bar s_0*\bar s_1&=T_1*\bar t_0*\bar t_1.\label{eq:LemmaESAssumption2}
  \end{align}
  Then $S*\bar s_0*\bar s_1=T*\bar t_0*\bar t_1$.
\end{Lemma}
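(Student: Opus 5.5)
The plan is a direct comparison, simplex by simplex. We show that every simplex of $S*\bar s_0*\bar s_1$ is a simplex of $T*\bar t_0*\bar t_1$, and then get the converse by symmetry. A stellar subdivision at $z$ changes only the simplexes containing $z$, replacing the star by the cone from $z$ over the link. The operation is therefore local, and it commutes with restriction to subcomplexes in the following sense. For a subcomplex $R\subset S$ and $z\in|S|$, the simplexes of $S*z$ lying in $|R|$ are exactly the simplexes of $R*z$; when $z\notin|R|$ we read $R*z$ as $R$. The reason is that $\Lk_R(z)\subset\Lk_S(z)$, and a cone $\Delta[V(\lambda)\cup\{z\}]$ lies in $|R|$ exactly when $\lambda\in R$ and $z\in|R|$. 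Iterating over the points of a finite sequence, we obtain the first key step:
\begin{equation*}
(S*\bar s_0*\bar s_1)\rest|S_i| = S_i*\bar s_0*\bar s_1 \quad (i=0,1),
\end{equation*}
where $R\rest Y$ denotes the simplexes of $R$ contained in $Y$, and points of the sequences outside $|S_i|$ are simply skipped. Some care is needed here, because after earlier subdivisions the images of $S_0$ and $S_1$ are subdivided subcomplexes, not literally $S_0$ and $S_1$. I would handle this by proving by induction on the length of the sequence that $S*\bar z$ is the union of its subcomplexes $S_0*\bar z$ and $S_1*\bar z$.

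Next we compute each piece. Since $\bar s_1\subset|S_1|$, only those points of $\bar s_1$ that lie in $|S_0|\cap|S_1|$ affect $S_0$. The same holds for $\bar t_1$ and $T_0$. For the $S_1$-part, the restriction is literally $S_1*\bar s_0*\bar s_1$, which by \eqref{eq:LemmaESAssumption2} equals $T_1*\bar t_0*\bar t_1$, itself the restriction of $T*\bar t_0*\bar t_1$ to $|T_1|=|S_1|$. For the $S_0$-part, we restrict \eqref{eq:LemmaESAssumption2} to $|S_0|\cap|S_1|$. Both sides of \eqref{eq:LemmaESAssumption2} then say that the further subdivision of $S_0*\bar s_0$ by the points of $\bar s_1$ in $|S_0|\cap|S_1|$ agrees, on $|S_0\cap S_1|$, with that of $T_0*\bar t_0$ by the points of $\bar t_1$. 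Outside $|S_1|$ these points change nothing. Combined with \eqref{eq:LemmaESAssumption1}, this should give $(S*\bar s_0*\bar s_1)\rest|S_0|=(T*\bar t_0*\bar t_1)\rest|S_0|$.

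The main obstacle is in this last step. Stellar subdivision at a point $z$ in $|S_0|\cap|S_1|$ affects simplexes of $S_0$ that touch $z$ but are not contained in $|S_1|$. The common subdivision $S_0*\bar s_0=T_0*\bar t_0$ is the same complex on both sides, so the question is whether the remaining stellar moves, applied to this common complex, produce the same result. The sequences $\bar s_1$ and $\bar t_1$ may differ, so this is not immediate. I would argue inductively, processing $\bar s_1$ and $\bar t_1$ in turn. For each step I would show that the star of the subdivided point in the current complex, intersected with $|S_0|$, is determined by its link in $|S_0|\cap|S_1|$ together with the already-equal complex $S_0*\bar s_0$. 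The cones created outside $|S_1|$ are then forced to coincide, because the $S_1$-sides coincide. Once both restrictions agree and $S=S_0\cup S_1$, $T=T_0\cup T_1$, every simplex of either final complex lies in $|S_0|$ or in $|S_1|$. This gives the equality $S*\bar s_0*\bar s_1=T*\bar t_0*\bar t_1$.
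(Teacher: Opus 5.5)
Your locality step is sound: by induction, $S*\bar z=(S_0*\bar z)\cup(S_1*\bar z)$ and $(S*\bar z)\rest|S_i|=S_i*\bar z$.

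The criterion you give for a cone to lie in $|R|$ is wrong, though. It should be $\lambda\in\Lk_R(z)$, not ``$\lambda\in R$ and $z\in|R|$''. For a counterexample to your criterion, take $R$ the boundary of a triangle, $z$ inside one edge, and $\lambda$ the opposite vertex.

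With \eqref{eq:LemmaESAssumption2}, the locality step reduces the lemma exactly to $Q*\bar s_1=Q*\bar t_1$, where $Q:=S_0*\bar s_0=T_0*\bar t_0$. However, restricting \eqref{eq:LemmaESAssumption2} to $Y:=|S_0|\cap|S_1|$ only yields $(Q\rest Y)*\bar s_1=(Q\rest Y)*\bar t_1$.

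The step you flag as the main obstacle is where the argument breaks, and it cannot be repaired. A simplex of $Q$ whose intersection with $Y$ is not a single face can be cut in a way that depends on the \emph{order} in which the points of $Y$ are inserted. The restriction to $Y$ does not record that order. So your intermediate complexes on the two sides need not agree, and nothing forces the cones outside $|S_1|$ to coincide. Your sentence ``outside $|S_1|$ these points change nothing'' fails for the same reason.

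In fact the statement as given is false. Take the following data:
\begin{itemize}
\item $S=T=\scl(\Delta[a,b,c])$ and $S_0=T_0=S$;
\item $S_1=T_1$ consisting of the edges $\Delta[a,b]$, $\Delta[b,c]$ and their vertices;
\item $\bar s_0=\bar t_0$ empty;
\item $z_1$ in the interior of $\Delta[a,b]$, $z_2$ in the interior of $\Delta[b,c]$, and $\bar s_1=(z_1,z_2)$, $\bar t_1=(z_2,z_1)$.
\end{itemize}
Then \eqref{eq:LemmaESAssumption1} is trivial, and \eqref{eq:LemmaESAssumption2} holds because both sides are the path $a,z_1,b,z_2,c$. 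But the $2$-simplexes of $S*z_1*z_2$ are $\Delta[a,z_1,c]$, $\Delta[z_1,b,z_2]$, $\Delta[z_1,z_2,c]$. Those of $S*z_2*z_1$ are $\Delta[a,z_1,z_2]$, $\Delta[z_1,b,z_2]$, $\Delta[a,z_2,c]$. To get $S\neq T$, add to $X$ a disjoint edge that $T$ subdivides at its midpoint $w$, and take $\bar s_0=(w)$.

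Some extra hypothesis is therefore needed. Two options work:
\begin{itemize}
\item If the subsequences of $\bar s_1$ and $\bar t_1$ consisting of their points in $|S_0|$ coincide, then $Q*\bar s_1=Q*\bar t_1$ trivially, and your first two steps already form a complete proof.
\item If $S_0\cap S_1$ is a full subcomplex of $S_0$, fullness persists under stellar subdivision, so it holds for $Q\rest Y$ in $Q$. Then every simplex of $Q$ meets $Y$ in a single face and is subdivided as the join of the subdivision of that face with the opposite face. Hence $Q*\bar s_1$ is determined by $Q$ and $(Q\rest Y)*\bar s_1$, which is the kind of statement your inductive idea was aiming for.
\end{itemize}
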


\begin{thm}\label{thm:CommonStellarSubdivision}\cite[Theorem 3.2]{IW26}
  Suppose that two (possibly infinite) triangulations $S$ and $T$ of a complex $X$
  have a common subdivision. Then there are locally finite
  $\bar s,\bar t\subset X$ such that $S*\bar s=T*\bar t$.
\end{thm}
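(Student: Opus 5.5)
The plan is to reduce to the finite case, Lemma~\ref{lemma:ExtedingSubdivisions}, by an exhaustion of $X$ by finite subcomplexes, and then pass to the limit via Definition~\ref{def:StellarSubdivision}.

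First I treat the finite case. Let $R$ be a common subdivision of $S$ and $T$. For finite complexes I use the classical fact that any subdivision of a finite complex can be refined by an iterated stellar subdivision; this is Alexander--Newman, see \cite{hudson1969pl}. Applying it to $R$ gives finite sequences $\bar s,\bar t$ with $S*\bar s=T*\bar t$. One has to be careful to subdivide $R$ further so that a single complex is stellarly reachable from both sides. Concretely: $S*\bar s_1\subd R$ is a stellar refinement of $S$, and $T*\bar t_1$ is one of $T$. Then I take a common stellar subdivision of these two complexes, both of which subdivide $R$, by iterating the same fact.

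For the infinite case, I choose finite subcomplexes $S^0\subset S^1\subset\cdots$ of $S$ and $T^0\subset T^1\subset\cdots$ of $T$ with $|S^k|=|T^k|=:X_k$ exhausting $X$, so that every compact set lies in some $X_k$. This is possible because a common subdivision $R$ exists. For a finite subcomplex $R_k$ of $R$, take the union of closed simplexes of $S$ meeting $|R_k|$; but then its realization need not equal a union of $T$-simplexes. To fix this, I alternate between $S$ and $T$ and pass to subdivisions, replacing $S,T$ by suitable finite-stellar refinements on each piece.

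Inductively, I build finite sequences $\bar s_k\subset X_k\setminus X_{k-1}$-ish and $\bar t_k$ such that
\[
S^{k}*\bar s_0*\cdots*\bar s_k=T^{k}*\bar t_0*\cdots*\bar t_k .
\]
The step goes as follows. I apply the finite case to the annular pieces $S_1=\overline{S^{k+1}\setminus S^k}$ (with the previous subdivisions already imposed), and then glue with Lemma~\ref{lemma:ExtedingSubdivisions}, taking $S_0=S^k$, $S_1$ the new part, $\bar s_0$ the accumulated sequence, and $\bar s_1$ the new finite sequence. The points of $\bar s_{k+1}$ should be chosen off $X_{k-1}$, or at least they must not alter simplexes deep inside $X_k$. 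Since stellar subdivision at $z$ changes only $\St(z)$, I will arrange that the new points lie outside a neighbourhood of $X_{k-1}$, for instance by first subdividing finely near $\partial X_k$.

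Concatenating gives $\bar s=\bar s_0\cat\bar s_1\cat\cdots$, which is locally finite since each compact set meets only finitely many blocks. Likewise $\bar t$ is locally finite. By Definition~\ref{def:StellarSubdivision}, a simplex lies in $S*\bar s$ iff it stabilises. Stabilisation on $X_{k-1}$ after stage $k+1$ then gives $S*\bar s=T*\bar t$.

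The main obstacle is the stabilisation and locality control: ensuring that later stellar moves on the annulus do not disturb the already-matched complex on $X_{k-1}$. This is exactly what Lemma~\ref{lemma:ExtedingSubdivisions} is designed for, provided the pieces $S_0,S_1$ and $T_0,T_1$ have matching realizations.
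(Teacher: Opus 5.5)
Your architecture (a finite case, gluing along an exhaustion with Lemma~\ref{lemma:ExtedingSubdivisions}, then a limit by local finiteness) is the right one. However, the finite case, which is the real content of the theorem, is not proved.

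The fact you quote gives only one-sided refinements: for finite $L\subd K$ there is $\bar k$ with $K*\bar k\subd L$. Applied to $R\subd S$ and $R\subd T$, it yields $S*\bar s\subd R$ and $T*\bar t\subd R$, not $S*\bar s=T*\bar t$. ``Iterating'' it on two subdivisions $A,B$ of $R$ only produces an alternating chain $\cdots\subd A*\bar a_1*\bar a_2\subd B*\bar b_1\subd A*\bar a_1\subd B$. Each term is stellar over one of $A,B$ and merely a subdivision of the previous term, so nothing forces a single complex to be a stellar subdivision of both. Alexander--Newman does not close this either: it gives stellar equivalence, i.e.\ a chain of stellar subdivisions \emph{and welds}, not a common stellar subdivision. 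The identity $S*\bar s=T*\bar t$ for finite complexes with a common subdivision is a strong-factorization type statement. It is exactly the input the paper takes from \cite{AP24} (see how the finite case enters the proof of Proposition~\ref{prop:CommonQsubd}). Without it, or an argument of comparable strength, your induction has no base case.

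The infinite step has a second gap, at the point you flag yourself. Finite subcomplexes $S^k\subset S$, $T^k\subset T$ with $|S^k|=|T^k|$ exhausting $X$ need not exist. Take $X=[0,\infty)$, $S$ with vertices $0,1,2,\dots$, and $T$ with vertices $0,\tfrac12,\tfrac32,\dots$; these have a common subdivision with vertices $\tfrac12\N$, yet the only non-empty common realization of finite subcomplexes is $\{0\}$.

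``Alternating and passing to refinements'' does not repair this. Stellar points used to make $|S^k|$ a subcomplex on the $T$-side have stars that leak outside $|S^k|$, so the next annular pieces are unmatched again. Moreover, Lemma~\ref{lemma:ExtedingSubdivisions} concerns decompositions $S=S_0\cup S_1$, $T=T_0\cup T_1$ of one fixed pair of complexes with $\bar s_0\subset|S_0|$, $\bar t_0\subset|T_0|$. All such preparatory points would therefore have to be absorbed consistently into $\bar s,\bar t$, which you do not do.

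The route the paper points to avoids this by working over a common \emph{coarser} complex $Z$; see the Claim in the proof of Proposition~\ref{prop:CommonQsubd}, which the authors model on \cite[Claim~3.2.1]{IW26}, followed by interlacing.
\begin{itemize}
\item If $S',T'\subd Z$ and $Z_0,Z_1,\dots$ is a locally finite exhaustion of $Z$ by finite subcomplexes, then $S'\rest Z_i$ and $T'\rest Z_i$ are subcomplexes with the same realization $|Z_i|$ automatically.
\item Each inductive step is then the finite case followed by Lemma~\ref{lemma:ExtedingSubdivisions}.
\item A common finer $R$ reduces to this setting. Apply it to $S,R\subd S$ to get $S*\bar s_1=R*\bar r$. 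Then apply it to $R*\bar r,T\subd T$ to get $(R*\bar r)*\bar r'=T*\bar t$. Finally, interlace $\bar s_1$ and $\bar r'$ into a single locally finite sequence.
\end{itemize}
Your stabilisation argument via Definition~\ref{def:StellarSubdivision} is fine once these two points are in place.
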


We now define a ``canonical'' dense
subset of any simplicial complex and introduce several notions that enable us to characterize the existence of a PL-homeomorphism between two complexes. In particular, we will show that a PL-homeomorphism between two complexes exists if and only if there is a specific correspondence between their canonical dense subsets, which we shall define below.

\begin{Def}\label{def:TheQTset}
Define $\Q^*=\Cup_{n=1}^\infty \Q^n$ to be the union of all finite
  dimensional rational vector spaces which, for all $n$,
  is assumed to satisfy
  $$\Q^*\cap \R^n=\Q^n.$$
  For a complex $T$, let $\Q(T)=\Cup\{\kappa[\Q^*]\mid \kappa\in T\}$.
\end{Def}
Notice that since the vertices of $\bDelta^n$ are in $\Q^*$, we have
\begin{equation}
  \label{eq:VerticesofQT}
  V(T)\subseteq \Q(T).
\end{equation}

Given a complex \(T\) in a topological space \(X\), we say that a set
  $Q\subset X$ is \textbf{$T$-dense} if for every $x\in X$, every
  simplex $\tau\in T$ with $x\in |\tau|$, and every open subset $U$ of \(X\) there is
  $q\in Q\cap |\tau|\cap U$.

\begin{Lemma}\label{lemma:FindPointsQ}
  For any triangulation $T$,
  $\Q(T)$ is $T$-dense.
\end{Lemma}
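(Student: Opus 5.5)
Fix $x\in X=|T|$, a simplex $\tau\in T$ with $x\in|\tau|$, and an open set $U\subset X$. We must find a point of $\Q(T)\cap|\tau|\cap U$. The natural candidate comes from pulling back along $\tau$ itself. Since $\tau\in T$, we have $\tau[\Q^*]\subset\Q(T)$, so it suffices to find $q'\in\bDelta^n\cap\Q^*$ with $\tau(q')\in U$, where $n=\dim(\tau)$. Then $q=\tau(q')$ lies in $\Q(T)\cap|\tau|\cap U$.

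The definition of $T$-density allows $U$ to be any open set. This includes the degenerate case $U\cap|\tau|=\es$, in which no such point can exist. I will therefore read the definition with $U$ a neighbourhood of $x$, which is the intended meaning: it is what is used later, and it is the only reading under which the statement is true. Under that reading, $\tau^{-1}[U\cap|\tau|]$ is a nonempty open subset of $\bDelta^n$ in the subspace topology. It is nonempty because it contains $\tau^{-1}(x)$, and it is open because $\tau$ is a continuous embedding, hence a homeomorphism onto $|\tau|$, and $|\tau|$ carries the topology induced from $X$.

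It remains to check that $\bDelta^n\cap\Q^{n+1}$ is dense in $\bDelta^n$; this is the only step requiring an argument. A point $p=\sum_i t_i\ee_i$ of $\bDelta^n$ has $t_i\ge 0$ and $\sum_i t_i=1$. Approximate the $t_i$ by nonnegative rationals $r_i$ with $\sum_i r_i=1$ as follows. Choose rationals $r_1,\dots,r_n$ with $0\le r_i\le t_i$, each within $\e/n$ of $t_i$, taking $r_i=0$ whenever $t_i=0$. Then set $r_0=1-\sum_{i\ge1}r_i$. This $r_0$ is rational, satisfies $r_0\ge t_0\ge0$, and differs from $t_0$ by at most $\e$. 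The point $\sum_i r_i\ee_i$ lies in $\bDelta^n\cap\Q^*$, since $\Q^*\cap\R^{n+1}=\Q^{n+1}$, and it is within $2\e$ of $p$. So the rational points are dense in $\bDelta^n$, and we can pick $q'$ in the nonempty relatively open set $\tau^{-1}[U\cap|\tau|]$. Taking $q=\tau(q')$ completes the argument.

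The main (mild) obstacle is handling points of $\tau$ that lie on the boundary $\partial\tau$, where naive rational approximation might leave the simplex; the one-sided approximation above keeps all coordinates nonnegative. The other point needing care is the interpretation of ``every open $U$'' as ``every open neighbourhood of $x$'', which I would state explicitly.
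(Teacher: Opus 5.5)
Your proof is correct and follows the same route as the paper: since each simplex is an embedding, the lemma reduces to density of $\Q^*\cap\bDelta^n$ in $\bDelta^n$. Your explicit approximation by nonnegative rationals summing to $1$ supplies the detail the paper only asserts, and you are right that $U$ must be read as an open neighbourhood of $x$ for the statement to hold.
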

\begin{proof}
  Since each $\kappa\in T$ is a topological embedding, it is enough
  to show that $\Q^*\cap \bDelta^n$ is dense in $\bDelta^n$.
  Since the coordinate vectors $\ee_0,\dots,\ee_n$ are in $\Q^*$,
  all the linear combinations
  $$q_0\ee_0+\cdots+q_n\ee_n,$$
  for $q_0, \dots, q_n \in \Q$, are also in $\Q^*$ and these are dense in $\bDelta^n$ which is the
  convex hull of $\{\ee_k\mid 0\le k\le n\}$.
\end{proof}

\begin{Lemma}\label{lem:SimplicialQQ}
  Suppose $S$ and $T$ are simplicial complexes. 
  If $h\colon S\to T$ is simplicial, then $h\Q(S)=\Q(T)$.
\end{Lemma}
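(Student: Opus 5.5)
We prove the two inclusions $h\Q(S)\subseteq\Q(T)$ and $\Q(T)\subseteq h\Q(S)$ directly from the definitions. The key observation is that a simplicial map sends simplices of $S$ to simplices of $T$ by composition, and $\Q(\cdot)$ is defined simplexwise as images of $\Q^*$.

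For the first inclusion, let $x\in\Q(S)$, say $x=\sigma(q)$ with $\sigma\in S$, $\sigma\colon\bDelta^n\to|S|$ and $q\in\Q^*\cap\bDelta^n$. Since $h$ is simplicial, $h\sigma=h\circ\sigma\in T$. Hence
\[
h(x)=(h\circ\sigma)(q)\in (h\sigma)[\Q^*]\subseteq\Q(T).
\]
One subtlety needs checking. Simplices are taken up to the equivalence of Definition~\ref{def:EquivalenceOfSimplexes}, so ``$h\sigma\in T$'' means that some $\kappa\in T$ satisfies $|\kappa|=|h\sigma|$ and $\kappa^{-1}h\sigma$ is an isometry of $\bDelta^n$. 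Such an isometry permutes the vertices $\ee_0,\dots,\ee_n$. Being affine, it is then a coordinate permutation on $\bDelta^n$, so it maps $\Q^*\cap\bDelta^n$ onto itself. Therefore $(h\sigma)[\Q^*]=\kappa[\Q^*]$, and $\Q(T)$ does not depend on the choice of representatives. The same remark shows that $\Q(S)$ is well defined.

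For the reverse inclusion, let $y=\kappa(q)$ with $\kappa\in T$ and $q\in\Q^*\cap\bDelta^n$. We need some $\sigma\in S$ with $h\sigma$ equivalent to $\kappa$. The plan is to take the carrier-type simplex of $S$ that $h$ maps onto $|\kappa|$. Since $h$ is a homeomorphism, $h^{-1}(y)\in|S|$. Let $\tau=\Carr_S(h^{-1}(y))$. By Lemma~\ref{lemma:CarrierRelation}, $h\tau=\Carr_T(y)$, which is a face $\kappa'$ of $\kappa$. To reach $\kappa$ itself rather than merely a face, use surjectivity on simplices. Pick an interior point $p$ of $\kappa$, meaning $p=\kappa(r)$ with $r\in\obDelta^n$. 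Let $\sigma=\Carr_S(h^{-1}(p))$. By Lemma~\ref{lemma:CarrierRelation}, $h\sigma=\Carr_T(p)=\kappa$, where the last equality holds because $p$ is interior to $\kappa$. So $h\sigma$ and $\kappa$ are equivalent. By the isometry remark above, $q'=(\kappa^{-1}h\sigma)^{-1}(q)\in\Q^*$, and then $y=h(\sigma(q'))\in h\Q(S)$.

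The main obstacle is purely bookkeeping: making sure that the equivalence of simplices preserves rational points, i.e.\ that isometries of $\bDelta^n$ preserve $\Q^*\cap\bDelta^n$. This is where we use that an isometry of a simplex is the affine map induced by a vertex permutation, and vertex permutations have rational (permutation) matrices. Everything else is immediate from the definition of a simplicial map.
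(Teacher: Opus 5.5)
Your proof is correct and follows the paper's argument. The forward inclusion is the same one-liner. Your carrier argument for the reverse inclusion makes explicit what the paper's ``by symmetry'' tacitly needs, namely that every $\kappa\in T$ equals $h\sigma$ for some $\sigma\in S$, so that $h^{-1}$ is simplicial. Your remark that isometries of $\bDelta^n$ preserve $\Q^*\cap\bDelta^n$ supplies the well-definedness check that the paper leaves implicit. The preliminary detour through $\Carr_S(h^{-1}(y))$ is never used and can be dropped.
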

\begin{proof}
  Suppose that $q\in h\Q(S)$. Let $\sigma\in S$ and $q'\in\Q^*$ be
  such that $h(\sigma(q'))=q$. But $\kappa=h\sigma\in T$, so
  $\kappa(q')=q\in \Q(T)$.  The other direction follows by symmetry.
\end{proof}

\begin{Def}\label{def:Q-subd}
    \begin{enumerate-(i)}
    \item Given $A\subset \R^m$, a map $f\colon A\to \R^n$ is
  a \textbf{$\Q$-map}, if $f[\Q^*\cap A]= \Q^*\cap f[A]=\Q^*\cap |f|$.  When no confusion can occur, we use the notation \(f[\Q^*]\) instead of \(f[\Q^*\cap A]\). 
  \item A simplex $\kappa$ in $\R^n$ is a \textbf{$\Q$-simplex}, if it is a
  $\Q$-map as a map $\kappa\colon\bDelta^k\to \R^n$. 
  \item A subdivision
  $S\subd T$ is a \textbf{$\Q$-subdivision}, denoted $S\subd_{\Q}T$,
  if for all $\sigma\in S$ and all $\kappa\in T$ with
  $|\sigma|\subset|\kappa|$ the map $\kappa^{-1}\sigma$ is a
  rectilinear $\Q$-simplex. 
  \item A \textbf{$\Q$-PL-homeomorphism} $h$ from
  $S$ to $T$ is a PL-homeomorphism from $S$ to $T$ such that there are
  $\Q$-subdivisions $S'\subd_\Q S$ and $T'\subd_\Q T$
  such that $h$ is simplicial from $S'$ to~$T'$.  If such a map
  exists, we say that $S$ and $T$ are \textbf{$\Q$-PL-homeomorphic} and write
  $S\homeo^{\Q PL}T$.
    \end{enumerate-(i)}
\end{Def}



  

\begin{Lemma}\label{lemma:algvert}
  Let $\kappa$ be a rectilinear simplex in $\R^n$. The following are
  equivalent:
  \begin{enumerate-(i)}
      \item\label{lemma:algvert-1} \(\kappa\) is a $\Q$-simplex,
      \item\label{lemma:algvert-2} $V(\kappa)\subset\Q^*$,
      \item\label{lemma:algvert-3} $\kappa[\Q^*]\subset \Q^*$.
  \end{enumerate-(i)}
\end{Lemma}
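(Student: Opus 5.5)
I will prove the cycle of implications \ref{lemma:algvert-1}$\Rightarrow$\ref{lemma:algvert-3}$\Rightarrow$\ref{lemma:algvert-2}$\Rightarrow$\ref{lemma:algvert-1}. Throughout, write $\kappa=\Delta[\vv_0,\dots,\vv_k]\colon\bDelta^k\to\R^n$ with $V(\kappa)=\{\vv_0,\dots,\vv_k\}$. By Fact and Definition~\ref{def:DeltaInRm}, $\kappa$ is the restriction of the linear map $L\colon\R^{k+1}\to\R^n$, $\ee_i\mapsto\vv_i$, and the vectors $\vv_i-\vv_0$ are linearly independent.

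For \ref{lemma:algvert-1}$\Rightarrow$\ref{lemma:algvert-3}: by definition of a $\Q$-map, $\kappa[\Q^*\cap\bDelta^k]=\Q^*\cap|\kappa|\subset\Q^*$, which is exactly what \ref{lemma:algvert-3} asserts under the notational convention $\kappa[\Q^*]=\kappa[\Q^*\cap\bDelta^k]$.

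For \ref{lemma:algvert-3}$\Rightarrow$\ref{lemma:algvert-2}: the standard basis vectors $\ee_i$ lie in $\Q^{k+1}\subset\Q^*$ and in $\bDelta^k$, so $\vv_i=\kappa(\ee_i)\in\Q^*$.

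For \ref{lemma:algvert-2}$\Rightarrow$\ref{lemma:algvert-1}, assume all $\vv_i\in\Q^n$. We need $\kappa[\Q^*\cap\bDelta^k]=\Q^*\cap|\kappa|$. The inclusion $\subset$ holds because a point $x=\sum q_i\ee_i$ of $\bDelta^k$ with rational coordinates maps to $\sum q_i\vv_i$, which is a rational combination of rational vectors. For $\supset$, take $y\in\Q^n\cap|\kappa|$. Write $y=\sum_i t_i\vv_i$ with $t_i\ge0$ and $\sum_i t_i=1$. Then $y-\vv_0=\sum_{i\ge1}t_i(\vv_i-\vv_0)$, and the vectors $\vv_i-\vv_0$ are linearly independent and have rational entries. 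So $(t_1,\dots,t_k)$ is the unique solution of a linear system with rational coefficients that has a solution. By Gaussian elimination (for instance, choosing $k$ linearly independent rows and applying Cramer's rule), this solution is rational. Hence $t_0=1-\sum_{i\ge1}t_i$ is rational too, and the preimage point $(t_0,\dots,t_k)$ lies in $\Q^*\cap\bDelta^k$.

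The only nontrivial step is the rationality of the barycentric coordinates. It follows from the standard fact that a consistent linear system over $\Q$ with a unique real solution has that solution in $\Q$, since the solution is obtained by rational row operations. Linear independence, and hence uniqueness, is supplied by $\kappa$ being a simplex.
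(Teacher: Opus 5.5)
Your proof is correct, and it is more complete than the paper's. The paper proves (i)$\Rightarrow$(ii) by evaluating at the vertices $\ee_i$. It proves (ii)$\Rightarrow$(i) by writing $\kappa(q)=\bW q$ with $\bW$ a rational matrix, and (i)$\Leftrightarrow$(iii) by observing that $\kappa[\Q^*]=\kappa[\Q^*\cap\bDelta^k]$ and $\Q^*\cap|\kappa|\subset\Q^*$.

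However, in both (ii)$\Rightarrow$(i) and (iii)$\Rightarrow$(i) the paper only obtains the inclusion $\kappa[\Q^*\cap\bDelta^k]\subset\Q^*\cap|\kappa|$. It then says this ``by definition'' makes $\kappa$ a $\Q$-map, whereas Definition~\ref{def:Q-subd} requires the equality $\kappa[\Q^*\cap\bDelta^k]=\Q^*\cap|\kappa|$.

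Your cycle (i)$\Rightarrow$(iii)$\Rightarrow$(ii)$\Rightarrow$(i) puts all the real content into one step. That step is your argument that a rational point of $|\kappa|$ has rational barycentric coordinates: they are the unique real solution of a consistent rational linear system, hence rational by Cramer's rule on an invertible $k\times k$ rational minor. This supplies exactly the reverse inclusion the paper omits.

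This matters later in the paper. The surjectivity onto $\Q^*\cap|\kappa|$ is what is used when the inverse of a $\Q$-simplex is treated as a $\Q$-map. This happens, for example, in the step ``so $s\in\Q^*$'' of Lemma~\ref{lemma:Q-compatibilility}, and in the proof of (3)$\Rightarrow$(4) of Lemma~\ref{lemma:QsubdivisionIFF}. Your version of the argument is the one those steps actually need.
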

\begin{proof}
  Let us first prove that \ref{lemma:algvert-1} implies
  \ref{lemma:algvert-2}.  Suppose $\kappa\colon \bDelta^k\to \R^n$ is
  a $\Q$-map.  Since the vertices of $\bDelta^k$ are in $\Q^*$,
  $V(\kappa)=\{\kappa(\ee_0),\dots,\kappa(\ee_k))\}\subset \Q^*$.
  
  For the implication \ref{lemma:algvert-2} $\Rightarrow$
  \ref{lemma:algvert-1}, assume that $V(\kappa)\subset\Q^*$ and let
  $q\in \Q^*\cap \bDelta^k$.  By the linearity of $\kappa$ there is a
  matrix $\bW$ such that $\kappa(q)=\bW q$. The columns of that matrix
  are given by $\bW\ee_i=\kappa(\ee_i)$ and so all entries are
  rational.  So $\kappa(q)=\bW q$ is in $\Q^*$, and we have
  $\kappa[\Q^*\cap \bDelta^k]\subseteq \Q^*\cap |\kappa|$ which by
  definition means that $\kappa$ is a $\Q$-map.

  The domain of $\kappa$ is $\bDelta^k$, so we have $\kappa[\Q^*]=\kappa[\Q^*\cap\bDelta^k]$. On the other hand $\Q^*\cap|\kappa|\subset\Q^*$. This shows that \ref{lemma:algvert-1}
  is equivalent to \ref{lemma:algvert-3}.  
\end{proof}

In the next few statements $S$ and $T$ are simplicial complexes.

\begin{Lemma}\label{lemma:Q-compatibilility}
  For all $\kappa\in T$, $\Q(T)\cap |\kappa|=\kappa[\Q^*]$. In particular, $\kappa^{-1}[\Q(T)]\subset\Q^*$.
\end{Lemma}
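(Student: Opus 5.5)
The inclusion $\kappa[\Q^*]\subset \Q(T)\cap|\kappa|$ is immediate from Definition~\ref{def:TheQTset}, since $\kappa\in T$ and $\kappa[\Q^*]\subset|\kappa|$. The plan is therefore to prove the reverse inclusion: take $q\in\Q(T)\cap|\kappa|$ and show $q\in\kappa[\Q^*]$. The second sentence then follows at once, because $\kappa$ is injective and so $\kappa^{-1}[\Q(T)]=\kappa^{-1}[\Q(T)\cap|\kappa|]=\kappa^{-1}[\kappa[\Q^*]]\subset\Q^*$.

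By definition of $\Q(T)$ there are $\lambda\in T$ and $p\in\Q^*\cap\bDelta^{\dim\lambda}$ with $q=\lambda(p)$. Then $q\in|\kappa|\cap|\lambda|$, so condition~\ref{def:simplicial_complex-2} of Definition~\ref{def:simplicial_complex} gives $\tau\in T$ which is a common face of $\kappa$ and $\lambda$ with $|\tau|=|\kappa|\cap|\lambda|$. We then transport $q$ to the coordinates of $\kappa$ through $\tau$:
\[
\kappa^{-1}(q)=(\kappa^{-1}\tau)\circ(\tau^{-1}\lambda)(p).
\]
This is well defined because $q\in|\tau|$, so $p=\lambda^{-1}(q)$ lies in the range of $\lambda^{-1}\tau$.

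The key step is that the maps involved send $\Q^*$ into $\Q^*$. By Definition~\ref{def:Faces}, $\lambda^{-1}\tau$ is an isometry of $\bDelta^{\dim\tau}$ onto a standard face of $\bDelta^{\dim\lambda}$. Up to the equivalence of Definition~\ref{def:EquivalenceOfSimplexes} it is therefore the rectilinear map sending the standard basis vectors $\ee_i$ to a subset of the $\ee_j$, i.e.\ a coordinate permutation and inclusion. Such a map and its inverse (restricted to the face) preserve rational coordinates, so $\tau^{-1}\lambda(p)\in\Q^*$. Similarly, $\kappa^{-1}\tau$ is a rectilinear $\Q$-simplex by Lemma~\ref{lemma:algvert}, since its vertices are standard basis vectors and hence lie in $\Q^*$. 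Therefore $\kappa^{-1}(q)\in\Q^*$, which gives $q\in\kappa[\Q^*]$.

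The main obstacle is justifying that $\tau^{-1}\lambda$, defined on $\lambda^{-1}|\tau|$, maps rational points to rational points. The worry is that an isometry onto a face, if not rectilinear, could scramble rationality. I would settle this using the remark in Definition~\ref{def:Faces} that $\kappa^{-1}\lambda$ is rectilinear, together with the fact that a linear bijection between faces of standard simplices, whose vertices are standard basis vectors, has a $0/1$ matrix. If needed, the equivalence of simplices allows replacing $\tau$ by a representative compatible with either parent; since the faces are determined up to vertex permutation, this does not affect rationality.
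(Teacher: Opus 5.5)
Your proof is correct and essentially the paper's own argument: use condition~(2) of Definition~\ref{def:simplicial_complex} to get a common face $\tau$ of $\kappa$ and $\lambda$, then transport the rational point through $\lambda^{-1}\tau$ and $\kappa^{-1}\tau$, which are face maps with standard basis vertices and hence preserve rationality in both directions. Your worry about rectilinearity is settled by the parenthetical in Definition~\ref{def:Faces}, which states that the face maps are rectilinear.
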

\begin{proof}
  The inclusion from right to left clearly holds by Definition \ref{def:TheQTset} of $\Q(T)$.  To prove the other direction,
  let $x\in \Q(T)\cap|\kappa|$ be arbitrary. Then there is 
  $\lambda \in T$, and
  $q\in \Q^*$ such that $\lambda(q)=x$. If $\lambda=\kappa$, the conclusion
  follows trivially, so we can assume that $\lambda\ne\kappa$.
  Since
  $\lambda(q)\in |\lambda|\cap |\kappa|$, there is a common face
  $\sigma\in T$ of $\lambda$ and $\kappa$ and $s\in \bDelta^{\dim(s)}$ such that
  $\sigma(s)=\lambda(q)$.  By definition of a face,
  $\lambda^{-1}\sigma$ is an isometry from
  $\bDelta^{\dim(\sigma)}$ to a face of $\bDelta^{\dim(\lambda)}$ and
  so is a $\Q$-map by Lemma \ref{lemma:algvert}. So $s\in \Q^*$. On the other hand
  $\sigma$ is a face of $\kappa$, so $\kappa^{-1}\sigma$ is also
  a $\Q$-map, so
  $q_0:=\kappa^{-1}(\sigma(s))$ is in
  $\Q^*$. Now $\kappa(q_0)=\sigma(s)=\lambda(q)=x$ and so
  $x\in \kappa[\Q^*]$.
\end{proof}


\begin{Lemma}\label{lemma:QsubdivisionIFF}
  Suppose $S\subd T$. Then the following are equivalent:
  \begin{enumerate-(1)}
  \item $S\subd_\Q T$,
  \item $V(S)\subset \Q(T)$,
  \item \(\Q(S) \subset \Q(T)\),
  \item \(\Q(S) = \Q(T)\).
  \end{enumerate-(1)}
\end{Lemma}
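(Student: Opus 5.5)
I will prove the cycle of implications $(1)\Rightarrow(2)\Rightarrow(3)\Rightarrow(4)\Rightarrow(1)$. The main tools are Lemma~\ref{lemma:algvert}, which reduces being a $\Q$-simplex to rationality of vertices, and Lemma~\ref{lemma:Q-compatibilility}, which says $\Q(T)\cap|\kappa|=\kappa[\Q^*]$ for $\kappa\in T$. Fix throughout, for each $\sigma\in S$, some $\kappa\in T$ with $|\sigma|\subset|\kappa|$. By Lemma~\ref{lemma:AnySimplexRect}, $\kappa^{-1}\sigma$ is rectilinear for \emph{any} such $\kappa$.

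For $(1)\Rightarrow(2)$, take $v\in V(S)$, a vertex of some $\sigma\in S$, and choose $\kappa\in T$ with $|\sigma|\subset|\kappa|$. Since $\kappa^{-1}\sigma$ is a rectilinear $\Q$-simplex, Lemma~\ref{lemma:algvert} gives $\kappa^{-1}(v)\in\Q^*$, so $v\in\kappa[\Q^*]\subset\Q(T)$.

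For $(2)\Rightarrow(3)$, let $x=\sigma(q)$ with $q\in\Q^*\cap\bDelta^{\dim\sigma}$, and choose $\kappa\supset\sigma$ as above. Every vertex of $\sigma$ lies in $\Q(T)\cap|\kappa|=\kappa[\Q^*]$ by Lemma~\ref{lemma:Q-compatibilility}. Hence the rectilinear map $\kappa^{-1}\sigma$ has vertices in $\Q^*$, so by Lemma~\ref{lemma:algvert} it maps $q$ into $\Q^*$. Therefore $x=\kappa(\kappa^{-1}\sigma(q))\in\Q(T)$.

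For $(3)\Rightarrow(4)$, I need $\Q(T)\subset\Q(S)$, and this is the step I expect to be the main obstacle, since it needs a genuine linear-algebra argument. Let $x=\kappa(p)$ with $p\in\Q^*$, and pick $\sigma\in S$ with $x\in|\sigma|$ and $|\sigma|\subset|\kappa'|$ for some $\kappa'\in T$. Moving $x$ into $\kappa'$ via the common face of $\kappa$ and $\kappa'$, as in the proof of Lemma~\ref{lemma:Q-compatibilility}, we get $x\in\kappa'[\Q^*]$. By (3) the vertices of $\sigma$ lie in $\Q(S)\subset\Q(T)$, so the matrix $\bW$ of the affine injective map $\kappa'^{-1}\sigma$ has rational columns. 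Then $\sigma^{-1}(x)$ is the unique solution in barycentric coordinates of a linear system $\bW y=\kappa'^{-1}(x)$ with rational data and full column rank. Hence the solution is rational by Gaussian elimination over $\Q$, so $x\in\sigma[\Q^*]\subset\Q(S)$.

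For $(4)\Rightarrow(1)$, take $\sigma\in S$ and any $\kappa\in T$ with $|\sigma|\subset|\kappa|$. Then $V(\sigma)\subset\Q(S)=\Q(T)$, so $\kappa^{-1}[V(\sigma)]\subset\Q^*$ by Lemma~\ref{lemma:Q-compatibilility}. Since $\kappa^{-1}\sigma$ is rectilinear, Lemma~\ref{lemma:algvert} shows it is a $\Q$-simplex, which is exactly the condition in Definition~\ref{def:Q-subd}.
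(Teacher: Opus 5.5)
Your proof is correct and follows essentially the paper's route. It uses the same ingredients (Lemmas~\ref{lemma:AnySimplexRect}, \ref{lemma:algvert} and \ref{lemma:Q-compatibilility}), arranged as one cycle instead of the paper's three pairwise equivalences. The only real difference is in $(3)\Rightarrow(4)$: the paper first passes through $(1)$ and then simply asserts that the inverse of the $\Q$-map $\kappa^{-1}\lambda$ sends rational points to rational points, whereas your rational-linear-system argument proves this explicitly. The full column rank you need holds because the columns are affinely independent points of $\bDelta^m$, which lies in the hyperplane of coordinate sum $1$.
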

\begin{proof}
  For (1) $\Rightarrow$ (2)
  suppose $S\subd_{\Q} T$. Let $\lambda\in S$ be any $n$-simplex for some $n$, and let $\kappa\in T$ be
  some simplex with $|\lambda|\subset |\kappa|$.  Then
  $\kappa^{-1}\lambda$ is a $\Q$-map
  (Definition~\ref{def:Q-subd}). Since $V(\bDelta^n)\subset\Q^*$, we
  have
  $$V(\lambda)=\lambda[V(\bDelta^n)]=\kappa[\kappa^{-1}\lambda V(\bDelta^n)]\subset \kappa[\Q^*]\subset \Q(T).$$
  Then \(V(S) \subset \Q(T)\).  For (2) $\Rightarrow$ (1), let
  $\lambda\in S$ be arbitrary. By (2), $V(\lambda)\subset\Q(T)$. Let
  $\kappa\in T$ be such that $|\lambda|\subset|\kappa|$, which exists
  since $S\subd T$.  Then $\kappa^{-1}\lambda$ is rectilinear
  (Lemma~\ref{lemma:AnySimplexRect}), and thus
  $$V(\kappa^{-1}\lambda)=\kappa^{-1}[V(\lambda)]\subset \kappa^{-1}[\Q(T)]\subset \Q^*$$
  where the last inclusion is by the second assertion of Lemma~\ref{lemma:Q-compatibilility}.
  By Lemma~\ref{lemma:algvert} $\kappa^{-1}\lambda$ is a
  $\Q$-simplex. By Definition~\ref{def:Q-subd} this implies that
  $S\subd_{\Q}T$.
  
  We now prove (2)$\iff$(3). Let  $\lambda\in S$ be
  arbitrary and let $\kappa$ be some simplex in $T$ such that $|\lambda|\subset |\kappa|$.
  By Lemma \ref{lemma:AnySimplexRect} $\kappa^{-1}\lambda$ is rectilinear.
  By using this and Lemmas~\ref{lemma:algvert}
  and \ref{lemma:Q-compatibilility}, we have the following sequence of equivalences:
  \begin{align*}
    &V(\lambda)\subset \Q(T) \iff V(\lambda)\subset \Q(T)\cap |\kappa|\stackrel{\text{\ref{lemma:Q-compatibilility}}}{\iff} V(\lambda)\subset \kappa[\Q^*] \iff \kappa^{-1}V(\lambda)\subset \Q^*\\
    \iff &V(\kappa^{-1}\lambda)\subset\Q^*
    \stackrel{\ref{lemma:algvert}}{\iff} (\kappa^{-1}\lambda)[\Q^*]\subset \Q^*
    \iff \lambda[\Q^*]\subset \kappa[\Q^*]\subset \Q(T).
  \end{align*}
  Since this follows for an arbitrary \(\lambda \in S\), we have the desired equivalence.

  We finally show (3)$\iff$(4). Consider the non-obvious direction and suppose that $\Q(S)\subset \Q(T)$. Since (3) implies (1), we have \(S \subd_{\Q} T\).
  Suppose $q\in \Q(T)$. Since $S$ is a subdivision of \(T\) and \(|S|=|T|\), there are
  $\lambda\in S$ and $\kappa\in T$ with
  $q\in|\lambda|\subset |\kappa|$. By
  Lemma~\ref{lemma:Q-compatibilility} $q\in \kappa[\Q^*]$. Let
  $q'=\kappa^{-1}(q)$. Then $q'\in\Q^*$. Let $q''=\lambda^{-1}(q)$.
  Now $q''=\lambda^{-1}(\kappa(q'))$. But
  $\kappa^{-1}\lambda$ is a $\Q$-map and so is its inverse (when
  defined), so $q''\in \Q^*$.  Thus,
  $q=\lambda(q'')\in \lambda[\Q^*]\subset \Q(S)$. This completes the proof of (3) $\iff$ (4).
\end{proof}

\begin{Lemma}\label{lem:SubdQTrans}
  The relation $\subd_\Q$ is reflexive and transitive.
\end{Lemma}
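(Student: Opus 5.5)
The plan is to reduce both properties to the characterization in Lemma~\ref{lemma:QsubdivisionIFF}. That lemma says that for $S\subd T$ we have $S\subd_\Q T$ if and only if $\Q(S)=\Q(T)$, or equivalently $\Q(S)\subset\Q(T)$. Working at the level of the dense sets $\Q(\cdot)$ avoids composing rectilinear $\Q$-maps by hand.

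\emph{Reflexivity.} Take $S=T$. First, $T\subd T$: for $\sigma\in T$ choose $\kappa=\sigma$; then $\kappa^{-1}\sigma$ is the identity, which is rectilinear. Second, $\Q(T)\subset\Q(T)$ holds trivially. By Lemma~\ref{lemma:QsubdivisionIFF} (condition (3) implies (1)) we get $T\subd_\Q T$. Alternatively, one can check $V(T)\subset\Q(T)$ directly, which is \eqref{eq:VerticesofQT}, and use condition (2).

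\emph{Transitivity.} Suppose $R\subd_\Q S$ and $S\subd_\Q T$. By Lemma~\ref{lemma:QsubdivisionIFF} (condition (1) implies (4)) we have $\Q(R)=\Q(S)=\Q(T)$. It then remains only to show the plain subdivision relation $R\subd T$, after which Lemma~\ref{lemma:QsubdivisionIFF} gives $R\subd_\Q T$.

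For $R\subd T$, note first that $|R|=|S|=|T|$. Now let $\rho\in R$. Pick $\sigma\in S$ with $|\rho|\subset|\sigma|$, so that $\sigma^{-1}\rho$ is rectilinear. Then pick $\kappa\in T$ with $|\sigma|\subset|\kappa|$, so that $\kappa^{-1}\sigma$ is rectilinear. It follows that $|\rho|\subset|\kappa|$, and
\[
\kappa^{-1}\rho=(\kappa^{-1}\sigma)\circ(\sigma^{-1}\rho),
\]
which is a composition of linear maps and hence rectilinear. This is the same composition argument as in the proof of Lemma~\ref{lemma:AnySimplexRect}.

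The only point needing care is this last step: $\sigma^{-1}\rho$ maps $\bDelta^{\dim\rho}$ into $\bDelta^{\dim\sigma}$, and $\kappa^{-1}\sigma$ is linear on all of $\bDelta^{\dim\sigma}$, so the composite is a well-defined linear map. I expect no real obstacle beyond this bookkeeping; everything else is supplied by Lemma~\ref{lemma:QsubdivisionIFF}.
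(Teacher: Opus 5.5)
Your proof is correct, but it is organized differently from the paper's.

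The paper works directly from Definition~\ref{def:Q-subd}. For reflexivity it notes that if $\sigma,\kappa\in T$ with $|\sigma|\subset|\kappa|$, then $\sigma$ is a face of $\kappa$, so $\kappa^{-1}\sigma$ is given by a $0$--$1$ matrix and is a rectilinear $\Q$-map. For transitivity, given $\kappa$ in the finest complex and $\tau\in T$ with $|\kappa|\subset|\tau|$, it picks an intermediate $\sigma\in S$ with $|\kappa|\subset|\sigma|\subset|\tau|$. It then writes $\tau^{-1}\kappa$ as a composite of two rectilinear $\Q$-simplexes.

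You instead route everything through Lemma~\ref{lemma:QsubdivisionIFF}. The $\Q$-part of transitivity then collapses to the chain $\Q(R)=\Q(S)=\Q(T)$. Only transitivity of plain $\subd$ needs an argument, and that uses just the existential witnesses in the definition of subdivision. This is legitimate, because Lemma~\ref{lemma:QsubdivisionIFF} is proved beforehand without using the present lemma. The paper actually runs the implication the other way later, in Lemma~\ref{lemma:Collection3}, deriving $V(S)\subset\Q(S)$ from reflexivity; you take \eqref{eq:VerticesofQT} as the input instead.

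What your route buys concerns the universal quantifier in Definition~\ref{def:Q-subd}, which ranges over \emph{all} $\tau\in T$ containing the simplex. In your proof this is absorbed into the $(2)\Rightarrow(1)$ step of Lemma~\ref{lemma:QsubdivisionIFF}, via Lemma~\ref{lemma:AnySimplexRect}. In the paper's direct argument, the existence of $\sigma$ with $|\kappa|\subset|\sigma|\subset|\tau|$ for a \emph{prescribed} $\tau$ is not literally given by the definition of subdivision. That definition only supplies some $\sigma\in S$ with $|\kappa|\subset|\sigma|$, so the paper tacitly needs a short carrier or common-face argument. In exchange, the paper's route is more elementary and does not rely on the $\Q(T)$ machinery of Lemmas~\ref{lemma:Q-compatibilility} and~\ref{lemma:QsubdivisionIFF}.
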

\begin{proof}
  For reflexivity, let $\sigma,\kappa\in T$ be such that
  $|\sigma|\subset |\kappa|$. We need to check that
  $\kappa^{-1}\sigma$ is a rectilinear $\Q$-map. But now $\sigma$ is a
  face of $\kappa$, so it is rectilinear by definition and a $\Q$-map
  because it is realized, in fact, by a permutation matrix.
  For transitivity, suppose
  that $Q\subd_\Q S\subd_\Q T$, let $\kappa\in Q$ and $\tau\in T$ be
  such that $|\kappa|\subset |\tau|$.  We need to show that
  $\tau^{-1}\kappa$ is a rectilinear $\Q$-simplex. Let $\sigma\in S$
  be such that $|\kappa|\subset|\sigma|\subset |\tau|$ which exists by
  the definition of a subdivision. By the assumption we have that
  $\tau^{-1}\sigma$ and $\sigma^{-1}\kappa$ are rectilinear
  $\Q$-simplexes. But $\tau^{-1}\kappa$ is their composition so we are
  done.
\end{proof}

\begin{Lemma}\label{lemma:Collection3}
    Let $S$ and $T$ be complexes.
  \begin{enumerate-(1)}
  \item \label{fact:Q-stellarQ1} If \(\bar z = (z_0, \dots, z_n)\) and $S=T*\bar z$, then
    $V(S)=V(T)\cup \Cup_{i \le n} \{|z_i|\}$.
  \item \label{fact:Q-stellarQ2} If $\bar z\subset \Q(S)$, then
    $S*\bar z\subd_{\Q} S$.
\end{enumerate-(1)}
\end{Lemma}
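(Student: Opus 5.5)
Both parts go by induction on the length of $\bar z$, reducing to a single stellar subdivision $T*z$ and using the explicit formula \eqref{eq:DefOfStellarSubd}. (The notation $|z_i|$ in \ref{fact:Q-stellarQ1} is read as the point $z_i$, identifying a vertex with its image.)

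For \ref{fact:Q-stellarQ1}, the key claim is $V(T*z)=V(T)\cup\{z\}$. The simplices of $T$ that survive in $T*z$ are those not containing $z$. The new simplices are $\Delta[V(\lambda)\cup\{z\}]$ for $\lambda\in\Lk_T(z)$. By \eqref{eq:VerticesofDelta} and Fact~\ref{fact:LinkStarVert}, the new simplices have vertex set $V(\lambda)\cup\{z\}$, and $V(\lambda)\subset V(T)$. So $V(T*z)\subset V(T)\cup\{z\}$.

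For the reverse inclusion, $z$ is a vertex of $T*z$ provided $\Lk_T(z)\neq\es$. If the link is empty, then $z$ is already an isolated vertex of $T$, and the vertex set is unchanged. Each old vertex $v\ne z$ remains a vertex: it either lies in the link, or it is itself a surviving $0$-simplex because $z\notin\{v\}$. If $z$ is already a vertex of $T$, then $T*z=T$ and the identity holds trivially. Iterating over $z_0,\dots,z_n$ (in which case $T*\bar z$ is the iterated subdivision) gives $V(T*\bar z)=V(T)\cup\{z_0,\dots,z_n\}$.

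For \ref{fact:Q-stellarQ2}, I would use Lemma~\ref{lemma:QsubdivisionIFF} together with the transitivity in Lemma~\ref{lem:SubdQTrans}, and again induct on the length of $\bar z$. Suppose $S'=S*\bar z^{j}\subd_\Q S$ is already known. By Lemma~\ref{lemma:QsubdivisionIFF}(4) this gives $\Q(S')=\Q(S)$, so the next point $z_j$ lies in $\Q(S')$. It therefore suffices to treat a single point $z\in\Q(S)$ and show $S*z\subd_\Q S$.

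Since $S*z$ is a subdivision of $S$ (Definition~\ref{def:StellarSubdivision0}), Lemma~\ref{lemma:QsubdivisionIFF}(2) reduces the task to showing $V(S*z)\subset\Q(S)$. By part~\ref{fact:Q-stellarQ1}, $V(S*z)=V(S)\cup\{z\}$. Here $V(S)\subset\Q(S)$ by \eqref{eq:VerticesofQT}, and $z\in\Q(S)$ by hypothesis. Transitivity of $\subd_\Q$ then closes the induction.

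The only delicate point is the bookkeeping in part (1): confirming that the new simplices $\Delta[\cdots]$ have exactly the claimed vertices, including the degenerate cases where $z$ is already a vertex or the link is empty. The rest is formal.
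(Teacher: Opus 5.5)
Your proposal is correct and follows essentially the same route as the paper: part (1) comes from the definitions of stellar subdivision, link, and $\Delta[\cdot]$, and part (2) combines part (1) with $V(S)\subset\Q(S)$ and Lemma~\ref{lemma:QsubdivisionIFF} ((2)$\Rightarrow$(1)). The paper proves part (2) in one step for the whole sequence, since part (1) already gives $V(S*\bar z)=V(S)\cup\{z_0,\dots,z_n\}\subset\Q(S)$; your induction on the length of $\bar z$, which uses Lemma~\ref{lemma:QsubdivisionIFF}(4) and transitivity, is correct but unnecessary.
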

\begin{proof}
\ref{fact:Q-stellarQ1} It follows by   Definitions~\ref{def:StellarSubdivision}, \ref{def:Star},
  and~\ref{def:DeltaInComplex}.  \ref{fact:Q-stellarQ2} By
  Lemma~\ref{lem:SubdQTrans}, $S\subd_\Q S$ so by
  Lemma~\ref{lemma:QsubdivisionIFF}((1)$\Rightarrow$(2)),
  $V(S)\subset\Q(S)$. Now by \ref{fact:Q-stellarQ1},
  $V(S*\bar z)\subset \Q(S)$, so by
  \ref{lemma:QsubdivisionIFF}((2)$\Rightarrow$(1)) we have
  $S*\bar z\subd_{\Q} S$.      
\end{proof}

\begin{Lemma}\label{lemma:Adjust}
  Suppose that $T$ is a complex in a metric space $(X,d)$,
  $\bar t \subset X$ is locally finite, and $\e\colon X\to\R_+$ is
  any continuous function. Then there is a locally finite $\bar q\subset \Q(T)$
  and a simplicial map $h\colon T*\bar t\to T*\bar q$ such that
  $h(t_i)=q_i$ for all $i$ and $d(x,h(x))\le \e(x)$ for all $x\in X$.
\end{Lemma}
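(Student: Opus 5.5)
We build $\bar q$ and $h$ by induction along $\bar t$, applying Proposition~\ref{prop:ExtendSimplicial} one point at a time and controlling the accumulated displacement with a summable budget. Write $\bar t=(t_0,t_1,\dots)$, allowing the finite case. Put $T_0=T$, $S_0=T$ and $h_0=\id$, which is simplicial from $T$ to $T$. Suppose that after $i$ steps we have finite sequences $\bar t^i=(t_0,\dots,t_{i-1})$ and $\bar q^i=(q_0,\dots,q_{i-1})\subset \Q(T)$, together with a simplicial map $h_i\colon T*\bar t^i\to T*\bar q^i$ such that $h_i(t_j)=q_j$ for $j<i$.

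For the next step, let $y=h_i(t_i)$ and $\tau=\Carr_{T*\bar q^i}(y)$. By Lemma~\ref{lemma:Collection3}\ref{fact:Q-stellarQ2}, $T*\bar q^i\subd_\Q T$, so Lemma~\ref{lemma:QsubdivisionIFF} gives $\Q(T*\bar q^i)=\Q(T)$. By Lemma~\ref{lemma:FindPointsQ}, $\Q(T)$ is therefore dense in $\overset{\circ}{\tau}$, and Lemma~\ref{lemma:CarrierRelation} says every point of $\overset{\circ}{\tau}$ has carrier $\tau$. So we can choose $q_i\in \Q(T)\cap\overset{\circ}{\tau}$ arbitrarily close to $y$. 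Proposition~\ref{prop:ExtendSimplicial}, applied with $s=t_i$ and $t=q_i$, then yields a simplicial
\[
h_{i+1}\colon T*\bar t^{i+1}\to T*\bar q^{i+1}
\]
with $h_{i+1}(t_i)=q_i$. This map agrees with $h_i$ outside $|\St_{T*\bar t^i}(t_i)|$, and by part (3) of that proposition (here $|S|=|T|=X$)
\[
d(x,h_{i+1}(x))\le d(x,h_i(x))+d(q_i,y).
\]
Because $h_{i+1}=h_i$ off the star of $t_i$, the earlier values $h_{i+1}(t_j)=q_j$ for $j<i$ are kept, provided $t_j$ is not in the open star of $t_i$. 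Where it is, $t_j$ is a vertex of $T*\bar t^i$ and the stellar move fixes vertices other than $t_i$, so the property still holds.

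The displacement bound is handled locally. Let $\delta_i=\inf\{\e(x)\mid x\in |\St_T(t_i)|\}$, which is positive since $\St_T(t_i)$ is a finite complex, hence compact, and $\e$ is continuous. Choose $d(q_i,y)<\delta_i 2^{-i-1}$ (if $t_i=t_j$ for some $j<i$ the step is trivial and we take $q_i=q_j$). A point $x$ is moved at step $i$ only if $x\in |\St_T(t_i)|$, because stars only shrink under subdivision. For such $x$ the total displacement is at most the sum of the increments at those steps, and each increment is below $\e(x)2^{-i-1}$, so $d(x,h(x))\le\e(x)$. The same estimate keeps $q_i$ inside a small neighbourhood of $|\St_T(t_i)|$. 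Since $\bar t$ is locally finite and $T$ is locally finite (condition~\ref{def:simplicial_complex-3}), $\bar q$ meets each compact set in only finitely many points, so $\bar q$ is locally finite.

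It remains to pass to the limit, which I expect to be the main obstacle. By local finiteness of $\bar t$, every finite subcomplex of $T$ meets only finitely many stars $|\St_T(t_i)|$. Hence on each compact piece the maps $h_i$ eventually stabilize, and we set $h=\lim h_i$ pointwise. We must then verify that $h$ is simplicial from $T*\bar t$ to $T*\bar q$ in the sense of Definition~\ref{def:StellarSubdivision}. For $\sigma\in T*\bar t$, $\sigma$ lies in $T*\bar t^j$ for all large $j$, and $h_j\sigma\in T*\bar q^j$ eventually stabilizes as well. This gives $h\sigma\in T*\bar q$, and symmetrically every simplex of $T*\bar q$ is the image of a simplex of $T*\bar t$. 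The point to check carefully is that stabilization happens in the index, uniformly on each simplex. It follows from the fact that only finitely many $t_i$ and $q_i$ lie near any given compact set, which is what the choice $q_i$ close to $h_i(t_i)$ ensures. Finally, $h$ is a bijective, locally simplicial map between locally finite complexes, and hence a homeomorphism.
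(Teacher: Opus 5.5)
Your proposal is correct and is essentially the paper's proof: induction via Proposition~\ref{prop:ExtendSimplicial} with $q_i\in\Q(T)$ chosen in the open carrier of $h_i(t_i)$ in $T*\bar q^i$, a summable displacement budget, and a limit that stabilizes simplexwise by local finiteness. Your budget $2^{-i-1}\inf_{|\St_T(t_i)|}\e$ replaces the paper's choice of $\e_i$ with $\sum_{i:\,t_i\in|\tau|}\e_i\le\min_{|\tau|}\e$ for every $\tau\in T$, and is if anything slightly cleaner. One step is stated too loosely: local finiteness of $\bar q$, since the displacement estimate alone does not confine $q_i$ near $|\St_T(t_i)|$ in a general metric space. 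Use instead that each adjusting map of Lemma~\ref{lemma:Collection2} maps every simplex of $T*\bar q^i$, hence of $T$, onto itself, so by induction every $h_i$ does too and $q_i=h_{i+1}(t_i)\in|\Carr_T(t_i)|$.
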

\begin{proof}
  For each $\tau\in T$, $|\tau|$ is compact, so $\e\rest |\tau|$ has a
  positive minimum. Let $\e_\tau=\min\{\e(x)\mid x\in|\tau|\}$.
  Since $\bar t$ is locally finite, one can find a sequence
  $(\e_i)_{i\in\N}\subset\R_+$ such that for each $\tau$,
  \begin{equation}
    \label{eq:etaubound}
    \sum_{i\in I(\tau)} \e_i\le \e_\tau\quad\text{ where }\quad I(\tau)=\{i\in\N\mid t_i\in|\tau|\}.
  \end{equation}
  As before, denote $\bar t^j=(t_0,\dots,t_{j-1})$ with $\bar t^0$
  being empty.  For all $i\in \N$, let $T_i=T*\bar t^{i}$ (for $i=0$,
  $T_0=T$).  Let $Q_0=T_0=T$. Let $h_0\colon |T_0|\to |Q_0|$ be the
  identity.  Let $\bar q^0$ be the empty sequence.
  Then $h_0$ is clearly simplicial from $T_0$ to~$Q_0=T*\bar q^{0}$.
  As induction
  hypothesis, suppose that $h_i$ and
  $\bar q^i=(q_0,\dots,q_{i-1})\subset \Q(T)$ have been defined such
  that $h_i$ is simplicial from $T_i$ to~$Q_i:=T*\bar q^i$,
  $h_i(t_j)=q_j$ for all $j<i$, and for all $\tau\in T$,
  \begin{equation}
    \label{eq:IndHypSup}
    \sup\left\{d(x,h_i(x))\mid x\in |\tau|\right\}\le \sum_{j\in J(\tau,i)}\e_j\quad\text{ where }\quad J(\tau,i)=\{j\in\N\mid t_j\in |\tau|,\ j<i\}.
  \end{equation}
  Note that by
  Lemma~\ref{lemma:Collection3}\ref{fact:Q-stellarQ2}
  \begin{equation}
    Q_i\subd_\Q T.\label{eq:ItIsAQsubd}  
  \end{equation}  
  Let $\tau_i=\Carr_{T_i}(t_i)$. 
  Then by Lemma~\ref{lemma:FindPointsQ},
  the set
  $h_i|\overset{\circ}\tau_i|\cap \Q(Q_i)\cap B_X(h_i(t_i),\e_i)$
  is non-empty, so pick $q_i$ from that set.
  Let $p_i=h_i^{-1}(q_i)$. Then $p_i\in\overset{\circ}{\tau_i}$,
  so by the first part of Lemma~\ref{lemma:CarrierRelation}, 
  $\Carr_{T_i}(p_i)=\tau_i$. By the second part of the same Lemma,
  $\Carr_{Q_i}(q_i)=\Carr_{Q_i}(h_i(t_i))=h_i\tau_i$.
  We can now apply Proposition \ref{prop:ExtendSimplicial} with 
  $S=T_i$, $T=Q_i$, $f=h_i$, $s=t_i$, $t=q_i$ to get a simplicial 
  $h_{i+1}\colon T_i*t_i\to Q_i*q_i$
  such that $h_{i+1}(t_i)=q_i$, $h_{i+1}(x)=h_i(x)$ for 
  $x\in |T_i|\setminus |\St_{T_i}(t_i)|$ and for all $x\in |T|$,
  \begin{equation*}
    d(x,h_{i+1}(x))\le d(x,h_i(x))+d(q_i, h_i(t_i))
  \end{equation*}
  which by the fact that $q_i\in B_X(h_i(t_i),\e_i)$ implies
  \begin{equation}
    d(x,h_{i+1}(x))\le d(x,h_i(x))+\e_i.\label{eq:Ineqhip1e} 
  \end{equation}
  Suppose $\tau\in T$ and $x\in |\tau|$. We need to show that
  $$d(x,h_{i+1}(x))\le \sum_{j\in J(\tau,i+1)}\e_j.$$
  Suppose first that $\tau\notin \St_{T_i}(t_i)$.
  Then either $x\notin |\St_{T_i}(t_i)|$ or $x$ is on the boundary
  of $|\St_{T_i}(t_i)|$. The boundary case follows by the continuity
  of $h_{i+1}$ from the other case, so we can assume that $x$
  is not in $|\St_{T_i}(t_i)|$. 
  Then $h_{i+1}(x)=h_i(x)$. On the other hand, $t_i\notin |\tau|$
  (by the definition of a star), so $J(\tau,i)=J(\tau,i+1)$. So
  by \eqref{eq:IndHypSup} we have
  $$d(x,h_{i+1}(x))=d(x,h_i(x))\le \sum_{j\in J(\tau,i)}\e_j= \sum_{j\in J(\tau,i+1)}\e_j.$$
  Suppose now that $\tau\in \St_{T_i}(t_i)$. Since we already dealt
  with $x$ which is on the boundary of the star, we can w.l.o.g.
  assume that $\tau\notin\Lk_{T_i}(t_i)$. Then, $t_i\in |\tau|$ and so
  $J(\tau,i+1)=J(\tau,i)\cup \{i\}$ and so by \eqref{eq:Ineqhip1e} and
  \eqref{eq:IndHypSup}
  we have
  $$d(x,h_{i+1}(x))\le d(x,h_i(x))+\e_i
  \le \sum_{j\in J(\tau,i)}\e_j+\e_i=\sum_{j\in J(\tau,i+1)}\e_j$$
  which was to be proven.
  Let us show that from this construction it follows for all $i$ that
  $d(x,h_i(x))$ is bounded by $\e(x)$. Let
  $x\in |T|$ and let $\tau\in T_i$ be such that $x\in |\tau|$. Note
  that $J(\tau,i)\subset I(\tau)$ where the terms are defined in
  \eqref{eq:etaubound} and \eqref{eq:IndHypSup}. By the construction, 
  \begin{equation}
    \label{eq:Boundonhi}
    d(x,h_i(x))\le \sum_{j\in J(\tau,i)}\e_j\subset \sum_{j\in I(\tau)}\e_j\stackrel{\eqref{eq:etaubound}}{\le}\e_{\tau}\le \e(x).
  \end{equation}
  In the case of a finite sequence $\bar t$,
  the proof is complete by setting $h=h_i$ and $\bar q=\bar q^i$ where
  $i$ is the length of $\bar t$. 

  Suppose $\bar t$ is infinite. By the local finiteness of $\bar t$ and the choice of $h_i(x)$, the sequence
  $(h_i(x))_{i\in\N}$ is eventually constant for any fixed $x\in
  |T|$. Thus, the limit $\lim_{i\to\infty} h_i(x)$ is well-defined and
  we can set $h(x)=\lim_{i\to\infty} h_i(x)$ for all $x\in |T|$.
  By \eqref{eq:Boundonhi}, $d(x,h(x))\le \e(x)$.
    
  By the choice of $q_i$ and $h_{i+1}$, we have that
  $d(t_i,q_i)\le d(t_i,h_i(t_i))+d(h_i(t_i),q_i)\le \e(t_i)+\e_i$.  Let
  $\tau\in T$ be such that $t_i\in |\tau|$. Assume w.l.o.g. that $\e$
  is bounded by $1$. Then $\e(t_i)+\e_i\le 2$, so we have
  $d(t_i,q_i)\le 2$. By local finiteness of $\bar t$, this implies
  that $\bar q$ is locally finite too.  Thus, $T*\bar q$ is
  well-defined. It remains to show that $h$ is simplicial from
  $T*\bar t$ to $T*\bar q$. First, let us show that $h$ is bijective.
  If $x,y\in |T|$ with $x\ne y$, pick large enough $i$ such that
  $h(x)=h_j(x)$ and $h(y)=h_j(y)$ for all $j>i$. Since $h_j$ is
  one-to-one, it follows that $h(x)\ne h(y)$ and so $h$ is one-to-one
  too. Suppose $y\in |T|$. By the local finiteness of $\bar t$, there
  is $i$ such that $h_j\rest B(y,1)=h_{j'}\rest B(y,1)$ for
  all $j,j'>i$. Thus, for any $j>i$ and any $x\in B(y,1)$, we have
  $h(x)=h_j(x)$.  By \eqref{eq:Boundonhi}, $h_j^{-1}(y)\in B(y,\e(y))\subset B(y,1)$ (we are still assuming w.l.o.g. that $\e(x)\le 1$),
  so $h(h_j^{-1}(y))=y$. This proves that $h$ is onto. By similar
  arguments of restricting $h$ to bounded neighborhoods, it is easy to
  see that $h$ and $h^{-1}$ are continuous, so it is a
  homeomorphism. Suppose $\sigma\in T*\bar t$. By definition, there is
  $i_0$ such that $\sigma\in T*\bar t^j$ for all $j>i_0$. Let
  $i_1\ge i_0$ be such that $h\rest |\sigma|=h_j\rest |\sigma|$ for
  all $j>i_1$. Then, since $h_j$ is simplicial from $T*\bar t^j$ to
  $T*\bar q^j$, we have that
  $h_j\circ\sigma=h\circ\sigma\in T*\bar q^j$ for all $j>i_1$.  Thus,
  $h\circ\sigma\in T*\bar q$. This completes the proof.
\end{proof}

\begin{Prop}\label{prop:SubdivisionAdjustment}
  Suppose $S$ and $T$ are two triangulations of a metric space $(X,d)$ which have a
  common subdivision. Suppose $\e\colon X\to\R_+$ is any continuous function.
  Then there are $\bar r\in \Q(S)$ and
  $\bar q\in \Q(T)$ and a simplicial map
  $f\colon S*\bar r\to T*\bar q$ such that for all $x\in X$,
  $d(x,f(x))\le\e(x)$.
\end{Prop}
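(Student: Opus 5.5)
We combine Theorem~\ref{thm:CommonStellarSubdivision} with Lemma~\ref{lemma:Adjust}, applied once on each side. Since $S$ and $T$ have a common subdivision, Theorem~\ref{thm:CommonStellarSubdivision} gives locally finite sequences $\bar s,\bar t\subset X$ with $S*\bar s=T*\bar t$. Call this common complex $R$. The identity map of $X$ is then simplicial from $S*\bar s$ to $T*\bar t$. The only remaining defect is that $\bar s$ and $\bar t$ need not consist of rational points of $S$ and $T$. We fix this by moving each sequence by a small simplicial adjustment, and we split the error budget evenly, $\e/2$ on each side.

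\emph{Step 1.} Apply Lemma~\ref{lemma:Adjust} to the complex $S$, the locally finite sequence $\bar s$, and the function $\e/2$. This gives a locally finite $\bar r\subset\Q(S)$ and a simplicial map $g\colon S*\bar s\to S*\bar r$ with $g(s_i)=r_i$ and $d(x,g(x))\le\e(x)/2$ for all $x\in X$.

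\emph{Step 2.} Apply Lemma~\ref{lemma:Adjust} to $T$, $\bar t$, and a function $\e'$ still to be chosen. This gives a locally finite $\bar q\subset\Q(T)$ and a simplicial map $k\colon T*\bar t\to T*\bar q$ with $d(y,k(y))\le\e'(y)$.

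\emph{Step 3.} Set $f=k\circ g^{-1}$. Inverses and compositions of simplicial homeomorphisms are simplicial, and $g^{-1}$ is simplicial from $S*\bar r$ to $S*\bar s=T*\bar t$. Hence $f$ is simplicial from $S*\bar r$ to $T*\bar q$.

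The step that needs care is the distance estimate. Fix $x\in X$ and put $y=g^{-1}(x)$. By the triangle inequality,
\[
d(x,f(x))\le d(x,y)+d(y,k(y))\le \tfrac{\e(y)}{2}+\e'(y),
\]
where the first bound uses $d(y,g(y))\le\e(y)/2$ and $g(y)=x$. These bounds are evaluated at $y$, not at $x$. To close the gap we shrink the functions fed to Lemma~\ref{lemma:Adjust} to control the shift in argument, and for this we use local compactness of $X$. Choose a continuous $\delta\colon X\to\R_+$ with the following property: whenever $d(x,y)\le\delta(y)$, we have $\e(y)\le 2\e(x)$ and $\delta(y)\le 2\delta(x)$.

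One way to build $\delta$ is simplex by simplex. Using the local finiteness in Definition~\ref{def:simplicial_complex}\ref{def:simplicial_complex-3}, each $|\tau|$ has a compact neighbourhood on which $\e$ is bounded above and below by positive constants. Take $\delta$ to be a small fraction of the minimum of $\e$ over that neighbourhood, and then smooth it into a continuous function by a partition of unity or an infimum-plus-Lipschitz construction. Alternatively, take $\delta(y)=\min(\e(y)/4,\rho(y))$, where $\rho$ is a $1$-Lipschitz positive function such that $\e$ varies by at most a factor of $2$ on $B(y,\rho(y))$.

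Now run Step 1 with $\min(\e/4,\delta)$ in place of $\e/2$, and Step 2 with $\e'=\e/4$. Then $d(x,y)\le\delta(y)$ holds, so $\e(y)\le 2\e(x)$, and the two error terms give
\[
d(x,f(x))\le \tfrac{\e(y)}{4}+\tfrac{\e(y)}{4}=\tfrac{\e(y)}{2}\le\e(x).
\]

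The main obstacle is therefore the construction of this modulus $\delta$ on a possibly non-compact $X$. Everything else is a direct combination of Theorem~\ref{thm:CommonStellarSubdivision} and Lemma~\ref{lemma:Adjust}.
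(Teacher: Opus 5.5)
Your proof is correct and follows the paper's route. Theorem~\ref{thm:CommonStellarSubdivision} gives $S*\bar s=T*\bar t$, Lemma~\ref{lemma:Adjust} is applied once on each side, $f$ is one adjustment composed with the inverse of the other, and an auxiliary modulus $\delta$ handles the fact that the error bounds are evaluated at $g^{-1}(x)$ rather than at $x$.

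The only difference is how $\delta$ is built. The paper normalizes $\varepsilon\le 1$ and sets $\delta(x)=\min\{\tfrac12\varepsilon(y)\mid y\in\overline{B(x,1)}\}$, which tacitly needs compact closed unit balls. Your Lipschitz modulus $\rho$ uses only the continuity of $\varepsilon$, so it works in an arbitrary metric space. Your first recipe, a fixed fraction of $\min\varepsilon$ on a neighbourhood, would have to use the uniform-continuity modulus of $\varepsilon$ there instead, but the alternative you give is fine.
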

\begin{proof}
  We can assume w.l.o.g. that $\e$ is bounded from above by~$1$.
  By Theorem~\ref{thm:CommonStellarSubdivision} let
  $\bar s$ and $\bar t$ be such that
  $S*\bar s=T*\bar t$.
  If $\bar s\subset \Q(S)$ and $\bar t\subset \Q(T)$,
  then let $f$ be the identity and we are done.
  Otherwise define $\delta\colon X\to \R_{+}$ by
  $$\delta(x)=\min\Big\{\frac{1}{2}\e(y)\mid y\in \overline{B(x,1)}\Big\}.$$
  By local compactness, $\delta$ is well-defined and always positive.
  Also $\delta(x)\le\frac{1}{2}\e(x)$ for all $x\in X$.
  By Lemma~\ref{lemma:Adjust},
  find $\bar r\subset \Q(S)$ and $\bar q\subset \Q(T)$
  such that there are simplicial maps
  $g\colon S*\bar s\to S*\bar r$
  and
  $h\colon T*\bar t\to T*\bar q$
  such that for all $x\in X$, $d(x,g(x))\le\delta(x)$
  and $d(x,h(x))\le\delta(x)$.
  Let $f=h\circ g^{-1}$. Clearly $f$ is simplicial from $S*\bar r$
  to $T*\bar q$. Let $x\in X$. We will show that $d(x,f(x))\le\e(x)$.
  First note that
  \begin{align*}
    d(x,f(x))&\le d(x,g^{-1}(x))+d(g^{-1}(x),h(g^{-1}(x)))\\
             &=d\left(g(g^{-1}(x)),g^{-1}(x)\right)+d\left(g^{-1}(x),h(g^{-1}(x))\right)\\
             &\le 2\delta(g^{-1}(x))\\
             &=2\min\Big\{\frac{1}{2}\e(y)\mid y\in \overline{B\big(g^{-1}(x),1\big)}\Big\}
  \end{align*}
  Let $y_0=g^{-1}(x)$. Then
  $d(x,y_0)=d(g(y_0),y_0)\le \delta(y_0)\le \frac{1}{2}\e(y_0)\le
  \frac{1}{2}<1$, so we have $x\in B(y_0,1)=B\big(g^{-1}(x),1\big)$.
  It follows that
  $$d(x,f(x))\le 2\cdot\frac{1}{2}\e(x)=\e(x).$$
  which completes the proof.
\end{proof}

\begin{Prop}
    \label{prop:CommonQsubd}
    Suppose $S$ and $T$ are both $\Q$-subdivisions of $Z$.
    Then there is $\bar z\subset\Q(Z)$ such that
    $Z*\bar z$ is a common $\Q$-subdivision of $S$ and $T$.
\end{Prop}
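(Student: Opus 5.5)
We will use \cite[Theorem 3.2]{IW26} (Theorem~\ref{thm:CommonStellarSubdivision}) together with a rationality argument. The key observation is that a common $\Q$-subdivision of $S$ and $T$ exists naturally: since $S\subd_\Q Z$ and $T\subd_\Q Z$, for each simplex $\kappa\in Z$ the simplexes of $S$ and $T$ inside $|\kappa|$ pull back under $\kappa^{-1}$ to finite rectilinear triangulations of $\bDelta^{\dim\kappa}$ with rational vertices (Lemma~\ref{lemma:QsubdivisionIFF}). Intersections of rational rectilinear simplexes are rational convex polyhedra, and these can be triangulated without new vertices beyond rational points. Hence there is a common subdivision $W$ of $S$ and $T$ whose vertices lie in $\Q(Z)$, i.e.\ $W\subd_\Q Z$. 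To keep $W$ a simplicial complex globally, we triangulate the cell complex of intersections $|\sigma|\cap|\tau|$ by induction on skeleta, e.g.\ by starring each cell at a rational interior point such as its vertex barycenter.

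Next, we want $W$ to be of the form $Z*\bar z$. For this we invoke Theorem~\ref{thm:CommonStellarSubdivision} for the pair $Z$ and $W$: since $W\subd Z$, they have a common subdivision, namely $W$ itself. So there are locally finite $\bar s,\bar w$ with $Z*\bar s=W*\bar w$. This does not directly give a stellar subdivision of $Z$ refining $S$ and $T$, but it does show $Z*\bar s\subd W$, which refines both $S$ and $T$. It remains to ensure rationality of $\bar s$.

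This is the main obstacle: the points produced by Theorem~\ref{thm:CommonStellarSubdivision} need not lie in $\Q(Z)$. Our first attempt is to inspect the proof in \cite{IW26}. Typically such proofs obtain $\bar s$ as vertices of the common subdivision (or barycenters of its cells). If the construction places the $s_i$ among vertices of $W$ and barycenters of simplexes of $W$ and $Z$, rationality holds automatically, because $V(W)\subset\Q(Z)$ and barycenters of rational simplexes are rational.

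If that inspection fails, we fall back on an adjustment argument in the spirit of Lemma~\ref{lemma:Adjust}. We move each $s_i$ within its carrier to a nearby rational point, and apply the resulting simplicial map. This only yields a complex PL-isomorphic to $Z*\bar s$ rather than one refining $S$ and $T$ exactly, so this route is weaker.

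Finally, given $\bar z\subset\Q(Z)$ with $Z*\bar z\subd S$ and $Z*\bar z\subd T$, Lemma~\ref{lemma:Collection3}\ref{fact:Q-stellarQ2} gives $Z*\bar z\subd_\Q Z$, so $V(Z*\bar z)\subset\Q(Z)=\Q(S)=\Q(T)$ by Lemma~\ref{lemma:QsubdivisionIFF}. A second application of Lemma~\ref{lemma:QsubdivisionIFF} ((2)$\Rightarrow$(1)) then yields $Z*\bar z\subd_\Q S$ and $Z*\bar z\subd_\Q T$, as required.
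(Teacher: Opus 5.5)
Your proposal has a genuine gap, and it sits exactly at the step you flag as the main obstacle. That step carries the whole content of the proposition.

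Theorem~\ref{thm:CommonStellarSubdivision} produces locally finite $\bar s,\bar w$ with $Z*\bar s=W*\bar w$. It gives no control over where the $s_i$ lie. Guessing that the proof in \cite{IW26} only stars at vertices of $W$ or at barycenters is not an argument. That some stellar subdivision of $Z$ refines a prescribed subdivision at all is a nontrivial theorem, and its rational form is precisely what has to be proved here.

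Your fallback fails for the reason you suspect. Lemma~\ref{lemma:Adjust} gives $\bar q\subset\Q(Z)$ and a simplicial $h\colon Z*\bar s\to Z*\bar q$. Then $Z*\bar q=h^*(W*\bar w)=h^*W*h[\bar w]$ refines $h^*W$, hence $h^*S$ and $h^*T$. It does not refine $S$ and $T$, because $h$ moves the simplexes of $W$.

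What you need is a rational version of Theorem~\ref{thm:CommonStellarSubdivision} for $\Q$-subdivisions of $Z$. The paper obtains it from Theorem~1.1 of \cite{AP24}: for finite $S,T,Z$ in $\R^N$ with vertices in $\Q^N$ and $S,T\subd_\Q Z$, there is a finite $\bar z\subset\Q^N$ such that $Z*\bar z$ is a common $\Q$-subdivision. A finite $Z$ is put in this position by sending $V(Z)$ bijectively onto the standard basis of $\R^N$, $N=|V(Z)|$, and extending linearly on each simplex. This carries $\Q(Z)$ onto the rational points of the image.

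For infinite $Z$, the paper exhausts $Z$ by a locally finite sequence of finite subcomplexes $Z_i$. It builds inductively $\bar s^i,\bar t^i\subset\Q(Z)$ with $\hat S_i*\bar s^i=\hat T_i*\bar t^i$, using the finite case on $Z_{i+1}$ glued to the previous stage by Lemma~\ref{lemma:ExtedingSubdivisions}. Taking unions gives $\bar s,\bar t\subset\Q(Z)$ with $S*\bar s=T*\bar t$. The paper applies this to $(S,Z)$, then to $(Z*\bar z,T)$, and interlaces.

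With that rational claim in hand, your own route also closes: apply it once to the pair $(Z,W)$, noting $W\subd_\Q Z$ by Lemma~\ref{lemma:QsubdivisionIFF}. This avoids the interlacing, at the cost of constructing $W$, which the paper does not need. Your last paragraph, deducing $Z*\bar z\subd_\Q S$ and $Z*\bar z\subd_\Q T$ from Lemmas~\ref{lemma:Collection3} and~\ref{lemma:QsubdivisionIFF}, is correct.
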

\begin{proof}
  Let us first prove it for finite complexes. Theorem 1.1 of
  \cite{AP24} implies that if $S$, $T$, and $Z$ are
  finite complexes in $\R^N$ with vertices in $\Q^N$ such that
  $S\subd_\Q Z$ and $T\subd_\Q Z$, then there is a finite sequence
  $\bar z\subset \Q^N$ such that $Z*\bar z$ is a $\Q$-subdivision of
  both $S$ and $T$.  So it is enough to show that given a finite
  complex $Z$, it can be embedded into $R^N$ for large enough $N$ via
  $f\colon |Z|\to \R^N$ such that $f[\Q(Z)]=\Q^N\cap f|Z|$ and $f|Z|$
  is the realization of a complex in \(\R^N\). This is standard, but
  let us give a proof for the sake of completeness.  For instance, let
  $N$ be the cardinality of $V(Z)$, let
  $\eta\colon V(Z)\mapsto \{\ee_0,\dots,\ee_{N-1}\}$ be a bijection,
  and map $\kappa\in Z$ linearly onto $\Delta[\eta V(\kappa)]$. It is
  easy to verify that this is as needed.

  Let us now consider the case of infinite complexes $S$, $T$ and $Z$.
  \begin{claim}
    There are $\bar s,\bar t\subset \Q(Z)$ such that
    $S*\bar s=T*\bar t$.
  \end{claim}
  \begin{proof}
    This proof is similar to that of \cite[Claim
    3.2.1]{IW26}.  Let $Z_0, Z_1, \dots $ be a
    locally finite sequence of finite subcomplexes of $Z$ such that
    $\Cup_{i\in\N}Z_i=Z$ and $Z_0$ is a single vertex. Let
    $\hat Z_i=\Cup_{j\le i}Z_j$, $S_i=S\rest Z_i$, $T_i=T\rest Z_i$,
    $\hat S_i=S\rest \hat Z_i$, and $\hat T_i=T\rest \hat Z_i$.  Then
    $S_i$ and $T_i$ are $\Q$-subdivisions of~$Z_i$ and by
    Lemma~\ref{lemma:QsubdivisionIFF}, $\Q(S_i)=\Q(T_i)=\Q(Z_i)$.

    By induction on $i$ we will find sequences $\bar s^i$ and
    $\bar t^i$ such that $\hat S_i*\bar s^i=\hat T_i*\bar t^i$.
    Suppose $\bar s^0=\bar t^0$ are equal to the empty sequence. Since
    $T_0=S_0=\hat T_0=\hat T_0=Z_0=\hat Z_0$ is a single vertex, we
    have $S_0*\bar s^0=T_0*\bar t^0$.  This starts the
    induction.  Suppose $\bar s^i,\bar t^i\subset \Q(\hat Z_i)\subset\Q(Z)$ have been
    found such that $\hat S_i*\bar s^i=\hat T_i*\bar t^i$. In particular, $S_{i+1}*\bar s^i=T_{i+1}*\bar t^i$. By the
    finitary case of this proposition (proved above), we can find
    $\bar r,\bar q\subset \Q(\hat Z_{i+1})\subset \Q(Z)$ such that
    $S_{i+1}*\bar s^i*\bar r=T_{i+1}*\bar t^i*\bar q$.  We now apply
    Lemma~\ref{lemma:ExtedingSubdivisions} (with
    $S_0=\hat S_{i}$, $S_1=S_{i+1}$, $S=\hat S_{i+1}$, $T_0=\hat T_i$,
    $T_1=T_{i+1}$, $T=\hat T_{i+1}$, $\bar s_0=\bar s^{i}$,
    $\bar s_1=\bar r$, $\bar t_0=\bar t^{i}$, and $\bar t_1=\bar q$) to obtain that
    $\hat S_{i+1}*\bar s^i*\bar r=\hat T_{i+1}*\bar t^i*\bar q$, so we
    can set $\bar s^{i+1}=\bar s^i\cat\bar r$,
    $\bar t^{i+1}=\bar t^i\cat \bar q$, and set
    $\bar s=\Cup_{i\in\N} \bar s^i$ and
    $\bar t=\Cup_{i\in\N}\bar t^i$.
  \end{proof}

  Now apply the claim to $S$ and $Z$ to get $\bar s,\bar z\subset\Q(Z)$
  such that $S*\bar s=Z*\bar z$. Then apply it to $Z*\bar z$ and $T$
  to get $\bar z',\bar t\subset \Q(Z)$ such that
  $(Z*\bar z)*\bar z'=T*\bar t$. Then, by ``interlacing''
  $\bar z$ and $\bar z'$, just like in the final part
  (after \eqref{thm:CommonStellarSubdivision}) of
  the proof of Theorem~\ref{thm:CommonStellarSubdivision}, we can find
  $\bar w\subset \Q(Z)$ such that $(Z*\bar z)*\bar z'=Z*\bar w$.
  Clearly $Z*\bar w$ is a common $\Q$-subdivision of $S$ and $T$.
\end{proof}

If $T$ is a complex and $f\colon X\to |T|$ is a homeomorphism, denote
by $f_*T:=\{f^{-1}\circ\kappa\mid \kappa\in T\}$ the \textbf{pullback}
of the triangulation. Similarly, if $f\colon |T|\to X$ is a
homeomorphism, a \textbf{pushforward} is given by $f^*T:=(f^{-1})_*T$.

\begin{Fact}\label{fact:SimplicialPushSubd}
  Suppose $T$ is a simplicial complex and $f\colon X\to |T|$ a
  homeomorphism.  Then $f_*T$ is a simplicial complex in \(X\), and
  $f\colon f_*T\to T$ is a simplicial map.  Dually, for pushforward,
  if $g\colon |T|\to X$ is a homeomorphism, then $g^*T$ is a
  simplicial complex in \(X\) and $g$ is simplicial from $T$ to
  $g^*T$. In particular, by Lemma~\ref{lem:SimplicialQQ},
  $g[\Q(T)]=\Q(g^*T)$.  Suppose $T_0$ and $T_0'$ are also complexes.  If
  $h\colon T_0\to T'_0$ is simplicial, then $h^*T_0=T_0'$, and if
  $T_0\subd T$ and $h\colon |T|\to X$ is a homeomorphism, then
  $h^*T_0\subd h^*T$.  Also, $h^{*}[T*\bar r]=h^*T*h[\bar r]$.
\end{Fact}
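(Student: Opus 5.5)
The plan is to verify each assertion of Fact~\ref{fact:SimplicialPushSubd} directly from the definitions, in the order stated; the only operation involved is composing embeddings with a homeomorphism.

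\textbf{Pullback is a complex.} For $f\colon X\to|T|$ a homeomorphism and $\kappa\in T$, the map $f^{-1}\circ\kappa$ is an embedding $\bDelta^n\to X$, hence a simplex in $X$. I would check the three conditions of Definition~\ref{def:simplicial_complex} in turn.
\begin{enumerate-(1)}
\item Faces: a face $\lambda$ of $\kappa$ satisfies $(f^{-1}\kappa)^{-1}(f^{-1}\lambda)=\kappa^{-1}\lambda$, so the face relation is unchanged.
\item Intersections: $|f^{-1}\kappa_0|\cap|f^{-1}\kappa_1|=f^{-1}(|\kappa_0|\cap|\kappa_1|)$, so the common face $\lambda$ in $T$ pulls back to the required common face.
\item Local finiteness: the preimage $f^{-1}U$ of a neighbourhood $U$ of $|\kappa|$ is a neighbourhood of $|f^{-1}\kappa|$ meeting the same finitely many simplexes.
\end{enumerate-(1)}
The equivalence relation of Definition~\ref{def:EquivalenceOfSimplexes} is respected because $\kappa^{-1}\lambda$ is again unchanged. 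Since $f\circ(f^{-1}\circ\kappa)=\kappa\in T$ and $f$ is a homeomorphism, $f\colon f_*T\to T$ is simplicial. The pushforward claim is the special case $g^*T=(g^{-1})_*T$ applied to $g^{-1}\colon X\to|T|$. The equality $g[\Q(T)]=\Q(g^*T)$ then follows from Lemma~\ref{lem:SimplicialQQ}.

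\textbf{$h^*T_0=T_0'$.} If $h\colon T_0\to T_0'$ is simplicial, then $h^*T_0=\{h\circ\kappa\mid\kappa\in T_0\}\subseteq T_0'$ by definition. For the reverse inclusion, I would argue as follows. Every $\sigma'\in T_0'$ with $|\sigma'|\subset h|T_0|=|T_0'|$ contains an interior point $y$. Let $\kappa=\Carr_{T_0}(h^{-1}(y))$. By Lemma~\ref{lemma:CarrierRelation}, $h\kappa=\Carr_{T_0'}(y)=\sigma'$ up to equivalence, and this gives equality.

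\textbf{Subdivisions and stellar subdivisions.} For $T_0\subd T$ and $h\colon|T|\to X$ a homeomorphism, we have $|h^*T_0|=h|T_0|=h|T|=|h^*T|$. Given $\sigma\in T_0$, choose $\kappa\in T$ with $|\sigma|\subset|\kappa|$ and $\kappa^{-1}\sigma$ rectilinear. Then $(h\kappa)^{-1}(h\sigma)=\kappa^{-1}\sigma$, which is still rectilinear. For the last identity I would first treat a single stellar step $T*z$. Every ingredient of \eqref{eq:DefOfStellarSubd} is preserved by $h$: the star and link correspond; $h(z)\in|h\kappa|$ iff $z\in|\kappa|$; and
\[
h\circ\Delta[V(\lambda)\cup\{z\}]=\Delta[hV(\lambda)\cup\{h(z)\}],
\]
because Definition~\ref{def:DeltaInComplex} computes $\Delta$ inside $\kappa$ and $h\kappa$ with the same coordinates $\kappa^{-1}(x)$. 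Induction then handles finite $\bar r$. For infinite locally finite $\bar r$, I would use Definition~\ref{def:StellarSubdivision}: $\liminf$ of sets commutes with the bijection $\sigma\mapsto h\circ\sigma$, and local finiteness is preserved because $h$ maps compacta to compacta.

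The main obstacle is only bookkeeping. The step needing care is the reverse inclusion $T_0'\subseteq h^*T_0$, handled via carriers as above. Compatibility of $\Delta$ with the choice of ambient simplex in the stellar step is the only other point needing attention, and it is exactly the independence of choice already recorded in Definition~\ref{def:DeltaInComplex}.
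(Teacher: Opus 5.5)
Your proposal is correct and is the routine unwinding of definitions that the paper leaves implicit, since it records this statement as a Fact without proof: every clause follows because $\sigma\mapsto h\circ\sigma$ is a bijection on simplexes that preserves faces, realizations and the coordinates $\kappa^{-1}(x)$, and therefore also preserves $\Delta$, stars, links and the $\liminf$ in Definition~\ref{def:StellarSubdivision}. One cosmetic point: the equality $\Carr_{T_0'}(y)=\sigma'$ for $y$ interior to $\sigma'$ comes directly from condition~\ref{def:simplicial_complex-2} of Definition~\ref{def:simplicial_complex}, not from Lemma~\ref{lemma:CarrierRelation}, whose first part already presupposes that $\sigma'$ is a carrier.
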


\begin{thm}[PL-homeomorphic complexes are $\Q$-PL-homeomorphic]\label{thm:QPL-homeo}
  Let $T$ and $T'$ be triangulations of metric spaces $(X,d)$ and
  $(X',d')$ respectively.  Suppose $h\colon T\to T'$ is a
  PL-homeomorphism and $\e\colon |T|\to \R_+$ is any continuous function.  Then
  there is a $\Q$-PL-homeomorphism $g\colon T\to T'$ such that for all
  $x\in |T|$, $d'(h(x),g(x))\le \e(x)$.
\end{thm}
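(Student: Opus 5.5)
We will reduce the statement to Proposition~\ref{prop:SubdivisionAdjustment} by transporting $T'$ back to $X$. Since $h$ is a PL-homeomorphism, there are subdivisions $S\subd T$ and $S'\subd T'$ such that $h\colon S\to S'$ is simplicial.

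\emph{Step 1: pull back.} Consider the pullback $h_*T'$, a triangulation of $X$. By Fact~\ref{fact:SimplicialPushSubd}, $h\colon h_*T'\to T'$ is simplicial and $h_*S'\subd h_*T'$. Since $h$ is simplicial from $S$ to $S'$, we have $h_*S'=S$. Hence $S$ is a common subdivision of $T$ and $h_*T'$, which are two triangulations of the metric space $(X,d)$.

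\emph{Step 2: adjust.} Using the continuity of $h$ and local compactness, choose a continuous $\delta\colon X\to\R_+$ such that $d(x,y)\le\delta(x)$ implies $d'(h(x),h(y))\le\e(x)$. This is done on each compact $|\tau|$, $\tau\in T$, by uniform continuity, and the resulting bounds are glued into a continuous positive function, much as in the definition of $\delta$ in the proof of Proposition~\ref{prop:SubdivisionAdjustment}. Apply Proposition~\ref{prop:SubdivisionAdjustment} to $T$, $h_*T'$ and $\delta$. This gives $\bar r\subset\Q(T)$, $\bar q\subset\Q(h_*T')$ and a simplicial map $f\colon T*\bar r\to (h_*T')*\bar q$ with $d(x,f(x))\le\delta(x)$ for all $x$.

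\emph{Step 3: push forward and check the $\Q$-condition.} Set $g=h\circ f$. By Fact~\ref{fact:SimplicialPushSubd}, $h$ is simplicial from $(h_*T')*\bar q$ to $h^*[(h_*T')*\bar q]=T'*h[\bar q]$. Since $h$ is simplicial from $h_*T'$ to $T'$, Lemma~\ref{lem:SimplicialQQ} gives $h[\bar q]\subset h\Q(h_*T')=\Q(T')$. Lemma~\ref{lemma:Collection3}\ref{fact:Q-stellarQ2} then yields $T*\bar r\subd_\Q T$ and $T'*h[\bar q]\subd_\Q T'$. Here the $\omega$-stellar case must also be covered: it follows from Lemma~\ref{lemma:QsubdivisionIFF}, because every vertex of $T*\bar r$ lies in $V(T)\cup|\bar r|\subset\Q(T)$. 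Thus $g$ is simplicial between $\Q$-subdivisions, i.e.\ a $\Q$-PL-homeomorphism $T\to T'$. Finally, $d'(h(x),g(x))=d'(h(x),h(f(x)))\le\e(x)$ by the choice of $\delta$.

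\emph{Main obstacle.} The hardest step is constructing $\delta$ in Step 2. It must be continuous and positive while converting the $\e$-closeness required in $X'$ into $d$-closeness in $X$; this needs a local uniform continuity argument for $h$ on compact neighbourhoods. One first takes $\eta(x)=\sup\{r\le 1 \mid d'(h(x),h(y))\le\e(x)/2\text{ whenever }d(x,y)\le r\}$ and then takes a continuous minorant of it. A second point to check is that the $\omega$-stellar vertex description, Lemma~\ref{lemma:Collection3}\ref{fact:Q-stellarQ1}, extends to infinite locally finite sequences. This should follow from Definition~\ref{def:StellarSubdivision}, since each simplex of $T*\bar r$ already appears in some finite stage.
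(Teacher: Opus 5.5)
Your argument is correct, but it runs the construction on the opposite side from the paper. The paper pushes $T$ forward instead of pulling $T'$ back. It sets $S=h^*T$, notes that $h^*T_0$ is a common subdivision of $S$ and $T'$ in $X'$, and applies Proposition~\ref{prop:SubdivisionAdjustment} in $X'$ with $\delta=\e\circ h^{-1}$. This gives $f\colon S*\bar r\to T'*\bar q$ close to the identity of $X'$. Then $g=f\circ h$ is simplicial from $T*h^{-1}[\bar r]$ to $T'*\bar q$, and the estimate is immediate: $d'(h(x),g(x))=d'(h(x),f(h(x)))\le\delta(h(x))=\e(x)$.

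Because the paper adjusts in $X'$, where the error is measured, its $\delta$ is trivially continuous and positive. The step you call the main obstacle therefore never arises: converting $d$-closeness into $d'$-closeness through a local modulus of continuity of $h$, and then taking a continuous positive minorant. Your route does go through, since local compactness of $|T|$ makes $h$ uniformly continuous on compact neighbourhoods. These must be genuine neighbourhoods such as stars, not the simplexes $|\tau|$ themselves, because $f(x)$ can leave $|\tau|$.

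So your version costs this extra technical step and gains nothing else. The $\Q$-bookkeeping is symmetric: you need $h[\bar q]\subset\Q(T')$, the paper needs $h^{-1}[\bar r]\subset\Q(T)$. One point in your favour: you explicitly check, via Lemma~\ref{lemma:QsubdivisionIFF}, that Lemma~\ref{lemma:Collection3}\ref{fact:Q-stellarQ2} extends to infinite locally finite sequences. The paper uses this without comment.
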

\begin{proof}
  Let $h\colon T\to T'$ be a PL-homeomorphism. Then there are
  subdivisions $T_0$ and $T_0'$ of $T$ and $T'$ respectively such that
  $h$ is simplicial from $T_0$ to $T_0'$.  Let $S=h^*T$.  Then
  $T'_0=h^*T_0$ is a common subdivision of $S$ and $T'$.  Let
  $\delta\colon X'\to \R_+$ be defined by
  $\delta(x)=\e(h^{-1}(x))$. By
  Proposition~\ref{prop:SubdivisionAdjustment}, let
  $\bar r\subset \Q(S)$ and $\bar q\subset \Q(T')$ be such that there
  is a simplicial $f\colon S*\bar r\to T'*\bar q$ with
  $d(x,f(x))\le \delta(x)$ for all $x\in X'$. Let $R:=h_*[S*\bar r]$ and
  $Q:=T'*\bar q$.  Then $h^{-1}$ is simplicial from $S*\bar r$ to
  $R=T*h^{-1}[\bar r]$ where
  $h^{-1}[\bar r]=(h^{-1}(r_i))_{i\in \N}$. By
  Fact \ref{fact:SimplicialPushSubd}, $h\Q(T)=\Q(S)$, so
  $h^{-1}[\bar r]\subset \Q(T)$.  Let
  $g=f\circ h$. Then $g$ is simplicial from $R$ to $Q$ which are
  $\Q$-subdivisions of $T$ and $T'$ respectively by
  Lemma~\ref{lemma:Collection3}\ref{fact:Q-stellarQ2}. Let us check
  that $d'(h(x),g(x))\le \e(x)$.  We have
  $$d'(h(x),g(x))= d'(h(x),f(h(x))\le \delta(h(x))=\e(h^{-1}(h(x)))=\e(x)$$
  which completes the proof.
\end{proof}



\vspace{10pt}

\subsubsection{The classification theorem}
\label{sssec:PLClass}

In this section we will use the results proved so far
to show that each simplicial complex can be associated with
a sorted complemented algebra as a complete invariant of PL-homeomorphism.

\begin{Def}\label{def:QPolyhedron}
  Let $T$ be a simplicial complex. A \textbf{polyhedron in} $T$ is a
  set $P\subset |T|$ such that for some subcomplex $P'$ of a
  subdivision of $T$ we have $P=|P'|$.  It is \textbf{compact}
  (resp. \textbf{co-compact}), if $P'$ can be chosen to be finite
  (resp. co-finite, i.e. $T\setminus P'$ is finite).  It is a
  $\Q$-\textbf{polyhedron} if we can choose $P'$ such that $P'$ is a
  subcomplex of a $\Q$-subdivision of~$T$.
\end{Def}

\begin{Fact}\label{fact:CanChooseFiniteStellarSubd}
  If $P$ is a $\Q$-polyhedron in $T$, then by
  Proposition~\ref{prop:CommonQsubd} there is $\bar z\subset \Q(T)$
  such that $P$ is the realization of a subcomplex of $T*\bar z$. If
  $P$ is compact or co-compact, then $\bar z$ can be chosen to be
  finite.
\end{Fact}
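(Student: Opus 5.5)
The plan is to unwind Definition~\ref{def:QPolyhedron} and then use Proposition~\ref{prop:CommonQsubd} to replace an arbitrary $\Q$-subdivision by a stellar one. Since $P$ is a $\Q$-polyhedron, fix a $\Q$-subdivision $T'\subd_\Q T$ and a subcomplex $P'\subset T'$ with $P=|P'|$. By reflexivity (Lemma~\ref{lem:SubdQTrans}), $T\subd_\Q T$ also holds. Applying Proposition~\ref{prop:CommonQsubd} with $Z=T$, $S=T'$ and the second complex equal to $T$, we obtain $\bar z\subset\Q(T)$ such that $T*\bar z$ is a common $\Q$-subdivision of $T'$ and $T$; in particular $T*\bar z\subd T'$.

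Next, every simplex of $T'$ is a union of simplexes of $T*\bar z$. Let $P''=\{\sigma\in T*\bar z\mid |\sigma|\subset |P'|\}$. This set is closed under faces, so it is a subcomplex. Its realization is $|P'|$: for $x\in |P'|$, pick $\kappa\in P'$ with $x\in|\kappa|$. Some $\sigma\in T*\bar z$ contains $x$ and satisfies $|\sigma|\subset|\kappa'|$ for some $\kappa'\in T'$. Choose $\sigma$ of minimal dimension, i.e. $\sigma=\Carr_{T*\bar z}(x)$, so that $x\in\overset{\circ}{\sigma}$. Because $|\sigma|$ lies in a single simplex of $T'$ and $T*\bar z$ subdivides $T'$, the open simplex $\overset{\circ}{\sigma}$ lies in the open carrier of $x$ in $T'$, which is a face of $\kappa$. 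Hence $|\sigma|\subset|\kappa|$, and therefore $P=|P''|$ with $P''$ a subcomplex of $T*\bar z$.

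For the finiteness clause, suppose $P$ is compact. Then $P''$ is finite, since $P$ meets only finitely many simplexes of the locally finite complex $T*\bar z$ (condition~\ref{def:simplicial_complex-3}). The stellar sequence, however, may be infinite, so I truncate it. Only finitely many $z_i$ can affect the simplexes meeting a compact neighbourhood $N$ of $P$, by local finiteness of $\bar z$. Take $n$ so large that $T*\bar z$ and $T*\bar z^n$ agree on all simplexes meeting $P$. More concretely, by Definition~\ref{def:StellarSubdivision} each of the finitely many $\sigma\in P''$ belongs to $T*\bar z^j$ for all $j\ge i_\sigma$, so let $n=\max_\sigma i_\sigma$. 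Then $P''\subset T*\bar z^n$, and $\bar z^n$ is finite.

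In the co-compact case, $T\setminus P'$ is finite in the sense of the definition, so $C=\overline{|T|\setminus P}$ is compact. Apply the same argument to the finitely many simplexes of $T*\bar z$ meeting $C$, which are eventually stable. Choose $n$ beyond all indices $i$ with $z_i$ in the finite subcomplex $\St$ of the simplexes of $T$ meeting $C$. This is possible by local finiteness. Then define $\bar z'$ to consist only of those $z_i$ (with $i<n$) lying in that star. Outside the star, the original $T$-simplexes already lie in $P$, so the subcomplex $\{\sigma\in T*\bar z'\mid |\sigma|\subset P\}$ still realizes $P$.

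I expect this last step to be the main obstacle: checking that discarding the points $z_i$ far from $C$ does not change the subcomplex structure near $C$. Stellar subdivisions at points outside $\St$ touch only simplexes in $P$'s interior region, so by Lemma~\ref{lemma:ExtedingSubdivisions}-style locality the two subdivisions agree near $C$. I would verify this using the fact that $T*z$ only alters $\St_T(z)$.
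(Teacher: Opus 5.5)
Your first two steps are what the paper's one-line appeal to Proposition~\ref{prop:CommonQsubd} intends: apply it with $Z=T$, $S=T'$ and $T$ itself (by reflexivity), then take $P''=\{\sigma\in T*\bar z\mid |\sigma|\subset P\}$ and check $|P''|=P$ with the carrier argument. Your compact case is also correct: truncating at $n=\max_{\sigma\in P''} i_\sigma$ via Definition~\ref{def:StellarSubdivision} supplies a detail the paper leaves implicit.

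The co-compact case has a gap exactly at the step you flagged. Passing to the subsequence $\bar z'$ of those $z_i$ lying in $|\St|$ changes the complexes on which the remaining stellar moves act. The fact you propose to use is that $T*z$ only removes simplexes containing $z$ and adds simplexes inside their union. That shows only that moves at points outside $|\St|$ leave the simplexes contained in $|\St|$ untouched. You also need the converse locality. Suppose $K_1,K_2$ are subdivisions of $T$ with the same simplexes inside $|\St|$, and $z\in|\St|$. Then $K_1*z$ and $K_2*z$ must have the same simplexes inside $|\St|$, i.e. $(K*z)\rest|\St|$ depends only on $K\rest|\St|$. This is true. Every simplex of a subdivision of $T$ either lies in $|\St|$ or has relative interior disjoint from $|\St|$. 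Hence a cone $\Delta[V(\lambda)\cup\{z\}]$ lies in $|\St|$ iff the simplex of $K$ spanned by $V(\lambda)\cup V(\Carr_K(z))$ does, and such $\lambda$ are exactly the link of $z$ in $K\rest|\St|$. But this must be proved, and then an induction along the sequence is needed to get $(T*\bar z')\rest|\St|=(T*\bar z)\rest|\St|$. As written, your sketch does not provide it.

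The detour is unnecessary: keep the whole prefix instead of discarding points.
\begin{enumerate-(1)}
\item Since $|\St|$ is compact, only finitely many simplexes of $T*\bar z$ lie in $|\St|$. Choose $n$ so that all of them belong to $T*\bar z^j$ for every $j\ge n$.
\item For $x\in P\setminus|\St|$, the simplex $\Carr_T(x)$ misses $C$. So the carrier of $x$ in any subdivision of $T$ lies in $|\Carr_T(x)|\subset|T|\setminus C\subset P$.
\item For $x\in|\St|$, the simplex $\sigma=\Carr_{T*\bar z}(x)$ lies in $|\Carr_T(x)|\subset|\St|$, using that $T*\bar z\subd T$. Hence $\sigma\in T*\bar z^n$ with $x\in\overset{\circ}{\sigma}$, so $\sigma=\Carr_{T*\bar z^n}(x)$, and $|\sigma|\subset P$ by your second step.
\end{enumerate-(1)}
Thus $\{\sigma\in T*\bar z^n\mid |\sigma|\subset P\}$ realizes $P$ with the finite sequence $\bar z^n$. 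The co-compact case is then the same truncation argument as the compact one, with no locality lemma needed.
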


\begin{Lemma}\label{lemma:QpolyhIFF}
  Let $S$ and $T$ be complexes.  If $S\subd_\Q T$, then $P$ is a
  $\Q$-polyhedron in $S$ if and only if it is a $\Q$-polyhedron in~$T$. If
  $f\colon S\to T$ is simplicial, then $P$ is a $\Q$-polyhedron of $S$
  if and only if $fP$ is a $\Q$-polyhedron of~$T$.
\end{Lemma}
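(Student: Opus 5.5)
For the first assertion I would reduce everything to the transitivity of $\subd_\Q$ (Lemma~\ref{lem:SubdQTrans}) together with Proposition~\ref{prop:CommonQsubd}. Assume $S\subd_\Q T$. If $P$ is a $\Q$-polyhedron in $S$, then $P=|P'|$ with $P'$ a subcomplex of some $S'\subd_\Q S$. Transitivity gives $S'\subd_\Q T$, so $P$ is a $\Q$-polyhedron in $T$.

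Conversely, suppose $P=|P'|$ with $P'$ a subcomplex of some $T'\subd_\Q T$. Both $S$ and $T'$ are $\Q$-subdivisions of $T$, so Proposition~\ref{prop:CommonQsubd} yields $\bar z\subset\Q(T)$ such that $W:=T*\bar z$ is a common $\Q$-subdivision of $S$ and $T'$. In particular $W\subd_\Q S$. It remains to find a subcomplex of $W$ with realization $P$. Take $P''=\{\sigma\in W\mid |\sigma|\subset P\}$. This is closed under faces, and the other axioms of Definition~\ref{def:simplicial_complex} are inherited from $W$, so $P''$ is a subcomplex.

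To see $|P''|=P$, use $W\subd T'$. Every $\sigma\in W$ lies in some $\kappa\in T'$ with $\kappa^{-1}\sigma$ rectilinear. Choose $\kappa$ to be the carrier of an interior point of $\sigma$. Then $\overset{\circ}{\sigma}\subset\overset{\circ}{\kappa}$ (up to the parametrization), so $|\sigma|\subset P$ as soon as $\overset{\circ}{\sigma}$ meets $P$, since $P$ is a union of closed simplexes of $T'$. Every point of $P$ lies in the interior of its carrier in $W$, hence $P\subset|P''|$; the reverse inclusion holds by definition. This carrier argument, showing that a subdivision's simplexes lie in simplexes of the coarser complex so that a subcomplex of the coarse complex is a union of fine simplexes, is the only non-formal point of the proof.

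For the second assertion, let $f\colon S\to T$ be simplicial and let $P=|P'|$ with $P'$ a subcomplex of $S'\subd_\Q S$. By Fact~\ref{fact:SimplicialPushSubd}, $f^*S'\subd f^*S=T$ and $f$ is simplicial from $S'$ to $f^*S'$, so $fP=|f^*P'|$ with $f^*P'=\{f\circ\sigma\mid\sigma\in P'\}$ a subcomplex of $f^*S'$. To see that $f^*S'\subd_\Q T$, apply Lemma~\ref{lem:SimplicialQQ} twice: $\Q(f^*S')=f\Q(S')$ and $\Q(T)=f\Q(S)$. Since $S'\subd_\Q S$, Lemma~\ref{lemma:QsubdivisionIFF} gives $\Q(S')=\Q(S)$, so $\Q(f^*S')=\Q(T)$, and Lemma~\ref{lemma:QsubdivisionIFF} again gives $f^*S'\subd_\Q T$. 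The converse follows by applying the same argument to $f^{-1}$, which is simplicial from $T$ to $S$.
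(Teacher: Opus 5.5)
Your proposal is correct and follows the paper's proof essentially step for step: transitivity of $\subd_\Q$ together with Proposition~\ref{prop:CommonQsubd} for the first assertion, and Fact~\ref{fact:SimplicialPushSubd} with Lemmas~\ref{lem:SimplicialQQ} and~\ref{lemma:QsubdivisionIFF} for the second. Your carrier argument spells out the step the paper states only as $P=|Z\rest P'|$. Your chain $\Q(f^*S')=f\Q(S')=f\Q(S)=\Q(T)$ is the paper's reasoning, with its slip ``$S'\subd_\Q T$'' corrected to $S'\subd_\Q S$.
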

\begin{proof}
  Suppose $S\subd_\Q T$ and $P$ is a $\Q$-polyhedron of $S$. Then by
  definition there is $S'\subd_\Q S$ such that $P=|P'|$ for a
  subcomplex $P'$ of $S'$. By Lemma~\ref{lem:SubdQTrans}
  $S'\subd_\Q T$, so $P$ is a $\Q$-polyhedron in $T$. If $P$ is a
  $\Q$-polyhedron in $T$, let $T'\subd_\Q T$ be such that $P=|P'|$ for
  some $P'\subset T'$. By Proposition~\ref{prop:CommonQsubd} there is
  a common $\Q$-subdivision $Z$ of $T'$ and $S$.  Then
  $P=|Z\rest P'|$, so it is a $\Q$-polyhedron in~$S$.
    
  For the second part, assume that $f\colon S\to T$ is simplicial and
  $P$ is a $\Q$-polyhedron in $S$. Let $S'\subd_\Q S$ be such that $P=|P'|$
  for some $P'\subset S'$.  Then $f$ is simplicial from $S'$ to
  $f^*S'$ (Fact~\ref{fact:SimplicialPushSubd}) which is a
  $\Q$-subdivision of $T$ (to see this, use that \(S' \subd_{\Q} T\)
  and so \(\Q(S')=\Q(T)\) by Lemma \ref{lemma:QsubdivisionIFF}, but by
  Lemma \ref{lem:SimplicialQQ} $f\Q(S')=\Q(f^*S')$, and thus
  \(f^*S' \subd_{\Q} T\) by Lemma \ref{lemma:QsubdivisionIFF}), so
  $fP$ is a $\Q$-polyhedron in~$T$. For the converse, let $fP$ be a
  $\Q$-polyhedron in~$T$. Then by definition there are
  \(T' \subd_{\Q} T\) and a subcomplex \(P'\subset T'\) such that
  \(fP=|P'|\). It follows that \(f\) is simplicial from \(f_*T'\) to
  \(T'\), and \(f_*T'\) is a subdivision of \(S\). Then \(f^{-1}fP=P\)
  is a $\Q$-polyhedron in~$S$.
\end{proof}

\begin{Def}\label{def:BetaFromT}
  Suppose $T$ is a simplicial complex and $X=|T|$.  Let
  $$\beta_T=\{\int_{X}(P)\mid P\text{ is a compact or co-compact }\Q\text{-polyhedron in }T\}$$
\end{Def}

\begin{prop}\label{prop:CCLCP}
  Let $T$ be a simplicial complex and $X=|T|$.
  Then $(X,\beta_T)$ is a CCLCP basis space, and $\beta_T$ is a basis
  for the original topology on~$X$.
\end{prop}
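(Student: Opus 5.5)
I will verify the axioms \ref{def:BC3}--\ref{def:BC5} of Definition~\ref{def:CLCP} one by one for $\beta_T$, enumerated without repetitions as $(b_n)_{n\in\N}$. First, countability and \ref{def:BC4}. By Fact~\ref{fact:CanChooseFiniteStellarSubd}, every compact or co-compact $\Q$-polyhedron is the realization of a subcomplex of $T*\bar z$ for some finite $\bar z\subset\Q(T)$. The sets $\Q(T)$ and $T$ are countable, so there are only countably many finite sequences $\bar z$. Each $T*\bar z$ then has countably many finite subcomplexes and countably many co-finite subcomplexes, so $\beta_T$ is countable. We list it without repetitions; $\beta_T$ is infinite, so the enumeration is indexed by $\N$ (if $T$ is empty this degenerates and I would treat it separately or assume $X\ne\es$). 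For \ref{def:BC3}, $|T|$ is a countable locally finite union of compact metrizable simplices in a metric space. Hence it is locally compact, separable and metrizable, and therefore Polish. Once I show $\beta_T$ is a basis of the original topology, this also gives $\la\beta_T\ra$ equal to that topology. \ref{def:BC2} holds because $\es$ and $X=|T|$ are both compact or co-compact $\Q$-polyhedra.

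\textbf{Basis property and \ref{def:BC5}, \ref{def:BC6}, \ref{def:BC9}.} Given $x$ and an open $U\ni x$, I would use Lemma~\ref{lemma:FindPointsQ} to find rational points near $x$. Then I would pass to a fine $\Q$-subdivision, namely iterated rational stellar subdivisions approximating barycentric subdivision, so that $\St(\Carr(x))$ has compact closure inside $U$. Let $P$ be the closed star of $x$ in that subdivision, taken as a subcomplex. Then $P$ is a compact $\Q$-polyhedron, $x\in\int P$, and $P\subset U$. So $b=\int P$ has compact closure $P\subset U$, which gives the basis property, \ref{def:BC6}, and \ref{def:BC5}.

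For \ref{def:BC9}, let $\overline{b_n}$ be compact and $\overline{b_n}\subset b_m$. I would cover the compact set $\overline{b_n}$ by finitely many such star-interiors inside $b_m$ and take the union $P$ of the corresponding closed stars. Since the stars live in different subdivisions, I first pass to a common $\Q$-subdivision by Proposition~\ref{prop:CommonQsubd}, so that this union is again a compact $\Q$-polyhedron. Its interior then contains $\overline{b_n}$, and its closure lies in $b_m$.

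\textbf{Complements: \ref{def:BC1}, \ref{def:BC7}, \ref{def:BC8}.} Let $b=\int P$, with $P=|P'|$ and $P'$ a subcomplex of a $\Q$-subdivision $T'$. The key claim is that $\overline{\int P}$ is again a $\Q$-polyhedron, namely the realization of the subcomplex generated by those simplices of $P'$ whose open simplices lie in $\int P$. Set $Q=X\setminus\int P$. This is the closure of $X\setminus P$, i.e. the realization of the subcomplex generated by the simplices of $T'$ not in $P'$. It is a $\Q$-polyhedron, compact if $P$ is co-compact and co-compact if $P$ is compact. I would then check that $X\setminus\overline{b}=\int(Q')$ for the $\Q$-polyhedron $Q'$ generated by the simplices whose interiors avoid $\overline b$. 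This gives \ref{def:BC1}. Moreover, co-compactness of $P$ forces $X\setminus b$ to be compact, and compactness of $P$ forces $\overline b$ to be compact, which gives \ref{def:BC8}.

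For \ref{def:BC7}, i.e. regularity of $b$ (Fact~\ref{fact:InteriorReg}), I would argue that $\int(\overline{\int P})=\int P$. A point of $\int\overline{\int P}$ lies in an open simplex of $T'$. By local finiteness it has a neighbourhood meeting only simplices of $\overline{\int P}\subset P$, so the point lies in $\int P$.

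\textbf{Main obstacle.} I expect the main difficulty to be the bookkeeping showing that interiors, closures and complements of $\Q$-polyhedra are again $\Q$-polyhedra of the right compactness type. This needs a combinatorial description of $\int|P'|$ and $\overline{\int|P'|}$ through open simplices of a single subdivision $T'$. Local finiteness (condition \ref{def:simplicial_complex-3}) ensures these unions of open simplices behave well. If this description gets messy, my fallback is to first replace $T'$ by a derived (rational stellar) subdivision in which $P'$ is full, where interior and closure are easier to read off.
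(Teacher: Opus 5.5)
Your proposal is correct and follows essentially the paper's proof: countability through finite rational stellar subdivisions, closed stars in fine barycentric $\Q$-subdivisions for the basis property, \ref{def:BC5} and \ref{def:BC6}, and regularity of interiors of closed sets for \ref{def:BC7}. The differences are that you spell out the complement axioms \ref{def:BC1} and \ref{def:BC8}, which the paper calls obvious, and that for \ref{def:BC9} you merge finitely many point-stars via Proposition~\ref{prop:CommonQsubd} instead of taking the single star $\St_{S'}(\overline{b_1})$. One caveat, shared with the paper: $\beta_T$ is infinite only when $|T|$ is infinite. For a finite $0$-dimensional complex (e.g.\ a single vertex, where $\beta_T=\{\es,X\}$), condition \ref{def:BC4} cannot hold for an $\N$-indexed enumeration. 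So the statement tacitly needs $|T|$ infinite (cf.\ Lemma~\ref{lemma:Infinite}), not just $T\neq\es$.
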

\begin{proof}
  First let us show that $\beta_T$ is countable. Note that \(\Q(T)\)
  is countable.  For each finite $\bar z\subset\Q(T)$ denote by
  $C(\bar z)$ the set of all finite and co-finite subcomplexes of
  $T*\bar z$. Then $C(\bar z)$ is countable.  Let
  $$Z'=\Cup\{C(\bar z)\mid \bar z\in \Q(T)^{<\o}\}.$$
  Then $Z=\{|P'|\mid P'\in Z'\}$ contains all compact and co-compact
  $\Q$-polyhedra in $T$. Since $\Q(T)^{<\o}$ is countable, also
  $\beta_T$ is, because the interior operation defines a map from $Z$
  onto~$\beta_T$.
  
  Let us show that $\beta_T$ is a basis for the original topology
  on~$X$.  Each $b\in\beta_T$ is clearly open in $X$, so it is enough
  to show that for all open $U\subset X$ and all $x\in U$ there is
  $b\in\beta_T$ with $x\in b\subset U$. So let $x\in U\subset X$ be
  such.  Let $S\subd_\Q T$ be a \(\Q\)-subdivision so fine that
  $|\St_S(x)|\subset U$. One can obtain it by taking repeated
  barycentric subdivisions.  Note that barycentric subdivisions are
  $\Q$-subdivisions.  Now, $\int_X(|\St_{S}(x)|)\in\beta_T$, so we are
  done (Condition~\ref{def:simplicial_complex-3} of
  Definition~\ref{def:simplicial_complex} ensure that the interior of
  the star of any $x$ is an open neighborhood of~$x$). This also
  proves condition~\ref{def:BC5} of Definition~\ref{def:CLCP}.

  Let us prove condition \ref{def:BC9}. Suppose
  $b_1\in\beta_T$ is such that $\overline{b_1}$ is compact 
  and $b_2$ such that $\overline{b_1}\subset b_2$.
  Then $\overline{b_1}$ is a compact $\Q$-polyhedron and
  $b_2$ is an open neighborhood of each point of $\overline{b_1}$.
  Let $S$ be a $\Q$-subdivision of $T$ such that both
  $\overline{b_1}$ and $\overline{b_2}$ are 
  realization of subcomplexes of $S$.
  By compactness and Condition~\ref{def:simplicial_complex-3}
  of Definition~\ref{def:simplicial_complex} there is
  an open neighbourhood $U$ of $\overline{b_1}$ contained into $b_2$
  which intersects only finitely many simplexes of $S$.
  Let $S'$ be a sufficiently fine $\Q$-subdivision of $S$
  such that $P:=\St_{S'}(\overline{b_1})$ is contained in $U$.
  For instance, obtain $S'$ by repeated barycentric 
  subdivisions. Then $|P|$ is a compact $\Q$-polyhedron.
  Let $b:=\int(|P|)$. Then $b\in\beta_T$,  
  $\overline{b_1}\subset b$, and $\overline{b}\subset U\subset b_2$.
  
  Let us make sure that also conditions \ref{def:BC3}--\ref{def:BC4}
  are satisfied.  Condition \ref{def:BC3} follows because $|T|$ is
  Polish, see e.g. Fact~\ref{fact:locally_compact_Ksigma}.  By the
  construction of $\beta_T$, conditions \ref{def:BC1}--\ref{def:BC2}
  are also obvious.  Condition \ref{def:BC7} follows from
  Fact~\ref{fact:InteriorReg}. Condition~\ref{def:BC4} is
  automatically satisfied once we enumerate $\beta_T$ because
  $\beta_T$ was defined as a set.
\end{proof}

\begin{thm}[Characterization of $\homeo^{PL}$ by $\,\equiv$]%
    \label{thm:ClassificationOFPL-complexesByBasisSpaces}
    Let $S$ and $T$ be simplicial complexes. Then $S\homeo^{PL}T$ if and only if $(|S|,\beta_S)\equiv(|T|,\beta_T)$.
\end{thm}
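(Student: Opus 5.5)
For the forward direction, assume $S\homeo^{PL}T$ via a PL-homeomorphism $h$. By Theorem~\ref{thm:QPL-homeo} we replace $h$ by a $\Q$-PL-homeomorphism $g\colon S\to T$; the constant function $\e\equiv 1$ suffices. So there are $S'\subd_\Q S$ and $T'\subd_\Q T$ such that $g\colon S'\to T'$ is simplicial. We claim that $g$ witnesses $(|S|,\beta_S)\equiv(|T|,\beta_T)$.

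Let $b=\int(P)\in\beta_S$, where $P$ is a compact or co-compact $\Q$-polyhedron in $S$. By Lemma~\ref{lemma:QpolyhIFF}, applied first to $S'\subd_\Q S$ and then to the simplicial map $g\colon S'\to T'$, the set $gP$ is a $\Q$-polyhedron in $T'$. Applying the same lemma to $T'\subd_\Q T$, it is also a $\Q$-polyhedron in $T$. Compactness is preserved because $g$ is a homeomorphism. For co-compactness, we argue as follows. By Fact~\ref{fact:CanChooseFiniteStellarSubd}, $P$ is the realization of a cofinite subcomplex of some finite stellar $\Q$-subdivision; equivalently, $\overline{|S|\setminus P}$ is a compact $\Q$-polyhedron. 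The image of that compact polyhedron is again a compact $\Q$-polyhedron. Its complement closure is then a subcomplex with finitely many missing simplices of a suitable common $\Q$-subdivision, obtained via Proposition~\ref{prop:CommonQsubd}. So $gP$ is co-compact. Since $g$ is a homeomorphism, $g[\int P]=\int(gP)\in\beta_T$. The symmetric argument with $g^{-1}$, which is simplicial from $T'$ to $S'$, handles preimages. Finally, $g$ is a bijection $|S|\to|T|$, which completes the forward direction.

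For the converse, suppose $h\colon|S|\to|T|$ witnesses the equivalence. By Fact~\ref{fact:Homeo}, $h$ is a homeomorphism. We must show it is PL, possibly after modification; the plan is to show directly that $h$ itself is PL. Take any simplex $\sigma$ of any finite $\Q$-subdivision of $S$. Then $\int(\St(\sigma))$ and related sets lie in $\beta_S$. Compact polyhedra can be recovered from interiors: $P=\overline{\int P}$ when $P$ is a homogeneous (regular closed) polyhedron. So $h$ maps closures of basic sets, in particular stars and closed simplices that are regular closed in $|S|$, onto compact $\Q$-polyhedra of $T$.

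The key step, and the main obstacle, is to upgrade ``$h$ maps a rich countable family of polyhedra to polyhedra'' into ``$h$ is piecewise linear''. Our first attempt is this. Points of $\Q(S)$ can be characterized as intersections of suitable basic polyhedra: for instance, a rational vertex $v$ is the unique point lying in all closed stars $\overline{\int\St_{S*v*\cdots}(v)}$, each of which is a $\Q$-polyhedron whose combinatorial ``cone point'' it is. Intersections of $\Q$-polyhedra are $\Q$-polyhedra. Hence $h$ maps the polyhedral structure generated by these sets, namely the cell decompositions given by finite Boolean combinations of basic closures, onto $\Q$-polyhedral cell structures of $T$.

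Concretely, for a locally finite family of compact basic polyhedra covering $|S|$ whose closures generate a triangulation $S''\subd_\Q S$ as their common refinement, the images form a cell structure of $\Q$-polyhedra in $T$ with the same combinatorics. We then triangulate both sides compatibly by iterated barycentric-type stellar subdivision at $\Q$-points, matched through $h$. This produces a homeomorphism $h'$ that is simplicial between $\Q$-subdivisions and agrees with $h$ on the matched cells, so $S\homeo^{PL}T$. If handling nonhomogeneous (lower-dimensional) pieces is problematic, we treat each dimension stratum separately, using that the links of points determine local dimension.
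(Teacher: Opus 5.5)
Your forward direction is the paper's argument: Theorem~\ref{thm:QPL-homeo} gives a simplicial $g\colon S'\to T'$ between $\Q$-subdivisions, Lemma~\ref{lemma:QpolyhIFF} is applied three times, and homeomorphisms preserve interiors. Your extra check of co-compactness is fine. It can be shortened: for a $\Q$-polyhedron, co-compactness just says the complement is relatively compact, which $g$ preserves.

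The converse has a genuine gap, and your first plan, to show that the witness $h$ itself is PL, cannot succeed. Take $S=T$ the triangulation of $\R$ with vertex set $\mathbb{Z}$, so $\Q(T)=\Q$. Let $h$ be the identity outside $[0,1]$, with $h(0)=0$, and affine on each $[\frac{1}{k+1},\frac{1}{k}]$ with $h(\frac{1}{k})=\frac{1}{k^2}$. Then $h$ is an increasing homeomorphism with $h[\Q]=\Q$. So $h$ and $h^{-1}$ send $\Q$-polyhedra (locally finite unions of rational points and rational closed intervals) to $\Q$-polyhedra, preserving compactness and co-compactness, and $h$ witnesses $(\R,\beta_T)\equiv(\R,\beta_T)$. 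Yet its breakpoints $\frac1k$ accumulate at $0$, so $h$ is not simplicial for any (locally finite) subdivisions. Here $S\homeo^{PL}T$ holds via the identity; the point is that the witness need not be PL.

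So a different map must be built, and that is where your sketch stops being a proof. The images $h|\sigma|$ of the simplices of $S''$ are compact $\Q$-polyhedra homeomorphic to simplices, with the right incidences. To build a PL-homeomorphism cell by cell (extending over skeleta by coning), however, you need each $h|\sigma|$ to be a PL ball whose PL boundary is $h|\partial\sigma|$. Nothing you prove gives this, and it is not automatic for polyhedra: the cone on the double suspension of the Poincar\'e homology sphere is homeomorphic to $\bDelta^6$ but is not a PL ball. Likewise, ``stellar subdivision at $\Q$-points matched through $h$'' has no meaning. Since $h$ is not linear on $\sigma$, $h|\sigma|$ is not a cone from $h(b)$ on $h|\partial\sigma|$ in any PL sense, and subdividing $\sigma$ at $b$ induces no subdivision of $h|\sigma|$. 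The existence of $h'$ is therefore asserted rather than constructed. This step carries the whole Hauptvermutung-type content of the direction: a homeomorphism preserving $\Q$-polyhedra must force the existence of some PL-homeomorphism. Your remark about treating dimension strata separately does not address it.

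For comparison, the paper does not argue this step directly either. It concludes in one line, citing Hudson's Theorem~3.6(c), that $h$ itself is a PL-homeomorphism, which by the example above cannot hold as stated. What this direction needs is an argument, or a precise reference, producing \emph{some} PL-homeomorphism from a $\beta$-preserving homeomorphism, and your proposal does not supply one.
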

\begin{proof}
    Suppose $S\homeo^{PL}T$. By Theorem \ref{thm:QPL-homeo}, 
    $S\homeo^{\Q PL}T$,
    so there are $S'\subd_\Q S $ and $T'\subd_\Q T$
    and a simplicial map $f\colon S'\to T'$. Let $b\in \beta_S$ and let $P$
    be a $\Q$-polyhedron in $S$ such that $b=\int(P)$. Applying Lemma~\ref{lemma:QpolyhIFF}
    three times,
    we have that $fP$ is a $\Q$-polyhedron in $T$. 
    Since $f$ is a homeomorphism, it preserves interior, so $f[b]\in\beta_T$.
    It follow by a symmetric argument that if $b\in \beta_T$, then $f^{-1}[b]\in\beta_S$.

    Conversely, if the basis spaces are equivalent, then the map witnessing it is
    a homeomorphism $h$ from $(|S|,\la\beta_S \ra)$ to
    $(|T|,\la\beta_T\ra)$ which preserves the basic sets and hence
    preserves all the $\Q$-polyhedra. By an application of Theorem
    3.6(c.)  of \cite{hudson1969pl}, $h$ is a PL-homeomorphism (by our
    definition; note that in \cite{hudson1969pl} the definitions, like
    that of a PL map, are different).
\end{proof}

\begin{Def}\label{def:A_T}
  For a simplicial complex $T$, let $\AA_T:=\psi(|T|,\beta_T)$, where \(\psi\) is the map of Definition~\ref{def:map_from_BBB_to_AAA}. 
  By Proposition~\ref{prop:CCLCP}
  $(|T|,\beta_T)$ is a CCLCP basis space and so by
  Lemma~\ref{lemma:CCLCP_is_AAA}, $\AA_T$ is a sorted
  complemented algebra.
\end{Def}

\begin{thm}[Characterization of $\homeo^{PL}$ by $\,\cong_\AAA$]%
  \label{thm:ClassificationOfSimplicialComplexes}
  Let $S$ and $T$ be simplicial complexes.
  Then $S\homeo^{PL}T$ if and only
  if $\AA_{S}\cong \AA_{T}$.  
\end{thm}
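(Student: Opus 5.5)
This will be a direct composition of two characterizations already proved. By Definition~\ref{def:A_T}, $\AA_S=\psi(|S|,\beta_S)$ and $\AA_T=\psi(|T|,\beta_T)$. Proposition~\ref{prop:CCLCP} guarantees that both $(|S|,\beta_S)$ and $(|T|,\beta_T)$ are CCLCP basis spaces. So the hypotheses of Theorem~\ref{thm:PsiReduction} are met, and it gives
\[
(|S|,\beta_S)\equiv(|T|,\beta_T)\iff \psi(|S|,\beta_S)\cong\psi(|T|,\beta_T)\iff \AA_S\cong\AA_T .
\]

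Next, Theorem~\ref{thm:ClassificationOFPL-complexesByBasisSpaces} gives $S\homeo^{PL}T$ if and only if $(|S|,\beta_S)\equiv(|T|,\beta_T)$. Chaining the two equivalences yields $S\homeo^{PL}T\iff\AA_S\cong\AA_T$, which is the statement.

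The heavy lifting was done earlier, so there is no real obstacle here. That work was the blurry duality theorem (Theorem~\ref{thm:BlurryDuality}), which underlies the direction from isomorphism of the algebras to equivalence of basis spaces, and the $\Q$-PL adjustment (Theorem~\ref{thm:QPL-homeo}), which underlies the direction from PL-homeomorphism to equivalence. The only point to check is bookkeeping: $\beta_T$ is a set, and it must be enumerated without repetition as a sequence $(b_n)_{n\in\N}$ so that \ref{def:BC4} holds and $\psi$ is defined.

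Different enumerations of $\beta_T$ give isomorphic algebras. The identity map on $|T|$ witnesses that any two enumerations are equivalent basis spaces, so Lemma~\ref{lemma:BasisEqIso} applies. Hence $\AA_T$ is well defined up to isomorphism, and the statement is insensitive to the choice of enumeration.
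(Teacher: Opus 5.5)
Your proposal is correct and is exactly the paper's proof: use Proposition~\ref{prop:CCLCP} to know both basis spaces are CCLCP, then compose Theorem~\ref{thm:ClassificationOFPL-complexesByBasisSpaces} with Theorem~\ref{thm:PsiReduction}. Your extra remark that the choice of enumeration of $\beta_T$ is irrelevant (the identity map plus Lemma~\ref{lemma:BasisEqIso}) is correct bookkeeping that the paper leaves implicit. One caveat, which the paper also overlooks: an injective $\N$-indexed enumeration requires $\beta_T$ to be infinite, and this fails in the degenerate case where $|T|$ is finite, so there $\AA_T$ is not defined as stated.
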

\begin{proof}
  Follows by applying
  Theorem~\ref{thm:ClassificationOFPL-complexesByBasisSpaces} and
  Theorem~\ref{thm:PsiReduction}.
\end{proof}

\begin{Def}\label{def:PO_Qpolyhedra}
  Let $T$ be a simplicial complex.  Let $P_T$ be the set of all
  compact $\Q$-polyhedra in $T$
  (which are regular by Fact~\ref{fact:InteriorReg}). Let $\le_T$ be a partial ordering
  on $P_T$ defined by $p\le_Tq$ iff $p$ is contained in the interior
  of~$q$ or $p=q$.
\end{Def}

\begin{thm}[Characterization of $\homeo^{PL}$ by partial orders]%
  \label{thm:PL-class-simple}
  Let $S$ and $T$ be simplicial complexes.
  Then $S\homeo^{PL}T$ if and only if
  $(P_S,\le_S)\cong (P_T,\le_T)$.
\end{thm}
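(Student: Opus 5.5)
The plan is to reduce the statement to Theorem~\ref{thm:ClassificationOfSimplicialComplexes} via Proposition~\ref{prop:IsoOnKIsEnough}. By those two results,
$$S\homeo^{PL}T\iff \AA_S\cong\AA_T\iff (K^{\AA_S},\le^{\AA_S})\cong(K^{\AA_T},\le^{\AA_T}).$$
So it suffices to construct, for every simplicial complex $T$, an isomorphism of partial orders $\iota_T\colon (P_T,\le_T)\to (K^{\AA_T},\le^{\AA_T})$. Composing these isomorphisms then gives both directions of the theorem.

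Recall that $\AA_T=\psi(|T|,\beta_T)$, and that $K^{\AA_T}$ consists of the indices $n$ with $\overline{b_n}$ compact. I will define $\iota_T(p)$ as the index of $\int(p)$ in $\beta_T$. This makes sense because $p$ is a compact $\Q$-polyhedron, so $\int(p)\in\beta_T$ by Definition~\ref{def:BetaFromT}. Moreover $\overline{\int(p)}\subset p$ is compact, so $\iota_T(p)\in K^{\AA_T}$.

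The argument needs three facts.
\begin{enumerate-(1)}
\item \emph{Injectivity.} This uses that compact $\Q$-polyhedra are regular closed, i.e.\ $p=\overline{\int(p)}$, as asserted in Definition~\ref{def:PO_Qpolyhedra} via Fact~\ref{fact:InteriorReg}. Then $p$ is recovered from $\int(p)$.
\item \emph{Surjectivity.} Take $n\in K^{\AA_T}$, so $b_n=\int(P)$ for some compact or co-compact $\Q$-polyhedron $P$ with $\overline{b_n}$ compact. I will show that $q:=\overline{\int(P)}$ is a compact $\Q$-polyhedron. By Fact~\ref{fact:CanChooseFiniteStellarSubd}, $P=|P'|$ for a subcomplex $P'$ of some $T*\bar z$ with $\bar z\subset\Q(T)$. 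Replace $T*\bar z$ by its barycentric subdivision, which is still a $\Q$-subdivision. Then $\overline{\int(P)}$ is the realization of the subcomplex generated by the simplices whose open cells lie in $\int(P)$. This is a $\Q$-polyhedron, and it is compact because it is closed in the compact set $\overline{b_n}$. By regularity $\int(q)=\int(\overline{\int P})=\int(P)=b_n$ (Fact~\ref{fact:InteriorReg}), so $\iota_T(q)=n$.
\item \emph{Order preservation.} By \ref{def:Psi_item1}, $\iota_T(p)\le^{\AA_T}\iota_T(q)$ iff $\int(p)=\int(q)$ or $\overline{\int(p)}\subset\int(q)$. Using $p=\overline{\int(p)}$ and injectivity, this is exactly $p=q$ or $p\subset\int(q)$, i.e.\ $p\le_T q$.
\end{enumerate-(1)}

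The main obstacle is the regularity and closure-of-interior step. Strictly speaking, a compact $\Q$-polyhedron may have lower-dimensional parts; an example is an isolated edge inside a $2$-complex. For such a $p$ one has $p\ne\overline{\int(p)}$, so the map above fails to be injective on the raw set of all compact $\Q$-polyhedra. I would first follow the paper's convention that the elements of $P_T$ are regular, i.e.\ restrict $P_T$ to regular closed compact $\Q$-polyhedra, which is what the parenthetical remark in Definition~\ref{def:PO_Qpolyhedra} indicates.

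If instead one must handle all compact $\Q$-polyhedra, I would try a different route. First, show that the regular elements are first-order definable in $(P_T,\le_T)$, e.g.\ as those $p$ that are suprema of everything strictly $\le_T$-below them. Second, show that the order on non-regular elements is determined by their regularizations. Together these yield a bijection between isomorphisms of the full orders and isomorphisms of the regular suborders.

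The forward direction can also be checked directly, independently of this subtlety. A $\Q$-PL-homeomorphism obtained from Theorem~\ref{thm:QPL-homeo} maps compact $\Q$-polyhedra to compact $\Q$-polyhedra by Lemma~\ref{lemma:QpolyhIFF}, and it preserves interiors. Hence it induces an isomorphism $(P_S,\le_S)\cong(P_T,\le_T)$ on all compact $\Q$-polyhedra.
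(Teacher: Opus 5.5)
Your main argument is correct and is the paper's proof. The paper uses the inverse map $n\mapsto\overline{b_n}$ from $(K^{\AA_T},\le^{\AA_T})$ to $(P_T,\le_T)$ and then cites Proposition~\ref{prop:BasisToPPP} and Theorem~\ref{thm:ClassificationOFPL-complexesByBasisSpaces}, which unwind to exactly the results you cite. Your surjectivity step, showing that $\overline{b_n}$ is itself a compact $\Q$-polyhedron with $\int(\overline{b_n})=b_n$, justifies something the paper only asserts.

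Your caveat is also accurate. The parenthetical ``which are regular'' in Definition~\ref{def:PO_Qpolyhedra} is false, for instance for $\{0\}$ when $|T|=\R$. So the paper's proof, like yours, only covers $P_T$ read as the regular closed compact $\Q$-polyhedra.

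The fallback you sketch for the literal reading fails at both steps. Work in $|T|=\R$ with integer vertices.
\begin{enumerate-(1)}
\item The regular element $p=[0,1]$ is not the least upper bound of its strict lower set $\{r\mid r\subset(0,1)\}$. Indeed $[0,2]$ is an upper bound of that set, but $[0,1]\not\le_T[0,2]$ because $0\notin(0,2)$. So ``suprema of everything strictly below'' does not pick out regular elements.
\item The order on non-regular elements is not determined by their regularizations. Both $\{0\}$ and $\{5\}$ have regularization $\es$, yet $\{0\}\le_T[-1,1]$ while $\{5\}\not\le_T[-1,1]$. So the claimed bijection between isomorphisms of the full orders and of the regular suborders also fails.
\end{enumerate-(1)}
For the backward direction you only need that every order isomorphism $(P_S,\le_S)\to(P_T,\le_T)$ maps regular elements onto regular elements; the forward direction you already handle directly. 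That invariance would require a different, correct characterization, which neither you nor the paper supplies.
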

\begin{proof}
  Let $T$ be a simplicial complex.  Let $(b_n)_{n\in\N}$ be an
  enumeration of $\beta_T$.  Let $\AA:=\AA_T$. Recall that $A=\N$. By
  the definition of $\beta_T$ and condition~\ref{def:Psi_item5} of
  Definition~\ref{def:map_from_BBB_to_AAA}, we have that $n\in K^\AA$
  if and only if $\overline{b_n}\in P_T$. We also have that
  $n\le_{\AA} m$ if and only if either \(b_n=b_m\) or
  $\overline{b_n}\subset b_m$ if and only if $b_n\le_T b_m$. Thus, the
  map $n\mapsto \overline{b_n}$ is an isomorphism from
  $(K^{\AA},\le_{\AA})$ to $(P_T,\le_T)$.  By
  Definition~\ref{def:map_from_BBB_to_PPP}, we have that
  $(P_T,\le_T)\cong \psi^*(|T|,\beta_T)$.  Let $S$ be another complex
  and let $\AA'=\AA_S$.  By the same argument as above,
  $(P_S,\le_S)\cong (|S|,\beta_S)$. The statement of the theorem
  follows from Proposition \ref{prop:BasisToPPP} and
  Theorem~\ref{thm:ClassificationOFPL-complexesByBasisSpaces}.
\end{proof}



\vspace{10pt}

\subsection{Classification of 2 and 3-manifolds up to homeomorphism}
\label{sssec:Classification23man}

An \textbf{$n$-manifold with boundary} is a separable metric space each of whose
points has a closed neighborhood homeomorphic to~$\bDelta^n$. 
It is \textbf{without boundary}, if every point has an open neighborhood homeomorphic to $\obDelta^n$. 

A triangulation of an $n$-manifold $M$ is a simplicial complex $T$
such that $|T|=M$.  Bing
and Moise proved that $3$-manifolds, with or without boundary, admit a
triangulation which is unique up to PL-homeomorphism (the so-called Hauptvermutung for $3$-manifolds); see \cite[Theorem 9.1]{Mo54},
\cite[Theorems 3 and 4]{Mo52}, 
\cite[Theorems 6]{Bi59}, \cite[Theorem 5 and Corollary 1]{Bing54}.
The same holds for $2$-manifolds; see
\cite{Mo77} for statements and classical references, or \cite{Ra25}.

\begin{Fact}\label{fact:MoiseBing}(\cite{Mo52,Mo77})
  Assume $n\in \{2,3\}$, and let $M$ be an $n$-manifold, with or without boundary.
  Then there is a triangulation $T$ of $M$, and any two
  triangulations of $M$ are PL-homeomorphic. Consequently, two $n$-manifolds are homeomorphic if and only if their respective triangulations are PL-homeomorphic.
\end{Fact}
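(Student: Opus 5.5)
The existence and uniqueness parts of the statement are the classical theorems of Moise and Bing (and their $2$-dimensional predecessors), and I would not reprove them. I would take from \cite{Mo52,Mo54,Bi59,Bing54} for $n=3$ and \cite{Mo77} for $n=2$ two facts: every $n$-manifold $M$, with or without boundary, admits a triangulation; and for any two triangulations $T_0,T_1$ of the same $M$ there is a PL-homeomorphism $T_0\to T_1$. Only the ``consequently'' clause needs an argument, and I would derive it as follows.

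Let $M,N$ be $n$-manifolds with triangulations $T$ and $T'$.

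\emph{Easy direction.} Suppose $T\homeo^{PL}T'$. By Definition~\ref{def:rel_complexes}, a PL-homeomorphism is in particular a homeomorphism $|T|\to|T'|$, that is, $M\to N$.

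\emph{Converse.} Let $h\colon M\to N$ be a homeomorphism.
\begin{enumerate-(1)}
\item Form the pullback $h_*T'$. By Fact~\ref{fact:SimplicialPushSubd}, $h_*T'$ is a simplicial complex in $M$ and $h\colon h_*T'\to T'$ is simplicial. Since $|h_*T'|=M$, it is a second triangulation of $M$.
\item By uniqueness, there is a PL-homeomorphism $g\colon T\to h_*T'$. So there are subdivisions $S_1\subd T$ and $S_2\subd h_*T'$ such that $g\colon S_1\to S_2$ is simplicial.
\item Push $S_2$ forward along $h$. By Fact~\ref{fact:SimplicialPushSubd}, $h^*S_2\subd h^*(h_*T')=T'$, and $h$ is simplicial from $S_2$ to $h^*S_2$.
\item Hence $h\circ g$ is a composition of simplicial homeomorphisms $S_1\to S_2\to h^*S_2$, so it is simplicial.
\end{enumerate-(1)}
Since $S_1\subd T$ and $h^*S_2\subd T'$, the map $h\circ g$ is a PL-homeomorphism from $T$ to $T'$, and so $T\homeo^{PL}T'$.

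The only delicate point is the uniqueness statement. It must give a PL-homeomorphism $|T_0|\to|T_1|$ that is not necessarily the identity; the Hauptvermutung for dimension $\le 3$ gives exactly this, including the boundary case. It must also not assume the triangulations are rectilinear in a Euclidean space. The latter is handled by the paper's abstract definition of simplicial complexes and PL maps, which applies to the pulled-back complex $h_*T'$. I would check that the cited theorems are stated for arbitrary (curvilinear) triangulations of $M$, which they are.
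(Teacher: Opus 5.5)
Your proposal is correct and matches the paper's treatment. The paper gives no proof of this Fact: it cites Moise and Bing (and \cite{Mo77} for $n=2$) for existence and uniqueness of triangulations, and leaves the ``consequently'' clause implicit. Your argument supplies the routine justification of that clause: pull $T'$ back along $h$ to get a second triangulation of $M$, apply uniqueness, then push the resulting subdivision forward using Fact~\ref{fact:SimplicialPushSubd}.
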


For $n\in \{2,3\}$, associate to each $n$-manifold $M$ the sorted
complemented algebra $A_{T(M)}$, where $T(M)$ is a triangulation of
$M$. Recall the notation $A_T$ from Definition~\ref{def:A_T}. 

\begin{thm}[Characterization of homeomorphism on $2$ and $3$-manifolds by $\cong_\AAA$]%
   \label{thm:ClassificationOfManifolds}
   Let $n\in \{2,3\}$.
   Let $M$ and $M'$ be $n$-manifolds (with or without boundary).
   Then $M$ is homeomorphic to $M'$ if and only
   if $A_{T(M)}$ is isomorphic to~$A_{T(M')}$.  
\end{thm}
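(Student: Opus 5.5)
The plan is to reduce the statement to Theorem~\ref{thm:ClassificationOfSimplicialComplexes} by way of Fact~\ref{fact:MoiseBing}. First fix, for each $n$-manifold $M$ with $n\in\{2,3\}$, a triangulation $T(M)$; it exists by the first part of Fact~\ref{fact:MoiseBing}. This is the choice implicit in the definition of $A_{T(M)}$, and the argument below shows that the isomorphism type of $A_{T(M)}$ does not depend on it. We then need to show that $M\homeo M'$ if and only if $T(M)\homeo^{PL}T(M')$. Once that is in hand, Theorem~\ref{thm:ClassificationOfSimplicialComplexes} applied to $S=T(M)$ and $T=T(M')$ finishes the proof.

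For the forward direction, let $g\colon M\to M'$ be a homeomorphism. Transport $T(M')$ back along $g$ by taking the pullback $g_*T(M')$. By Fact~\ref{fact:SimplicialPushSubd}, $g_*T(M')$ is a simplicial complex in $M$ with realization $M$, so it is a triangulation of $M$, and $g$ is simplicial from $g_*T(M')$ to $T(M')$. By the uniqueness part of Fact~\ref{fact:MoiseBing}, $T(M)$ and $g_*T(M')$ are PL-homeomorphic. Composing this PL-homeomorphism with the simplicial map $g$ gives $T(M)\homeo^{PL}T(M')$.

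The one point needing care here is that a composition of PL-homeomorphisms is again a PL-homeomorphism; this is what \emph{transitivity} of $\homeo^{PL}$ amounts to. I would obtain it by chaining through the invariant rather than by hand. Theorem~\ref{thm:ClassificationOfSimplicialComplexes} characterizes $\homeo^{PL}$ by isomorphism of the algebras $\AA_S$ and $\AA_T$. Since $\cong$ is an equivalence relation, so is $\homeo^{PL}$. This gives $\AA_{T(M)}\cong\AA_{g_*T(M')}\cong\AA_{T(M')}$ directly.

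For the backward direction, suppose $A_{T(M)}\cong A_{T(M')}$. Then Theorem~\ref{thm:ClassificationOfSimplicialComplexes} gives a PL-homeomorphism $h\colon T(M)\to T(M')$. By definition, $h$ is simplicial between subdivisions of $T(M)$ and $T(M')$, so it is a homeomorphism from $|T(M)|=M$ onto $|T(M')|=M'$. Hence $M\homeo M'$.

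The main obstacle is only bookkeeping. One must check that the realization topology on $|T(M)|$ coincides with the manifold topology, which holds because $T(M)$ is a triangulation with $|T(M)|=M$ carrying the subspace topology. One must also check that the pulled-back complex $g_*T(M')$ legitimately counts as a triangulation to which Fact~\ref{fact:MoiseBing} applies. No new geometric input beyond the Moise--Bing Hauptvermutung is needed.
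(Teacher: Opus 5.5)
Your proposal is correct and takes the same route as the paper, whose proof is the one-line combination of Fact~\ref{fact:MoiseBing} with Theorem~\ref{thm:ClassificationOfSimplicialComplexes}. Your pullback argument, with transitivity obtained through the invariant, just unpacks the ``consequently'' clause of Fact~\ref{fact:MoiseBing}, which the paper uses directly.
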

\begin{proof}
  Follows by applying 
  Fact~\ref{fact:MoiseBing} and Theorem~\ref{thm:ClassificationOfSimplicialComplexes}.
\end{proof}

Similarly as in the previous section, let us state the following version of the result.

\begin{Cor}[Characterization of homeomorphism on $2$ and $3$-manifolds by partial orders]%
  \label{cor:Manifolds-class-simple}
  Let $M$ be a $2$- or $3$-manifold (with or without boundary), and
  $T$ any triangulation of $M$. 
  Then the isomorphism type $(P_T,\le_T)$ completely determines the
  homeomorphism type of~$M$.
\end{Cor}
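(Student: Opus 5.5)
The corollary follows by combining Fact~\ref{fact:MoiseBing} with Theorem~\ref{thm:PL-class-simple}, in the same way that Theorem~\ref{thm:ClassificationOfManifolds} follows from Theorem~\ref{thm:ClassificationOfSimplicialComplexes}.

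First I make the statement precise. Let $M$ and $M'$ be $n$-manifolds, $n\in\{2,3\}$, with or without boundary, and let $T$ and $T'$ be arbitrary triangulations of them. The claim is that $M\homeo M'$ if and only if $(P_T,\le_T)\cong(P_{T'},\le_{T'})$. Triangulations exist by Fact~\ref{fact:MoiseBing}, so $(P_T,\le_T)$ is defined for every such $M$.

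For the forward direction, suppose $h\colon M\to M'$ is a homeomorphism. The pushforward $h^*T$ is a triangulation of $M'$, and $h\colon T\to h^*T$ is simplicial by Fact~\ref{fact:SimplicialPushSubd}, so $T\homeo^{PL}h^*T$. By the uniqueness half of Fact~\ref{fact:MoiseBing}, $h^*T\homeo^{PL}T'$. Composing PL-homeomorphisms gives $T\homeo^{PL}T'$. To avoid having to justify transitivity of $\homeo^{PL}$ directly, I would pass through the invariant $\AA_T$, or through Theorem~\ref{thm:ClassificationOFPL-complexesByBasisSpaces}, since the relation $\equiv$ is clearly transitive. Theorem~\ref{thm:PL-class-simple} then yields $(P_T,\le_T)\cong(P_{T'},\le_{T'})$.

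For the converse, an isomorphism $(P_T,\le_T)\cong(P_{T'},\le_{T'})$ gives $T\homeo^{PL}T'$ by Theorem~\ref{thm:PL-class-simple}. A PL-homeomorphism is in particular a homeomorphism $|T|=M\to|T'|=M'$, so $M\homeo M'$.

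The argument has one delicate point. The invariant must not depend on which triangulation is chosen, and this is exactly what the Hauptvermutung part of Fact~\ref{fact:MoiseBing} provides. The only other thing to check is that $\homeo^{PL}$ behaves as an equivalence relation across the composition. Theorem~\ref{thm:ClassificationOFPL-complexesByBasisSpaces} settles this, because it characterizes $\homeo^{PL}$ by $\equiv$ on basis spaces.
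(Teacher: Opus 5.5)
Your proof is correct and follows the paper's own argument: combine Fact~\ref{fact:MoiseBing} with Theorem~\ref{thm:PL-class-simple}, with the Hauptvermutung ensuring that the invariant does not depend on which triangulation is chosen. Your detour through $\equiv$ to obtain transitivity of $\homeo^{PL}$ is harmless but unnecessary, because the ``consequently'' clause of Fact~\ref{fact:MoiseBing} already states directly that $M\homeo M'$ if and only if $T\homeo^{PL}T'$.
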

\begin{proof}
  Follows by Fact \ref{fact:MoiseBing} and Theorem~\ref{thm:PL-class-simple}.
\end{proof}

\vspace{10pt}

\subsection{Classification of Cantor sets in $\R^3$ up to conjugacy}
\label{sssec:Cantor1}

A Cantor set in $\R^3$ is a compact perfect totally disconnected set.
The space of such Cantor sets and the equivalence relation of
conjugation on them were studied in \cite{GKB13}.
Corollary~\ref{cor:CantorSets} below gives the first part of an answer
to Question 5.5 of \cite{GKB13}. The full answer is given in
Section~\ref{ssec:BorelClassCantor},
Theorem~\ref{thm:CantorBorelRed}. Two Cantor sets $C$ and $C'$ in
$\R^3$ are \textbf{conjugate}, if there exists a homeomorphism
$h\colon \R^3\to \R^3$ such that $h[C]=C'$.  If $U\subset\R^3$ is an
open set, let $T(U)$ be a triangulation of~$U$.

\begin{Cor}[Characterization of Cantor set equivalence]%
  \label{cor:CantorSets}
  Let $C,C'\in \R^3$ be Cantor sets. They are conjugate if and only if
  $A_{T(\R^3\setminus C)}$ and $A_{T(\R^3\setminus C')}$ are isomorphic algebras.
\end{Cor}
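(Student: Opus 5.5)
The plan is to reduce conjugacy of Cantor sets to homeomorphism of their complements, and then to apply Theorem~\ref{thm:ClassificationOfManifolds} with $n=3$. Write $U=\R^3\setminus C$ and $U'=\R^3\setminus C'$. These are open subsets of $\R^3$, so they are non-compact $3$-manifolds without boundary. By Theorem~\ref{thm:ClassificationOfManifolds}, $A_{T(U)}\cong A_{T(U')}$ if and only if $U\homeo U'$. It therefore suffices to show that $C$ and $C'$ are conjugate if and only if $U$ and $U'$ are homeomorphic.

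\emph{Forward direction.} If $h\colon\R^3\to\R^3$ is a homeomorphism with $h[C]=C'$, then $h$ is a bijection of $\R^3$. Hence $h[U]=U'$, and $h\rest U$ is the required homeomorphism $U\to U'$.

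\emph{Converse.} Suppose $g\colon U\to U'$ is a homeomorphism. We want to extend it to a homeomorphism of $\R^3$ taking $C$ to $C'$.

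1. Pass to the one-point compactification $S^3=\R^3\cup\{\infty\}$. Then $U\cup\{\infty\}=S^3\setminus C$, and $S^3$ is a compactification of $U$ whose remainder $C\cup\{\infty\}$ is compact and totally disconnected. Since $C$ is compact, it is a closed subset of $\R^3$.

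2. Identify this remainder with the Freudenthal end space of $U$. A compactification of a connected, locally compact, locally connected space is equivalent to the Freudenthal compactification if and only if its remainder is totally disconnected and does not separate any connected open set. The remainder $C\cup\{\infty\}$ is zero-dimensional, so it does not locally separate $S^3$, a fact from dimension theory in $3$-manifolds. Hence $S^3$ is the Freudenthal compactification of $U$, and likewise for $U'$.

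3. By functoriality of the Freudenthal compactification, $g$ extends to a homeomorphism $\hat g\colon S^3\to S^3$ with $\hat g[C\cup\{\infty\}]=C'\cup\{\infty\}$.

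4. The extension $\hat g$ need not fix $\infty$, because $\infty$ is just one end among the ends of $U$. Let $e=\hat g(\infty)$ and $e'=\hat g^{-1}(\infty)$. We modify $\hat g$ by composing with a homeomorphism of $S^3$ that preserves $C'\cup\{\infty\}$ and swaps $e$ with $\infty$.

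5. Construct this swap. The set $C'\cup\{\infty\}$ is again a tame-or-wild Cantor-set-like compactum in which $\infty$ is an isolated point, while $e$ lies in $C'$, which is perfect. So the points $e$ and $\infty$ are not interchangeable inside the remainder, and a homeomorphism of the remainder cannot swap an isolated point with a non-isolated one. We exploit this directly. The extension $\hat g$ restricts to a homeomorphism of remainders $C\cup\{\infty\}\to C'\cup\{\infty\}$. Homeomorphisms preserve isolated points. In $C\cup\{\infty\}$ the unique isolated point is $\infty$, because $C$ is perfect and closed. In $C'\cup\{\infty\}$ the unique isolated point is also $\infty$. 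Therefore $\hat g(\infty)=\infty$ automatically, and step 4 is unnecessary.

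6. Restricting $\hat g$ to $\R^3$ gives a homeomorphism $h$ of $\R^3$ with $h[C]=C'$, so $C$ and $C'$ are conjugate.

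The main obstacle is step 2, namely justifying that $S^3$ is the end compactification of $U$ so that $g$ extends. My first attempt would be a direct argument. Each end of $U$ determines a nested sequence of complementary components of compact sets in $U$. Because $C\cup\{\infty\}$ is zero-dimensional, it has arbitrarily small clopen neighbourhoods in the remainder. Each such neighbourhood can be enclosed in a small open set whose intersection with $U$ is connected, since removing a zero-dimensional closed set from a connected open subset of $S^3$ leaves it connected. This gives a bijection between ends and points of the remainder that is compatible with the topology. Continuity of the extension of $g$ then follows from this bijection together with its compatibility with neighbourhood bases.
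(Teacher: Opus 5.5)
Your proof is correct, and its outer structure matches the paper's. Both reduce, via Theorem~\ref{thm:ClassificationOfManifolds}, to showing that $C$ and $C'$ are conjugate if and only if $\R^3\setminus C\homeo\R^3\setminus C'$, and in both the forward direction is trivial. The difference lies in the converse.

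The paper invokes the extension theorem \cite[Theorem~4.1]{CM83}, together with the remark after its proof, which handles the non-compact ambient space. Its only verification is that a Cantor set is locally non-separating in $\R^3$, which it gets from \cite[Thm~IV.4]{HurewiczWallman1941}.

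You prove the extension by hand, in three steps:
\begin{enumerate-(1)}
\item The same non-separation fact, applied to small balls, shows that $S^3$ is the Freudenthal compactification of $U$. The remainder is $C\cup\{\infty\}$, and a $0$-dimensional closed set does not separate a connected open subset of $S^3$.
\item Functoriality of the end compactification extends $g$ to a homeomorphism $\hat g\colon S^3\to S^3$.
\item Since $C$ is perfect and bounded, $\infty$ is the unique isolated point of both remainders, which forces $\hat g(\infty)=\infty$.
\end{enumerate-(1)}

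Your route is self-contained modulo standard end theory. It also makes transparent why no homeomorphism of complements can move $\infty$ into $C'$. The paper's route is a one-line citation, and the cited result does not rely on perfectness. The paper reuses it in Proposition~\ref{thm:CongToHomeo} for arbitrary compact $K\subset 2^{\N}$, which may have isolated points. There your isolated-point trick would need an extra swapping argument.

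Clean up the exposition. Step~4 asserts that $\hat g$ ``need not fix $\infty$,'' which is false and is refuted in Step~5. Delete Step~4 and the muddled opening of Step~5, and state the isolated-point argument directly.

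In Step~2, cite the dimension-theoretic fact precisely, for instance \cite[Thm~IV.4]{HurewiczWallman1941}. State the Freudenthal characterization in the local form actually used: each point of the remainder has arbitrarily small open neighbourhoods $V\subset S^3$ with $V\cap U$ connected, and small balls provide these. This makes the looser sketch in your last paragraph unnecessary. That sketch talks about enclosing clopen pieces of the remainder in open sets with connected trace on $U$, which is not what the characterization requires.
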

\begin{proof}
  Notice that the complement of a Cantor set of $\R^3$ is an open
  $3$-manifold. Then by Theorem~\ref{thm:ClassificationOfManifolds},
  it is enough to show that $C,C'$ are conjugate iff $\R^3\setminus C$
  and $\R^3\setminus C'$ are homeomorphic. This is a consequence of
  \cite[Theorem~4.1]{CM83} and the remark right after its proof. To
  see this, we have to show that a Cantor set is always locally
  non-separating in $\R^3$. But this follows directly from
  \cite[Thm~IV.4]{HurewiczWallman1941}.
\end{proof}



\vspace{10pt}

\section{Borelness of the classifications}
\label{sec:Borelification}

\vspace{10pt}

\subsection{Basic definitions of descriptive set theory}
\label{ssec:BasicDST}

\subsubsection{Polish spaces and Borel reductions}

A \textbf{Polish metric space} is a separable complete metric space.
A topological space is \textbf{Polish}, if it is separable and has a
metric which makes it into a Polish metric space.  If no confusion
ensues, we sometimes call Polish metric spaces just Polish spaces.
The collection of \textbf{Borel subsets} of a Polish space is the
smallest $\sigma$-algebra containing the basic open sets.  Closed sets
and $G_\delta$ sets (countable intersection of open sets) of a Polish
space are Polish spaces in the subspace topology~\cite{Kec95}.  The
product and sum of a countable (possibly finite) sequence of Polish
spaces is a Polish space.  An important Polish space that we often use
is given by the following.

\begin{defn}
  \label{def:K(X)}
  Let $X$ be a topological space, and consider the space $K(X)$ of all
  compact subsets of $X$ equipped with the \textbf{Vietoris topology}
  which is generated by the sets of the form
  $$\{K \in K(X) \mid K \subseteq U\}\ \text{ and }\{K \in K(X) \mid K \cap U \neq \es\},$$
  for $U$ open in $X$. 
  Let now $(X,d)$ be a metric space with $d \le 1$, i.e.\
  $d(x,y) \le 1$ for every $x,y \in X$. We define the
  \textbf{Hausdorff metric} $d_H$ on $K(X)$ as follows:
  $$d_H(K,L)=
  \begin{cases}
    0 & \text{if } K = L = \es\\
    1 & \text{if exactly one of } K,L \text{ is } \es\\
    \max\{\delta(K,L),\delta(L,K)\} & \text{if } K, L \neq \es 
  \end{cases},
  $$
  where $\delta(K,L)$ is the smallest $\delta\in \R_{\ge 0}$ such that
  $K$ is contained in the \textbf{collar} \(L_\delta\) of $L$ defined by
  $$L_\delta = \{x\in X\mid d(x,L)\le \delta\}.$$
  It is a standard result that the
  Hausdorff metric is compatible with the Vietoris topology~\cite[Exercise 4.21]{Kec95}.
  If $X$ is Polish, so is $K(X)$, and if in addition $X$ is compact,
  so is $K(X)$ (see \cite[Theorems 4.25 and 4.26]{Kec95}).
\end{defn}

Below are some known properties of basic relations defined on $K(X)$.

\begin{Fact}\cite[Exercise 4.29]{Kec95}\label{fact:K(X)}
  Let $X$ be a metrizable.
  \begin{enumerate-(i)}
  \item The relation ``$x \in K$'' is closed, i.e.,
    $\{(x,K) \mid x \in K\}$ is closed in $X \times K(X)$.
  \item\label{prop:K(X)-2} The relation ``$K \subseteq L$'' is closed, i.e.,
    $\{(K,L)\mid K \subseteq L\}$ is closed in $K(X)^2$.
  \item The relation ``$K \cap L \neq \es$'' is closed in $K(X)^2$. \label{prop:K(X)-3}
  \item\label{prop:UnionContinuous} The map $K(X)\times K(X)\to K(X)\colon(K_0,K_1)\mapsto K_0\cup K_1$ is continuous.
  \item\label{prop:K(X)-4} If $Y$ is metrizable, then the map
    $(K, L) \mapsto K \times L$ from $K(X) \times K(Y)$ into
    $K(X \times Y)$ is continuous.
  \end{enumerate-(i)}
\end{Fact}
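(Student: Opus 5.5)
The statement is Fact~\ref{fact:K(X)}, a list of standard properties of the Vietoris topology on $K(X)$. Since $X$ is metrizable, I would fix a compatible metric $d\le 1$ and use the Hausdorff metric $d_H$, which is compatible with the Vietoris topology (Definition~\ref{def:K(X)}). Each item then reduces to a sequential or $\varepsilon$-argument. Metrizability of $X$, and hence of $K(X)$ and of the relevant products, means closedness can be checked with sequences and continuity with the metric.

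For (i), I would show the complement is open. If $x\notin K$, then $r=d(x,K)>0$, or $K=\es$. The set $B(x,r/3)\times\{L\mid L\subseteq \{y\mid d(y,K)<r/3\}\}$ is then an open neighborhood of $(x,K)$ avoiding the relation: points $x'$ within $r/3$ of $x$ are at distance at least $2r/3$ from any such $L$. The case $K=\es$ is trivial, since $\{\es\}$ is open.

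For (ii), take $K_n\to K$ and $L_n\to L$ with $K_n\subseteq L_n$. Any $x\in K$ is a limit of points $x_n\in K_n$, because $\delta(K,K_n)\to 0$. Hence $d(x_n,L_n)=0$, and $d(x,L)\le d(x,x_n)+d_H(L_n,L)\to0$. Since $L$ is closed, $x\in L$.

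For (iii), take $x_n\in K_n\cap L_n$. The union $K\cup\bigcup_n K_n$ is compact, so a subsequence of $(x_n)$ converges to some $x$. By (i) applied to the closed relation ``$x\in K$'', we get $x\in K$ and $x\in L$.

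For (iv), I would use the estimate $d_H(K_0\cup K_1,K_0'\cup K_1')\le\max\{d_H(K_0,K_0'),d_H(K_1,K_1')\}$. It follows directly from the collar definition: $(A\cup B)_\delta=A_\delta\cup B_\delta$. The empty cases only need a small separate check.

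For (v), I would give $X\times Y$ the max metric and show
\[
d_H(K\times L,K'\times L')\le\max\{d_H(K,K'),d_H(L,L')\}
\]
for nonempty sets. If exactly one factor is empty, the product is $\es$. In that case I would argue by Vietoris open sets, since $\{\es\}$ is clopen and nonemptiness is locally constant.

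No step is a real obstacle; the only care needed is the bookkeeping of the empty set in the definition of $d_H$. Alternatively, I would simply cite \cite[Exercise 4.29]{Kec95}, as the paper does.
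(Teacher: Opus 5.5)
Your proposal is correct. The paper gives no proof of this Fact; it only quotes \cite[Exercise 4.29]{Kec95}, whereas you derive each item from the Hausdorff metric. That makes the Fact self-contained, and for (iv) and (v) slightly stronger: your estimates show that union and product are $1$-Lipschitz for the max metric on the domain. With the paper's normalisation ($d_H\le 1$ always, and $d_H(A,B)=1$ whenever exactly one of $A,B$ is empty) those two estimates in fact hold in every case, so no separate empty-set analysis is needed.

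Two small repairs are needed.

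In (i) your distance bound is off. From $d(x,x')<r/3$ and $d(y,K)<r/3$ you only get $d(x',y)\ge d(x,K)-d(x,x')-d(y,K)>r/3$, not $2r/3$. The bound is still positive, so $x'\notin L$ and the argument survives.

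In (iii) you use the compactness of $K\cup\bigcup_n K_n$ for $K_n\to K$ without justification. It needs one line: finitely many sets of an open cover cover $K$, and their union $U$ gives the Vietoris neighbourhood $\{L\mid L\subseteq U\}$ of $K$, which contains all but finitely many $K_n$. Alternatively, argue (iii) directly. If $K\cap L=\emptyset$ with $K,L\neq\emptyset$, then $d(K,L)>0$, and compacta lying inside the open $d(K,L)/3$-neighbourhoods of $K$ and $L$ stay disjoint.
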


A \textbf{standard Borel space} is a pair $(X, \BB)$ where $X$ is a
set, $\BB$ is a $\sigma$-algebra on $X$, and there is a Polish
topology on $X$ for which $\BB$ is precisely the collection of Borel
sets. The elements of \( \BB \) are called Borel sets of~\((X,\BB)\).
In particular, every Polish space is standard Borel when equipped with
its \(\sigma\)-algebra of Borel sets. The product and sum of a
countable (possibly finite) sequence of standard Borel spaces are
standard Borel spaces. Moreover, if $(X, \BB)$ is a standard Borel
space and $Y \subseteq X$ is in $\BB$, then $(Y, \BB\rest Y)$ is also
a standard Borel space where $\BB\rest Y=\{B\cap Y\mid B\in\BB\}$ is
the \textbf{induced Borel structure} on $Y$. One can consider the
induced Borel structure $\BB\rest Y$ even if $Y$ is not in $\BB$ (as
we do below in Definition~\ref{def:BorelReducibility}), although then
$(Y,\BB\rest Y)$ might not be a standard Borel space.  A particularly
important construction of a standard Borel
space 
is given in the following example.

\begin{exmp}
  Given a topological space $X$, the collection $F(X)$ of all its
  closed subsets can be equipped with the $\sigma$-algebra
  $\BB_{F(X)}$ generated by the sets of the form
  $$\{F \in F(X) \mid F \cap U \neq \es\},$$
  for $U \subseteq X$ non-empty open. It turns out that if $X$ is
  Polish, then $(F(X), \BB_{F(X)})$ is a standard Borel space, called
  the \textbf{Effros Borel space} (see \cite[Theorem 12.6]{Kec95}).
\end{exmp}

The following results are basic facts about the Effros Borel space.

\begin{Fact}\cite[Exercise 12.11]{Kec95}\label{fact:F(X)}
Let $X$ be Polish.
\begin{enumerate-(i)-r}
\item\label{prop:F(X)-1} $K(X)$ is a Borel set in $F(X)$.
\item\label{prop:F(X)-5} The relation ``$x \in F$'' in $X\times F(X)$ is Borel. 
\item\label{prop:F(X)-2} The relation ``$F_0 \subseteq F_1$'' in $F(X)^2$ is Borel. 
\item\label{prop:F(X)-3} The set of regular closed sets in $X$ is
  Borel in $F(X)$ (cf. Fact \ref{fact:InteriorReg})
\item\label{prop:F(X)-4} The relation ``$F \cap K = \es$'' is Borel in $F(X) \times K(X)$.
\end{enumerate-(i)-r} 
\end{Fact}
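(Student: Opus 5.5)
The approach is to describe each relation by countably many Borel conditions. The generators $\{F\mid F\cap U\ne\es\}$ of $\BB_{F(X)}$ and the Kuratowski--Ryll-Nardzewski selection theorem \cite[Theorem 12.13]{Kec95} are the main tools. Fix a compatible complete metric $d$ on $X$, a countable basis $(U_k)_{k\in\N}$ of non-empty open sets, and a countable dense set $\{q_i\mid i\in\N\}\subseteq X$. By the selection theorem there are Borel maps $s_n\colon F(X)\to X$ such that $\{s_n(F)\mid n\in\N\}$ is dense in $F$ whenever $F\ne\es$. The set $\{F\mid F=\es\}$ is the complement of the generator for $U=X$, so it is Borel. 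Each item below reduces to a countable Boolean combination of generators, open conditions on $x$, and Borel conditions built from the $s_n$.

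For \ref{prop:F(X)-5}, since $F$ is closed, $x\in F$ iff $\forall k\,(x\in U_k\rightarrow F\cap U_k\ne\es)$. This is a countable intersection of Borel subsets of $X\times F(X)$. For \ref{prop:F(X)-2}, I claim that $F_0\subseteq F_1$ iff $\forall k\,(F_0\cap U_k\ne\es\rightarrow F_1\cap U_k\ne\es)$. One direction is trivial. Conversely, if $x\in F_0\setminus F_1$, choose $U_k\ni x$ disjoint from the closed set $F_1$; then $U_k$ violates the condition.

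For \ref{prop:F(X)-1}, a closed set in a complete metric space is compact iff it is totally bounded. For $F\ne\es$ this means that for every $n$ there is $m$ such that for all $j$ we have $\min_{i<m} d(s_j(F),s_i(F))<1/n$. This suffices because the $s_j(F)$ are dense in $F$; the factor of $2$ lost by passing to the dense set is harmless. Each inner condition is Borel, since $d$ is continuous and the $s_i$ are Borel; together with $F=\es$ this gives a Borel description of $K(X)$.

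For \ref{prop:F(X)-3}, recall that $F$ is regular closed iff every non-empty open set meeting $F$ contains a non-empty open subset of $F$. I would express this as
\[
\forall k\,\bigl(F\cap U_k\ne\es\rightarrow \exists l\,(U_l\subseteq U_k\land U_l\subseteq F)\bigr).
\]
Here $U_l\subseteq U_k$ is a fixed fact about indices. Since $U_l$ is open and $F$ closed, $U_l\subseteq F$ iff $\forall i\,(q_i\in U_l\rightarrow q_i\in F)$, which is Borel by \ref{prop:F(X)-5}. The only care needed is checking that this condition really is equivalent to $F=\overline{\int F}$; this follows from a standard basis argument.

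For \ref{prop:F(X)-4}, I would use that $(x,K)\mapsto d(x,K)$ is continuous on $X\times(K(X)\setminus\{\es\})$ for the Hausdorff metric. For $F,K\ne\es$, I claim $F\cap K\ne\es$ iff $\forall n\,\exists i\; d(s_i(F),K)<1/n$. If the right side holds, pick points $y_n\in F$ with $d(y_n,K)<1/n$ and $z_n\in K$ with $d(y_n,z_n)<1/n$. By compactness of $K$ a subsequence of $(z_n)$ converges to some $z\in K$. Then the corresponding $y_n$ also converge to $z$, so $z\in F$ since $F$ is closed. The converse uses the density of $\{s_i(F)\}$ in $F$. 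The cases $F=\es$ or $K=\es$ are Borel and are handled separately.

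I expect the main obstacle to be this last item together with \ref{prop:F(X)-1}. Both genuinely need the selectors and compactness, whereas the other items follow directly from the generators.
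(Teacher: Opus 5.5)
Your proof is correct. Each of the five characterizations you give is valid and is visibly a countable Boolean combination of Borel conditions. This includes the total-boundedness criterion via the selectors in \ref{prop:F(X)-1}, the basis reformulation of $F=\overline{\interior F}$ in \ref{prop:F(X)-3}, and the compactness/subsequence argument in \ref{prop:F(X)-4}.

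The paper gives no argument of its own for this fact and simply cites \cite[Exercise 12.11]{Kec95}. What you have written is therefore a self-contained solution of that exercise rather than a variant of a proof in the text.

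One simplification is worth knowing: \ref{prop:F(X)-4} needs neither the selectors nor the continuity of $(x,K)\mapsto d(x,K)$. By compactness, $F\cap K=\es$ holds iff there is a finite $S\subseteq\N$ with $K\subseteq\Cup_{k\in S}U_k$ and $F\cap U_k=\es$ for every $k\in S$. To see this, cover $K$ by finitely many basic sets lying inside the open set $X\setminus F$. This exhibits the relation as a countable union, over finite $S$, of products of a Borel set of $F$'s (complements of generators) with a Vietoris-open set of $K$'s. It also handles the cases $F=\es$ or $K=\es$ without a separate argument. So your closing expectation that this item ``genuinely needs'' the selectors is too pessimistic, though this does not affect the correctness of what you wrote.
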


If we are only interested in the Borel structure, we treat Polish
spaces as standard Borel spaces or vice versa, if there is no loss of
generality, or danger of confusion.  Let $X$ and $Y$ be Polish or
standard Borel spaces. A function $\varphi \colon X \to Y$ is
\textbf{Borel} if the preimage of any Borel subset of $Y$ is Borel in
$X$.

\begin{Fact}\cite[Theorem 12.13]{Kec95}\label{prop:selection_thm}
  Let $X$ be Polish. There is a sequence of Borel functions
  $d_n\colon F(X) \to X$, such that for non-empty $F \in F(X)$,
  $\{d_n(F)\mid n\in\N\}$ is dense in $F$. \qed
\end{Fact}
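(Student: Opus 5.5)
The plan is the classical Kuratowski--Ryll-Nardzewski argument: first build one Borel selector $s\colon F(X)\setminus\{\es\}\to X$ with $s(F)\in F$, then compose it with countably many Borel ``localisation'' maps $F\mapsto \overline{F\cap U_n}$. Fix a compatible complete metric $d$ on $X$, a dense sequence $(x_k)_{k\in\N}$, and an enumeration $(U_n)_{n\in\N}$ of a countable basis of non-empty open sets. On the Borel set $\{\es\}=\{F\mid F\cap X=\es\}$ every $d_n$ will just be a fixed constant point.

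\emph{Step 1 (the selector).} For non-empty $F$ I define indices $k_j(F)$ by induction on $j$.
\begin{itemize}
\item $k_0(F)$ is the least $k$ with $B(x_k,1)\cap F\ne\es$.
\item $k_{j+1}(F)$ is the least $k$ with $d(x_k,x_{k_j(F)})<2^{-j}$ and $B(x_k,2^{-(j+1)})\cap F\ne\es$.
\end{itemize}
Such a $k$ exists. Indeed, pick $y\in F$ with $d(y,x_{k_j})<2^{-j}$ and choose $x_k$ very close to $y$; the strict inequality leaves room for this. The sequence $(x_{k_j(F)})_j$ is then Cauchy, so by completeness it converges; call the limit $s(F)$. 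Since $d(x_{k_j},F)<2^{-j}$ and $F$ is closed, $s(F)\in F$.

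Borelness is checked one index at a time. Each set $\{F\mid k_{j+1}(F)=k\}$ is a finite Boolean combination of sets $\{F\mid k_j(F)=m\}$ and of generators $\{F\mid F\cap B(x_k,r)\ne\es\}$ of $\BB_{F(X)}$. So each $F\mapsto x_{k_j(F)}$ is Borel with countable range, by induction on $j$. Then $s$ is Borel as a pointwise limit of Borel maps into a metric space.

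\emph{Step 2 (localisation).} For each $n$ consider $\rho_n(F)=\overline{F\cap U_n}$. This map is Borel because, for open $V$,
\[
\rho_n^{-1}\{G\mid G\cap V\ne\es\}=\{F\mid F\cap (U_n\cap V)\ne\es\},
\]
and $U_n\cap V$ is open. Set $d_n(F)=s(\rho_n(F))$ if $F\cap U_n\ne\es$, and $d_n(F)=s(F)$ otherwise (for $F\ne\es$). Each piece is Borel and the case split is along a Borel set, so $d_n$ is Borel, and $d_n(F)\in F$ always.

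\emph{Step 3 (density).} Let $V$ be open with $V\cap F\ne\es$. Pick $y\in V\cap F$ and a basic $U_n$ with $y\in U_n\subset\overline{U_n}\subset V$, using regularity of the metric space. Then $F\cap U_n\ne\es$, so
\[
d_n(F)\in\overline{F\cap U_n}\subset F\cap\overline{U_n}\subset V.
\]

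The only step needing genuine care is Step 1: one must keep the strict inequalities so that the next index always exists, and must verify that the recursively defined index maps are Borel. Both reduce directly to the definition of the Effros structure.
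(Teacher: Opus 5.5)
Your proposal is correct and is the standard Kuratowski--Ryll-Nardzewski argument: one Borel selector from a greedy least-index Cauchy construction, followed by localisation via $F\mapsto\overline{F\cap U_n}$. The paper gives no proof of its own and only cites Kechris, Theorem 12.13. One small slip: in Step 1 the set $\{F\mid k_{j+1}(F)=k\}$ is a countable union over all $m$ with $d(x_k,x_m)<2^{-j}$, not a finite Boolean combination, but it is still Borel, so nothing breaks.
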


We denote by $\DD(F)$ the set $\{d_n(F)\mid n\in\N\}$ as defined above.

\begin{Lemma}\label{lemma:SeqClosureBorel}
  Let $X$ be a Polish space.
  The map $f\colon X^\N\to F(X)$ given by
  $$f((x_n)_{n\in\N})=\overline{\{x_n\mid n\in\N\}}$$
  is continuous and hence Borel.
\end{Lemma}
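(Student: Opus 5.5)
The plan is to verify continuity of $f$ directly against the subbasis generating the Effros Borel structure. Strictly speaking, $F(X)$ carries only a $\sigma$-algebra. The natural reading of ``continuous'' is with respect to the topology generated by the sets $\{F\in F(X)\mid F\cap U\ne\es\}$ for $U\subseteq X$ open; this is the lower Vietoris (Fell-type) topology, and its Borel $\sigma$-algebra is exactly $\BB_{F(X)}$. So it suffices to show that for every open $U\subseteq X$ the preimage
$$f^{-1}\big[\{F\mid F\cap U\ne\es\}\big]$$
is open in the product space $X^\N$. Borelness then follows at once, since the sets $\{F\mid F\cap U\ne\es\}$ generate $\BB_{F(X)}$ by definition, and a map whose preimages of a generating family are Borel is Borel.

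The key step is the observation that, for $U$ open,
$$\overline{\{x_n\mid n\in\N\}}\cap U\ne\es\iff \exists n\ (x_n\in U).$$
The right-to-left direction is trivial. For left-to-right, take a point of the closure lying in $U$. Since $U$ is an open neighbourhood of that point, it meets the set $\{x_n\mid n\in\N\}$.

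Hence the preimage equals $\Cup_{n\in\N}\{(x_k)_{k\in\N}\mid x_n\in U\}$. Each set in this union is a basic open cylinder in the product topology on $X^\N$, so the union is open. This gives continuity with respect to the topology above, and therefore Borel measurability.

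I do not expect a real obstacle here. The only point requiring care is the interpretation of ``continuous'', since $F(X)$ is presented only as a Borel space. I would state explicitly which topology is meant and note that it generates $\BB_{F(X)}$, so that the Borel conclusion, which is what is actually used later, is unambiguous. If $X$ is in addition compact, one could also phrase this via the Vietoris topology on $K(X)=F(X)$. However, the ``$\subseteq U$'' subbasic sets are not continuous preimages in general (only semicontinuous), so I would avoid claiming Vietoris continuity and rely on the lower topology argument above.
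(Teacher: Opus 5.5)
Your argument is correct and is the same as the paper's: for open $U$, the preimage of $\{F\in F(X)\mid F\cap U\ne\es\}$ is $\Cup_{n\in\N}\{(x_k)_{k\in\N}\mid x_n\in U\}$, a union of open cylinders in $X^\N$. Your remark on which topology ``continuous'' refers to is a sensible clarification that the paper leaves implicit, with one terminology correction: the topology generated by the hitting sets alone is the lower Vietoris topology, not a ``Fell-type'' one, since the Fell topology also includes the sets $\{F\mid F\cap K=\es\}$ for compact $K$; the lower Vietoris topology's Borel sets coincide with $\BB_{F(X)}$ because $X$ is second countable.
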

\begin{proof}
  Let $U\subseteq X$ be open and let
  $\tilde U=\{F\in F(X)\mid F\cap U\ne\es\}$. Then
  $f^{-1}[\tilde U]=\Cup_{m\in\N}\{(x_n)_{n\in\N}\mid x_m\in U\}$
  which is a union of open sets in the product topology of~$X^\N$.
\end{proof}

\begin{prop}\label{prop:StrongComplementBorel}
  Let $X$ be a metric space. The map $c\colon F(X)\times F(X)\to F(X)$ given by
  $c(A,B)=\overline{A\setminus B}$ is Borel.
\end{prop}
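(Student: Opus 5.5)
The plan is to use the definition of the Effros Borel structure directly. Since $\BB_{F(X)}$ is generated by the sets $\tilde U=\{F\in F(X)\mid F\cap U\ne\es\}$ for $U$ open, it suffices to show that
$$c^{-1}[\tilde U]=\{(A,B)\in F(X)^2\mid \overline{A\setminus B}\cap U\ne\es\}$$
is Borel in $F(X)\times F(X)$ for every open $U$. We implicitly take $X$ Polish, as needed for $F(X)$ to be standard Borel and for Fact~\ref{prop:selection_thm} to apply.

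First, because $U$ is open, $\overline{A\setminus B}\cap U\ne\es$ holds if and only if $(A\setminus B)\cap U\ne\es$. Second, we fix a countable basis $(V_k)_{k\in\N}$ of $X$ and rewrite the condition $(A\setminus B)\cap U\ne\es$ via the Borel selectors $d_n$ of Fact~\ref{prop:selection_thm}. The key observation is that $A\setminus B$ is relatively open in $A$, so if it meets the open set $U$, then $A\cap(U\setminus B)$ is a nonempty relatively open subset of $A$. Hence it contains some $d_n(A)$, because $\DD(A)$ is dense in $A$. Conversely, any such $d_n(A)$ witnesses the intersection. This gives
$$c^{-1}[\tilde U]=\Cup_{n\in\N}\{(A,B)\mid A\ne\es,\ d_n(A)\in U,\ d_n(A)\notin B\}.$$

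Each piece is Borel. The condition $A\ne\es$ is the set $\tilde X$, which is Borel. The condition $d_n(A)\in U$ is the preimage of an open set under the Borel map $d_n$. The condition $d_n(A)\notin B$ is the preimage of the complement of the Borel relation ``$x\in F$'' (Fact~\ref{fact:F(X)}\ref{prop:F(X)-5}) under the Borel map $(A,B)\mapsto(d_n(A),B)$. A countable union of such sets is Borel, and this finishes the argument.

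The only delicate point is the density step: one must use that $U\setminus B$ is open, so that it meets the dense set $\DD(A)$ whenever it meets $A$. This is exactly what makes the countable selector family sufficient. Beyond that the argument is routine, and I do not expect any further obstacle.
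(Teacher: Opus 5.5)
Your argument is correct and is essentially the paper's proof. Both reduce to the generators $\{F\mid F\cap U\neq\varnothing\}$, replace $\overline{A\setminus B}$ by the dense selector set $\{d_n(A)\mid n\in\mathbb{N}\}$, and conclude via the Borelness of the relation ``$x\in F$''. Your explicit clause $A\neq\varnothing$ and your justification of the density step (using that $U\setminus B$ is open) fill in points the paper leaves implicit: $d_n(\varnothing)$ is an arbitrary point, so the paper's displayed equality can fail at $A=\varnothing$ without that clause.
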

\begin{proof}
  Let $U\subseteq X$ be open and let
  $\tilde U=\{F\in F(X)\mid F\cap U\ne\es\}$.
  Now
  \begin{align*}    
    c^{-1}[\tilde U]&=\{(A,B)\mid \overline{A\setminus B}\cap U\ne \es\}\\
   &=\{(A,B)\mid (\DD(A)\setminus B)\cap U\ne\es\}\\
   &=\{(A,B)\mid \exists n\in \N (d_n(A)\in U\land d_n(A)\notin B)\}\\
   &=\Cup_{n\in\N}\{(A,B)\mid d_n(A)\notin X\setminus U\}\cap \{(A,B)\mid d_n(A)\notin B\}
  \end{align*}
  which is Borel by Fact \ref{fact:F(X)}\ref{prop:F(X)-5}.
\end{proof}

We often make use of the \textbf{universal Urysohn space} $\U$,
referring the reader to \cite[Section 1.2]{Gao09} for the relevant
definitions and proofs. Given any Polish metric space $X$, using the
Kat\v{e}tov construction one can canonically construct a Polish metric
space $\U_X$. This construction satisfies the following for all Polish
metric spaces $X$ and $Y$:
\begin{itemize}
\item $\U_X$ contains (a canonical isometric copy of) $X$, and every
  isometry $\iota\colon X \to Y$ can be extended to an isometry
  $\iota^*\colon \U_X \to \U_Y$;
\item $\U_X$ is isometric to $\U_Y$ (the so-called Urysohn property).
\end{itemize}
Let now $\U$ be the space $\U_\R$: by the Urysohn property, a metric
space is Polish if and only if it is isometric to a closed subspace of
$\U$. It is thus natural to regard the Effros Borel space $F(\U)$ of
closed subspaces of $\U$ as the standard Borel space of all Polish
metric spaces. We assume that $\es\in F(\U)$ and that it is an
isolated point in~$F(\U)$.

We say that a Polish metric space $X$ is \textbf{Heine-Borel} if any
closed bounded subset of $X$ is compact. One can equivalently express
the property of being Heine-Borel in $\U$ in the following form:

\begin{Fact}\label{fact:HB_for_subsets_of_U}
  A set $X\in F(\U)$ is Heine-Borel if and only if $\overline{B(x,n)}\cap X$
  is compact for all $x\in \U$, $n\in\N$ if and only if for some
  $x\in\U$ the set $\overline{B(x,n)}\cap X$ is compact for all $n$.  \qed
\end{Fact}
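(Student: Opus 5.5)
We prove the three conditions equivalent by going around the cycle: Heine-Borel, then ``for all $x\in\U$ and $n\in\N$ the set $\overline{B(x,n)}\cap X$ is compact'', then ``for some $x$ and all $n$'', then back to Heine-Borel. Here $\overline{B(x,n)}$ denotes the closed ball in $\U$. Throughout we use that $X\in F(\U)$ is closed in $\U$ and carries the restricted metric of $\U$.

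\emph{Heine-Borel implies the second condition.} Fix $x\in\U$ and $n\in\N$. The set $\overline{B(x,n)}\cap X$ is closed in $X$, since the closed ball is closed in $\U$. It is also bounded in $X$: it is empty, or any two of its points are within distance $2n$ of each other by the triangle inequality through $x$. Note that $x$ need not lie in $X$, but boundedness is an intrinsic property of the subset, so this causes no problem. By the Heine-Borel property the set is compact.

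\emph{The second condition implies the third.} This is immediate, since $\U\neq\es$ lets us pick any $x\in\U$.

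\emph{The third condition implies Heine-Borel.} Fix $x\in\U$ such that $\overline{B(x,n)}\cap X$ is compact for every $n$. Let $C\subseteq X$ be closed in $X$ and bounded. If $C=\es$ it is trivially compact. Otherwise pick $c\in C$ and let $r$ be the diameter of $C$. Then for every $y\in C$ we have $d(x,y)\le d(x,c)+r$. Choose $n\in\N$ with $n\ge d(x,c)+r$; then $C\subseteq \overline{B(x,n)}\cap X$. Since $C$ is closed in $X$, it is a closed subset of the compact set $\overline{B(x,n)}\cap X$, and hence compact.

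There is no real obstacle here. The only point needing care is that the centre $x$ may lie outside $X$, and the triangle-inequality estimates above handle this uniformly.
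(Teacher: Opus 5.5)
Your proof is correct. The paper records this as a Fact with no proof, and your cyclic verification (closed balls are closed and bounded, and a bounded closed set lies inside some $\overline{B(x,n)}\cap X$ by the triangle inequality, even when $x\notin X$) is exactly the routine argument it leaves to the reader.
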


As usually, we say that a subset $A$ of a topological space $X$ is $K_\sigma$ if
$A=\Cup_{n\in\N} K_n$ for some $K_n \in K(X)$.

\begin{Fact}\cite[Theorem 5.3]{Kec95}%
  \label{fact:locally_compact_Ksigma}
  Let $X$ be Hausdorff and locally compact. Then $X$ is metrizable and
  $K_\sigma$ if and only if $X$ is Polish. \qed
\end{Fact}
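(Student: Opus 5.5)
The plan is to prove the two directions separately. The harder one, that a metrizable $K_\sigma$ locally compact space is Polish, goes through the one-point compactification.

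\emph{Polish $\Rightarrow$ metrizable and $K_\sigma$.} Metrizability is part of being Polish. For $K_\sigma$, I use that a separable metrizable space is second countable and hence Lindelöf. By local compactness every $x\in X$ has an open neighborhood $U_x$ with compact closure. By Lindelöf, countably many $U_{x_n}$ cover $X$. Then $X=\Cup_n \overline{U_{x_n}}$ is a countable union of compact sets.

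\emph{Metrizable, $K_\sigma$, locally compact $\Rightarrow$ Polish.}
\begin{enumerate-(1)}
\item \emph{Separability.} Write $X=\Cup_n K_n$ with $K_n$ compact. Each $K_n$ is compact metrizable, hence separable. A countable union of countable dense subsets of the $K_n$ is dense in $X$. So $X$ is separable metrizable, and in particular second countable.
\item \emph{Compact exhaustion.} I build compact sets $L_0\subset \int L_1\subset L_1\subset \int L_2\subset\cdots$ with $\Cup_n L_n=X$. Inductively, the compact set $L_n\cup K_{n+1}$ is covered by finitely many open sets with compact closure (local compactness). I let $L_{n+1}$ be the union of these closures.
\item \emph{The compactification.} Let $X^*=X\cup\{\infty\}$ be the one-point compactification, which is compact Hausdorff because $X$ is locally compact Hausdorff. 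Every compact subset of $X$ lies in some $\int L_n$, so the sets $X^*\setminus L_n$ form a countable neighborhood base at $\infty$. Together with a countable base of $X$, this makes $X^*$ second countable.
\item \emph{Conclusion.} By Urysohn's metrization theorem $X^*$ is metrizable. Being compact metrizable, it is Polish. Finally $X$ is open in $X^*$, hence $G_\delta$, and therefore Polish in the subspace topology.
\end{enumerate-(1)}

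The only step needing care is step 2, the compact exhaustion, since it is what makes $\infty$ have a countable neighborhood base. Everything else consists of standard citations: Lindelöf for second countable spaces, Urysohn metrization, and the fact that $G_\delta$ subsets of Polish spaces are Polish.
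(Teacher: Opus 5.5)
Your proof is correct and follows essentially the standard argument behind the cited result \cite[Theorem 5.3]{Kec95}; the paper gives no proof of its own beyond the citation. That argument obtains separability from the $K_\sigma$ decomposition, shows the one-point compactification is second countable, applies Urysohn metrization, and uses that open subsets of Polish spaces are Polish (your starting set $L_0$, e.g.\ $L_0=K_0$, is left implicit but harmless).
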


The following is a relevant result involving Heine-Borel metric
spaces:

\begin{Fact}(\cite{WJ87, Vau37})%
  \label{fact:locally_compact_HB}
  If $X$ is a $K_\sigma$, locally compact, metrizable space, then
  there is a compatible metric on $X$ which is Heine-Borel. \qed
\end{Fact}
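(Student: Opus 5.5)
The plan is to build an exhaustion of $X$ by compact sets, turn it into a proper continuous function $f\colon X\to[0,\infty)$, and add $|f(x)-f(y)|$ to a bounded compatible metric. Closed bounded sets will then be trapped inside compact pieces of the exhaustion.

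First, fix a compatible metric $d$ on $X$ and replace it by $d_1=\min(d,1)$. This is still compatible, and it is bounded. Write $X=\Cup_{n\in\N}C_n$ with each $C_n$ compact.

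Next, build an exhaustion $K_0\subset K_1\subset\cdots$ of compact sets with $K_n\subset\int(K_{n+1})$ and $\Cup_n K_n=X$. Start with $K_0=C_0$. Given $K_n$, use local compactness to cover the compact set $K_n\cup C_{n+1}$ by finitely many open sets with compact closures. Let $K_{n+1}$ be the union of these closures. Then $K_n\cup C_{n+1}\subset\int(K_{n+1})$, so the $K_n$ exhaust $X$.

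Now define $f$. For each $n$, Urysohn's lemma in the metric space $(X,d)$ gives a continuous $u_n\colon X\to[0,1]$ with $u_n=0$ on $K_n$ and $u_n=1$ on $X\setminus\int(K_{n+1})$. For instance, one can take the explicit formula
\[
u_n(x)=\frac{d(x,K_n)}{d(x,K_n)+d(x,X\setminus\int(K_{n+1}))},
\]
whose denominator never vanishes because the two sets are disjoint and closed. Set $f=\sum_n u_n$. Near any point $x\in\int(K_m)$, only $u_n$ with $n<m$ can be nonzero, since $u_n$ vanishes on $K_n\supset K_m$ for $n\ge m$. Hence the sum is locally finite and $f$ is continuous. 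If $x\notin K_{n+1}$, then $x\notin\int(K_{j+1})$ for all $j\le n$, so $u_0(x)=\dots=u_n(x)=1$ and $f(x)\ge n+1$. Equivalently, $f^{-1}[0,n]\subset K_{n}$, or at worst $\subset K_{n+1}$ depending on indexing; in either case each $\{f\le n\}$ is a closed subset of a compact set.

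Define $d'(x,y)=d_1(x,y)+|f(x)-f(y)|$. This is clearly a metric. It is compatible with the topology. On one hand $d'\ge d_1$, so every $d_1$-open set is $d'$-open. On the other hand, continuity of $f$ shows that $d'(x_k,x)\to0$ whenever $d_1(x_k,x)\to 0$, so $d'$-open sets are open in the original topology.

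Finally, check the Heine-Borel property. If $B$ is closed and $d'$-bounded, say $B\subset B_{d'}(x_0,R)$, then $f\le f(x_0)+R$ on $B$. So $B$ lies in some $K_n$, and being closed it is compact.

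The only mildly delicate steps are the local finiteness of the sum defining $f$, which yields its continuity, and the bound $f\ge n$ off $K_n$ (up to an index shift). Both follow from the nesting $K_n\subset\int(K_{n+1})$. Completeness of $d'$ is not needed for the statement, though it follows from Heine-Borel anyway.
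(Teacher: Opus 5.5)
Your proof is correct and complete. The paper gives no argument of its own for this fact; it only cites Vaughan and Williamson--Janos, so there is no in-paper proof to match against. Your route is the standard self-contained one: a compact exhaustion $K_n\subset\int(K_{n+1})$, a locally finite sum of Urysohn functions giving a proper continuous $f$, and the metric $d'=d_1+|f(x)-f(y)|$.

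The hedge about the index is unnecessary. With your explicit formula, $u_n(x)>0$ whenever $x\notin K_n$, and $u_0(x)=\dots=u_{n-1}(x)=1$ for such $x$. So $f(x)>n$ off $K_n$, and $f^{-1}[0,n]\subset K_n$ holds exactly.

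A shorter way to get a proper function goes through the one-point compactification $X\cup\{\infty\}$. This space is metrizable because $X$ is locally compact and second countable, being $\sigma$-compact and metrizable. If $\rho$ is a compatible metric on it, then $f(x)=1/\rho(x,\infty)$ is already proper on $X$. That version relies on Urysohn's metrization theorem, whereas your exhaustion-and-Urysohn construction works entirely inside $X$ with its given metric.
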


We now discuss Borel reducibility, a tool of descriptive set theory
which is useful to gain insight into some natural and interesting
classification problems.

\begin{defn}\label{def:BorelReducibility}
  Suppose $X$ and $Y$ are standard Borel spaces and 
  $E$ and $F$ are equivalence relations on $X$ and $Y$
  respectively. We say that $f\colon X\to Y$ is a \textbf{reduction}
  of $\mathrel{E}$ to $\mathrel{F}$, if for all $x_0,x_1\in X$ we have
  $$x_0\mathrel{E}x_1\iff f(x_0)\mathrel{F}f(x_1).$$
  If $f$ is Borel, then it is a \textbf{Borel reduction}.
  In this case we say that $E$ is \textbf{Borel reducible} to $F$, 
  and write $E \le_B F$. We finally write $E\sim_B F$ when
  $E\le_B F$ and $F\le_B E$.
\end{defn}

Let $L$ be any countable (first-order) vocabulary and let $\Mod(L)$ be
the set of all countable $L$-structures with domain $\N$. This space
can be viewed as a Polish space, in fact homeomorphic to the Cantor
space $2^\N$ (see \cite{Hjo00}).  Denote by $\cong_{\Mod(L)}$ the
isomorphism relation on $\Mod(L)$ and more generally, if
$X \subseteq \Mod(L)$ is closed under isomorphism, denote
$ {\cong_{\Mod(L)} \rest X}$ by ${\cong_X}$.

\begin{Fact}\cite[Theorem 16.8]{Kec95}%
  \label{fact:borel_subsets_of_Mod(L)}
  The Borel subsets of $\Mod(L)$ which are closed under isomorphism
  are exactly those whose elements are codes for models satisfying
  some $L_{\omega_1\omega}$-sentence $\varphi$, where
  $L_{\omega_1\omega}$ is the infinitary logic obtained from the usual
  first-order logic by further allowing the use of (infinite)
  countable conjunctions and disjunctions.
\end{Fact}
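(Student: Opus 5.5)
The statement to prove is the Lopez-Escobar-type characterization (Fact~\ref{fact:borel_subsets_of_Mod(L)}): a Borel, isomorphism-invariant subset of $\Mod(L)$ is exactly the set of models of an $L_{\omega_1\omega}$-sentence. I would prove the two directions separately.

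For the easy direction, I would show that for every $L_{\omega_1\omega}$-formula $\varphi(\bar x)$ and every tuple $\bar a\subset\N$, the set $\{M\in\Mod(L)\mid M\models\varphi(\bar a)\}$ is Borel. The proof is by induction on $\varphi$. For atomic formulas the set is clopen, since it is determined by one coordinate of the code $M\in 2^{\N}$. Negation corresponds to complement. Countable conjunctions and disjunctions correspond to countable intersections and unions. Because the domain is $\N$, the quantifier $\exists y\,\psi(\bar a,y)$ is the countable union over $b\in\N$ of the sets for $\psi(\bar a,b)$. Isomorphism invariance of $\Mod(\varphi)$ is automatic for sentences.

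For the hard direction, let $B\subseteq\Mod(L)$ be Borel and invariant under the logic action of $S_\infty$. I would use the Vaught transform. For $U$ a basic open set of $S_\infty$ (determined by a finite injection $s$) and $A\subseteq\Mod(L)$, define
\[
A^{*U}=\{M\mid \{g\in U\mid g\cdot M\in A\}\text{ is comeager in }U\}.
\]
Define $A^{\Delta U}$ analogously with ``nonmeager''.

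The key claim is that for every Borel $A$ and every finite injection $s$ of length $n$, there is an $L_{\omega_1\omega}$-formula $\varphi_{A,s}(x_0,\dots,x_{n-1})$ such that $M\in A^{*U_s}$ if and only if $M\models\varphi_{A,s}(s(0),\dots,s(n-1))$ (with the appropriate indexing convention for the action). I would prove this by induction on the Borel rank of $A$.
\begin{itemize}
\item For basic clopen $A$, membership of $g\cdot M$ depends on finitely many values of $g$. Comeagerness over $U_s$ then reduces to a statement about all extensions $t\supseteq s$ that is expressible by quantifying over new elements: ``for all $y$ distinct from $\bar x$ there exist $z$ \dots''. This gives a finitary formula.
\item For complements, I would use the identity $(\neg A)^{*U}=\neg(A^{\Delta U})$ together with $A^{\Delta U}=\bigcup_{V\subseteq U}A^{*V}$. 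The union over the countably many $V=U_t$ with $t\supseteq s$ becomes an existential quantifier over the new elements $t\setminus s$, combined with a countable disjunction.
\item For countable intersections, I would use $(\bigcap_n A_n)^{*U}=\bigcap_n A_n^{*U}$, which is a countable conjunction.
\end{itemize}
These identities are the standard Baire-property facts for Vaught transforms, and they rely on the Baire category theorem in $S_\infty$ and on Borel sets having the Baire property.

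To finish, I would note that invariance of $B$ gives $B=B^{*S_\infty}$. The sentence is then $\varphi_{B,\emptyset}$. The main obstacle is the bookkeeping in the complement step: writing the union over extensions $t\supseteq s$ as quantification over distinct new witnesses, and checking that the variable assignment convention matches the direction of the action.
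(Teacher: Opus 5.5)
The paper does not prove this fact: it quotes \cite[Theorem 16.8]{Kec95}, and your outline is the Vaught-transform proof given there, so your route is the standard one and is correct. The one thing to make explicit is that the inductive formula should depend only on $A$ and $n=l(s)$ (Kechris's $\varphi_{A,n}$) and should work uniformly for every injective $s$ of length $n$; your base case and intersection step already yield formulas of this kind. (In the base case, $\{g\in U_s\mid g\cdot M\in A\}$ is clopen in $U_s$, so comeagerness means it is all of $U_s$, and the formula is purely universal over the new elements; no existential quantifiers are needed.) This uniformity is exactly what lets the complement step rewrite $\bigcup_{t\supseteq s}A^{*U_t}$ as $\bigvee_m\exists x_n\cdots\exists x_{m-1}\bigl(\bigwedge_{i<j<m}x_i\ne x_j\wedge\varphi_{A,m}(x_0,\dots,x_{m-1})\bigr)$.
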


It follows that most common classes of countable structures, such as
those of countable trees, countable linear orders, countable groups,
countable sorted complemented algebras (see Section \ref{ssec:SCA}),
form Borel subsets of~$\Mod(L)$ for some $L$, and hence are standard
Borel spaces.

An equivalence relation $E$ on a Borel space $X$ is called
\textbf{classifiable by countable structures} if
$E \le_B {\cong_{\Mod(L)}}$ for some~$L$.  

\vspace{10pt}

\subsubsection{Spaces of embeddings}

In this section we define some standard Borel spaces of functions
which are useful in the sequel. A map is \emph{an embedding} if it is
a homeomorphism onto its image. A \emph{partial embedding} from $X$
to $Y$ is an embedding from some $Z\subset X$ to~$Y$.

\begin{Def}\label{def:PartEmb}
  Let $X=(X,d)$ and $Y=(Y,d')$ be metric spaces.  Let $\PartEmb(X,Y)$
  be the set of all graphs of partial embeddings from $X$ to $Y$ with compact
  domain. If $F\in \PartEmb(X,Y)$, then $F=g[C]$ where $C\subset X$ is compact
  and $g\colon C\to X\times Y$ is given by $g(x)=(x,f(x))$ for some
  continuous $f\colon C\to Y$, so $F$ is compact. Thus,
  $\PartEmb(X,Y)\subset K(X\times Y)$ and we equip it with the
  subspace topology.  If $f$ is an embedding from a compact
  $C\subset X$ to $Y$, we denote by $\graph(f)$ the corresponding
  element of $\PartEmb(X,Y)$.
\end{Def}

Given the product $\prod_{i \in I} X_i$ of a family of topological
spaces $(X_i)_{i \in I}$, we denote by $\prj_{j}$ or
$\prj_{X_j}$ the projection function $\prod_{i \in I} X_i \to X_j$
onto the $j$-th coordinate, for each $j \in I$.

\begin{Lemma}\label{lemma:PartEmbContainsAllEmbeddings}
  Suppose $(X,d),(Y,d')$ are metric spaces. Then $F\in \PartEmb(X,Y)$
  if and only if
  \begin{align}
    &\forall \e\in\R_+\exists\d\in\R_+ \forall ((x,y),(x',y'))\in (F\times F)\big(d(x,x')<\delta \rightarrow d'(y,y')\le \e\big)\label{def:PartEmb1}\\
    \text{and }\ &\forall \e\in \R_+\exists\d\in\R_+ \forall ((x,y),(x',y'))\in (F\times F)\big(d'(y,y')<\delta \rightarrow d(x,x')\le \e\big).\label{def:PartEmb2}
  \end{align}
\end{Lemma}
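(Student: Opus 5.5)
We prove the two implications separately. In both, $F$ is a compact subset of $X\times Y$, and we use the two projections $\prj_X\rest F$ and $\prj_Y\rest F$.

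For the forward direction, suppose $F=\graph(f)$ with $f\colon C\to Y$ an embedding and $C\subset X$ compact.
\begin{enumerate-(1)}
\item The map $f$ is continuous on the compact set $C$, so it is uniformly continuous. Since the points of $F$ are exactly the pairs $(x,f(x))$ with $x\in C$, this is literally condition \eqref{def:PartEmb1}.
\item The image $f[C]$ is compact, being the continuous image of a compact set. Because $f$ is an embedding, $f^{-1}\colon f[C]\to C$ is continuous.
\item Hence $f^{-1}$ is uniformly continuous on the compact set $f[C]$. Restated in terms of pairs in $F$, this is condition \eqref{def:PartEmb2}.
\end{enumerate-(1)}

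For the converse, let $F\in K(X\times Y)$ satisfy \eqref{def:PartEmb1} and \eqref{def:PartEmb2}. Set $C=\prj_X[F]$, which is compact as a continuous image of a compact set.
\begin{enumerate-(1)}
\item \emph{$F$ is the graph of a function on $C$.} Suppose $(x,y),(x,y')\in F$. Then $d(x,x)=0<\delta$ for every $\delta>0$, so \eqref{def:PartEmb1} gives $d'(y,y')\le\e$ for every $\e\in\R_+$. Hence $y=y'$, and $F$ is the graph of a function $f\colon C\to Y$.
\item \emph{$f$ is continuous.} Condition \eqref{def:PartEmb1} says precisely that $f$ is uniformly continuous, in particular continuous.
\item \emph{$f$ is injective.} Apply the same argument as in step (1) to \eqref{def:PartEmb2}: if $f(x)=f(x')$, then $d(x,x')\le\e$ for every $\e\in\R_+$, so $x=x'$.
\item \emph{$f^{-1}$ is continuous.} Condition \eqref{def:PartEmb2} says that $f^{-1}\colon f[C]\to C$ is uniformly continuous.
\end{enumerate-(1)}
Thus $f$ is an embedding from the compact set $C$ into $Y$ with $\graph(f)=F$, so $F\in\PartEmb(X,Y)$.

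There is no real obstacle here. The one point needing care is the converse: single-valuedness of $F$ and injectivity of $f$ must be extracted from the conditions by taking $\e$ arbitrarily small. The hypothesis that $F$ is compact is needed only to make $C$ compact, as Definition~\ref{def:PartEmb} requires.
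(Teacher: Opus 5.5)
Your proof is correct and follows essentially the same route as the paper. For the forward direction you use uniform continuity of $f$ and $f^{-1}$ on the compact sets $C$ and $f[C]$; for the converse you project $F$ to get a compact $C$ and read single-valuedness, injectivity, and continuity of $f$ and $f^{-1}$ off \eqref{def:PartEmb1} and \eqref{def:PartEmb2}, and like the paper you rightly assume $F\in K(X\times Y)$ there, which the statement leaves implicit (your $d(x,x)=0<\delta$ argument just spells out a step the paper states without detail).
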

\begin{proof}
  Suppose that $F \in \PartEmb(X,Y)$ and $f\colon C\to Y$ is an
  embedding such that $C \in K(X)$ and $F=\graph(f)$.  Then the
  conditions \eqref{def:PartEmb1} and \eqref{def:PartEmb2} are simply
  saying that both $f$ and its inverse are uniformly continuous, so
  they are satisfied by the compactness of~$C$ (from which in turn the
  compactness of $f[C]$ also follows).

  Suppose $F \in K(X\times Y)$ satisfies conditions
  \eqref{def:PartEmb1} and \eqref{def:PartEmb2}. Let $C=\prj_X(F)$,
  $D=\prj_{Y}(F)$. Then $C$ and $D$ are compact by the compactness
  of~$F$. By the choice of $C$ and $D$, for all $x\in C$ there is
  $y\in D$ and for all $y$ there is $x$ such that $(x,y)\in F$.
  Conditions \eqref{def:PartEmb1} and \eqref{def:PartEmb2} imply that
  for all $x\in C$ there is a unique $y\in D$ with $(x,y)\in F$ and
  vice versa, so $F$ is indeed a graph of some function
  $f\colon C\to D$ and that map is a bijection. But then conditions
  \eqref{def:PartEmb1} and \eqref{def:PartEmb2} imply that both $f$
  and its inverse are continuous, so we are done.
\end{proof}

\begin{Lemma}\label{lemma:PartEmbBorel}
  If $X$ and $Y$ are Polish, then $\PartEmb(X,Y)$ is a Borel subset of $K(X \times Y)$.
\end{Lemma}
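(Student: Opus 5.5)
We will use the characterization of Lemma~\ref{lemma:PartEmbContainsAllEmbeddings}: $\PartEmb(X,Y)$ is the set of those $F\in K(X\times Y)$ satisfying \eqref{def:PartEmb1} and \eqref{def:PartEmb2}. It therefore suffices to show that each of these two conditions defines a Borel subset of $K(X\times Y)$. By symmetry (swap the roles of the coordinates via the homeomorphism $X\times Y\to Y\times X$, which induces a homeomorphism of the hyperspaces), it is enough to handle \eqref{def:PartEmb1}.

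The first step is to replace the real quantifiers by countable ones. In \eqref{def:PartEmb1} we may restrict $\e$ and $\d$ to range over $\{1/k\mid k\ge1\}$, since the condition is monotone in both. The condition then reads
\[
\bigcap_{k\ge1}\bigcup_{m\ge1} E_{k,m},
\]
where $E_{k,m}$ is the set of $F\in K(X\times Y)$ such that for all $(x,y),(x',y')\in F$ with $d(x,x')<1/m$ we have $d'(y,y')\le 1/k$. It remains to show that each $E_{k,m}$ is Borel.

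For the second step, note that the complement of $E_{k,m}$ is the set of $F$ for which there are $(x,y),(x',y')\in F$ with $d(x,x')<1/m$ and $d'(y,y')>1/k$. Write $F\times F\subset (X\times Y)^2$, which depends continuously on $F$ by Fact~\ref{fact:K(X)}\ref{prop:K(X)-4}. Let
\[
U_{k,m}=\{((x,y),(x',y'))\mid d(x,x')<1/m,\ d'(y,y')>1/k\},
\]
which is open in $(X\times Y)^2$. Then $F\notin E_{k,m}$ if and only if $(F\times F)\cap U_{k,m}\ne\es$. The set $\{L\in K((X\times Y)^2)\mid L\cap U\ne\es\}$ is open in the Vietoris topology, so the complement of $E_{k,m}$ is the preimage of an open set under a continuous map. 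Hence it is open, $E_{k,m}$ is closed, and the conditions are $G_{\delta\sigma}$-type (in fact $\Pi^0_3$), in particular Borel.

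The main subtlety is in the first step: we must check that discretizing $\e$ and $\d$ is legitimate, given the strict and non-strict inequalities. For $\e$, shrinking is harmless because the condition for $\e=1/k$ with $1/k\le\e$ implies it for $\e$. For $\d$, any positive $\d$ admits some $1/m<\d$, and a smaller $\d$ still works. So the countable reformulation is equivalent. Everything else is routine via Fact~\ref{fact:K(X)}.
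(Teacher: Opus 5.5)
Your proof is correct and follows essentially the paper's argument. The paper also discretizes $\e,\d$ (over $\Q_+$) via Lemma~\ref{lemma:PartEmbContainsAllEmbeddings}, and pulls back along the continuous map $F\mapsto F\times F$ the closed sets $K(Z^0_{\e\d}\cap Z^1_{\e\d})$ of compacta avoiding your open sets $U$; it differs only in using one common $\d$ for both conditions instead of your symmetry reduction. One small slip: a countable intersection of countable unions of closed sets is $F_{\sigma\delta}$, not $G_{\delta\sigma}$, though your parenthetical $\Pi^0_3$ is right.
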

\begin{proof}
  For any fixed $\e,\d\in\Q_+$ the sets
  $$Z^0_{\e\d}=\{((x,y),(x',y'))\in (X\times Y)^2\mid d(x,x')<\delta\rightarrow d(y,y')\le\e\}$$
  $$Z^1_{\e\d}=\{((x,y),(x',y'))\in (X\times Y)^2\mid d(y,y')<\delta\rightarrow d(x,x')\le\e\}$$
  are closed in $(X\times Y)^2$, so the sets $K(Z^0_{\e\d})$ and
  $K(Z^1_{\e\d})$ are closed in $K((X\times Y)^2)$.  Let
  $$\xi_0\colon K(X\times Y) \to K(X\times Y) \times K(X\times Y)\ \text{ be defined by }\ K \mapsto (K,K),$$ 
  and 
  \[\xi_1\colon K(X\times Y) \times K(X\times Y) \to K((X\times Y)^{2})\ \text{ by }\ (K,L) \mapsto K \times L.\] 
  By Fact~\ref{fact:K(X)}\ref{prop:K(X)-4} the composition
  $\xi= \xi_1 \circ \xi_0$ given by $K\mapsto K\times K$ is
  continuous. Hence the set
  $\xi^{-1}[K(Z^0_{\e\d})\cap K(Z^1_{\e\d})]$ is closed and so Borel
  for each $\e$ and~$\delta$.  The lemma will follow from the
  following equality:
  $$\PartEmb(X,Y)=\Cap_{\e\in\Q_+} \Cup_{\d\in\Q_+} \xi^{-1}[K(Z^0_{\e\d} \cap Z^1_{\e\d})].$$ 
  Denote the right-hand side of this equality by \(W\). Suppose
  \(F \in \PartEmb(X,Y)\).  Let $\e\in\Q_+$ be arbitrary. Then by
  Lemma \ref{lemma:PartEmbContainsAllEmbeddings}, there are
  \(\delta_0, \delta_1 \in \Q_+\) such that
  \(F \times F \subset Z^0_{\e\d_0}\cap Z^1_{\e\d_1}\). Let
  $\delta=\min\{\delta_0,\delta_1\}$. Then
  $F\times F\subset Z^0_{\e\d}\cap Z^1_{\e\d}$.  Thus,
  \(\xi(F)=F \times F \in K(Z^0_{\e\d}) \cap K(Z^1_{\e\d})\), and so
  $$F=\xi^{-1}[F\times F]\in \xi^{-1}[K(Z^0_{\e\d})\cap K(Z^1_{\e\d})].$$
  This shows that \(F \in W\). On the other hand, if \(F \in W\), then
  for each \(\e \in \Q_+\) there is \(\d \in \Q_+\) such that
  \(F \in \xi^{-1}[K(Z^0_{\e\d})\cap K(Z^1_{\e\d})]\), i.e.,
  \(F\times F=\xi(F)\in K(Z^0_{\e\d}\cap Z^1_{\e\d})=K(Z^0_{\e\d})\cap
  K(Z^1_{\e\d})\), which by the other direction of Lemma
  \ref{lemma:PartEmbContainsAllEmbeddings} implies that
  \(F \in \PartEmb(X,Y)\).
\end{proof}

Notice that by Lemma~\ref{lemma:PartEmbContainsAllEmbeddings}, if
\(F \in \PartEmb(X,Y)\), then \(F=\graph(f)\) for some
\(f\colon \prj_X(F) \to \prj_Y(F)\). So, the domain of such an \(f\)
is always $\prj_X(F)$. We will use this fact in the sequel.

\begin{Def}\label{def:EmbSpace}
  For a compact metric space $X$ and a metric space $Y$, let
  $\Emb(X,Y)=\{F\in \PartEmb(X,Y)\mid \prj_X(F)=X\}$ be the set of
  (the graphs of) embeddings of $X$ into~$Y$.
\end{Def}

\begin{Lemma}\label{lemma:EmbIsClosed}
  If $X$ and $Y$ are Polish and $X$ is compact, then $\Emb(X,Y)$ is a closed subset
  of $\PartEmb(X,Y)$, and hence a standard Borel space.
\end{Lemma}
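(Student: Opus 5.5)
We show that $\Emb(X,Y)$ is closed in $\PartEmb(X,Y)$ by identifying it as the preimage of a closed set under a continuous map. Standard Borelness then follows from Lemma~\ref{lemma:PartEmbBorel}: $\PartEmb(X,Y)$ is a Borel subset of the Polish space $K(X\times Y)$, so a relatively closed subset of it is again Borel in $K(X\times Y)$. By the facts recalled in Section~\ref{ssec:BasicDST}, a Borel subset of a Polish space with the induced Borel structure is a standard Borel space.

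For closedness, consider the projection map $\prj_X$ on compact sets,
\[
\pi\colon K(X\times Y)\to K(X),\qquad F\mapsto \prj_X[F].
\]
This map is well defined because the continuous image of a compact set is compact. We argue that $\pi$ is continuous. Use the Hausdorff metric, taking on $X\times Y$ the max metric, which is compatible with the product topology. Since $\prj_X$ is $1$-Lipschitz for this metric, if $F\subseteq G_\delta$ then $\prj_X[F]\subseteq (\prj_X[G])_\delta$, and symmetrically. Hence $d_H(\pi(F),\pi(G))\le d_H(F,G)$. The degenerate cases where $F$ or $G$ is empty are immediate. Alternatively, continuity can be checked directly on the Vietoris subbasis: for $U\subseteq X$ open we have $\pi^{-1}\{K\mid K\subseteq U\}=\{F\mid F\subseteq U\times Y\}$ and $\pi^{-1}\{K\mid K\cap U\ne\es\}=\{F\mid F\cap (U\times Y)\ne\es\}$, and both are Vietoris open.

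Next, the singleton $\{X\}$ is closed in $K(X)$, since $K(X)$ is metrizable; here we use that $X$ is compact, so $X\in K(X)$. Therefore $\pi^{-1}[\{X\}]$ is closed in $K(X\times Y)$. Moreover
\[
\Emb(X,Y)=\PartEmb(X,Y)\cap \pi^{-1}[\{X\}]
\]
by Definition~\ref{def:EmbSpace}, which shows that $\Emb(X,Y)$ is closed in the subspace $\PartEmb(X,Y)$.

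No step here is a real obstacle. The only point requiring care is the continuity of $\pi$, and the Lipschitz estimate above (or the subbasis computation) settles it.
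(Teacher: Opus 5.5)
Your proof is correct and is essentially the paper's argument. The paper shows $\PartEmb(X,Y)\setminus\Emb(X,Y)$ is open by taking a point of $X$ missing from $\pr_X(F)$ and observing that it stays missing from $\pr_X(F')$ for every $F'$ Hausdorff-close to $F$. That is exactly continuity of $\pr_X$ (which the paper proves later as Lemma~\ref{lemma:r_C}\ref{lemma:r_C-1}) combined with closedness of $\{X\}$ in $K(X)$. The paper also deduces the Borel part from Lemma~\ref{lemma:PartEmbBorel}, as you do.

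Your formulation has one small advantage: it makes explicit that one needs $x\notin\pr_X(F)$. The paper's written proof blurs this point by choosing only a pair $(x,y)\notin F$.
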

\begin{proof}
  Suppose $F \in \PartEmb(X,Y)\setminus \Emb(X,Y)$. Let $(x,y)\in (X \times Y)\setminus F$ and
  let $\e=\frac{1}{2}d((x,y),F)$. Notice that $\e>0$ since $F$ is
  compact.  Let $U_\e$ be the $\e$-neighbourhood of $F$ in
  $K(X\times Y)$. Then $U_\e\cap \PartEmb(X,Y)$ is contained in
  $\PartEmb(X,Y)\setminus \Emb(X,Y)$ since $x$ is not in $\pr_X(F')$ for every $F' \in U_\e$.  Hence,
  $\PartEmb(X,Y)\setminus \Emb(X,Y)$ is an open subset of
  $\PartEmb(X,Y)$, and thus $\Emb(X,Y)$ is closed.  The second
  assertion follows by applying Lemma~\ref{lemma:PartEmbBorel}.
\end{proof}

\begin{Lemma}\label{lemma:SupmetricHomeoEmb}
  Let $K_1$ and $K_2$ be compact Polish metric spaces and let
  $C(K_1,K_2)$ be the set of continuous maps $K_1\to K_2$ equipped
  with the sup-metric.  Let $E\subset C(K_1,K_2)$ be the set of
  embeddings. Then $E$ is homeomorphic to $\Emb(K_1,K_2)$.
\end{Lemma}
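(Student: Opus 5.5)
The natural map is $\Phi\colon E\to \Emb(K_1,K_2)$, $\Phi(f)=\graph(f)$. By Definition~\ref{def:EmbSpace} and Lemma~\ref{lemma:PartEmbContainsAllEmbeddings}, $\Phi$ is a bijection. Every element of $\Emb(K_1,K_2)$ is a compact graph $F\subset K_1\times K_2$ with $\prj_{K_1}(F)=K_1$. By Lemma~\ref{lemma:PartEmbContainsAllEmbeddings} it is the graph of a unique embedding $f\colon K_1\to K_2$. Conversely, the graph of any embedding of the compact space $K_1$ is compact and lies in $\Emb(K_1,K_2)$. Injectivity is clear because a function is determined by its graph. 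It remains to show that $\Phi$ and $\Phi^{-1}$ are continuous, where $\Emb(K_1,K_2)$ carries the Hausdorff metric $d_H$ of Definition~\ref{def:K(X)}. Fix on $K_1\times K_2$ the max metric, truncated at $1$ if necessary.

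\emph{Continuity of $\Phi$.} For $f,g\in E$ every point $(x,f(x))$ is within distance $d'(f(x),g(x))\le \sup_x d'(f(x),g(x))$ of the point $(x,g(x))\in\graph(g)$, and symmetrically. Hence $d_H(\graph f,\graph g)\le d_{\sup}(f,g)$, so $\Phi$ is $1$-Lipschitz.

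\emph{Continuity of $\Phi^{-1}$.} This is the step requiring an argument, since Hausdorff closeness of graphs does not a priori give uniform closeness of the maps. Fix $f\in E$ and $\e>0$. Because $K_1$ is compact, $f$ is uniformly continuous, so choose $\delta<\e/2$ with $d(x,x')<\delta\Rightarrow d'(f(x),f(x'))<\e/2$. Suppose $g\in E$ satisfies $d_H(\graph f,\graph g)<\delta$, and let $x\in K_1$. The point $(x,g(x))\in\graph g$ lies within $\delta$ of some $(x',f(x'))$, so $d(x,x')<\delta$ and $d'(g(x),f(x'))<\delta$. Then
\[
d'(g(x),f(x))\le d'(g(x),f(x'))+d'(f(x'),f(x))<\delta+\e/2<\e .
\]
Thus $d_{\sup}(f,g)\le\e$, and $\Phi^{-1}$ is continuous at $\graph f$. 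Only this direction uses the uniform continuity of $f$ coming from compactness of $K_1$, so this is where the main work lies.

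Combining the two directions, $\Phi$ is a homeomorphism from $E$ onto $\Emb(K_1,K_2)$, where $E$ carries the sup-metric and $\Emb(K_1,K_2)$ the subspace topology from $K(K_1\times K_2)$.
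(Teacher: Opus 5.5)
Your proposal is correct and follows the paper's proof: the same graph map with the max metric on $K_1\times K_2$, the bound $d_H(\graph f,\graph g)\le d_{\sup}(f,g)$ obtained by pairing $(x,g(x))$ with $(x,f(x))$, and, for the converse, uniform continuity of $f$ with $\delta<\e/2$ plus the triangle inequality. Your labelling of the two directions (continuity of $\Phi$ versus continuity of $\Phi^{-1}$) is the correct one; the paper swaps them, calling the first step ``$h$ is open'' and the second ``$h$ is continuous''.
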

\begin{proof}
  We can assume that the metric on $K_1\times K_2$ is the
  $\max$-metric given by
  $$d_{K_1\times K_2}((x,y),(x',y'))=\max\{d_{K_1}(x,x'),d_{K_2}(y,y')\}.$$
  Let $h\colon E\to \Emb(K_1,K_2)$ be
  defined by $h(f)=\graph(f)$.  It is clearly one-to-one and by
  Lemma~\ref{lemma:PartEmbContainsAllEmbeddings} it is onto. Let
  $f\in E$ and suppose $\e>0$.  We will show that
  $h[B_{E}(f,\e)]\subset B_{\Emb(K_1\times K_2)}(\graph(f),\e)$ where
  the ball on the left-hand-side is taken in the sup-metric on
  \(C(K_1,K_2)\) and the ball on the right-hand-side in the Hausdorff
  metric $d_H$ on $K(K_1\times K_2)$.  Suppose $g\in B_{E}(f,\e)$.  To
  show that $h(g)\in B_{\Emb(K_1\times K_2)}(\graph(f),\e)$ it is
  enough to show for all $x\in K_1$ that there exists $y$ such that
  $d_{K_1\times K_2}((x,g(x)),(y,f(y)))< \e$.  We can select $x=y$ in
  which case
  $d_{K_1\times K_2}((x,g(x)),(x,f(x)))=d_{K_2}(f(x),g(x))<\e$ by the
  choice of $g$. This proves that $h$ is open. Let us show now
  that $h$ is continuous. Let $f\in E$ and $\e>0$.  Since $f$
  is uniformly continuous by the compactness of $K_1$, there is
  $\delta<\e/2$ such that for all $x,y\in K_1$, if
  $d_{K_1}(x,y)<\delta$, then $d_{K_2}(f(x),f(y))<\e/2$. We will show
  that
  $h^{-1}[B_{\Emb(K_1\times K_2)}(h(f),\delta)]\subset B_{E}(f,\e)$.
  Let $g$ be such that $d_H(h(g),h(f))<\delta$.  This means that for
  every $x\in K_1$ there is $y\in K_1$ such that
  $$d_{K_1\times K_2}((x,g(x)),(y,f(y)))<\delta.$$
  By definition of $d_{K_1\times K_2}$ this is equivalent to
  \begin{equation}
    d_{K_1}(x,y)<\delta\text{ and }d_{K_2}(g(x),f(y))<\delta\label{eq:Inequalities_H}.
  \end{equation}
  From the first of the inequalities \eqref{eq:Inequalities_H}, by the
  choice of $\delta$, we have $d_{K_2}(f(x),f(y))<\e/2$. Combining
  with the second inequality of \eqref{eq:Inequalities_H} and the
  triangle inequality, we get
  $$d_{K_2}(f(x),g(x))<\e/2+\delta<\e.$$
  The last inequality follows from the assumption that
  $\delta<\e/2$. Since $x$ was arbitrary, this means that
  $g\in B_{E}(f,\e)$.
\end{proof}

From now on we treat $\PartEmb(X,Y)$
as a set of functions or a set of their graphs interchangeably hoping
that no confusion ensues.

\begin{Lemma}\label{lemma:r_C}
  Suppose $X$ and $Y$ are Polish metric spaces. Then the following hold:
  \begin{enumerate-(a)}
  \item\label{lemma:r_C-1} The projection maps
    \(\pr_X \colon K(X \times Y) \to K(X)\) and
    \(\pr_Y \colon K(X \times Y) \to K(Y)\) defined by
    \(\pr_X(K)=\{x \in X \mid \exists y \in Y ((x,y) \in K)\}\) and
    \(\pr_Y(K)=\{y \in Y \mid \exists x \in X ((x,y) \in K)\}\) are
    continuous.
  \item\label{lemma:r_C-3} If \(X\) is compact and $C\subset X$ is
    compact, the map $r_C\colon \Emb(X,Y)\to K(Y)$ defined by
    $r_C(F)=\{y\in Y\mid\exists x\in C((x,y)\in F)\}$ is Borel.
  \item\label{lemma:DomainOfEmb} The function
    $\dom\colon \PartEmb(X,Y)\to K(X)$ is Borel.
  \end{enumerate-(a)}
\end{Lemma}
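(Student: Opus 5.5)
For part \ref{lemma:r_C-1}, I will show that $\pr_X$ is $1$-Lipschitz with respect to the Hausdorff metrics, taking the $\max$-metric on $X\times Y$ as in the proof of Lemma~\ref{lemma:SupmetricHomeoEmb}. The argument for $\pr_Y$ is symmetric. Let $K,L\in K(X\times Y)$ be nonempty with $d_H(K,L)<\e$, and let $x\in\pr_X(K)$, witnessed by $(x,y)\in K$. There is $(x',y')\in L$ with $\max\{d(x,x'),d'(y,y')\}<\e$, and then $x'\in\pr_X(L)$ satisfies $d(x,x')<\e$. By symmetry this gives $d_H(\pr_X K,\pr_X L)\le d_H(K,L)$. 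The empty set is isolated and is mapped to the empty set, so it causes no trouble. Compactness of $\pr_X(K)$ follows because it is the continuous image of a compact set.

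For part \ref{lemma:r_C-3}, I will write $r_C(F)=\pr_Y\big(F\cap (C\times Y)\big)$. The set $C\times Y$ is closed, and $K\mapsto K\cap (C\times Y)$ is Borel from $K(X\times Y)$ to $K(X\times Y)$. To prove this Borelness, I use that $K(X\times Y)$ is a Borel subset of $F(X\times Y)$ (Fact~\ref{fact:F(X)}\ref{prop:F(X)-1}) and that the Vietoris and Effros Borel structures agree on it. It then suffices to check Borelness of the sets $\{K\mid K\cap (C\times Y)\cap U\ne\es\}$ for $U$ open. The intersection map is upper semicontinuous, so I will write $U$ as a countable union of closed sets $D_n$ and use that $\{K\mid K\cap (C\times Y)\cap D_n\ne\es\}$ is closed by Fact~\ref{fact:K(X)}\ref{prop:K(X)-3}, since $K\cap(C\times Y)$ is compact. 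Composing with the continuous map $\pr_Y$ from \ref{lemma:r_C-1} then yields a Borel map. A more direct alternative avoids the intersection map altogether. Because $F$ is the graph of an embedding $f$ with domain $X$, we have $r_C(F)=f[C]$, and $r_C(F)\cap U\ne\es$ holds iff there is $x\in\DD(C)$ with $f(x)\in U$. This reduces to $F\cap (V\times U)\ne\es$ for small basic $V$, but density has to be handled with care when $U$ is open. I will take the first route.

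Part \ref{lemma:DomainOfEmb} is immediate: $\dom(F)=\pr_X(F)$, as noted after Lemma~\ref{lemma:PartEmbBorel}. This is the restriction of the continuous map $\pr_X$ from \ref{lemma:r_C-1} to the Borel set $\PartEmb(X,Y)$ (Lemma~\ref{lemma:PartEmbBorel}), so it is Borel.

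I expect the main obstacle to be the Borelness of the intersection map $K\mapsto K\cap(C\times Y)$ in \ref{lemma:r_C-3}. The key observation is that $\{K\mid K\cap (C\times Y)\cap D\ne\es\}$ is closed for closed $D$, by a compactness argument on limits of points. Given that, the preimages of the generating sets of the Effros Borel structure are $F_\sigma$.
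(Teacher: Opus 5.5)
Your proposal is correct. Part (c) matches the paper; parts (a) and (b) take different routes.

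For (a), the paper checks preimages of the Vietoris subbasic sets. The preimages of $\{K\mid K\cap U\ne\es\}$ and $\{K\mid K\subset U\}$ are the Vietoris-open sets $\{K\mid K\cap(U\times Y)\ne\es\}$ and $\{K\mid K\subset U\times Y\}$. Your $1$-Lipschitz estimate for $d_H$ under the max-metric is just as short and gives uniform continuity for free.

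For (b), the paper works directly with Hausdorff balls. It rewrites $d_H(r_C(F),D)<\e$ as the collar inclusions $r_C(F)\subset D_\e$ and $D\subset (r_C(F))_\e$, then invokes Fact~\ref{fact:K(X)}\ref{prop:K(X)-2}. You instead factor $r_C=\pr_Y\circ\big(K\mapsto K\cap(C\times Y)\big)$, so you only need that intersecting with a fixed closed set is Borel.

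Your factorization is the more robust argument, for three reasons:
\begin{enumerate-(i)}
\item It does not need the collars $D_\e\subset Y$ to be compact. Compactness of $X$ does not give this, and it fails e.g.\ for $Y=\U$, where the lemma is later applied.
\item It needs no separate justification that the condition $D\subset (r_C(F))_\e$ is Borel in $F$, which the paper leaves implicit.
\item It proves more: $r_C$ is Borel on all of $K(X\times Y)$ for every closed $C\subset X$, without using compactness of $X$ or that $F$ is the graph of an embedding.
\end{enumerate-(i)}
In exchange, the paper's route tests open balls of $K(Y)$ directly. It never passes through the identification of the Vietoris and Effros Borel structures on $K(X\times Y)$.

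One small correction to your write-up. Closedness of $\{K\mid K\cap E\ne\es\}$, for the closed set $E=(C\times Y)\cap D_n$, does not follow from Fact~\ref{fact:K(X)}\ref{prop:K(X)-3}. That fact concerns pairs of compact sets. Using it here would require continuity of $K\mapsto K\cap(C\times Y)$, and that map is in general only upper semicontinuous. The closedness is nonetheless immediate: its complement $\{K\mid K\subset (X\times Y)\setminus E\}$ is Vietoris-open. The limit-of-points compactness argument you sketch at the end also works.
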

\begin{proof}
  For \ref{lemma:r_C-1} it is enough to show this for $\pr_X$. We will
  show that the preimages of the sets generating the Vietoris topology
  on \(K(X)\) are open in \(K(X \times Y)\). Let \(U \subset X\) be
  open, and consider
\begin{align*}
    \pr_X^{-1}[\{K \in K(X)\mid K \cap U \neq \es\}]= &\ \{K \in K(X\times Y) \mid \pr_X(K) \cap U \neq \es\}\\
    = &\ \{K \in K(X\times Y) \mid (U \times Y) \cap K \neq \es\},
\end{align*}
which is a basic open set in \(K(X \times Y)\) because $U\times Y$ is
open in $X\times Y$. Similarly, one can prove that, given an open
subset \(U \subset X\), \(\pr_X^{-1}[\{K \in K(X)\mid K \subset U\}]\)
is open in \(K(X \times Y)\), and thus \(\pr_X\) is Borel.

Now let us prove \ref{lemma:r_C-3}.
  Let $D\in K(Y)$ and $\e>0$. We will show that $r_C^{-1}B_{K(Y)}(D,\e)$
  is Borel where $B_{K(Y)}(D,\e)$ is the ball of radius $\e$
  in the Hausdorff metric on $K(Y)$.
  Let \(f\colon C \to Y\) be such that \(F=\graph(f)\). Since $f$ is continuous, the image $f[C]=r_C(F)$ is compact.
  Now, 
  \begin{align*}
    F\in r^{-1}_C[B_{K(Y)}(D,\e)]&\iff r_C(F)\in B_{K(Y)}(D,\e)\\
    &\iff d_{K(Y)}(r_C(F),D)<\e\\
    &\iff r_C(F)\subset D_\e\land D\subset (r_C(F))_\e,
  \end{align*}
  where $D_\e$ and $(r_C(F))_\e$ are respectively the collar of $D$ and $r_C(F)$ (see Definition~\ref{def:K(X)}). The latter is a Borel condition by Fact~\ref{fact:K(X)}\ref{prop:K(X)-2} because
  $D_\e$ and $(r_C(F))_\e$ are compact by the compactness of $X$.
  Finally, we notice that the map \(\dom\) in \ref{lemma:DomainOfEmb} is just the restriction of \(\pr_X\) to \(\PartEmb(X,Y)\) given by
  $\dom(F)=\prj_X(F)$. Thus, by \ref{lemma:r_C-1} it is Borel.
\end{proof}

\begin{Lemma}\label{lemma:SubsetOfDomain}
  The set
  $B=\{(C,F)\in K(X)\times\PartEmb(X,Y)\mid C\subseteq \prj_X(F)\}$
  is closed.
\end{Lemma}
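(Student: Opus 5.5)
The plan is to realize $B$ as the preimage of a closed set under a continuous map and invoke the earlier facts on hyperspaces. By Lemma~\ref{lemma:r_C}\ref{lemma:r_C-1}, the projection $\prj_X\colon K(X\times Y)\to K(X)$ is continuous, and so is its restriction to the subspace $\PartEmb(X,Y)$. The map
\[
\Phi\colon K(X)\times\PartEmb(X,Y)\to K(X)\times K(X),\qquad (C,F)\mapsto (C,\prj_X(F)),
\]
is therefore continuous: its first coordinate is the identity and its second coordinate is the restricted projection.

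Next, Fact~\ref{fact:K(X)}\ref{prop:K(X)-2} tells us that the inclusion relation $\{(K,L)\in K(X)^2\mid K\subseteq L\}$ is closed in $K(X)^2$. Since
\[
B=\Phi^{-1}\bigl[\{(K,L)\mid K\subseteq L\}\bigr],
\]
$B$ is closed in $K(X)\times\PartEmb(X,Y)$, which is the topology in which the statement is meant, with $\PartEmb(X,Y)$ carrying the subspace topology from $K(X\times Y)$.

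Should one prefer a direct check, the same conclusion follows by sequences, since all spaces are metrizable. Suppose $(C_n,F_n)\to(C,F)$ with $C_n\subseteq \prj_X(F_n)$, and let $x\in C$. Choose $x_n\in C_n$ with $x_n\to x$, and pick $y_n$ with $(x_n,y_n)\in F_n$. The set $\bigcup_n F_n\cup F$ is compact, because a convergent sequence of compact sets has compact union with its limit. Hence a subsequence $(x_n,y_n)$ converges to some $(x,y)$, and that point lies in $F$ because $F_n\to F$ in the Hausdorff metric. Thus $x\in\prj_X(F)$.

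I do not expect any genuine obstacle here. The only point needing care is that continuity of $\prj_X$ survives restriction to $\PartEmb(X,Y)$, which is immediate.
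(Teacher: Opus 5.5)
Your proof is correct, but your main argument takes a slightly different route from the paper's.

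The paper shows directly that the complement of $B$ is open. Given $(C,F)\notin B$, it picks $x\in C\setminus\prj_X(F)$ with $d(x,\prj_X(F))>2\e$ and asserts that the $\e$-neighborhood of $(C,F)$ misses $B$. This tacitly uses that $\prj_X$ does not increase Hausdorff distance, which holds for a max-type metric on $X\times Y$. With that, any $C'$ close to $C$ contains a point near $x$, while $\prj_X(F')$ stays within $\e$ of $\prj_X(F)$.

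You instead write $B$ as the preimage of the closed inclusion relation from Fact~\ref{fact:K(X)}\ref{prop:K(X)-2} under the continuous map $(C,F)\mapsto(C,\prj_X(F))$, with continuity supplied by Lemma~\ref{lemma:r_C}\ref{lemma:r_C-1}. Your version is more modular and does not need a particular product metric, but it depends on those two earlier results. The paper's version is self-contained and exhibits an explicit neighborhood.

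Your sequential backup argument is also sound. It uses that a Vietoris-convergent sequence of compact sets, together with its limit, has compact union. It also uses that the limit of the chosen subsequence must have first coordinate $x$, since $x_n\to x$.

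Both arguments in fact show that $B$ is closed in the larger space $K(X)\times K(X\times Y)$. The restriction to $\PartEmb(X,Y)$ plays no role.
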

\begin{proof}
    If $(C,F)\notin B$, pick $x\in C\setminus \prj_X(F)$ and
    $\e>0$ such that $d(x,\prj_X(F))>2\e$, which exists by the compactness
    of $\prj_X(F)$. Then the $\e$-neighborhood of $(C,F)$ is an open
    neighborhood of $(C,F)$ outside~$B$. Thus, the complement of $B$ is open, equivalently $B$ is closed. 
\end{proof}
 
The following result appears to be new.

\begin{thm}[$\Emb_{PL}(|K_1|,|K_2|)$ is $K_\sigma$]%
  \label{thm:PLK_sigma}
  Let $K_1$ and $K_2$ be finite simplicial complexes in a Polish
  metric space $(X,d)$.  Let $Z=\Emb_{PL}(|K_1|,|K_2|)$ be the
  subspace of $\Emb(|K_1|,|K_2|)$ which consists of graphs of
  PL-embeddings from $K_1$ into $K_2$.  Then $Z$ is $K_\sigma$ in
  $\Emb(|K_1|,|K_2|)$.
\end{thm}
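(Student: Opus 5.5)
We will write $Z$ as a countable union of compact subsets of $\Emb(|K_1|,|K_2|)$, indexed by combinatorial data that can be encoded by finitely many rational parameters.

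The basic observation is this. If $f$ is a PL-embedding from $K_1$ into $K_2$, then by Definition~\ref{def:rel_complexes} there are subdivisions $K_1'\subd K_1$ and $K_2'\subd K_2$ and a subcomplex $L\subset K_2'$ such that $f\colon K_1'\to L$ is simplicial. Since $K_1$ is finite and $|K_1|$ is compact, $K_1'$ may be taken finite. The map $f$ is then determined by two pieces of data: the finite abstract complex $K_1'$, realized rectilinearly inside the simplexes of $K_1$ by its vertex positions; and the images of the vertices of $K_1'$, each lying in some simplex of $K_2$, with $f$ linear on each simplex of $K_1'$ in the charts $\kappa^{-1}$ for $\kappa\in K_2$.

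The parametrization is as follows. Fix a finite combinatorial type $\Theta$. It specifies a finite abstract simplicial complex $\Sigma$. For each vertex $v$ of $\Sigma$ it assigns a simplex $\sigma_v\in K_1$ and a simplex $\tau_v\in K_2$. For each simplex of $\Sigma$ it assigns a simplex of $K_1$ containing it and a simplex of $K_2$ containing its image. There are only countably many such $\Theta$. The parameter space is
\[P_\Theta=\prod_v \bDelta^{\dim\sigma_v}\times\bDelta^{\dim\tau_v}.\]
It consists of positions $x_v$ in $\sigma_v$ and $y_v$ in $\tau_v$ in barycentric coordinates, and it is compact.

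Let $G_\Theta\subset P_\Theta$ be the set of parameters for which the following hold. First, the points $x_v$ span a rectilinear triangulation of $|K_1|$ subdividing $K_1$ with combinatorics $\Sigma$. Second, the points $y_v$ span rectilinear simplexes inside the assigned simplexes of $K_2$. Third, the resulting piecewise-linear map is injective. Let $\Phi_\Theta\colon G_\Theta\to \Emb(|K_1|,|K_2|)$ send a parameter to the graph of the PL map it determines. This map is continuous when $\Emb$ carries the sup-metric, which is legitimate by Lemma~\ref{lemma:SupmetricHomeoEmb}, because the map depends linearly on the vertex data chart-wise. Then
\[Z=\Cup_\Theta \Phi_\Theta[G_\Theta].\]
The inclusion $\supseteq$ holds because each image is a PL-embedding (the image of $K_1'$ is a subcomplex of a subdivision of $K_2$, by common subdivision of rectilinear complexes). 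The inclusion $\subseteq$ is the observation of the previous paragraph.

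The main obstacle is that $G_\Theta$ is not closed. Nondegeneracy of simplexes and injectivity are open conditions, and limits of embeddings can collapse. To fix this, write $G_\Theta$ as a countable union of compact sets $G_{\Theta,n}$. Let $G_{\Theta,n}$ consist of parameters for which:
\begin{enumerate-(a)}
\item every simplex of $\Sigma$ has, in both the domain and the target chart, volume (in its own dimension) at least $1/n$;
\item the induced map $g$ satisfies $d(g(x),g(x'))\ge \frac1n\, d(x,x')$ for all $x,x'$, a quantitative bi-Lipschitz lower bound measured in the charts.
\end{enumerate-(a)}
Both conditions are closed in $P_\Theta$. Condition (a) keeps the rectilinear combinatorics valid: the requirement that simplexes meet only in common faces and cover $|K_1|$ is preserved under limits once degeneration is excluded, since it amounts to an orientation and sign condition on the simplexes around each codimension-one face. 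Condition (b) ensures injectivity, and it passes to limits. Every parameter in $G_\Theta$ lies in some $G_{\Theta,n}$ by compactness. Hence each $\Phi_\Theta[G_{\Theta,n}]$ is compact, as the continuous image of a compact set, and $Z$ is $K_\sigma$.

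The delicate points I expect to check carefully are two. The first is that the combinatorial validity conditions are indeed closed once degeneracy is bounded away. The second is that continuity of $\Phi_\Theta$ holds across simplexes of $K_2$ sharing faces. For the second, I would use that the face maps are isometries onto standard faces (Definition~\ref{def:Faces}).
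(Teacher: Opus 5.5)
Your strategy matches the paper's in outline: reduce to finitely many real parameters, use a quantitative bi-Lipschitz bound to cut out countably many compact pieces, then take images. The parametrization is different, though.
\begin{itemize}
\item The paper lets $C^k$ be the set of triples $(\bar z_1,\bar z_2,\graph(f))$ with $l(\bar z_i)\le k$, $f$ simplicial from $K_1*\bar z_1$ onto a subcomplex of $K_2*\bar z_2$, and $f$ $k$-bilipschitz. It proves $C^k$ compact via Arzel\`a--Ascoli plus a closedness argument, projects to the third coordinate, and uses \cite{AP24} to know that every PL-embedding is simplicial between such stellar subdivisions.
\item You instead freeze the combinatorial type $\Theta$ of an arbitrary finite subdivision. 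Compactness then comes from $P_\Theta$ and continuity of $\Phi_\Theta$, so you need neither \cite{AP24} nor Arzel\`a--Ascoli.
\item Freezing $\Theta$ also means the only limit phenomenon you must control is degeneration. The paper's closedness step, by contrast, has to cope with the combinatorics of $K_1*\bar z_1$ jumping when points of $\bar z_1$ slide onto faces or collide.
\end{itemize}
Two of your steps need repair.

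\textbf{First, condition (b) cannot be stated with the ambient metric $d$.} The simplexes of $K_i$ are arbitrary topological embeddings into $X$, so a PL-embedding need not be bi-Lipschitz for $d$. For example, in $X=\R^2$ let $K_1$ be the edge $t\mapsto(t,0)$ and $K_2$ the edge $t\mapsto(t^2,1)$. Then $g(t,0)=(t^2,1)$ is even simplicial from $K_1$ to $K_2$, yet $d(g(t,0),g(0,0))=t^2$. So $g$ lies in no $\Phi_\Theta[G_{\Theta,n}]$, and your claim that every point of $G_\Theta$ lies in some $G_{\Theta,n}$ fails. (The paper's ``$k$-bilipschitz'' rests on the same tacit assumption; this is harmless in its applications, where everything is rectilinear in $\bDelta^n$.)

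To make ``measured in the charts'' precise, fix homeomorphisms $\rho_i\colon|K_i|\to|R_i|$ onto rectilinear complexes in Euclidean space, with $\rho_i\circ\kappa$ linear for every $\kappa\in K_i$ (for instance, send vertices to standard basis vectors). Then impose the lower bound on $\rho_2\circ g\circ\rho_1^{-1}$. This is still a closed condition, and every PL-embedding satisfies it for some $n$, because a PL-embedding of a compact Euclidean polyhedron is bi-Lipschitz.

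\textbf{Second, your reason why validity of the subdivision survives limits under (a) does not hold up as stated.} Orientation and sign conditions across codimension-one faces do not characterize triangulations. Take a triangulated disk whose interior vertex $u$ has a hexagonal link, and map the star so that the link winds twice around the image of $u$. Every triangle is positively oriented, and the two triangles at each interior edge lie on opposite sides of it, yet the map is two-to-one near $u$. Your closedness claim is still true, but it needs a global degree argument that uses the fact that the approximants are homeomorphisms.

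A simpler repair is to replace (a) by an $n$-bi-Lipschitz bound on $\rho_1\circ\phi_p$. Here $\phi_p\colon|\Sigma|\to|K_1|$ is the map determined by the $x_v$, and $|\Sigma|$ carries its standard Euclidean realization.
\begin{itemize}
\item This condition is closed, and every valid parameter satisfies it for some $n$.
\item Injectivity passes to limits directly, and surjectivity passes to limits by compactness, so a limit parameter again yields a subdivision of type $\Sigma$.
\item The estimate $\|(\rho_1\phi_{p_m})^{-1}-(\rho_1\phi_p)^{-1}\|_\infty\le n\|\rho_1\phi_{p_m}-\rho_1\phi_p\|_\infty$ gives the continuity of $\Phi_\Theta$ on $G_{\Theta,n}$, which you also need.
\end{itemize}
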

\begin{proof}
  Let $C^k$ be the set of triples
  $(\bar z_1,\bar z_2,\graph(f)) \in |K_1|^{\le k}\times |K_2|^{\le
    k}\times \Emb(|K_1|,|K_2|)$ such that $f$ is simplicial from
  $K_1*\bar z_1$ to a subcomplex of $K_2*\bar z_2$, the lengths of
  $\bar z_1$ and $\bar z_2$ are at most $k$ and $f$ is
  $k$-bilipschitz.
  \begin{claim}
    $C^k$ is compact.    
  \end{claim}
  \begin{proof}
    The sets $|K_1|^{\le k}$ and $|K_2|^{\le k}$ of sequences of
    length at most $k$ in the compact sets $|K_1|$ and $|K_2|$,
    respectively, are compact. Recall that $C(|K_1|,|K_2|)$ denotes
    the space of all continuous functions equipped with the
    sup-metric. The set $B^k$ of $k$-bilipschitz embeddings is closed
    in $C(|K_1|,|K_2|)$.  By the Arzelà-Ascoli Theorem (see
    e.g. \cite[Ch.~7]{Kelley1955}), $B^k$ is in fact compact. Let
    $B^k_0$ be the set of graphs of elements of $B^k$.  Then $B_0^k$
    is compact by Lemma~\ref{lemma:SupmetricHomeoEmb} in the topology
    of $\Emb(|K_1|,|K_2|)$.  Now,
    $C^k\subset |K_1|^{\le k}\times |K_2|^{\le k}\times B^k_0$ and so
    it is compact if it is closed. Let us show that $C^k$ is closed in
    $|K_1|^{\le k}\times |K_2|^{\le k}\times \Emb(|K_1|,|K_2|)$.
        
    Suppose $c=(\bar z_1,\bar z_2,\graph(f))\notin C^k$.  We will find
    an open neighborhood of $c$ outside~$C^k$ Denote $S:=K_1*\bar z_1$
    and $T:=K_2*\bar z_2$.
    \begin{remark}
      A map \(f\) is simplicial from \(S\) to \(T\) if and only if
      for all $\sigma \in S$, there is $\tau\in T$ such that
      $f|\sigma| = |\tau|$ and the map
      $\tau^{-1}\circ f\circ \sigma$ is linear (see Definitions
      \ref{def:EquivalenceOfSimplexes} and
      \ref{def:rel_complexes}).
    \end{remark}
                
    Suppose first that for some \(n\)-simplex $\sigma\in S$,
    $f|\sigma|\neq |\tau|$ for all $\tau \in T$.  Let
    \(\tau_0, \dots, \tau_k\) be the simplexes of \(T\) such that
    \(f|\sigma| \cap |\tau_i| \neq \es\). Denote by
    $A\triangle B=(A\setminus B)\cup (B\setminus A)$ the symmetric
    difference. Now $f|\sigma|\triangle |\tau_i|$ is non-empty.  Pick
    a point $w_i \in f|\sigma| \triangle |\tau_i|$ for each
    \(i \le k\). Let \(\e = \min\{d(w_i,A_i) \mid i \le k\}\), where
    \[ A_i=\begin{cases}
             |\tau_i| & \text{ if } w_i \in f|\sigma| \setminus |\tau_i|\\
             f|\sigma| & \text{ otherwise.}
           \end{cases}
    \]
    By the compactness of $f|\sigma|$ and \(|\tau_i|\) for each
    \(i \le k\), \(\e\) is greater than \(0\).  It is now possible to
    find $\e_0<\e$ such that for all $(\bar z'_1,\bar z'_2,\graph(f'))$
    in the $\e_0$-neighborhood $U$ of $c$, $f'$ maps some simplex
    \(\sigma' \in K_1*\bar z'_1\) not onto any simplex in
    $K_2*\bar z'_2$ witnessed by the same points $w_0,\dots,w_k$.
    Thus, $U$ is a neighborhood of $c$ outside $C^k$.
        
    Now assume that there are an \(n\)-simplex $\sigma\in S$ and
    \(\tau \in T\) such that \(f|\sigma|=\tau\) and
    $g = \tau^{-1}\circ f\circ \sigma$ is not linear. This is
    witnessed by two points $x, y \in \bDelta^n$ and $t \in (0,1)$,
    i.e. there is $\e>0$ such that
    $d(g(tx+(1-t)y),tg(x)+(1-t)g(y))>\e$. Since \(g\) is continuous,
    we can assume that $x$ and $y$ are in the interior of
    $\bDelta^n$. Let $\{v_0,\dots,v_n\}=V(\sigma)$ and let $\delta>0$
    be such that for all $v'_0,\dots,v'_n$ with $d(v'_i,v_i)<\delta$
    we have that $x$ and $y$ are in the image via \(\sigma^{-1}\) of
    the interior of $\Delta[v'_0,\dots,v'_n]$.  Let
    $\e'=\min\{\e,\delta\}$.  Then for all
    $(\bar z'_1,\bar z'_2,\graph(f'))$ in the $\e'$-neighborhood $U$
    of $c$, $\tau' \circ f' \circ \sigma'$ is not linear for some
    $\sigma'\in K_1*\bar z'_1$ and \(\tau' \in K_2*\bar z'_2\).  Thus,
    $U$ is a neighborhood of $c$ outside~$C^k$.

    We now consider the last case in which \(f\) is simplicial from
    \(S\) to \(T\), but not \(k\)-bilipschitz. Let \(x,y \in |S|\)
    witness this, i.e., either \(d(f(x),f(y))-kd(x,y)>0\) or
    \(\frac{1}{k}d(x,y)-d(f(x),f(y))>0\). Let \(\e\) be such a
    number. Then it is easy to see that any \(\e'\)-neighborhood of
    \(c\) with \(0<\e'<\frac{\e}{2}\) is an open neighborhood of \(c\)
    outside \(C^k\).
  \end{proof}
  Since $C^k$ is compact, its projection $P^k$ to the third coordinate
  is compact and so $P:=\Cup_{k\in\N}P^k$ is $K_\sigma$.  Let us show
  that $P=Z$. Clearly $P\subset Z$. Suppose $f$ is a PL-embedding from
  $K_1$ to $K_2$. Then there are triangulations $T_1$ and $T_2$ of
  $K_1$ and $K_2$ respectively such that $f$ is simplicial from $T_1$
  to a subcomplex of $T_2$. By \cite{AP24} there are
  $\bar z_1\subset K_1$ and $\bar z_2\subset K_2$ such that $f$ is
  simplicial from $K_1*\bar z_1$ to a subcomplex of $K_2*\bar z_2$. On
  each simplex of $K_1*\bar z_1$, $f$ is a linear embedding and
  therefore $L$-bilipschitz for some $L$. Since there are only
  finitely many simplexes in $K_1*\bar z_1$, there is an upper bound
  on all of these $L$ which gives a global bilipschitz constant for
  $f$.  Let $k$ be larger than this upper bound and larger than
  $\max\{l(z_1),l(z_2)\}$. Then $f\in P^k$.  Hence $f\in P$.
\end{proof}

\begin{Cor}\label{cor:FromCompactToNonCompactKsigmaPL}
  Let $K$ be a finite simplicial complex, $T$ an infinite simplicial
  complex, and let $Z=\Emb_{PL}(|K|,|T|)$ be the space of piecewise
  linear embeddings from $|K|$ into $|T|$.  Then $Z$ is $K_\sigma$ in
  $\Emb(|K|,|T|)$.
\end{Cor}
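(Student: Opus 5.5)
The plan is to reduce the statement to Theorem~\ref{thm:PLK_sigma} by exhausting $|T|$ with finite subcomplexes. First I will choose a sequence $T_0\subset T_1\subset\cdots$ of finite subcomplexes of $T$ with $\Cup_{n\in\N}T_n=T$. Condition~\ref{def:simplicial_complex-3} of Definition~\ref{def:simplicial_complex} and countability give such an exhaustion. Moreover, every compact subset of $|T|$ lies in $|T_n|$ for some $n$: a compact set is covered by finitely many of the neighbourhoods from \ref{def:simplicial_complex-3}, each of which meets only finitely many simplexes.

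Given $F=\graph(f)\in\Emb(|K|,|T|)$, the image $f|K|$ is compact, so it lies in some $|T_n|$. I will then use the identity
\[
Z=\Cup_{n\in\N}\bigl(Z\cap \Emb(|K|,|T_n|)\bigr)=\Cup_{n\in\N}\Emb_{PL}(|K|,|T_n|).
\]
The second equality needs the following check: a map whose image lies in $|T_n|$ is a PL-embedding into $T$ if and only if it is a PL-embedding into the subcomplex $T_n$.

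For the direction ``into $T_n$ implies into $T$'', a subcomplex of a subdivision of $T_n$ must be realized as a subcomplex of a subdivision of $T$. I will extend the subdivision of $T_n$ to one of $T$, for instance via the stellar form given by \cite{AP24}/Theorem~\ref{thm:CommonStellarSubdivision} applied to $T_n$. This yields a finite sequence $\bar z\subset|T_n|$ with $T*\bar z$ restricting to the required subdivision on $|T_n|$.

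For the converse, suppose $f$ is simplicial from a subdivision of $K$ onto a subcomplex $P'$ of a subdivision $T'$ of $T$, with $|P'|\subset|T_n|$. Then $T'$ and $T_n$ have a common subdivision over $|T_n|$. Passing to it turns the image into a subcomplex of a subdivision of $T_n$, after also subdividing $K$ correspondingly via pullback (Fact~\ref{fact:SimplicialPushSubd}).

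Each $\Emb_{PL}(|K|,|T_n|)$ is $K_\sigma$ in $\Emb(|K|,|T_n|)$ by Theorem~\ref{thm:PLK_sigma}, and is therefore a countable union of compact sets. Since $\Emb(|K|,|T_n|)\subset\Emb(|K|,|T|)$ carries the subspace topology (both sit inside $K(|K|\times|T|)$ with the Hausdorff metric), these compact sets remain compact in $\Emb(|K|,|T|)$. A countable union over $n$ of countable unions of compacts is $K_\sigma$, which completes the argument.

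The main obstacle is the second equality, namely the compatibility of ``PL-embedding into $T$'' with ``PL-embedding into $T_n$''. I expect it to be routine PL-topology (restricting and extending subdivisions), but it must be handled carefully at the level of the paper's definitions. If this proves awkward, the fallback is to reprove the Claim in Theorem~\ref{thm:PLK_sigma} directly with $K_2$ replaced by $T_n$ while still regarding $T$ as the target, that is, to define $C^{k,n}$ as triples whose stellar points lie in $|T_n|$ and whose image lies in $|T_n|$, and to reuse the same closedness argument.
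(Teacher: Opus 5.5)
Your proposal is correct and follows essentially the paper's route. The paper writes $Z$ as the union of $\Emb_{PL}(|K|,|P|)$ over the countably many finite subcomplexes $P$ of $T$, rather than over an increasing exhaustion, applies Theorem~\ref{thm:PLK_sigma}, and treats as clear the identity that you verify in both directions (for the converse, the simplexes of $T'$ lying in $|T_n|$ already form a subdivision of $T_n$, so no further common subdivision is needed).
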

\begin{proof}
  Let $\PP$ be the set of finite subcomplexes of $T$. Note that $\PP$
  is countable.  Clearly $Z=\Cup_{P\in\PP} \Emb_{PL}(|K|,|P|)$.  By
  the previous theorem $\Emb_{PL}(|K|,|P|)$ is $K_\sigma$, so $Z$ is
  too.
\end{proof}

\vspace{10pt}

\subsubsection{Notation summary}

There will be many different spaces and equivalence relations defined
and used along the next few sections.  For the reader's convenience we
gather the most important ones here:

\vspace{10pt}

\noindent Spaces:

\renewcommand{\arraystretch}{1.3}
\noindent%
\begin{longtable}{p{1.5cm}@{ }p{13cm}}
  $\AAA$ & space of sorted complemented algebras (Definition~\ref{def:SpaceAAA}) \\
  $\BBB$ & space of basis spaces (Definition~\ref{def:BBB})\\
  $\BBB^C$ & space of CCLCP basis spaces (Definition~\ref{def:BBB}) \\
  $\U$ & the universal Urysohn space\\
  $\SSS(Z)$ &space of countable sequences of simplexes (of any dimension) in some space $Z$ (Definition~\ref{def:SpaceOfTTT})\\
  $\SSS$ & $=\SSS(\U)$ \\
  $\TTT(Z)$ & space of Heine-Borel simplicial complexes in $Z$ (Definition~\ref{def:SpaceOfTTT})\\
  $\TTT$ & $=\TTT(\U)$\\
  $\SSS_n$& Space of countable sequences of $n$-simplexes in $\U$ (Definition~\ref{def:SpaceOfManifolds})\\
  $\MMM_n$ & space of $n$-manifolds presented as atlases (Definition~\ref{def:SpaceOfManifolds})\\
  $\MMM_n^{PL}$ & space of PL $n$-manifolds presented as PL-compatible atlases (Definition~\ref{def:MMMPL})\\
  $\MMM_n^{PL,\le i}$ & space of $n$-manifolds whose first $i$ charts are PL-compatible. (Definition~\ref{def:MMMPL})\\
  $O(\R^n)$& Space of open subsets of $\R^n$ (Definition~\ref{def:SpaceOfOpenSets})\\
  $\TT^{i,j}_n$ & the set of pairs
  $(\bar\f,\bar\tau)\in \MMM^{PL,\le j}\times \TTT$ where $\tau$ is a
  compatible triangulation of the part of
  $M(\bar\f)$ which is covered by the first $i$ charts. (Definition~\ref{def:Tiin})\\
\end{longtable}

\vspace{10pt}

\noindent Equivalence relations

\noindent%
\begin{longtable}{p{1.5cm}@{ }p{13cm}}
  $\cong_\AAA$ & Isomorphism on $\AAA$ (Definition~\ref{def:SpaceAAA})\\
  $\equiv_\BBB$ & Equivalence of basis spaces on $\BBB$ (Definition~\ref{def:BBB})\\
  $\homeo_n$  & Homeomorphism relation on $\MMM_n$ (Definition~\ref{def:SpaceOfManifolds})\\
  $\homeo^{PL}$ & PL-homeomorphism relation on $\TTT$ (Definition~\ref{def:SpaceOfTTT})\\
  $\homeo^{PL}_n$ & PL-homeomorphism relation on $\MMM^{PL}_n$ (Definition~\ref{def:homeoPLnrel})\\
  $\homeo^O_n$ & Homeomorphism relation on $O(\R^n)$
  (Definition~\ref{def:SpaceOfOpenSets})
\end{longtable}
\renewcommand{\arraystretch}{1}

\vspace{10pt}

\noindent We have the following helpful facts:
$$\MMM^{PL}_n\subset \MMM^{PL,\le i}_n\subset \MMM_n\subset \SSS_n,
\qquad\TTT(Z)\subset \SSS(Z),\qquad
\TTT\subset\SSS,\qquad\TT^{i,j}_n\subset \MMM^{PL,\le j}_n\times
\TTT$$

\vspace{10pt}

\subsection{Borel classification of Heine-Borel simplicial complexes up to PL-homeomorphism}
\label{ssec:ManToBS}


In this section we define the standard Borel spaces $\AAA$
and $\TTT$ encoding sorted complemented algebras and
simplicial complexes, respectively. We then prove that the
PL-homeomorphism relation on simplicial complexes is Borel reducible
to the isomorphism on countable structures, specifically that on
countable sorted complemented algebras. We postpone the study of the exact complexity of the latter with respect to Borel reducibility to Section \ref{sec:Final}, where we prove that it is Borel bireducible with the isomorphism on countable graphs.

\begin{Def}[Space of sorted complemented algebras]\label{def:SpaceAAA}
  Let $X=2^{\N^2}\times \N\times\N\times 2^{\N^2}\times 2^\N$.  For
  any $x=(\eta,n,m,\xi,\zeta)\in X$, define an $L^+$-structure
  $\AA(x)=(A,\le,\bzero,\bone,c,K)$ such that $A=\N$, for all
  $a,b\in A$, $a\le b\iff \eta(a,b)=1$, $\bzero=n$, $\bone=m$, for all
  $a,b\in A$, $c(a)=b$ iff $\xi(a,b)=1$, and for all $a\in A$,
  $a\in K$ iff $\zeta(a)=1$.  Let $\AAA$ be the subset of $X$
  consisting of those $x$ for which $c$ as above is well-defined
  (i.e. for all $a$ there is exactly one $b$ such that $\xi(a,b)=1$)
  and such that $\AA(x)$ is a sorted complemented algebra.  Let
  $\cong_\AAA$ be the equivalence relation on $\AAA$ where two
  elements are equivalent if and only if the corresponding sorted
  complemented algebras are isomorphic.
\end{Def}

\begin{prop}
  $\AAA$ is a Borel subset of $X=2^{\N^2}\times \N\times\N\times 2^{\N^2}\times 2^\N$.
\end{prop}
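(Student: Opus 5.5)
We show that $\AAA$ is defined by a countable Boolean combination of conditions, each of which depends on only finitely many coordinates of $x=(\eta,n,m,\xi,\zeta)$. All relevant quantifiers range over $A=\N$, which is countable. So every universal quantifier becomes a countable intersection, and every existential quantifier a countable union, of clopen subsets of $X$. Here $X$ carries the product topology, with $2^{\N^2}$, $2^\N$ Cantor spaces and $\N$ discrete.

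We treat the conditions in order.

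\textbf{Well-definedness of $c$.} The condition that $c$ is a total function is
$$\bigcap_{a\in\N}\bigcup_{b\in\N}\Big(\{\xi(a,b)=1\}\cap\bigcap_{b'\ne b}\{\xi(a,b')=0\}\Big).$$
This is Borel, since each set $\{\xi(a,b)=1\}$ is clopen.

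\textbf{Atomic formulas.} On this Borel set $W$, every atomic formula $t_1\le t_2$, $t_1=t_2$, $K(t)$ with terms built from $\bzero,\bone,c$ and variables evaluated at fixed $a,b\in\N$ defines a Borel subset of $W$. For example,
$$c(a)=b \iff \xi(a,b)=1, \qquad c(c(a))=a \iff \bigcup_{b}\big(\{\xi(a,b)=1\}\cap\{\xi(b,a)=1\}\big),$$
and $\bzero=a\iff n=a$. By induction on formulas, for every first-order $L^+$-formula $\varphi(\bar v)$ and every tuple $\bar a\subset\N$, the set $\{x\in W\mid \AA(x)\models\varphi(\bar a)\}$ is Borel. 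The connectives are handled by finite Boolean operations, and the quantifiers by countable unions and intersections over $\N$.

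\textbf{The axioms.} Each of \ref{alg1}--\ref{alg7_} is a single first-order $L^+$-sentence. This was remarked right after Definition~\ref{def:algebra}; in particular, the uniqueness clause in \ref{alg3} and the disjunction ``exactly one of'' in \ref{alg5_} are first-order. Hence $\AAA$ is the intersection of $W$ with finitely many Borel sets, and is therefore Borel.

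Alternatively, one can cite Fact~\ref{fact:borel_subsets_of_Mod(L)}: the map $x\mapsto\AA(x)$ is a Borel map from $W$ into $\Mod(L^+)$, and the class of models of the conjunction of the axioms is Borel there. Its preimage under this map is then Borel.

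There is no real obstacle. The only point requiring care is that $c$ is coded by a relation $\xi$ rather than by a function. This is why we first restrict to the Borel set $W$ on which $\xi$ is the graph of a total function, and only then evaluate terms involving $c$.
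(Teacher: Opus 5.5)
Your proof is correct and takes the paper's approach. The paper notes that \ref{alg1}--\ref{alg7_} are first-order and cites Fact~\ref{fact:borel_subsets_of_Mod(L)}, whereas you prove the easy direction of that fact by induction on formulas, after first restricting to the Borel set $W$ on which $\xi$ is the graph of a total function — a coding detail the paper's one-line proof leaves implicit.
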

\begin{proof}
  The axioms listed in Definition~\ref{def:algebra} are all
  first-order axioms, so the set is Borel by
  Fact~\ref{fact:borel_subsets_of_Mod(L)}.
\end{proof}

\begin{Def}[Space of basis spaces]\label{def:BBB}
  Let $\BBB$ be the set of all pairs $(X,\bar O)=(X,(O_i)_{i\in\N})$
  where $X\in F(\U)$ and $O_i\subseteq X$ is open in~$X$.  We will
  endow $\BBB$ with a Borel structure.  Let
  $\BBB^*= F(\U)\times F(\U)^\N$.  Let $\xi\colon \BBB^*\to \BBB$ be
  defined by $\xi(F,(F_i)_{i\in\N})=(F,(F\setminus F_i)_{i\in\N})$.
  This is a bijection and pushes the Borel structure to $\BBB$
  from~$\BBB^*$.  Then each $(X,(O_i)_{i\in\N})\in \BBB$ is a basis
  space.  Denote by $\BBB^C\subset \BBB$ those pairs $(X,\bar O)$
  where $\bar O$ is a basis for the induced topology on $X$ and
  $(X,\bar O)$ is a CCLCP basis space, that is, satisfies the
  conditions of Definition~\ref{def:CLCP}.  Let $\equiv_\BBB$ be the
  equivalence relation on $\BBB$ where two elements are equivalent if
  and only if the corresponding basis spaces are equivalent (see
  Definition~\ref{def:LCBS}).
\end{Def}

Our arguments work irrespectively of whether or not \(\BBB\) and
\(\BBB^C\) are standard Borel spaces, so we leave this question
untouched.

\begin{Def}[Space of simplicial complexes]%
  \label{def:SpaceOfTTT}
  For technical reasons, define $\dummy$ to be a ``dummy simplex''.
  It is just a place-holder, and we define by convention that
  $|\dummy|=\es$.  Given a Polish space $Z$, let
  $$\SSS(Z)=\left(\Cup_{m\in \N}\Emb(\bDelta^m,Z)\cup\{\dummy\}\right)^{\N}$$
  be the set of all infinite sequences of 
  simplexes. 
  Given $\bar\tau\in \SSS(Z)$, denote by 
  $\bar \tau_*$
  the corresponding set $\{\tau_i\mid i\in\N\}\setminus\{\dummy\}$
  of non-dummy simplexes.  
  Let $\TTT(Z)\subset \SSS(Z)$ be
  the space of all \textbf{Heine-Borel simplicial complexes} in \(Z\), i.e.,  
  $\TTT(Z)$
  consists of those sequences $\bar\tau=(\tau_i)_{i\in\N}$ such that
  $$T(\bar\tau):=\{[\tau]\mid \tau\in \bar\tau_*\}$$
  satisfies Definition~\ref{def:simplicial_complex} where $[\tau]$ is
  the equivalence class of $\tau$ according to
  Definition~\ref{def:EquivalenceOfSimplexes}, and for all $n$ there
  is $m$ such that for all $k>m$ we have $B_Z(z_0,n)\cap |\tau_k|=\es$
  for some fixed point $z_0\in Z$.  The latter is the Heine-Borel
  property.  For $\bar\tau\in \TTT(Z)$, conventionally define
  $|\bar\tau|:=|T(\bar\tau)|.$ By having only finitely many non-dummy
  simplexes in the sequence, we represent finite complexes, so they
  are also thought of as members of~$\TTT(Z)$. We denote
  $\TTT:=\TTT(\U)$ for $Z=\U$ the Urysohn space.
  Let $\homeo^{PL}$ be the equivalence relation of
  PL-homeomorphism on $\TTT$ (defined as in Definition~\ref{def:rel_complexes}).
\end{Def}

As pointed out in Lemma \ref{lemma:EmbIsClosed}, if $Z$ is a Polish
metric space, $\Emb(\bDelta^m,Z)$ is a standard Borel space.  The
Borel structure on $\Cup_{m\in \N}\Emb(\bDelta^m,Z)\cup\{\dummy\}$ is
that of a disjoint union of the sets $\Emb(\bDelta^m,Z)$ and an
isolated point $\{\dummy\}$.  Then $\SSS(Z)$ is a standard Borel space
as a product of standard Borel spaces.

In the next lemma we show that $\TTT(Z)$ is a standard Borel
space. Notice that the Heine-Borel property of the elements of
$\TTT(Z)$ implies condition \ref{def:simplicial_complex-3} of
Definition \ref{def:simplicial_complex}. The reason for using this
stronger property will become clear in the proof of Theorem
\ref{thm:PLtoBasis}.

\begin{Lemma}
  For a Polish space $Z$, $\TTT(Z)$ is a Borel subset
  of~$\SSS(Z)$.
\end{Lemma}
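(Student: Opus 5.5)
We write $\TTT(Z)$ as a countable intersection of Borel subsets of $\SSS(Z)$, one for each defining clause: the face condition \ref{def:simplicial_complex-1}, the intersection condition \ref{def:simplicial_complex-2}, and the Heine-Borel condition (which implies \ref{def:simplicial_complex-3}). Each clause quantifies over indices $i,j,k\in\N$ and over the finitely many standard faces of $\bDelta^m$. So it suffices to show that, for fixed indices, the relevant relation among coordinates $\tau_i,\tau_j,\tau_k$ is Borel; countable unions and intersections over indices then finish the argument. Throughout, the coordinate maps $\SSS(Z)\to \Cup_m\Emb(\bDelta^m,Z)\cup\{\dummy\}$ are Borel, and each $\Emb(\bDelta^m,Z)$ is standard Borel by Lemma~\ref{lemma:EmbIsClosed}.

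\textbf{Basic relations.} First, $\tau\mapsto|\tau|$ is Borel (indeed continuous) from $\Emb(\bDelta^m,Z)$ to $K(Z)$, since it is $\pr_Z$ (Lemma~\ref{lemma:r_C}\ref{lemma:r_C-1}). It also restricts well to standard faces: for a standard face $D\subset\bDelta^m$, $r_D$ is Borel by Lemma~\ref{lemma:r_C}\ref{lemma:r_C-3}.

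Second, ``$\lambda$ is equivalent to a given face of $\kappa$'', i.e.\ $\kappa^{-1}\lambda$ is a specified isometry $\iota$ onto the face $D$, is Borel. It is equivalent to $\lambda=\kappa\circ\iota$, and for each of the finitely many isometries $\iota\colon\bDelta^k\to D$ the map $\kappa\mapsto\kappa\circ\iota$ is continuous in the sup-metric. By Lemma~\ref{lemma:SupmetricHomeoEmb} it is therefore continuous on graphs, so this relation is closed. Equivalence of simplexes (Definition~\ref{def:EquivalenceOfSimplexes}) is handled the same way, taking $D=\bDelta^m$.

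\textbf{Clauses.} Condition \ref{def:simplicial_complex-1} becomes: for all $i$ and every face $D$ of $\tau_i$, there exists $j$ such that $\tau_j\sim\tau_i\circ\iota_D$. This is Borel. For condition \ref{def:simplicial_complex-2}, ``$|\tau_i|\cap|\tau_j|\ne\es$'' is closed by Fact~\ref{fact:K(X)}\ref{prop:K(X)-3}. The required conclusion is: there exist $k$ and faces $D,D'$ such that $\tau_k$ is the $D$-face of $\tau_i$, is the $D'$-face of $\tau_j$, and $|\tau_k|=|\tau_i|\cap|\tau_j|$. Since $|\tau_k|$ is a face of both, the last equality amounts to $|\tau_i|\cap|\tau_j|\subseteq|\tau_k|$. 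We show this inclusion is Borel by writing it as
\[\forall q\in\Q_+\ \forall n\ \bigl(\text{the closed $q$-ball around } d_n(|\tau_i|) \text{ meets } |\tau_j| \Rightarrow \ldots\bigr),\]
where the $d_n$ are the Borel selectors of Fact~\ref{prop:selection_thm}. This encoding is clumsy, and it is where we expect the main obstacle. A cleaner route is to use that $(A,B)\mapsto A\cap B$ on $K(Z)^2$ is Borel (it is upper semicontinuous), and then apply Fact~\ref{fact:K(X)}\ref{prop:K(X)-2}. We will use that route.

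\textbf{Heine-Borel clause and conclusion.} Fix $z_0$. The condition ``$B_Z(z_0,n)\cap|\tau_k|=\es$'' is Borel, since its complement $\{K\mid K\cap B(z_0,n)\ne\es\}$ is Vietoris-open. The clause $\forall n\,\exists m\,\forall k>m$ is then a countable Boolean combination of such sets. As noted, this clause yields \ref{def:simplicial_complex-3}: each $|\kappa|$ has a bounded neighborhood meeting only finitely many simplexes. Finally, equivalence classes of simplexes cause no trouble, because every clause was stated up to $\sim$, which we showed is Borel. Intersecting the Borel sets obtained for all three clauses gives that $\TTT(Z)$ is Borel in $\SSS(Z)$.
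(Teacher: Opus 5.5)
Your proposal is correct and follows essentially the same decomposition as the paper, which writes $\TTT(Z)=H\cap L\cap\Cap_{i,j}(U_{ij}\cup G_{ij})$, with $H$ the face-closure clause, $L$ the Heine-Borel clause, and $U_{ij}\cup G_{ij}$ the intersection clause. Your justifications are, if anything, more explicit than the paper's appeal to Fact~\ref{fact:F(X)} and \cite[Exercise 9.4]{Kec95}: you express the face relation as precomposition with the finitely many isometries onto standard faces, and you handle $|\tau_i|\cap|\tau_j|\subseteq|\tau_k|$ through Borelness of $(A,B)\mapsto A\cap B$ on $K(Z)$. The one thing to note is that Lemma~\ref{lemma:SupmetricHomeoEmb} is stated for a compact codomain, although its proof does not use this.
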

\begin{proof}
  The Heine-Borel property is clearly Borel.  Let $U_{ij}$ be the set
  of those $\bar\tau\in \SSS(Z)$ for which $|\tau_i|$ and $|\tau_j|$
  are disjoint. Then $U_{ij}$ is an open set by Fact \ref{fact:F(X)}. Let $F_{ij}$ be the set
  of those $\bar\tau$ such that $\tau_i$ is a face of $\tau_j$, let
  $\FF(\bDelta^n)$ be the set of all faces of $\bDelta^n$ and let
  $I(A,B)$ be the set of isometries from $A$ onto $B$.  Then
  $$F_{ij}=\big\{\bar\tau\mid |\tau_i|\subset |\tau_j|\big\}\cap  \Big\{\bar\tau\mid \tau_j^{-1}\circ\tau_i\in I\big(\bDelta^{\dim(\tau_i)},D\big)\text{ for some }D\in \FF\big(\bDelta^{\dim(\tau_j)}\big)\Big\}.$$
  By Fact \ref{fact:F(X)} and \cite[Exercise 9.4]{Kec95} $F_{ij}$ is a
  Borel set.  Let $G_{ij}$ be the set of those $\bar\tau$ for which
  there is $k\in \N$ such that $|\tau_i|\cap|\tau_j|=|\tau_k|$ and
  $\tau_k$ is a face of both $\tau_i$ and $\tau_j$. Then
  $$G_{ij}=\Cup_{k\in\N}\big\{\bar\tau\mid |\tau_i|\cap |\tau_j|=|\tau_k|\big\}\cap F_{ki}\cap F_{kj}.$$
  By Fact \ref{fact:F(X)} $G_{ij}$ is a Borel set. Let $H$ be the set
  of those $\bar \tau$ such that every face of every element of
  $\bar\tau$ is in $\bar\tau$.  Then
  $$H=\Cap_{i\in\N}\Cap_{\lambda\in \FF(\tau_i)}\Cup_{j\in \N} F_{ji}\cap \{\bar\tau\mid \tau_j=\lambda\}$$
  where $\FF(\tau)$ is the set of faces of~$\tau$.  Finally, let
  $$L=\{\bar\tau\mid \bar \tau \text{ has the Heine-Borel property}\}.$$
  Now
  $$\TTT(Z)=H\cap L\cap \Cap_{i\in\N}\Cap_{j\in\N}(U_{ij}\cup G_{ij}).$$
  and so it is Borel.
\end{proof}

Now, recall that $\homeo^{PL}$ denote
the PL-homeomorphism relation on $\TTT$ and $\equiv_\BBB$ is 
the equivalence of basis spaces (Definition~\ref{def:LCBS})
on the space~$\BBB$, and denote by $\equiv_{\BBB^C}$ the restriction
of $\equiv_\BBB$ to $\BBB^C$.
Also, by $\cong_\AAA$ we denote the isomorphism 
relation on~$\AAA$.

\begin{thm}%
  \label{thm:PLtoBasis}
  There is a Borel function $\eta\colon\TTT\to \BBB$ 
  reducing
  $\homeo^{PL}$ to $\ \equiv_{\BBB}$.
  Moreover, $\ran(\eta)\subset \BBB^C$, 
  and for all $\bar\tau\in\TTT$, if $(X,\beta)=\eta(\bar\tau)$, then
  $(X,\la\beta\ra)$ is homeomorphic to~$|\bar\tau|$.
\end{thm}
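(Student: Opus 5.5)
The natural candidate is $\eta(\bar\tau)=(|\bar\tau|,\beta_{T(\bar\tau)})$, suitably enumerated. By Theorem~\ref{thm:ClassificationOFPL-complexesByBasisSpaces} this is a reduction of $\homeo^{PL}$ to $\equiv_\BBB$, provided the enumeration does not matter; it does not, since $\equiv$ only involves the sets $\beta$. Proposition~\ref{prop:CCLCP} shows that the range lies in $\BBB^C$ and that $\beta_T$ generates the original topology on $|T|$, so $(X,\la\beta\ra)$ is homeomorphic to $|\bar\tau|$. The only remaining task, and the real content of the theorem, is to make the assignment Borel. This means producing $|\bar\tau|\in F(\U)$ and a sequence of closed sets $F_i$ with $O_i=|\bar\tau|\setminus F_i$ (Definition~\ref{def:BBB}), each depending on $\bar\tau$ in a Borel way.

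First, $\bar\tau\mapsto|\bar\tau|$ is Borel. Each $|\tau_i|$ is the image $r_{\bDelta^m}$ of the graph, which is Borel in $\bar\tau$ by Lemma~\ref{lemma:r_C}. The union $\Cup_i|\tau_i|$ is closed by the Heine-Borel property; this is exactly where that property is used, since local finiteness alone would not give closedness in $\U$. Its closure is then computed from a countable dense set, the points $\tau_i(q)$ with $q\in\Q^*$, via Lemma~\ref{lemma:SeqClosureBorel}.

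Next, enumerate $\beta_T$ canonically by syntactic codes rather than by the sets themselves. A code consists of
\begin{itemize}
\item a finite sequence of pairs $(i,q)$ with $q\in\Q^*\cap\bDelta^{\dim\tau_i}$, which determines points $z_k=\tau_i(q)\in\Q(T)$,
\item a finite set $J$ of indices, and
\item a flag saying whether the intended subcomplex is finite or cofinite.
\end{itemize}
The code represents the polyhedron $P$ equal to the union (respectively, the closure of the complement of the union) of those simplexes of $T*\bar z$ that lie inside $\Cup_{j\in J}|\tau_j|$. Using only finitely many $\tau_j$ keeps everything local. The realization of $T*\bar z$ restricted to finitely many simplexes of $T$ is given by finitely many explicit compositions $\tau_i\circ\Delta[\ldots]$, and these depend continuously on $\bar\tau$. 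Hence the compact set $P$ is a Borel function of $(\bar\tau,\text{code})$. For cofinite codes, $P$ is the closure of $|\bar\tau|$ minus the interior of a compact polyhedron, and this is Borel by Proposition~\ref{prop:StrongComplementBorel}.

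By Fact~\ref{fact:CanChooseFiniteStellarSubd}, every compact or co-compact $\Q$-polyhedron arises from some code. Taking $F_{\text{code}}=\overline{|\bar\tau|\setminus P}$ gives $O=\int(P)$, again by Proposition~\ref{prop:StrongComplementBorel}. This yields a Borel map into $F(\U)^\N$ whose range as a set is $\beta_T$.

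The main obstacle is condition~\ref{def:BC4}: the enumeration must be injective, whereas many codes give the same set. To fix this, I would use Fact~\ref{fact:F(X)}, under which equality of closed sets is Borel, and pass to the subsequence of codes $n$ whose set differs from all earlier ones. The $k$-th new set is then a Borel function of $\bar\tau$. The sequence is infinite because Lemma~\ref{lemma:Infinite} applies to CCLCP spaces, or directly because $\beta_T$ is infinite. A second delicate point is degenerate codes: when some $z_k$ is not a legitimate point, or the data are inconsistent, the code should be assigned the value $\es$. Recognizing this is a Borel condition, because validity of the stellar subdivision data is a countable Boolean combination of closed conditions.
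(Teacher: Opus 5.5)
Your overall plan is the paper's. You take $\eta(\bar\tau)=(|\bar\tau|,\beta_{T(\bar\tau)})$, get the reduction from Theorem~\ref{thm:ClassificationOFPL-complexesByBasisSpaces} and the range from Proposition~\ref{prop:CCLCP}, and prove Borelness by coding polyhedra through a Borel enumeration of $\Q(T)$, finite stellar subdivisions, and the map $(K,F)\mapsto\overline{F\setminus K}$ of Proposition~\ref{prop:StrongComplementBorel}.

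However, your codes, as defined, never select a subcomplex. Since $T*\bar z$ subdivides $T$, each $|\tau_j|$ is the union of the simplexes of $T*\bar z$ that it contains. So ``the union of those simplexes of $T*\bar z$ that lie inside $\Cup_{j\in J}|\tau_j|$'' is just $\Cup_{j\in J}|\tau_j|$, and $\bar z$ plays no role. Your codes therefore produce only unions of closed simplexes of the original $T$ and closures of their complements, and the appeal to Fact~\ref{fact:CanChooseFiniteStellarSubd} does not yield every compact $\Q$-polyhedron. Already for a single edge $[0,1]$ you get only $\{\es,X\}$. For its PL-homeomorphic subdivision at the midpoint you get a family containing $[0,1/2)$. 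So neither the CCLCP property nor the reduction survives.

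The fix is to let the code also choose a finite set of simplexes of $T*\bar z$, and let $P$ be the union of their realizations. You could take a subset of your finitely many simplexes $\tau_i\circ\Delta[\ldots]$ over $\Cup_{j\in J}|\tau_j|$ in a canonical listing. Alternatively, as the paper does, take a finite sequence $\bar s$ of indices into a canonical enumeration $\bar\kappa$ of $\bar\tau*r(\bar z,\bar\tau)$ and set $P=|\scl(\{\kappa_{s(i)}\})|$. With this change your cofinite flag covers the co-compact elements, just as the paper's interleaving of $b$ with $X\setminus\overline{b}$ does.

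Elsewhere you are more careful than the paper. Its enumeration over all pairs $(\bar z,\bar s)$ repeats sets; for instance, every empty $\bar s$ gives $\es$. Yet Proposition~\ref{prop:CCLCP} treats \ref{def:BC4} as automatic, and \ref{def:BC4} is needed for $\psi$ to be well defined downstream. Your passage to first occurrences repairs this, and it is Borel because inclusion of closed sets is Borel (Fact~\ref{fact:F(X)}). Likewise, obtaining $|\bar\tau|$ as the closure of the dense set $\{\tau_i(q)\}$ via Lemma~\ref{lemma:SeqClosureBorel} is cleaner than the paper's sketch.

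Do not justify infinitude of the de-duplicated sequence via Lemma~\ref{lemma:Infinite}: it presupposes the CCLCP property you are trying to establish. Argue directly that $\beta_T$ is infinite whenever $|\bar\tau|$ is infinite. When $|\bar\tau|$ is finite (the empty complex, or finitely many vertices), $\beta_T$ is finite and no enumeration can satisfy \ref{def:BC4}. The statement itself overlooks this degenerate case, so it must be excluded or handled separately.
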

\begin{proof}
  Recall the construction of $(|T|,\beta_T)$ from a complex $T$ (see
  Definition~\ref{def:BetaFromT}), where \(\beta_T\) is defined as the
  collection of the interiors of all compact and co-compact
  \(\Q\)-polyhedra in \(T\). First, we show that the \(T\)-dense set
  $\Q(T)$ can be enumerated in a Borel way for $T=T(\bar\tau)$ and
  $\bar\tau\in \TTT$.  Let $(q_i)$ be a fixed enumeration of $\Q^*$.
  Given $\bar\tau\in T$, define $p_{ij}(\bar\tau)=\tau_i(q_j)$. We set
  \(p_{ij}(\bar \tau)\) equal to the empty sequence if
  \(q_j \notin \dom(\tau_i)\). Let $\pi\colon\N\to\N\times\N$ be a
  bijection and let $p_i(\bar\tau):=p_{\pi(i)}(\bar\tau)$.  Then the
  function $\bar\tau\mapsto (p_{\pi(i)}(\bar\tau))_{i\in\N}$ from
  $\TTT$ to $\U^{\N}$ is Borel and enumerates $\Q(T)$.
  
  Given $\bar z\in \N^{<\N}$, let
  $r(\bar z,\bar\tau)=(p_{z}(\bar\tau))_{z\in\bar z}$.  For every
  $\bar z\in \N^{<\N}$, let $\bar\tau*r(\bar z,\bar\tau)$ be the
  corresponding $\lo$-stellar subdivision enumerated canonically
  according to the order of appearance of new simplexes. In this way
  $$\subd\colon (\bar\tau,\bar z)\mapsto \bar\tau*r(\bar z,\bar\tau)$$
  is a Borel function from $\TTT\times\N^{<\N}$ to~$\TTT$.  Note that
  by Fact~\ref{fact:CanChooseFiniteStellarSubd} every finite
  polyhedron in $T$ is represented by a subcomplex of
  $T*r(\bar z,\bar\tau)$ for some finite $\bar z$.
  
  For every
  $(\bar z,\bar s,\bar\tau)\in \N^{<\N}\times \N^{<\N}\times\TTT$,
  define
  $$b(\bar z,\bar s,\bar \tau)=\int_{|\bar \tau|}|\scl(\{\kappa_{s(i)}\mid i\in \{0,\dots,l(s)-1\}\})|$$
  where $\bar\kappa=\ \subd\!(\bar\tau,\bar z)$, and $l(s)$ is the
  length of $\bar s$.  Using this function $b$ we are planning to
  enumerate all elements of $\beta_T$.  Since the basic sets in the
  basis spaces in $\BBB$ are ultimately represented through their
  complements (see Definition \ref{def:BBB}), it is enough to show
  that the following function is Borel:
  \begin{equation}\label{eq:enumeration_of_basis}
    \N^{<\N}\times \N^{<\N}\times\TTT \to F(\U)\qquad\qquad(\bar z,\bar s,\bar \tau)\mapsto |\bar\tau|\setminus b(\bar z,\bar s,\bar \tau).
  \end{equation}
  We first check that \(|\bar \tau| \in F(\U)\), equivalently,
  \(\U \setminus |\bar \tau|\) is open.  Fix \(z_0 \in |\bar \tau|\),
  and let \(x \in \U \setminus |\bar \tau|\) be arbitrary.  Let
  \(n \in \N\) be sufficiently large so that \(x\in B(z_0,n)\).  By
  the Heine-Borel property of \(\bar \tau\) we have that
  \(K=|\bar \tau| \cap \overline{B(z_0,n)}\) is compact, and thus
  \(d(x,K)= \e >0\).  Then \(B(x, \frac{\e}{2})\) is an open
  neighborhood of \(x\) in \(\U \setminus |\bar \tau|\), and thus
  \(\U \setminus |\bar \tau|\) is open.
  
  To show that \eqref{eq:enumeration_of_basis} is Borel, it is enough to know that the following functions are Borel:
  \begin{align}
    &\TTT \to F(\U) &&\bar\tau\mapsto |\bar\tau|\label{item:RealizationIsBorel}\\
    &\Emb(\bDelta^n,\U)\to K(\U) &&\kappa\mapsto |\kappa|,\label{item:RangeIsBorel}\\
    &K(\U)^\N\times \N^{<\N}\to K(\U)&& (\bar a,\bar s)\mapsto a_{s(0)}\cup\cdots\cup a_{s(l(s)-1)}\\
    &K(\U)\times F(\U)\to F(\U)&&(K,F)\mapsto \overline{F\setminus K}.\label{item:RangeIsBorel2}\
  \end{align}  
  By Lemma \ref{lemma:r_C}\ref{lemma:r_C-3}, Fact
  \ref{fact:K(X)}\ref{prop:UnionContinuous} and Proposition
  \ref{prop:StrongComplementBorel}, the functions
  \eqref{item:RangeIsBorel}--\eqref{item:RangeIsBorel2} are Borel. The
  only thing we should still check is that
  $\bar\tau\mapsto |\bar\tau|$ is Borel.  To see this, together with
  \eqref{item:RangeIsBorel} it is enough to show that the infinite
  union
  \begin{align*}
      &X\to F(\U)&& \bar a \mapsto \Cup_{i\in\N}a_i
  \end{align*}
  is a Borel function where $X$ is the subset of $K(\U)^\N$ which
  consisting of those $(K_i)_{i\in\N}$ for which the Heine-Borel
  property in the definition of~$\TTT$ holds (see
  Definition~\ref{def:SpaceOfTTT}), i.e. for all $n$ there is $i$ such
  that for all $j>i$ we have $K_j\cap B(z_0,n)=\es$, with $z_0$ a
  fixed point in $\U$.  This easily follows by applying an argument
  similar to Lemma \ref{lemma:r_C}\ref{lemma:r_C-1}, by showing that
  the preimage of the basic open sets in \(F(\U)\) are Borel subsets
  of \(X\).  Then let $\pi\colon \N\to \N^{<\N}\times \N^{<\N}$ be a
  bijection with coordinates $\pi_1$ and $\pi_2$, and let
  $\beta'(\bar\tau):=(b(\pi_1(i),\pi_2(i),\bar\tau))_{i\in\N}$.  This
  will enumerate only the compact polyhedra. Let $\beta(\bar\tau)$ be
  a sequence such that the odd indices enumerate $\beta'(\bar\tau)$
  and the even indices enumerate the complements of elements of
  $\beta'(\bar\tau)$ in the same order.  Then define the final Borel
  function \(\eta\) by $\bar\tau\mapsto (|\bar\tau|,\beta(\bar\tau))$,
  which by Proposition \ref{prop:CCLCP} and Theorem
  \ref{thm:ClassificationOFPL-complexesByBasisSpaces} is a reduction
  of \(\homeo^{PL}\) to \(\equiv_\BBB\) whose range is contained
  in~\(\BBB^C\).
\end{proof}

\begin{prop}\label{prop:RedBasisToAlg}
  There is a Borel $\xi\colon \BBB\to \AAA$ such that for all
  $(X,\beta),(X',\beta')\in \BBB^C$, we have
  $(X,\beta)\equiv_\BBB (X',\beta')\iff \xi(X,\beta)\cong_\AAA
  \xi(X',\beta')$, that is, $\xi\rest \BBB^C$ is a reduction from
  $\equiv_{\BBB^C}$ to~$\cong_\AAA$.
\end{prop}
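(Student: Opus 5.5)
The natural candidate for $\xi$ is $\psi$ itself (Definition~\ref{def:map_from_BBB_to_AAA}), computed uniformly from the code $(X,\bar O)$ on all of $\BBB^C$ and extended to a fixed algebra elsewhere. Theorem~\ref{thm:PsiReduction} then already gives that $\xi\rest\BBB^C$ is a reduction from $\equiv_{\BBB^C}$ to $\cong_\AAA$. So the work is entirely descriptive: define $\xi$ on all of $\BBB$ so that it is Borel and agrees with $\psi$ on $\BBB^C$.

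Concretely, given $(X,\bar O)$ coded by $(X,(F_i)_{i\in\N})\in F(\U)\times F(\U)^\N$ with $O_i=X\setminus F_i$, we output $x=(\eta,n,m,\xi',\zeta)$ as follows.
\begin{itemize}
\item $\eta(i,j)=1$ iff $O_i=O_j$ or $\overline{O_i}\subset O_j$.
\item $n$ is the least $i$ with $O_i=\es$, and $m$ the least $i$ with $O_i=X$.
\item $\xi'(i,j)=1$ iff $O_j=X\setminus\overline{O_i}$, and we keep only the least such $j$ so that $c$ is a function.
\item $\zeta(i)=1$ iff $\overline{O_i}$ is compact.
\end{itemize}
If some required index fails to exist, we output a fixed element of $\AAA$.

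Each ingredient is Borel. The closure $\overline{O_i}=\overline{X\setminus F_i}$ is the Borel map of Proposition~\ref{prop:StrongComplementBorel}. The relations $O_i=O_j$ (i.e.\ $F_i\cap X=F_j\cap X$, expressible via $F_i\cap X$ and dense sequences $\DD$) and $\overline{O_i}\subset O_j$ (i.e.\ $\overline{O_i}\cap F_j=\es$, checked on a dense sequence of $\overline{O_i}$ against $F_j$) are Borel by Fact~\ref{fact:F(X)} and Fact~\ref{prop:selection_thm}. Likewise $O_j=X\setminus\overline{O_i}$ becomes $\overline{O_i}\cup F_j\supseteq X$ together with $\overline{O_i}\cap O_j=\es$, again via dense sequences. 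Compactness of $\overline{O_i}$ is Borel because $K(\U)$ is Borel in $F(\U)$ by Fact~\ref{fact:F(X)}\ref{prop:F(X)-1}. Least-witness selections over $\N$ preserve Borelness. Finally $\AAA$ is Borel, so the set of inputs producing a genuine SCA is Borel, and the case split is Borel.

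On $\BBB^C$ the output is exactly $\psi(X,\beta)$: condition \ref{def:BC4} makes the ``least index'' choices unique, and Lemma~\ref{lemma:CCLCP_is_AAA} shows it lies in $\AAA$. Theorem~\ref{thm:PsiReduction} then finishes the proof.

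The main obstacle is the bookkeeping for the mixed closed/open conditions (intersections with $X$, the closure of a difference, containment in an open set). The tool for all of them is to reduce each to countably many membership tests of the Borel dense points $d_k(\cdot)$ in closed sets, exactly as in Proposition~\ref{prop:StrongComplementBorel}.
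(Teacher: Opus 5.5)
Your overall route is the paper's: take $\xi=\psi$ computed from the code, use Lemma~\ref{lemma:CCLCP_is_AAA} for $\psi[\BBB^C]\subset\AAA$, and use Theorem~\ref{thm:PsiReduction} for the reduction. The paper simply asserts that Borelness is easy from Fact~\ref{fact:F(X)} and Proposition~\ref{prop:StrongComplementBorel}. However, two of the Borelness checks you actually wrote down are wrong, so as described your $\xi$ does not compute $\psi$ on $\BBB^C$.

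First, $\overline{O_i}\subset O_j$ cannot be tested on a dense sequence of $\overline{O_i}$. The condition $\forall k\,(d_k(\overline{O_i})\notin F_j)$ only says that some dense subset of $\overline{O_i}$ lies in $O_j$. For a counterexample, take $X=\R$, $O_i=(0,1)$, $O_j=(0,2)$, and a selected dense subset of $[0,1]$ that misses $0$: the test passes although $\overline{O_i}\not\subset O_j$. Dense points detect meeting an \emph{open} set, not a closed one. Moreover, disjointness of two closed subsets of $\U$ is not Borel on $F(\U)^2$ in general: the relation $F\cap F'\neq\es$ is already $\Sigma^1_1$-complete for closed subsets of a closed copy of $\N^\N$ inside $\U$.

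The missing input is \ref{def:BC8}. On $\BBB^C$, suppose $\overline{O_i}\subset O_j$ and $\overline{O_i}$ is not compact. Then $\overline{O_j}\supset\overline{O_i}$ is not compact either, so $X\setminus O_j$ is compact. Hence on $\BBB^C$ the relation coincides with
[$\overline{O_i}\in K(\U)$ and $\overline{O_i}\cap F_j=\es$] or [$X\setminus O_j\in K(\U)$ and $(X\setminus O_j)\cap\overline{O_i}=\es$].
This is Borel by Fact~\ref{fact:F(X)}\ref{prop:F(X)-1},\ref{prop:F(X)-4}. Here $X\setminus O_j=\overline{X\setminus\overline{O_j}}$ by \ref{def:BC7}, which is computed by applying Proposition~\ref{prop:StrongComplementBorel} twice.

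Second, your test for the complement is incorrect. The conditions ``$\overline{O_i}\cup F_j\supseteq X$ and $\overline{O_i}\cap O_j=\es$'' force $O_j=\es$: a point of $O_j=X\setminus F_j$ must lie in $\overline{O_i}$ by the first clause, contradicting the second. So your $c$ would be constantly $\bzero$. Instead, use that $O_j=X\setminus\overline{O_i}$ iff $X\setminus O_j=\overline{O_i}$. On $\BBB^C$ this reads $\overline{X\setminus\overline{O_j}}=\overline{O_i}$, an equality of two Borel-computed points of $F(\U)$.

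The same regularity trick handles the remaining tests: $O_i=O_j$ iff $\overline{O_i}=\overline{O_j}$, and $O_i=X$ iff $\overline{X\setminus\overline{O_i}}=\es$. It also avoids forming $F_i\cap X$, which is not a Borel operation on $F(\U)$ unless the code has $F_i\subset X$. With these repairs your argument goes through and coincides with the paper's.
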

\begin{proof}
  Recall the operation $\psi$ from
  Definition~\ref{def:map_from_BBB_to_AAA}.  It gives a function
  $\xi\colon \BBB\to \Mod_{L^+}(\N)$,
  $\xi\colon (X,\beta)\mapsto \psi(X,\beta)$.  From the definition of
  $\psi$ it is easy to check using Fact \ref{fact:F(X)} and
  Proposition \ref{prop:StrongComplementBorel} that this function is,
  in fact, Borel.  By Lemma~\ref{lemma:CCLCP_is_AAA}, we have
  $\psi[\BBB^C]\subset \AAA$.  The rest follows from
  Theorem~\ref{thm:PsiReduction}.
\end{proof}

\begin{thm}[$\homeo^{PL}\,\le_B\ \cong_\AAA$]%
  \label{thm:BorelClassificationOfPLtoISO}
  There is a Borel reduction $\zeta\colon \TTT\to \AAA$ reducing
  $\homeo^{PL}$ to $\cong_\AAA$.
\end{thm}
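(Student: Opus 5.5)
The natural plan is to set $\zeta:=\xi\circ\eta$, where $\eta\colon\TTT\to\BBB$ is the Borel map of Theorem~\ref{thm:PLtoBasis} and $\xi\colon\BBB\to\AAA$ is the Borel map of Proposition~\ref{prop:RedBasisToAlg}. Since both maps are Borel, their composition is Borel. One subtlety is the codomain: Proposition~\ref{prop:RedBasisToAlg} only promises that $\xi$ lands in $\AAA$ on $\BBB^C$ (on all of $\BBB$ it lands in $\Mod_{L^+}(\N)$). However, Theorem~\ref{thm:PLtoBasis} gives $\ran(\eta)\subset\BBB^C$, and Lemma~\ref{lemma:CCLCP_is_AAA} gives $\psi[\BBB^C]\subset\AAA$. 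Hence $\zeta$ is a Borel function $\TTT\to\AAA$. Measurability of a map into the Borel subset $\AAA$ of $X$ is inherited from measurability into $X$, so nothing extra is needed there.

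For the reduction property, fix $\bar\tau,\bar\tau'\in\TTT$. By Theorem~\ref{thm:PLtoBasis},
\[\bar\tau\homeo^{PL}\bar\tau'\iff \eta(\bar\tau)\equiv_\BBB\eta(\bar\tau').\]
Both $\eta(\bar\tau)$ and $\eta(\bar\tau')$ lie in $\BBB^C$, so Proposition~\ref{prop:RedBasisToAlg} applies to this pair and gives
\[\eta(\bar\tau)\equiv_\BBB\eta(\bar\tau')\iff \xi(\eta(\bar\tau))\cong_\AAA\xi(\eta(\bar\tau')).\]
Chaining the two equivalences yields $\bar\tau\homeo^{PL}\bar\tau'\iff\zeta(\bar\tau)\cong_\AAA\zeta(\bar\tau')$, as required.

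The main obstacle is making sure the two earlier results fit together exactly. The key point is that $\xi$ is only a reduction on $\BBB^C$, and this is precisely why the clause $\ran(\eta)\subset\BBB^C$ in Theorem~\ref{thm:PLtoBasis} matters. I would also check that the basis enumeration $\beta(\bar\tau)$ produced by $\eta$ has pairwise distinct entries, since this is condition~\ref{def:BC4}, on which $\psi$ and Lemma~\ref{lemma:CCLCP_is_AAA} depend. If the enumeration in the proof of Theorem~\ref{thm:PLtoBasis} repeats sets, I would first remove the duplicates in a Borel way. Concretely, I would keep index $i$ exactly when its set differs from all earlier ones; the test ``$F_i\neq F_j$'' is Borel by Fact~\ref{fact:F(X)}\ref{prop:F(X)-2}. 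Since $\beta_T$ is infinite, the pruned sequence is still indexed by $\N$.

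Once this is settled, the proof is just the composition argument above. As a remark, composing further with Proposition~\ref{prop:IsoOnKIsEnough} would even reduce $\homeo^{PL}$ to isomorphism of the countable partial orders $(K^\AA,\le^\AA)$, but this is not needed for the statement.
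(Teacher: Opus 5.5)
Your argument is the paper's proof: $\zeta=\xi\circ\eta$, where the clause $\ran(\eta)\subset\BBB^C$ of Theorem~\ref{thm:PLtoBasis} is exactly what lets you use the restricted reduction property of $\xi$ from Proposition~\ref{prop:RedBasisToAlg} and land in $\AAA$. Your pruning of repeated basic sets is a worthwhile tightening the paper skips (its enumeration over all pairs $(\bar z,\bar s)$ does repeat sets, which would break antisymmetry in $\psi$), but the justification ``$\beta_T$ is infinite'' fails for finite $0$-dimensional complexes in $\TTT$ (e.g.\ a single vertex): there $\beta_T$ is finite and $(|T|,\beta_T)$ cannot be CCLCP by Lemma~\ref{lemma:Infinite}, so this degenerate Borel set of inputs, which the paper also overlooks, must be handled separately.
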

\begin{proof}
  Let $\eta$ be the reduction $\homeo^{PL}$ to $\equiv_{\BBB^C}$ given
  by Theorem~\ref{thm:PLtoBasis} and $\xi$ the reduction from
  $\equiv_{\BBB^C}$ to $\cong_\AAA$ given by
  Proposition~\ref{prop:RedBasisToAlg}.  Then from their properties it
  follows that $\zeta=\xi\circ\eta$ is the desired reduction. Note
  that we did not need to prove that $\BBB^C$ is a Borel set. We also
  only showed that $\xi$ has the property of a reduction restricted to
  $\BBB^C$ even though $\dom(\xi)=\BBB$.  Since
  $\ran(\eta)\subset \BBB^C\subset \dom(\xi)$, and both functions
  $\eta$ and $\xi$ are Borel, the composition is also Borel.
\end{proof}

\vspace{10pt}

\subsection{Borel classification of manifolds}\label{sec:manifolds}

We have shown that PL-homeomorphism on Heine-Borel simplicial
complexes can be Borel reduced to the isomorphism on sorted
complemented algebras
(Theorem~\ref{thm:BorelClassificationOfPLtoISO}).  If we show that a
triangulation of $2$- and $3$-manifolds can be obtained in a Borel
way, then we have a Borel classification of these manifolds
strengthening the result of
Theorem~\ref{thm:ClassificationOfManifolds}.  We will do it for
$3$-manifolds with or without boundary and for open subsets of $\R^2$
and \(\R^3\), while we refer to \cite[Theorem 4.16]{BS25} for a proof
of the existence of a triangulation for 2-manifolds without boundary
via a Borel map.  We leave
the case of $2$-manifolds with boundary for future work. 
To begin with, we need to define a Borel space of $3$-manifolds (with
or without boundary). This part (defining the space of manifolds) we
will do for general $n$.  Since every $n$-manifold is characterized by
an atlas, we will define a Borel space of atlases whose realizations
are $n$-manifolds. Another natural option would be, for example, to
let $M\subset \U$ be the set of closed subsets of $\U$ which are
manifolds. Since $\U$ contains a copy of every metric space, $M$ would
indeed contain a copy of every manifold, at least up to homeomorphism
(some metric manifolds such as $\R^2\setminus\{0\}$ might not be
represented isometrically as a closed subset of~$\U$).  But as far as
the authors are aware, there is no reason to assume that such $M$
would be a Borel set. This is why we resort to atlases as
parametrizations of manifolds.

\vspace{10pt} 

\subsubsection{Spaces of manifolds without boundary}
\label{ssec:SpaceOfManifolds}

In the beginning of Section \ref{sssec:Classification23man} we defined
$n$-manifolds without boundary as separable metric spaces locally
homeomorphic to~$\obDelta^n$. In this section we resort
to a different definition.
From this point on, an \textbf{$n$-manifold without boundary} is
a metric space $M$ for which there exists a countable sequence of maps
$\bar\f=(\f_i)_{i\in\N}$ called an \textbf{atlas} such that each
$\f_i$ is an embedding $\f_i\colon\bDelta^n\to M$ with the
following conditions:
\begin{enumerate}[label={\upshape (M\arabic*)}, leftmargin=2.5pc]
\item \label{item:m1} The space $M$ is covered by
  the interiors:
  $M=\Cup_{i\in \N}|\overset{\circ}\f_i|$.
\item \label{item:m2} For all $i\in \N$, the image
  $|\overset{\circ}\f_i|$ is open in~$M$.
\item \label{item:m3} For all compact $C\subset M$ there are only 
  finitely many $i$ with $|\f_i|\cap C\ne\es$.
\end{enumerate}
We skip the somewhat technical, but standard proof of the fact
that a metric space $M$ is an $n$-manifold without boundary
according to the definition in Section \ref{sssec:Classification23man}
if and only if it satisfies \ref{item:m1}--\ref{item:m3}. From now on, we refer to \(n\)-manifolds without boundary simply as \(n\)-manifolds, while we will specify if they have a boundary. 

\begin{Def}\label{def:SpaceOfManifolds}
  Let 
  $\SSS_n:=\Emb(\bDelta^n,\U)^{\N}$ be the set of
  infinite sequences \(\bar\f=(\f_i)_{i \in \N}\) of 
  $n$-simplexes in $\U$. Given such a sequence, let
  $M(\bar\f)=|\bar\f|=\Cup_{i\in\N}|\f_i|$.  Let $\MMM_n\subseteq \SSS_n$ consist
  of those $\bar\f$ for which $(M,\bar\f)=(M(\bar\f),\bar \f)$
  satisfies conditions \ref{item:m1}--\ref{item:m3}
  above, and additionally such that $M(\bar\f)$
  is Heine-Borel. We require the Heine-Borel property
  because otherwise $M(\bar\f)$ is not necessarily closed in~$\U$. Let $\homeo_n$ be the equivalence relation
  of homeomorphism on~$\MMM_n$ defined 
  for all $\bar\f,\bar\f'\in\MMM_n$ by
  $\bar\f\homeo_n \bar\f'\iff M(\bar\f)\homeo M(\bar\f')$.
\end{Def}

Let us now show that $\MMM_n$ indeed contains a copy of every
$n$-manifold in the following sense:

\begin{Lemma}\label{lemma:FindEveryManifold}
  Let $M$ be an $n$-manifold. Then there is $\bar\f\in\MMM_n$ such
  that $M$ is homeomorphic to $M(\bar\f)$. Conversely, for every
  $\bar\f\in\MMM_n$, $(M(\bar\f),\bar\f)$ is an $n$-manifold.
\end{Lemma}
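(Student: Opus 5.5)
The direction ``for every $\bar\f\in\MMM_n$, $(M(\bar\f),\bar\f)$ is an $n$-manifold'' is essentially immediate, so I will dispose of it first. Conditions \ref{item:m1}--\ref{item:m3} hold by membership in $\MMM_n$. Each point of $M(\bar\f)$ lies in some open set $|\overset{\circ}\f_i|$, which is homeomorphic to $\obDelta^n$ via $\f_i$. Moreover, $M(\bar\f)$ is a closed subset of the separable metric space $\U$, so it is separable metric. Hence it is an $n$-manifold without boundary in the sense of Section~\ref{sssec:Classification23man}.

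For the main direction, let $M$ be an $n$-manifold. I will first fix a compatible metric on $M$ that is Heine-Borel. Since $M$ is locally compact, Hausdorff and separable metrizable, it is $\sigma$-compact, hence $K_\sigma$ and Polish by Fact~\ref{fact:locally_compact_Ksigma}. Fact~\ref{fact:locally_compact_HB} then gives a Heine-Borel compatible metric $d$. With this metric $(M,d)$ is a Polish metric space, so by the Urysohn property it is isometric to a closed subset of $\U$, and I identify $M$ with its image. Isometries preserve the Heine-Borel property, so $M\subset\U$ is Heine-Borel.

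It remains to build an atlas, and I expect this to be the main step. For each $x\in M$ I will choose a chart neighbourhood $U_x\cong\R^n$ and an embedded closed simplex $\f_x\colon\bDelta^n\to U_x$ whose interior image contains $x$. The interior image is open in $M$ by invariance of domain, or simply because it is the image of an open subset of $\R^n$ under the chart. Next I will use a compact exhaustion $K_0\subset \int K_1\subset K_1\subset\cdots$ of $M$, which is available by $\sigma$-compactness and local compactness.
\begin{itemize}
\item For each $j$, compactness of $K_{j}\setminus\int K_{j-1}$ lets me choose finitely many such simplexes covering it by their interiors.
\item I will shrink these simplexes so that they lie inside $\int K_{j+1}\setminus K_{j-2}$, which is possible because each point of $K_j\setminus \int K_{j-1}$ lies in that open set.
\end{itemize}
Enumerating all chosen simplexes gives $\bar\f$. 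Then \ref{item:m1} and \ref{item:m2} hold. For \ref{item:m3}, any compact $C$ lies in some $\int K_m$, and only simplexes from stages $j\le m+1$ can meet $K_m$, so only finitely many meet $C$. Finally $M(\bar\f)=M$, because $M=\Cup_i|\overset{\circ}\f_i|\subset\Cup_i|\f_i|\subset M$, and each $\f_i$ belongs to $\Emb(\bDelta^n,\U)$. Therefore $\bar\f\in\MMM_n$, and $M(\bar\f)$ is homeomorphic (indeed equal after the identification) to $M$.
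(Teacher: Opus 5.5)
Your proof is correct and follows the paper's route: a Heine-Borel remetrization via Fact~\ref{fact:locally_compact_HB}, then an isometric copy of $M$ in $\U$. The one difference is that you build the atlas satisfying \ref{item:m1}--\ref{item:m3} explicitly from a compact exhaustion, whereas the paper takes such an atlas from the ``standard'' fact whose proof it skips at the start of Section~\ref{ssec:SpaceOfManifolds}.

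One caveat applies to the paper's statement as well as to your proof. Since $\bar\f$ must be an infinite sequence, \ref{item:m3} with $C=M$ rules out compact $M$, so the first part of the lemma only holds for non-compact $M$. For non-compact $M$, infinitely many of your stages $K_j\setminus\int K_{j-1}$ are non-empty, so your enumeration is indeed indexed by $\N$; you should state this explicitly.
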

\begin{proof}
  Let $M$ be an $n$-manifold. As discussed in the beginning of this
  section, there is an atlas $\bar\psi$ on $M$ satisfying conditions
  \ref{item:m1}--\ref{item:m3}.  By Fact \ref{fact:locally_compact_HB} there is a
  homeomorphism $h\colon M\to M'$ onto some Heine-Borel manifold~$M'$.
  Let $g\colon M'\to\U$ be an isometric embedding. Let
  $$\f_i=g\circ h\circ\psi_i.$$
  Since all the Conditions \ref{item:m1}--\ref{item:m3} are satisfied
  for $(M,\bar\psi)$, they transfer to $(M(\bar\f),\bar\f)$ in virtue
  of $h$ and $g$ being homeomorphisms. Since $M'$ is Heine-Borel and
  $g$ is an isometry, also $M(\bar\f)$ is Heine-Borel.  Thus,
  $\bar\f\in\MMM_n$.  The reverse direction trivially follows from the
  definition of~$\MMM_n$.
\end{proof}

\begin{prop}[$\MMM_n$ is Borel]\label{prop:ManifoldsBorel}
  $\MMM_n$ is a standard Borel space.
\end{prop}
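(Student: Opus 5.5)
We will show that $\MMM_n$ is a Borel subset of $\SSS_n=\Emb(\bDelta^n,\U)^\N$. Since $\SSS_n$ is standard Borel (as a countable product of the standard Borel spaces of Lemma~\ref{lemma:EmbIsClosed}), this suffices. The plan is to express each of \ref{item:m1}--\ref{item:m3}, together with the Heine-Borel property, as a countable Boolean combination of Borel conditions on $\bar\f$.

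\textbf{Step 1 (basic maps).} The maps $\bar\f\mapsto|\f_i|\in K(\U)$ are Borel by Lemma~\ref{lemma:r_C}\ref{lemma:r_C-3} with $C=\bDelta^n$. More generally, $\bar\f\mapsto\f_i[C]$ is Borel for any fixed compact $C\subset\bDelta^n$; we use this for $C$ ranging over a countable family $\CC$ of rational sub-simplexes $\Delta[q_0,\dots,q_n]\subset\obDelta^n$, chosen so that their interiors exhaust $\obDelta^n$.

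\textbf{Step 2 (Heine-Borel and \ref{item:m3}).} Fix $z_0\in\U$. We replace the Heine-Borel property together with \ref{item:m3} by a single local-finiteness statement:
\[
\forall N\,\exists m\,\forall k>m\ \big(|\f_k|\cap \overline{B(z_0,N)}=\es\big),
\]
which is Borel by Fact~\ref{fact:F(X)}. Under this condition, $M(\bar\f)\cap\overline{B(z_0,N)}$ is a finite union of compact sets and is therefore compact. Hence $M(\bar\f)$ is closed and Heine-Borel (Fact~\ref{fact:HB_for_subsets_of_U}), and \ref{item:m3} follows because every compact set is bounded. Conversely, if \ref{item:m3} holds and $M$ is Heine-Borel, then the displayed condition holds, since $M\cap\overline{B(z_0,N)}$ is compact. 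So on the set where the displayed condition holds, the closure of $M(\bar\f)$ causes no difficulty.

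\textbf{Step 3 (\ref{item:m1} and \ref{item:m2}).} This is the main obstacle, because interiors of images are not directly Borel functions of $\bar\f$. By invariance of domain, any embedding $\bDelta^n\to M$ into an $n$-manifold sends $\obDelta^n$ onto an open set, but we cannot assume in advance that $M(\bar\f)$ is a manifold. We will therefore encode \ref{item:m1} and \ref{item:m2} jointly as follows: for all $i$ and all $C\in\CC$ there is $\e\in\Q_+$ such that
\[
\forall k\quad |\f_k|\cap B_\U(\f_i[C],\e)\subset \f_i[\bDelta^n]
\]
(this expresses openness), and in addition every point of each $|\f_k|$ lies in some $\f_i[C]$ (this expresses covering, \ref{item:m1}). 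The inclusion $|\f_k|\cap B(K,\e)\subset|\f_i|$ is checked through a countable dense set of $|\f_k|$ obtained from the Borel selectors of Fact~\ref{prop:selection_thm}. We replace the open ball by the closed collar $K_\e$ and quantify over rational $\e'<\e$, which keeps the condition Borel. The covering condition is reduced to the compact sets $|\f_k|$ as follows: $|\f_k|\subset\Cup_{i,C}\int\f_i[C]$ is equivalent to the existence of a finite subfamily covering $|\f_k|$, namely finitely many $\f_i[C]$ together with slight shrinkings $C'\Subset C$ in $\CC$. We express this with the closed relation $K\subset L$ of Fact~\ref{fact:K(X)} and the Borel operations on $K(\U)$, using the union over finite families. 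This makes the whole condition a countable combination of Borel sets.

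Finally, we verify that the conjunction of Steps 2 and 3 is equivalent to membership in $\MMM_n$, using the local finiteness from Step 2 to pass between the local, ball-based statements and the global ones.
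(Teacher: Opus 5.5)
Your proof is correct, and its overall plan is the paper's. Your Step 2 condition is literally the paper's (M3$'$), checked against (M3) plus Heine--Borel in the same two directions. Your openness condition is the same idea as the paper's condition $(*)$ for (M2$'$): a uniform positive separation between a compact inner piece of the $i$-th chart and the points of $M=M(\bar\f)$ outside $|\f_i|$, tested on countable dense sets.

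The implementations differ in two places.

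For openness, the paper takes the dense sets to be $\f_j[\Q^*\cap\obDelta^n]$, so Borelness is just evaluation at fixed rational points. You use Borel selectors for $|\f_k|$ together with the continuous map $(y,K)\mapsto d(y,K)$. Your equivalence uses exactly that $B(\f_i[C],\e)$ is open and $|\f_i|$ is closed, so the detour through closed collars is unnecessary.

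The genuine difference is in (M1). The paper requires each $|\partial\f_i|$ to be covered by finitely many $|\overset{\circ}\f_j|$, with finiteness coming from (M3). It then rewrites this using the sets $\bDelta^n\setminus\f_i^{-1}(|\overset{\circ}\f_j|)$, whose compactness already presupposes (M2) and whose Borel dependence on $\bar\f$ is left implicit. You require each $|\f_k|$ to lie in a finite union $\f_{i_1}[C_1]\cup\dots\cup\f_{i_r}[C_r]$, with finiteness coming from openness and compactness of $|\f_k|$. This is Borel at once, from continuity of finite unions and closedness of $K\subseteq L$ in $K(\U)^2$. The cost is that it is equivalent to (M1) only in the presence of the openness condition, which is why encoding the two jointly is the right choice.

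The final equivalence you defer is routine and needs no local finiteness. Your openness condition gives $|\overset{\circ}\f_i|\subseteq\int_M|\f_i|$, which makes $|\overset{\circ}\f_i|$ open in $M$ because it is open in $|\f_i|$. Conversely, (M2) gives a positive distance from the compact set $\f_i[C]$ to the relatively closed set $M\setminus|\overset{\circ}\f_i|$.
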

\begin{proof}
  We will show that it is a Borel subset of the standard Borel space
  $\SSS_n=\Emb(\bDelta^n,\U)^{\N}$. We will give an equivalent definition of
  $\MMM_n$ and then show the set thus defined is Borel. Let $u_0\in\U$
  be a fixed point.  Let $\MMM'_n\subseteq \SSS_n$ be the set of those
  $\bar\f$ for which the following conditions hold:
  \begin{enumerate}[label={\upshape (M\arabic*')}, leftmargin=2.5pc]
  \item \label{item:m1p}
   For every $i\in\N$ there is
   $m\in\N$ such that $|\partial \f_i|\subseteq \Cup_{j=0}^m|\overset{\circ}\f_{j}|$
   (recall Definition~\ref{def:Faces}\eqref{eq:oversetcirc}).
  \item  \label{item:m2p} For all $i\in\N$,
    the image $|\overset{\circ}\f_i|$ is open in~$M(\bar\f)$.
  \item \label{item:m3p} For all $r\in\N$, there are only finitely many
    $i\in\N$ such that $\overline{B_\U(u_0,r)}\cap|\f_i|\ne\es$.
  \end{enumerate}
  First, let us show that $\MMM'_n=\MMM_n$.  Suppose
  $\bar\f\in\MMM'_n$.  We want to show that $(M,\bar\f)$ where
  $M=M(\bar\f)$ satisfies conditions \ref{item:m1}--\ref{item:m3} and
  $M$ is Heine-Borel.  Clearly we have
  $$M(\bar\f)=\Cup_{i\in\N}|\overset{\circ}\f_i|\cup |\partial \f_i|.$$
  Condition \ref{item:m1} then follows from Condition \ref{item:m1p}.
  Condition \ref{item:m2p} directly implies \ref{item:m2}.  Suppose
  $C\subset M(\bar\f)$ is compact. Then it is bounded in $\U$ and so
  there is $r$ such that $C\subset B_\U(u_0,r)$.  Now
  Condition~\ref{item:m3} follows from Condition~\ref{item:m3p}.
  Finally, let us prove that $M(\bar\f)$ is Heine-Borel. Let
  $C\subset M(\bar\f)$ be a closed and bounded subset. Then it is
  bounded also in $\U$, so there is $r$ such that
  $C\subset B_\U(u_0,r)$.  By Condition \ref{item:m3p}, $C$ can
  intersect only finitely many $|\f_i|$ and since
  $C\subset M(\bar\f)$, there is some $m$ such that
  $C\subset \Cup_{i=0}^m|\f_i|$ and is therefore compact as a
  closed subset of a compact set. Thus, we have proved that
  $\MMM_n'\subseteq \MMM_n$.
  
  For the reverse inclusion, suppose $\bar\f\in\MMM_n$. Condition
  \ref{item:m1} together with \ref{item:m3} and the compactness of
  $\partial\bDelta^n$ implies \ref{item:m1p}. Condition \ref{item:m2},
  again, directly implies \ref{item:m2p}.  Let $r\in\N$. Then, since
  $M(\bar\f)$ is Heine-Borel, $M(\bar\f)\cap \overline{B_\U(u_0,r)}$ is
  compact in $M(\bar\f)$. Now Condition \ref{item:m3} implies \ref{item:m3p}.  This
  completes the proof that $\MMM'_n=\MMM_n$.

  We now show that $\MMM'_n$, and hence $\MMM_n$, is Borel.  Re-write:
  \begin{align*}
    &|\partial \f_i|\subseteq
    \Cup_{j=0}^m|\overset{\circ}\f_{j}|\\
    \iff&
          |\partial \f_i|\cap
          \Cap_{j=0}^m M\setminus|\overset{\circ}\f_{j}|=\es \\
    \iff&\partial\bDelta^n\cap \Cap_{j=0}^m \underbrace{\bDelta^n\setminus \f_i^{-1}(|\overset{\circ}\f_j|)}_{\text{compact}}=\es.
  \end{align*}
  By Fact \ref{fact:F(X)} this condition is Borel for each $m$ and taking the union
  over all $m\in\N$ the condition \ref{item:m1p} becomes Borel.  By Fact
  \ref{fact:K(X)} and the compactness of $|\f_i|$, Condition
  \ref{item:m3p} is also seen to be Borel.

  Finally, it remains to show that condition \ref{item:m2p} is Borel.
  Let $\bDelta^n_\e\subset \obDelta^n$ be a smaller simplex inside
  $\obDelta^n$ which is obtained by shrinking $\bDelta^n$ a little
  bit:
  $$\bDelta^n_\e:=\{x\in \obDelta^n\mid d(x,\partial\bDelta^n)\ge \e\}$$
  and define the interior of it by
  $$\obDelta^n_\e:=\{x\in \obDelta^n\mid d(x,\partial\bDelta^n)> \e\}.$$
  We claim that \ref{item:m2p} is equivalent to the following
  condition~$(*)$:
  \begin{itemize}
  \item[$(*)$] for all $\e\in\Q_+$ there is 
    $\delta\in\Q_+$ such that
    for all $x\in |\f_i\rest (\obDelta^n_\e \cap \Q^*)|$, all $j\in \N\setminus \{i\}$, and all
    $y\in |\overset{\circ}\f_j\rest \Q^*|\setminus |\f_i|$ we have
    $d(x,y)>\delta$.
  \end{itemize}
  This condition is clearly Borel, because all quantifiers range over
  countable sets.  Let us prove that \ref{item:m2p}~$\Rightarrow (*)$.
  Suppose \ref{item:m2p} holds. Let $\e\in\Q_+$.  Since
  $|\f_i\rest \bDelta^n_\e|$ is compact in $|\overset{\circ}\f_i|$ which in turn
  is open in $M(\bar\f)$, we have that
  $M(\bar\f)\setminus |\overset{\circ}\f_i|$ is closed and
  \begin{equation}
    \label{eq:deltai}
    \delta:=d\Big(M(\bar\f)\setminus |\overset{\circ}\f_i|,|\f_i\rest \bDelta^n_\e|\Big)>0.
  \end{equation}
  Let
  $x\in |\f_i\rest(\obDelta^n_\e\cap\Q^*)|$ and
  $y\in |\f_j\rest (\obDelta^n\cap \Q^*)|\setminus |\f_i|$ 
  for some $j\in \N\setminus \{i\}$.
  Then by \eqref{eq:deltai}, clearly $d(x,y)>\delta$.

  Now let us prove $(*)\Rightarrow$~\ref{item:m2p}. 
  Assume $(*)$ and suppose $x'\in |\overset{\circ}\f_i|$. 
  Let $\e\in\Q_+$ be such
  that 
  $$\e<d((\f_i^{-1}(x'),\partial\bDelta^n).$$
  Let $\delta\in \Q_+$ be as
  given by $(*)$. Let $j\in \N\setminus \{i\}$ 
  be arbitrary. We claim that
  \begin{equation}\label{eq:capsubset1}
    B(x',\delta/3)\cap |\overset{\circ}\f_j|\subseteq |\f_i|.
  \end{equation}
  This is sufficient, because by the arbitrariness of $j$, it implies that
  $B(x',\delta/3)\cap |\overset{\circ}\f_i|=B(x',\delta/3)\cap M(\bar\f)$ 
  is an open neighbourhood of $x'$
  in~$|\overset{\circ}\f_i|$. We prove the following statement, which is equivalent
  to~\eqref{eq:capsubset1}: 
  \begin{equation}\label{eq:capsubset2}
    (|\overset{\circ}\f_j|\setminus |\f_i|)\cap B(x',\delta/3)=\es.
  \end{equation}
  If $|\overset{\circ}\f_j|\setminus |\f_i|$
  is empty, we are done. Otherwise pick an arbitrary $y'\in |\overset{\circ}\f_j|\setminus |\f_i|$.
  We will show that $d(x',y')>\delta/3$.
  By the choice of $\e$, we have 
  $\f_i^{-1}(x')\in\obDelta^n_\e$, so there
  is $q_x\in \obDelta^n_\e\cap \Q^*$
  with $d(\f_i(q_x),x')<\delta/3$.
  Let $x=\f_i(q_x)$.
  Since $|\f_i|$ is compact, the 
  set
  $$\f_j^{-1}\big[|\overset{\circ}\f_j|\setminus |\f_i|\big]=\obDelta^n\setminus \f_j^{-1}\big[|\f_i|\big]$$ 
  is open, so
  there is $q_y\in (\obDelta^n\cap \Q^*)\setminus \f_j^{-1}[|\f_i|]$
  such that $d(\f_j(q_y),y')<\delta/3$. Let $y=\f_j(q_y)$, so we have
  $y\in |\overset{\circ}\f_j\rest\Q^*|\setminus |\f_i|$.  
  Now the conditions of $(*)$ are satisfied for $\e,\delta,x,j$ and $y$,
  so we have $d(x,y)>\delta$. 
  Thus, $d(x',y')\ge d(x,y)-d(x,x')-d(y,y')> \delta -\delta/3-\delta/3=\delta/3$.
\end{proof}

Let us now define the space of locally PL-compatible atlases.  Let
$U\subset\R^m$. A simplicial complex $T_0$ in $U$ is a
\textbf{polyhedron} if it is finite and for some rectilinear
triangulation \(T\) of $\R^m$, $T_0$ is a polyhedron in $T$ according
to Definition~\ref{def:QPolyhedron}. Since any two rectilinear
triangulations of $\R^m$ have a common rectilinear subdivision,
``some'' can be replaced by ``any'' in this definition.  A function
$f\colon U\to \R^m$, $U\subset \obDelta^n$, is \textbf{locally PL}, if
for all polyhedra $T_0$ in $U$, $f\rest |T_0|$ is~PL.

A \textbf{$\Q$-polyhedron} in $\bDelta^n$ is a subcomplex of a
$\Q$-subdivision of~$\scl(\bDelta^n)$.  

\begin{lem}\label{lem:ExtensionTo_bDelta}
  If $T_0\subset \obDelta^n$ is a polyhedron, then there exists a
  subdivision $T$ of $\bDelta^n$ such that some subdivision of $T_0$
  is a subcomplex of~$T$.
\end{lem}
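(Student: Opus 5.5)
Here $T_0$ is a finite simplicial complex in $\obDelta^n\subset\R^{n+1}$ (viewing $\bDelta^n$ inside its affine hull). It is also a polyhedron in the sense just defined: $|T_0|=|P'|$ for a subcomplex $P'$ of a subdivision of some rectilinear triangulation of the ambient space. My plan is to reduce the statement to the classical fact that two finite rectilinear complexes in Euclidean space can be subdivided compatibly, in the style of \cite{hudson1969pl}.

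First I replace $T_0$ by a rectilinear subdivision $T_0'$ with $|T_0'|=|T_0|$. This is possible because $T_0$ is a polyhedron in a rectilinear triangulation of the affine hull of $\bDelta^n$. If $T_0$ is not rectilinear, I use Lemma~\ref{lemma:AnySimplexRect} to see that the subcomplex $P'$ witnessing polyhedrality is rectilinear and subdivides $T_0$ after a common subdivision. The common subdivision comes from Theorem~\ref{thm:CommonStellarSubdivision}, applied to the finite complexes $P'$ and $T_0$ triangulating the same compact set.

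Second, I consider the finite rectilinear complex $\scl(\bDelta^n)$ together with $T_0'$. Both have realizations contained in $\bDelta^n$, and $|T_0'|\subset|\scl(\bDelta^n)|$. By the standard result that if $K,L$ are finite rectilinear complexes with $|L|\subset|K|$, then there is a subdivision $K'$ of $K$ containing a subdivision $L'$ of $L$ as a subcomplex, I obtain a subdivision $T$ of $\bDelta^n$ with the required property (\cite[Ch.~I]{hudson1969pl}, proved by intersecting cells and triangulating the resulting cell complex without new vertices, or by stellar subdivisions as in \cite{AP24}). Composing the subdivisions gives $T_0''\subd T_0'\subd T_0$, and $T_0''$ is a subcomplex of $T$. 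Transitivity of $\subd$ is immediate from Lemma~\ref{lemma:AnySimplexRect}.

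The main obstacle I expect is the cell-complex step. One forms the convex cells $|\sigma|\cap|\kappa|$ for $\sigma\in T_0'$ and $\kappa\in\scl(\bDelta^n)$, together with the closures of the complementary regions $|\kappa|\setminus|T_0'|$. These complementary regions need not be convex, so they must first be cut by the hyperplanes spanned by the simplexes of $T_0'$. Doing this gives a finite convex cell complex refining both complexes, and its derived (barycentric-type) subdivision is the desired simplicial $T$. Carrying this out carefully, rather than quoting it, is the only nontrivial part of the proof.
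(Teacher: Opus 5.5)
Your Step~2 is sound, but Step~1 is not, and it cannot be repaired as written. Theorem~\ref{thm:CommonStellarSubdivision} assumes that $S$ and $T$ already have a common subdivision; it only converts that into a common $\omega$-stellar subdivision. It cannot produce a common subdivision of two arbitrary triangulations $P'$ and $T_0$ of the same compact set, which would be a Hauptvermutung-type statement.

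Worse, the non-rectilinear case you are trying to cover is one in which the lemma is false. Let $T_0$ be a straight segment $[p,q]\subset\obDelta^2$ with the parametrization $\kappa((1-t)\ee_0+t\ee_1)=p+t^3(q-p)$, together with its two endpoints. Then $|T_0|$ is a polyhedron. But every $1$-simplex of a subdivision of $T_0$ has the form $s\mapsto p+(a+s(b-a))^3(q-p)$ with $a\neq b$, and this is never rectilinear. On the other hand, every simplex of a subdivision of $\scl(\bDelta^2)$ is rectilinear. So the lemma has to be read with $T_0$ rectilinear, i.e.\ $T_0$ is itself a subcomplex of a subdivision of a rectilinear triangulation. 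This is how the paper reads it: its proof treats $T_0$ as a subcomplex of a rectilinear triangulation of Euclidean space. In that reading Step~1 is vacuous and should simply be deleted.

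With that correction, your Step~2 is a valid proof, by a somewhat different route from the paper's.
\begin{itemize}
\item The paper uses \cite[Lemma 6]{Bi59} (stated for $n=3$, with general $n$ justified via \cite{AP24}) twice. First it extends $T_0$ to a rectilinear triangulation $P$ of the ambient Euclidean space. Then it finds a rectilinear triangulation $P'$ having $\bDelta^n$ as a simplex. It takes a common subdivision $T'$ of $P$ and $P'$ and sets $T=T'\rest\bDelta^n$, so that $\{\sigma\in T'\mid |\sigma|\subset|T_0|\}$ is the required subdivision of $T_0$.
\item You instead apply the finite relative statement directly: if $|L|\subset|K|$, then some subdivision of $K$ contains a subdivision of $L$ as a subcomplex. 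You take $L=T_0$ and $K=\scl(\bDelta^n)$.
\end{itemize}

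Your version stays inside the compact simplex and needs neither infinite triangulations of Euclidean space nor the extension of Bing's lemma to all dimensions. It also places everything correctly in the affine hull of $\bDelta^n$, whereas the paper writes $\R^n$. The paper's version is shorter given its citations, but relies on two separate facts: extension to a global triangulation, and common subdivision of two triangulations of Euclidean space.

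Both routes rest on the same convex cell-cutting argument. In your sketch of it, note one correction: for a simplex $\sigma$ of dimension less than $n-1$ there is no single hyperplane it spans. You need a finite family of hyperplanes whose closed half-spaces cut out exactly $|\sigma|$.
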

\begin{proof}
  By \cite[Lemma 6]{Bi59} there is a rectilinear triangulation
  $P$ of $\R^n$ such that $T_0$ is a subcomplex of $P$.
  \cite[Lemma 6]{Bi59}
  is stated for $n=3$ but the
  same proof works for
  general $n$ as remarked in~\cite{AP24}.
  By the same argument, there is a rectilinear triangulation
  $P'$ of $\R^n$ such that $\bDelta^n$ is a simplex in $P'$.
  Let $T'$ be a common subdivision of $P$ and $P'$ and let
  $T=T'\rest \bDelta^n$.
\end{proof}

\begin{lem}\label{fact:locallyPL-polyhedron}
  Suppose $U\subset \obDelta^n$ is open. Then $f\colon U\to \R^m$
  is locally PL if and only if for all $\Q$-polyhedra $T_0$
  in $U$, $f\rest |T_0|$ is PL.
\end{lem}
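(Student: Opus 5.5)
The forward direction is immediate, since every $\Q$-polyhedron in $U$ is in particular a polyhedron in $U$. For the converse, assume $f\rest|T_0|$ is PL for every $\Q$-polyhedron $T_0$ in $U$, and fix an arbitrary polyhedron $T_0$ in $U$. The plan is to enclose $|T_0|$ in a $\Q$-polyhedron $Q\subset U$ and use the fact that restricting a PL map to a polyhedron contained in its domain gives a PL map. This reduces the statement to a purely geometric claim: every compact polyhedron $K=|T_0|\subset U$ lies inside $|Q|$ for some $\Q$-polyhedron $Q$ in $U$.

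To construct $Q$, use that $K$ is compact and $U$ is open in $\bDelta^n$, so $\e:=d(K,\bDelta^n\setminus U)>0$; here $U\subset\obDelta^n$ is a proper open subset, and the case $U=\obDelta^n$ is covered by the same bound computed with respect to $\partial\bDelta^n$. Take the iterated barycentric subdivision $S=\beta^k\scl(\bDelta^n)$ with $k$ chosen so that every simplex of $S$ has diameter $<\e/2$. Barycentric subdivisions have rational barycentric coordinates, so they are $\Q$-subdivisions, exactly as observed in the proof of Proposition~\ref{prop:CCLCP}. Let $Q:=\St_S(K)$, the smallest subcomplex of $S$ whose realization contains $K$. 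Every simplex of $Q$ meets $K$ and has diameter $<\e/2$, so $|Q|$ lies in the $\e/2$-neighbourhood of $K$, and hence $|Q|\subset U$. Being a subcomplex of a $\Q$-subdivision of $\scl(\bDelta^n)$, $Q$ is a $\Q$-polyhedron in $U$, and by construction $K\subset|Q|$.

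By hypothesis $f\rest|Q|$ is PL. It remains to show that $f\rest K$ is PL, where $K=|T_0|$ is a polyhedron contained in $|Q|$. Choose subdivisions $Q'$ of $Q$ and a triangulation of the target on which $f\rest|Q|$ is simplicial. Both $T_0$ and $Q'$ are rectilinear complexes in $\R^n$ with $|T_0|\subset|Q'|$, so by the standard common-subdivision argument (the same one used in the proof of Lemma~\ref{lem:ExtensionTo_bDelta}) there is a subdivision $Q''$ of $Q'$ that contains a subdivision $T_0'$ of $T_0$ as a subcomplex. A simplicial map remains linear on each simplex of a finer subdivision, so $f$ is linear on each simplex of $Q''$; subdividing the image accordingly, $f\rest|T_0'|$ is simplicial onto a subcomplex, and therefore $f\rest|T_0|$ is PL.

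The only point that needs care is this last step, the passage from PL on $|Q|$ to PL on the subpolyhedron $|T_0|$. For that I would cite the standard fact that the restriction of a PL map to a subpolyhedron is PL, for example \cite{hudson1969pl}, rather than reprove it. The fineness argument in the construction of $Q$ is routine.
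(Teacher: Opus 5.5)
Your proposal is correct and follows essentially the same route as the paper: you enclose $|T_0|$ in its star in a sufficiently fine $\Q$-subdivision of $\bDelta^n$, which is a $\Q$-polyhedron inside $U$, apply the hypothesis there, and then pass to a subdivision of $T_0$ through a common subdivision, which the paper obtains via Lemma~\ref{lem:ExtensionTo_bDelta}. One small imprecision: not every simplex of $\St_S(K)$ needs to meet $K$, since faces of carriers may miss it, but every point of $|\St_S(K)|$ lies in a simplex that does, and that is all your $\e/2$ estimate requires.
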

\begin{proof}
  The direction from left to right is obvious.  Let
  $f\colon U\to \R^m$, $U\subset\obDelta^n$ open, and suppose that for
  all $\Q$-polyhedra $Q_0$ in $U\subset\bDelta^n$, $f\rest |Q_0|$
  is~PL.  Let $T_0$ be a polyhedron in~$U$. Let $Q$ be a fine enough
  $\Q$-subdivision of $\bDelta^n$ such that
  $Q_0:=\St_Q(|T_0|)\subset U$. This is possible because $U$ is open
  and $T_0$ is finite (so $|T_0|$ is compact).  Then $f$ is simplicial
  on a subdivision $P_0$ of $Q_0$.  Note that $P_0$ is not necessarily
  a $\Q$-polyhedron.  Let $T$ be a subdivision of $\bDelta^n$ such
  that some subdivision $T_0'$ of $T_0$ is a subcomplex of $T$ (exists
  by Lemma~\ref{lem:ExtensionTo_bDelta}).  Let $S$ be a common
  subdivision of $Q$ and $T$. Then there are subcomplexes $S_0$ and $S_1$ of $S$  
  such that $S_0\subset S_1$, $|S_0|=|T_0|$, and $|S_1|=|Q_0|$. Let
  $S_1'$ be a subdivision of $P_0$ such that $S_0'=S_1\rest |T_0|$
  is a subdivision of $T_0$. Then $f$ is simplicial on $S_1'$ and therefore
  on~$S_0'$.
\end{proof}

Similarly to the case of $n$-manifolds, we now use locally PL-compatible atlases to parametrize PL $n$-manifolds.

\begin{Def}\label{def:MMMPL}
  Two functions $\f,\psi\in \Emb(\bDelta^n,\U)$ are
  \textbf{locally PL-compatible}, if the functions
  $$\big(\overset{\circ}{\psi}\big)^{-1}\circ \overset{\circ}\f\colon \big(\overset{\circ}\f\big)^{-1}|\overset{\circ}{\psi}|
  \to \big(\overset{\circ}\psi\big)^{-1}|\overset{\circ}\f| \quad\text{ and }\quad
  \big(\overset{\circ}\f\big)^{-1}\circ \overset{\circ}\psi\colon
  \big(\overset{\circ}\psi\big)^{-1}|\overset{\circ}\f|\to
  \big(\overset{\circ}\f\big)^{-1}|\overset{\circ}\psi|$$
  are locally PL. Let $\MMM_n^{PL}\subseteq \MMM_n$ be the set of those
  $\bar\f$ such that for all $i,j$, $\f_i$ and $\f_j$ are locally PL-compatible.
  Let $\MMM_n^{\PL,\le i}$ be the set of those $\bar\f\in \MMM_n$ for which
  $\f_0,\dots,\f_i$ are pairwise locally PL-compatible.
\end{Def}

\begin{Lemma}\label{lemma:AlgPLManifoldsBorel}
  The spaces $\MMM^{PL,\le i}_n$ and $\MMM_n^{PL}$ are Borel subsets of $\MMM_n$.
\end{Lemma}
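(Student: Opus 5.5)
We need to show that $\MMM^{PL,\le i}_n$ and $\MMM_n^{PL}$ are Borel subsets of $\MMM_n$. Since $\MMM_n^{PL}=\Cap_{i\in\N}\MMM_n^{PL,\le i}$, and each $\MMM^{PL,\le i}_n$ is a finite intersection over pairs $j,k\le i$, it suffices to prove one thing. For fixed $j,k$, the set $C_{jk}$ of those $\bar\f\in\MMM_n$ for which $\f_j$ and $\f_k$ are locally PL-compatible is Borel. By symmetry it is enough to treat one of the two transition maps, namely $g=(\overset{\circ}{\f_k})^{-1}\circ\overset{\circ}{\f_j}$ on the open set $U=(\overset{\circ}{\f_j})^{-1}|\overset{\circ}{\f_k}|\subset\obDelta^n$.

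First I reduce the uncountable quantifier over polyhedra to a countable one. By Lemma~\ref{fact:locallyPL-polyhedron}, $g$ is locally PL if and only if $g\rest|T_0|$ is PL for every $\Q$-polyhedron $T_0$ in $U$. There are only countably many $\Q$-polyhedra $T_0$ in $\bDelta^n$: they are subcomplexes of $\Q$-subdivisions of $\scl(\bDelta^n)$, and by Proposition~\ref{prop:CommonQsubd} each can be taken to be a subcomplex of $\scl(\bDelta^n)*\bar z$ for a finite $\bar z\subset\Q^*$. Fix an enumeration $(T_0^m)_{m\in\N}$. Then $\bar\f\in C_{jk}$ if and only if for every $m$, either $|T_0^m|\not\subset U$ or $g\rest|T_0^m|$ is PL.

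The condition $|T_0^m|\subset U$ means $\f_j[|T_0^m|]\subset|\overset{\circ}{\f_k}|$. Since $|\overset{\circ}{\f_k}|=|\f_k|\setminus|\partial\f_k|$, this is equivalent to
\[
\f_j[|T_0^m|]\subset|\f_k|\quad\text{and}\quad \f_j[|T_0^m|]\cap|\partial\f_k|=\es .
\]
Both clauses are relations among compact sets that depend Borel-ly on $\bar\f$, via Lemma~\ref{lemma:r_C}\ref{lemma:r_C-3} and Fact~\ref{fact:K(X)}, so this condition is Borel.

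For the PL condition I use Theorem~\ref{thm:PLK_sigma}. On $|T_0^m|$, $g$ is an embedding of the finite complex $T_0^m$ into $\bDelta^n$. The key observation is that $g\rest|T_0^m|$ is PL if and only if its graph lies in $Z_m:=\Emb_{PL}(|T_0^m|,\bDelta^n)$. By Theorem~\ref{thm:PLK_sigma}, $Z_m$ is $K_\sigma$ in $\Emb(|T_0^m|,\bDelta^n)$, hence Borel. A PL embedding into $\bDelta^n$ is the same as a PL map here, because $g$ is injective. It therefore remains to show that the map
\[
\bar\f\mapsto\graph\big(\f_k^{-1}\circ\f_j\rest|T_0^m|\big),
\]
defined on the Borel set where $|T_0^m|\subset U$, is Borel into $\Emb(|T_0^m|,\bDelta^n)$.

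I expect this last point to be the main technical obstacle. I would handle it by writing the graph as the projection of a compact set. Let $F_j=\graph(\f_j)$ and $F_k=\graph(\f_k)$, both in $K(\bDelta^n\times\U)$. Then
\[
\graph(g\rest|T_0^m|)=\{(x,y)\in|T_0^m|\times\bDelta^n\mid \exists u\in\U\ ((x,u)\in F_j\wedge(y,u)\in F_k)\}.
\]
This is the image of the compact set $(F_j\cap(|T_0^m|\times\U))\times F_k$ intersected with the closed condition that the two $\U$-coordinates agree, and then projected. Intersection of a compact set with a fixed closed set is upper semicontinuous and hence Borel. Products are continuous by Fact~\ref{fact:K(X)}\ref{prop:K(X)-4}, and projections are continuous by Lemma~\ref{lemma:r_C}\ref{lemma:r_C-1}. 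So the composite map is Borel.

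Putting these together, $C_{jk}$ is obtained from the Borel conditions above by a countable intersection over $m$ of finite unions. Hence each $\MMM^{PL,\le i}_n$ is a Borel subset of $\MMM_n$, and so is $\MMM^{PL}_n=\Cap_{i\in\N}\MMM^{PL,\le i}_n$.
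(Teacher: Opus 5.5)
Your proof is correct and follows essentially the paper's route: you reduce to countably many $\Q$-polyhedra via Lemma~\ref{fact:locallyPL-polyhedron}, express the containment condition through Borel relations on compact sets, and conclude with Theorem~\ref{thm:PLK_sigma}. The only real difference is that you get Borelness of $\bar\f\mapsto\graph(\f_k^{-1}\circ\f_j\rest|T_0^m|)$ from hyperspace operations (product, intersection with a closed set, projection), whereas the paper uses continuity in the sup-metric together with Lemma~\ref{lemma:SupmetricHomeoEmb}.

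One small correction, which applies equally to the paper's description of $A_T$: since $U\subset\obDelta^n$, the condition $|T_0^m|\subset U$ also needs the $\bar\f$-independent requirement $|T_0^m|\subset\obDelta^n$, so you should enumerate only $\Q$-polyhedra lying in $\obDelta^n$.
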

\begin{proof}
  Clearly $\MMM_n^{PL}=\Cap_{i\in\N}\MMM^{PL,\le i}_n$, so it is
  enough to show that $\MMM^{PL,\le i}_{n}$ is Borel for all~$i\in\N$.
  Let $\PPP^{\le i}\subset \SSS_n$ be the set of sequences $\bar\f$
  such that $\f_0,\dots,\f_i$ are pairwise locally PL-compatible. Then
  $\MMM_n^{PL,\le i}=\MMM_n\cap \PPP^{\le i}$. By Proposition
  \ref{prop:ManifoldsBorel} it suffices to show that $\PPP^{\le i}$ is
  a Borel subset of~$\SSS_n$.

  For a fixed $\Q$-polyhedron $T$ in \(\bDelta^n\), let $A_T$ be the
  set of pairs
  $(\f_1,\f_2)\in \Emb(\bDelta^n,\U)\times\Emb(\bDelta^n,\U)$ such
  that $|T|\subset
  (\overset{\circ}{\f}_1)^{-1}|\overset{\circ}\f_2|$. We show that
  \(A_T\) is Borel.  Let
  $$F_T\colon A_T\to \Emb(|T|,\bDelta^n)$$
  be given by $F_T(\f_1,\f_2)=(\f_2^{-1}\circ \f_1)\rest |T|$.  By
  Lemma \ref{lemma:r_C} the maps $\f_2\mapsto |\f_2|$,
  $\f_2\mapsto |\partial \f_2|$ and $(\f_1,C)\mapsto \f_1^{-1}[C]$,
  where \(C \subset \U\) is compact, are Borel, so also
  $(\f_1,\f_2)\mapsto \f_1^{-1}|\partial \f_2|$ is Borel.  By
  compactness the set
  $$\{(\f_1,\f_2)\mid \f_1^{-1}|\partial \f_2|\cap |T|=\es\}$$
  is Borel as well as
  $$\{(\f_1,\f_2)\mid |T|\subset \f_1^{-1}|\f_2|\}.$$
  The intersection of those is~$A_T$.
  
  Let us show that the function $F_T$ Borel. It is easy to see that if
  $\Emb(\bDelta^n,\U)$ and $\Emb(|T|,\bDelta^n)$ are equipped with the
  sup-metric, then $F_T$ is continuous. But the sup-metric generates
  the same Borel sets of those given by the topology on both (see
  Lemma \ref{lemma:SupmetricHomeoEmb}), so $F_T$ is Borel.

  By Lemma~\ref{fact:locallyPL-polyhedron}, $\bar\f\in \PPP^{\le i}$
  if and only if for all $j<k\le i$ there is a $\Q$-complex $T_0$ in
  $U=(\overset{\circ}{\f_j})^{-1}|\overset{\circ}{\f_k}|$ such that
  $F_{T_0}(\f_j,\f_k)$ is PL. All quantifiers in this statement are
  countable, so it remains to show that for any given $\Q$-polyhedron
  $T$, the set
  $$\Emb_{PL}(|T|,\bDelta^n):=\{f\in \Emb(|T|,\bDelta^n)\mid f\text{ is }PL\}$$
  is a Borel subset of $\Emb(|T|,\bDelta^n)$. But this follows
  from Theorem~\ref{thm:PLK_sigma}.
\end{proof}

\begin{Def}\label{def:Compatible}
  Suppose $\bar\tau\in \TTT$ and $\bar\f\in\MMM_n$ are such that
  $|\bar\tau|\subset M(\bar\f)$. We say that $\bar\tau$ is
  \textbf{PL-compatible with} $\f_i$, if for all $j$ with
  $|\tau_j|\subset |\overset{\circ}\f_i|$, the function
  $\f_i^{-1}\circ\tau_j$ is~PL. If $\bar\f\in\MMM_n^{PL}$ and
  $\bar\tau$ is a triangulation of $M(\bar\f)$ which is PL-compatible
  with each $\f_i\in\bar\f$ and such that for all $l$ there is $k$
  such that $|\tau_l|\subset |\overset{\circ}{\f}_k|$, we say that
  $\bar\tau$ is a \textbf{compatible triangulation} of $M(\bar\f)$.
\end{Def}

\begin{Def}\label{def:Tiin}
  For $i\le j$, let $\TT^{i,j}_n$ be the set of pairs
  $(\bar \f,\bar\tau)\in \MMM^{PL,\le j}_n\times \TTT$ such that
  $\bar\tau$ is a triangulation of
  $\Cup_{k\le i}|\overset{\circ}\f_k|$ such that for all $l\in\N$
  there is $k\le i$ such that
  $|\tau_l|\subset |\overset{\circ}{\f}_k|$ and $\f_k^{-1}\circ\tau_l$
  is~PL.
\end{Def}

By the \textbf{standard triangulation of} $\obDelta^n$ we mean a fixed
infinite rectilinear triangulation of the open set~$\obDelta^n$.

\begin{Lemma}\label{lemma:TTijIsBorel}
  For all $i\le j$, the set $\TT^{i,j}_n$ is Borel.
\end{Lemma}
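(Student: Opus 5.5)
The plan is to write $\TT^{i,j}_n$ as $(\MMM^{PL,\le j}_n\times\TTT)\cap W_1\cap W_2\cap W_3$. The first factor is Borel by Lemma~\ref{lemma:AlgPLManifoldsBorel} and the fact that $\TTT$ is Borel in $\SSS$. The sets $W_1,W_2,W_3$ encode the three clauses of Definition~\ref{def:Tiin}, and each will be expressed with countable quantifiers over Borel conditions.

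\textbf{$W_1$: realization condition.} Let $W_1$ be the set of pairs with $|\bar\tau|=\Cup_{k\le i}|\overset{\circ}\f_k|$. For the inclusion $\subseteq$, the assertion $|\tau_l|\subset\Cup_{k\le i}|\overset{\circ}\f_k|$ is implied by the clause handled by $W_2$, so it suffices to ask that each $\tau_l$ lies in some $|\overset{\circ}\f_k|$. For the inclusion $\supseteq$, I would use the dense Borel enumeration $\f_k(q)$ with $q\in\Q^*\cap\obDelta^n$: each such point must lie in $|\bar\tau|$, which is Borel because $\bar\tau\mapsto|\bar\tau|\in F(\U)$ is Borel (shown in the proof of Theorem~\ref{thm:PLtoBasis}) and membership is Borel by Fact~\ref{fact:F(X)}. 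This only gives density, however, and closure does not suffice because $|\overset{\circ}\f_k|$ is open. The fix is to require, for every $k\le i$ and every compact $\Q$-polyhedron $C\subset\obDelta^n$ (a countable family, exhausting $\obDelta^n$), that $\f_k[C]\subset\Cup_{l\le m}|\tau_l|$ for some $m$. Here $\f_k[C]$ is Borel in $\f_k$ by Lemma~\ref{lemma:r_C}\ref{lemma:r_C-3}, finite unions are continuous, and inclusion of compacta is closed (Fact~\ref{fact:K(X)}). This yields a countable union and intersection of Borel sets.

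\textbf{$W_2$: containment condition.} Let $W_2$ be the set where for all $l$ there exists $k\le i$ with $|\tau_l|\subset|\overset{\circ}\f_k|$. This is equivalent to $\f_k^{-1}[|\tau_l|]\cap\partial\bDelta^n=\es$ together with $|\tau_l|\subset|\f_k|$, exactly as for $A_T$ in the proof of Lemma~\ref{lemma:AlgPLManifoldsBorel}. The map $(\f,\tau)\mapsto\f^{-1}[|\tau|]$ is Borel by Lemma~\ref{lemma:r_C}, so $W_2$ is Borel.

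\textbf{$W_3$: PL condition, the main obstacle.} Let $W_3$ be the set where $\f_k^{-1}\circ\tau_l$ is PL for the witnessing $k$. Write $W_3=\Cap_l\Cup_{k\le i}\big(\{|\tau_l|\subset|\overset{\circ}\f_k|\}\cap P_{k,l}\big)$. The set $P_{k,l}$ consists of pairs for which the composition $g=\f_k^{-1}\circ\tau_l$, restricted to a domain where it is defined, lies in $\Emb_{PL}(\bDelta^{d},\bDelta^n)$, where $d=\dim\tau_l$ ranges over countably many values. The map $(\f_k,\tau_l)\mapsto g$ is continuous in the sup-metric on this Borel domain, by the same argument as for $F_T$, and Lemma~\ref{lemma:SupmetricHomeoEmb} makes it Borel into $\Emb(\bDelta^d,\bDelta^n)$. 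Theorem~\ref{thm:PLK_sigma}, applied to $K_1=\scl(\bDelta^d)$ and $K_2=\scl(\bDelta^n)$, gives that $\Emb_{PL}$ is $K_\sigma$ and hence Borel, so its preimage $P_{k,l}$ is Borel.

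The delicate point is that $g$ is only defined when $|\tau_l|\subset|\f_k|$. I would handle this by restricting to that Borel domain, noting that a Borel function on a Borel subset yields Borel preimages. The last step is to confirm that being a triangulation of the set, rather than only covering it, is already ensured by $\bar\tau\in\TTT$.
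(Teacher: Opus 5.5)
Your proposal is correct and follows essentially the same route as the paper. The paper writes $\TT^{i,j}_n$ as the intersection of two conditions. The first is the coverage condition $|\overset{\circ}\f_k|\subset|\bar\tau|$ for $k\le i$, checked by requiring that the image under $\f_k$ of each simplex of the standard triangulation of $\obDelta^n$ lie in a finite union $\Cup_{l\le m}|\tau_l|$; this is your $W_1$, with compact $\Q$-polyhedra in place of those simplices. The second is the containment-plus-PL condition, which is your $W_2\cap W_3$, and its Borelness is reduced to the $K_\sigma$ result for PL embeddings.

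The only cosmetic difference is in that last step. You apply Theorem~\ref{thm:PLK_sigma} directly with the finite target $\scl(\bDelta^n)$ and spell out why $(\f_k,\tau_l)\mapsto\f_k^{-1}\circ\tau_l$ is Borel. The paper instead cites Corollary~\ref{cor:FromCompactToNonCompactKsigmaPL} for PL embeddings into $\obDelta^n$.
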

\begin{proof}
  Let $S_1$ be the set of those pairs
  $(\bar\f,\bar\tau)\in \MMM^{PL,\le j}_n\times \TTT$ 
  such that for all $l\in\N$ there is $k\le i$ such
  that $|\tau_l|\subset |\overset{\circ}{\f}_k|$ and $\f_k^{-1}\circ\tau_l$ is PL. 
  Let $S_2$ be the set of those
  pairs $(\bar\f,\bar\tau)\in \MMM^{PL,\le j}_n\times \TTT$ 
  such that for all $k\le i$, $|\overset{\circ}{\f}_k|\subset |\bar \tau|$.
  Clearly $\TT^{i,j}_n=S_1\cap S_2$. We will show that $S_1$
  and $S_2$ are Borel. Let $\bar\delta=(\delta_i)_{i \in \N}$ be the standard
  triangulation of $\obDelta^n$.  Then
  $(\bar\f,\bar\tau)\in S_2$ if and only if for all $k\le i$
  for all $k_1\in\N$ there is $k_2\in\N$ such that
  $\f_k[|\delta_{k_1}|]\subset \Cup_{l\le k_2}|\tau_l|$.
  This is Borel by Facts \ref{fact:F(X)} and \ref{fact:K(X)}\ref{prop:UnionContinuous}.
  For $S_1$ by Fact \ref{fact:F(X)}\ref{prop:F(X)-4} it is enough to show
  that the set of PL embeddings $\bDelta^n\to \obDelta^n$
  is Borel. This follows directly from
  Corollary~\ref{cor:FromCompactToNonCompactKsigmaPL}.
\end{proof}

\vspace{10pt}

\subsubsection{PL \(n\)-manifolds without boundary up to PL-homeomorphism}

In the following we show that triangulations can be extended in a
Borel way.

\begin{prop}\label{prop:TriangulationStep}
  Fix $i\in\N$.  There is a Borel map
  $$f_i\colon \TT^{i,i+1}_n\to \TT^{i+1,i+1}_n$$
  such that if $(\bar\f',\bar\tau')=f_i(\bar\f,\bar\tau)$, then
  $\bar\f'=\bar\f$ and
  \begin{equation}
    \text{if $j<i$ is such that $|\tau_j|\cap |\f_{i+1}|=\es$, then $\tau_j'=\tau_j$.}\label{eq:tauistaup} 
  \end{equation}
\end{prop}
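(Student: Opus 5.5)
The plan is to make the classical ``extend a triangulation across one more chart'' argument uniform by replacing every choice with a least-index search over a fixed countable family of rational data. Fix $(\bar\f,\bar\tau)\in\TT^{i,i+1}_n$. Put $U=\Cup_{k\le i}|\overset{\circ}\f_k|$ (so $|\bar\tau|=U$) and $V=|\overset{\circ}\f_{i+1}|$, and work in the chart $\bDelta^n$ via $\f_{i+1}^{-1}$.

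\textbf{Local PL structure.} Each $\tau_l$ satisfies $|\tau_l|\subset|\overset{\circ}\f_k|$ for some $k\le i$ with $\f_k^{-1}\tau_l$ PL. The charts $\f_k$ and $\f_{i+1}$ are locally PL-compatible because $\bar\f\in\MMM^{PL,\le i+1}_n$. Hence for every $l$ with $|\tau_l|\subset V$ the map $\f_{i+1}^{-1}\circ\tau_l$ is PL. It follows that the pullback of $\bar\tau$ to the open set $W=\f_{i+1}^{-1}[U\cap V]\subset\obDelta^n$ is a locally PL triangulation there.

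\textbf{Cutoff and gluing.} Fix the standard triangulation $\bar\delta$ of $\obDelta^n$, and form the increasing compacta $D_m=\Cup_{l\le m}|\delta_l|$. Choose a compact $\Q$-polyhedral region $R\subset\obDelta^n$ and a closed subcomplex $\tau^{\rm out}$ of a $\lo$-stellar $\Q$-subdivision of $\bar\tau$ with the following properties:
\begin{enumerate-(i)}
\item $\tau^{\rm out}$ contains every $\tau_j$ with $|\tau_j|\cap|\f_{i+1}|=\es$;
\item $|\tau^{\rm out}|\cup V=U\cup V$;
\item $|\tau^{\rm out}|$ meets $V$ only inside a collar where the two structures are compatible.
\end{enumerate-(i)}
In that collar the PL homeomorphisms $\f_{i+1}^{-1}\circ\tau_l$ become simplicial after subdivision, by Theorem~\ref{thm:QPL-homeo} and Proposition~\ref{prop:CommonQsubd}, and Fact~\ref{fact:CanChooseFiniteStellarSubd} lets us take those subdivisions to be stellar at $\Q$-points. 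On the remaining part of $V$ we use $\f_{i+1}$ applied to a rectilinear $\Q$-triangulation of the complementary region in $\obDelta^n$. That region is typically non-compact, so we triangulate it shell by shell: on $D_{m+1}\setminus\overline{D_m}$ we choose finite $\Q$-data extending what was already chosen on $D_m$. Property (i) gives \eqref{eq:tauistaup}, because simplices of $\bar\tau$ disjoint from $|\f_{i+1}|$ are never subdivided; the indexing of those $\tau_j$ is kept and new simplices are appended. Local finiteness and the Heine-Borel condition follow from condition~\ref{item:m3} of the atlas, since only finitely many $\tau_l$ and $\delta_l$ meet any bounded set.

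\textbf{Borelness.} Each choice above ranges over a countable set. Such a choice is a finite sequence of $\Q$-points (Definition~\ref{def:TheQTset}, enumerated Borel-ly as in the proof of Theorem~\ref{thm:PLtoBasis}), together with a finite rectilinear $\Q$-complex in $\bDelta^n$ and finitely many indices. We always take the least admissible candidate in a fixed enumeration. Admissibility is a Borel condition on $(\bar\f,\bar\tau)$, being built from the following:
\begin{itemize}
\item containment and disjointness of compact sets, by Facts~\ref{fact:K(X)} and~\ref{fact:F(X)};
\item the Borel operations $\f\mapsto|\f|$ and $(\f,C)\mapsto\f^{-1}[C]$ of Lemma~\ref{lemma:r_C};
\item the Borel stellar-subdivision map $\subd$;
\item PL-ness of the maps $\f_{i+1}^{-1}\circ\tau_l$ restricted to compact $\Q$-polyhedra, which is Borel by Theorem~\ref{thm:PLK_sigma} and Corollary~\ref{cor:FromCompactToNonCompactKsigmaPL} (exactly as in Lemmas~\ref{lemma:AlgPLManifoldsBorel} and~\ref{lemma:TTijIsBorel});
\item the simplicial-complex axioms, which are Borel as in the proof that $\TTT(Z)$ is Borel.
\end{itemize}
A map defined by least-witness search over countably many Borel conditions is Borel. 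The output lies in $\TT^{i+1,i+1}_n$ because every new simplex lies in some $|\overset{\circ}\f_k|$, $k\le i+1$, with PL transition.

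\textbf{Main obstacle.} I expect the hardest step to be the existence part of the search. One must show that at every stage some admissible finite $\Q$-candidate exists, which is the gluing of the $\f_{i+1}$-structure to the stellar $\Q$-subdivision of $\bar\tau$ along the frontier of $U\cap V$ in $V$. I would first try to get this by applying Proposition~\ref{prop:CommonQsubd} to finite pieces and Lemma~\ref{lemma:ExtedingSubdivisions} to patch consecutive shells. If that fails, I would fall back on Bing's Lemma~6 used in Lemma~\ref{lem:ExtensionTo_bDelta} to extend finite polyhedra in $\obDelta^n$ to triangulations.
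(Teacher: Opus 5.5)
Your opening observation, that $\bar\tau$ is PL-compatible with $\f_{i+1}$ on $U\cap V$, is exactly the paper's first step. Least-witness search over countably many Borel conditions is also a legitimate way to produce Borel maps; the paper uses countable-section uniformization in Lemmas~\ref{lemma:MoiseLemma4}--\ref{lemma:Exh}. The gap is that you never show the object you search for exists, and in the form you specify it need not exist.

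You glue a stellar $\Q$-subdivision of $\bar\tau$ to $\f_{i+1}$ applied to a rectilinear $\Q$-triangulation of part of $\obDelta^n$. A face shared by the two sides must be a face of both, so the transition $\f_{i+1}^{-1}\circ\tau_l$ must be linear on it. That transition is only PL, which causes two problems:
\begin{itemize}
\item Vertices of $\bar\tau$ on the interface need not be sent into $\Q^*$ by $\f_{i+1}^{-1}$.
\item An interface face is in general not linear in the chart. It must be cut along the break locus of the transition, whose vertices need not lie in $\Q(\bar\tau)$. Already for $n=2$, a single breakpoint at an irrational parameter on an edge suffices.
\end{itemize}
Hence the family of admissible finite $\Q$-candidates can be empty at the very first stage. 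Theorem~\ref{thm:QPL-homeo} does not repair this. It replaces a PL homeomorphism by a nearby $\Q$-PL one, but here the map is the fixed chart transition, and the statement demands $\bar\f'=\bar\f$.

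Independently of rationality, the interface is non-compact. The frontier of $U$ in $V$ can accumulate on $|\partial\f_{i+1}|$, and infinitely many $\tau_l$ may meet $V$. Meanwhile $|\tau^{\rm out}|$ must cover $U\setminus V$ while staying away from $V\cap\partial U$, or local finiteness fails at points of $V$. So the matching along the interface involves infinitely many simplices of $\bar\tau$ and cannot happen inside a $\lo$-stellar subdivision.

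Your shell-by-shell greedy search also needs an extension lemma: an admissible finite choice on $D_m$ must always be continuable on $D_{m+1}$, compatibly with what is already fixed. Without it, the least candidate may be a dead end, and building ``extendable'' into admissibility is not obviously Borel. This is the step you yourself call the main obstacle, and it is essentially the entire content of the proposition.

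The paper does not search at all.
\begin{itemize}
\item It uses the compatibility observation to take the homeomorphism $h$ in Bing's proof of \cite[Theorem~7]{Bi59} to be the identity. This is the only place where $n=3$ is needed.
\item It imports Bing's explicit construction of a triangulation $T_3$ of $U_1\cup U_2$ from $W_1,W_2,V_1,V_2$.
\item It observes that each simplex of $T_3$ depends on finitely many simplices of $T_1\cup T_2$. So the map is continuous under an interleaved enumeration that also keeps the first $i$ positions, which gives \eqref{eq:tauistaup}.
\end{itemize}
Keeping the old indices and ``appending'' new simplices, as you propose, is not possible after an infinite sequence; some such interleaving is required.
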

\begin{Remark}
  The technical condition~\eqref{eq:tauistaup} is needed later in the
  proof of Theorem~\ref{thm:GetTrianulation} to prove that a specially
  defined limit of the functions $f_i$ is Borel.
\end{Remark}
\begin{proof}
  Let $U_1=|\overset{\circ}{\f}_{i+1}|$ and
  $U_2=\Cup_{k\le i}|\overset{\circ}{\f}_k|$.  Let $\bar\delta$ be the
  standard triangulation of~$\obDelta^n$, and let
  $T_1:=\bar\delta'=(\f_{i+1}\circ\delta_j)_{j\in\N}$.  Note that
  \(T_1\) is a triangulation of \(U_1\) and the map
  $\bar\f\mapsto\bar\delta'$ is Borel since it is given by the
  countable sequence of the Borel functions
  $\bar\f\mapsto \f_{i+1}\circ \delta_j$.  We now check that the
  triangulation $T_2:=\bar\tau$ is compatible with
  \(\f_{i+1}\circ \delta_j\) for all \(j\). Suppose that for some
  \(l\) and \(j\) we have
  \(|\tau_l|\subset |\interior(\f_{i+1}\circ \d_j)|\). We want to show
  that \((\f_{i+1}\circ \d_j)^{-1} \circ \tau_l\) is PL. By the
  assumption that \((\bar \f,\bar \tau) \in \TT_n^{i,i+1}\), it
  follows that there is \(k\) such that
  \(|\tau_l| \subset |\overset{\circ}{\f_k}|\) and
  \(\f_k^{-1} \circ \tau_l\) is PL, and
  \((\overset{\circ}{\f_{i+1}})^{-1}\circ \overset{\circ}{\f_k}\) is
  PL. So, the map
  \[
    (\f_{i+1}\circ \d_j)^{-1} \circ \tau_l = ((\overset{\circ}{\f_{i+1}}\restriction |\d_j|)^{-1}\circ  \overset{\circ}{\f_k}) \circ ((\overset{\circ}{\f_k})^{-1}\circ \tau_l)
  \]
  is PL as well, as desired.
  
  We will now refer to Bing's proof of \cite[Theorem 7]{Bi59}. First,
  notice that from the fact that $T_2$ is compatible with
  \(\f_{i+1}\circ \delta_j\) for all \(j\), we have that if a simplex
  \(\sigma \in \bar \d'\) is such that \(|\sigma| \cap U_2 \neq \es\)
  and \(|\sigma| \cap (U_1 \setminus U_2)\neq \es\), then the faces of
  \(\sigma\) in \(U_1 \cap U_2\) are polyhedra in \(T_2\). We can thus
  assume that the map $h$ in Bing's proof, which is a homeomorphism
  from \(U_1\) to itself such that each face of a \(3\)-simplex of a
  subdivision of \(T_1\) goes onto a polyhedron of \(T_2\), is the
  identity map. Let $W_1\subset T_1$ be the set of those simplexes
  whose realization is disjoint from $U_2$, and let $W_2$ be the set
  of those simplexes whose realization intersects $U_1\setminus U_2$,
  but are not in $W_1$.  Let $W'_2=hW_2=W_2$. Let $V_2$ be the set of
  simplexes in $T_2$ which intersect simplexes in $W'_2=W_2$ and $V_1$
  the rest of $T_2$'s simplexes. On pages 59--61 of \cite{Bi59} Bing
  builds a triangulation $T_3$ of $U_1\cup U_2$. The construction is
  very detailed and explicit. Even though he is dealing with the case
  $n=3$, this part of the proof works for general $n$. Only the
  existence of $h$ in Bing's proof requires the assumption $n=3$, but
  we have assumed for the present proof that $h$ is the identity.
  From it, it is evident that the existence of a simplex in $T_3$
  depends only on the configuration of a finite number of simplexes in
  $T_1\cup T_2$. Therefore the map $(T_1,T_2)\mapsto T_3$ can be made
  continuous under a proper enumeration of $T_3$ which we will now
  do. In Bing's construction, $V_1\cup W_1\subset T_3$, and other
  simplexes in $T_3$ are obtained by subdividing the simplexes in
  $W_2\cup V_2$ one by one. To achieve a suitable enumeration, proceed
  as follows. Let $\bar\sigma^0$ be the sequence of simplexes of
  length $i$ such that for all $k<i$ we have $\sigma^0_k=\dummy$, if
  $\tau_k\notin V_1$ and otherwise $\sigma^0_k=\tau_k$.  For all $j$,
  suppose that a finite sequence of simplexes $\bar\sigma^j$ has been
  defined.  If $\delta'_{j}\notin W_1$, let $s=(\dummy)$ be the
  sequence of length~1 containing the dummy simplex. Otherwise, let
  $s:=(\delta'_{j})$.  If $\tau_{i+j}\notin V_1$, let
  $s'=(\dummy)$. Otherwise let $s':=(\tau_{i+j})$. Let
  $\bar \sigma^{j+1}:=\bar \sigma^j\cat s\cat s'$.  Finally, let
  $\bar\sigma:=\Cup_{j\in\N}\bar \sigma^j$. Note that since
  $\{\tau_j\mid j\in\N\land |\tau_j|\cap |\f_{i+1}|=\es\}\subset V_1$,
  we have for all $j<i$ that if $|\tau_j|\cap|\f_{i+1}|=\es$, then
  $\sigma_j=\tau_j$. Also note that $\bar\sigma$ enumerates exactly
  all simplexes in $W_1\cup V_1$ with possibly the dummy simplex.

  Next we will deal with simplexes in $W_2$ and~$V_2$.  For
  bookkeeping purposes, let $K_0$ be the set of those $k$ for which
  $\delta_k'\in W_2$ and let $L_0$ be the set of those $l$ for which
  $\tau_l$ is in~$V_2$. Let $\bar\pi^0$ be an empty sequence of
  simplexes. Suppose that index sets $K_j$, $L_j$, and a finite
  sequence of simplexes $\bar\pi^j$ have been defined. If $K_j$ is
  empty, let $\bar\kappa$ be the sequence of length one containing the
  dummy simplex, and $K_{j+1}:=K_j$.  Otherwise, let $k_0$ be the
  smallest element of $K_j$ and let
  $\bar\kappa=(\kappa_0,\dots,\kappa_{n(k_0)})$ be the subdivision of
  $\delta'_{k_0}$ given by Bing in his proof. As we have said before,
  it only depends on a finite initial segment of
  $(\bar\delta',\bar\tau)$. Let $K_{j+1}=K_{j}\setminus
  \{k_0\}$. Similarly, if $L_j$ is empty, let $\bar\lambda=(\dummy)$,
  and $L_{j+1}:=L_j$.  Otherwise let $l_0$ be the smallest element of
  $L_{j}$ and let $\bar\lambda=(\lambda_0,\dots,\lambda_{m(l_{0})})$
  be the subdivision of $\tau_{l_0}$ given by Bing. Let
  $L_{j+1}=L_{j}\setminus \{l_0\}$. Finally let
  $\bar\pi^{j+1}=\bar\pi^{j}\cat \bar\kappa\cat \bar\lambda$.  After
  this process let $\bar\pi:=\Cup_{j\in\N} \bar\pi^j$. Let
  $p_i\colon \N\to \N\times \{0,1\}$ be a fixed bijection with the
  property that for all $j<i$ we have $p_i(j)=(j,1)$.  For $z=(x,y)$
  denote $z_1=x$ and $z_2=y$.  Now for all $j$ let
  $$\tau'_j:=
  \begin{cases}
    \sigma_{p_i(j)_1},\text{ if }p_i(j)_2=1\\
    \pi_{p_i(j)_1},\text{ if }p_i(j)_2=0.
  \end{cases}
  $$
  By the construction, we see that the map
  $$(\bar\delta',\bar\tau)\mapsto \bar\tau'$$
  is continuous. Since the function
  $(\bar\f,\bar\tau)\mapsto(\bar\delta',\bar\tau)$ is Borel, also the
  desired function $(\bar\f,\bar\tau)\mapsto (\bar\f,\bar\tau')$ is
  Borel. By Bing's proof, $\bar\tau'$ is a triangulation of
  $U_1\cup U_2$, and, by the properties of $\bar\sigma$ and $p$,
  also condition \eqref{eq:tauistaup} holds.
\end{proof}

Recall that $\MMM^{PL}_n$ is the space of locally PL-compatible
atlases and $\MMM^{PL,\le i}_n$ is the set of atlases where the first
$i$ charts are locally PL-compatible (see Definition~\ref{def:MMMPL}).

\begin{Lemma}\label{lemma:inductiveTriangulation}
  For all $j\in\N$ there is a Borel
  $\mu_j\colon \MMM^{PL,\le j}_n\to \TTT$ such that
  $\bar\tau^j=(\tau^j_k)_{k \in \N}=\mu_j(\bar\f)$ is a triangulation of
  $\Cup_{k\le j}|\overset{\circ}{\f}_k|$.  Moreover, if there exists
  \(j_1\) such that $k<j_1$ and $|\f_k|\cap |\f_j|=\es$ for all
  $j\ge j_1$, then $\tau^{j_1}_k=\tau^{j}_k$ for all $j>j_1$.
\end{Lemma}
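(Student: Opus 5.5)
The plan is to build $\mu_j$ by induction on $j$, using the Borel extension maps $f_i$ from Proposition~\ref{prop:TriangulationStep}, so that each $\mu_j$ is a composition of finitely many Borel maps.

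For the base case $j=0$, let $\bar\delta$ be the standard triangulation of $\obDelta^n$ and set $\mu_0(\bar\f)=(\f_0\circ\delta_k)_{k\in\N}$. This is a triangulation of $|\overset{\circ}\f_0|$, and the map is Borel because each coordinate $\bar\f\mapsto\f_0\circ\delta_k$ is Borel; this was already used in the proof of Proposition~\ref{prop:TriangulationStep}. Each simplex $\f_0\circ\delta_k$ satisfies $\f_0^{-1}\circ(\f_0\circ\delta_k)=\delta_k$, which is linear and hence PL. We must also check that $\mu_0(\bar\f)$ has the Heine-Borel property. This follows from \ref{item:m3} and the Heine-Borel property of $M(\bar\f)$: a closed ball meets $|\f_0|$ in a compact set, and by local finiteness of $\bar\delta$ in $\obDelta^n$ only finitely many $\delta_k$ meet a given compact subset of $\obDelta^n$. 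One point needs care here, namely that $|\overset{\circ}\f_0|$ is not closed, so simplexes of $\mu_0(\bar\f)$ accumulate toward $|\partial\f_0|$. I expect the Heine-Borel requirement in $\TTT$ to be meant relative to the ambient realization of the open union, and I would verify that the convention adopted in Definition~\ref{def:Tiin} for $\TT^{i,j}_n$ is consistent with this; the step involves only a check of definitions. Altogether we get $(\bar\f,\mu_0(\bar\f))\in\TT^{0,j}_n$ for every $\bar\f\in\MMM^{PL,\le j}_n$.

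For the inductive step, assume $\mu_i$ has been defined on $\MMM^{PL,\le i}_n$ with $(\bar\f,\mu_i(\bar\f))\in\TT^{i,i}_n$. For $\bar\f\in\MMM^{PL,\le i+1}_n\subset\MMM^{PL,\le i}_n$ we then have $(\bar\f,\mu_i(\bar\f))\in\TT^{i,i+1}_n$, and we define $\mu_{i+1}(\bar\f)=\pr_2 f_i(\bar\f,\mu_i(\bar\f))$. This map is Borel as a composition of Borel maps, since the sets $\TT^{i,j}_n$ are Borel by Lemma~\ref{lemma:TTijIsBorel} and the restriction of a Borel map to a Borel subset is Borel. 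Proposition~\ref{prop:TriangulationStep} then gives that $\mu_{i+1}(\bar\f)$ triangulates $\Cup_{k\le i+1}|\overset{\circ}\f_k|$ and that $(\bar\f,\mu_{i+1}(\bar\f))\in\TT^{i+1,i+1}_n$, which completes the induction.

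For the ``moreover'' clause, fix $k<j_1$ with $|\f_k|\cap|\f_j|=\es$ for all $j\ge j_1$, and argue by induction on $j\ge j_1$ that $\tau^{j+1}_k=\tau^j_k$. By \eqref{eq:tauistaup} applied with $i=j$, it suffices to show $|\tau^j_k|\cap|\f_{j+1}|=\es$; we also need $k<j$, which holds since $k<j_1\le j$. The hard step is therefore to verify $|\tau^{j_1}_k|\subset|\f_k|$, or at least that $|\tau^{j_1}_k|$ is disjoint from every later chart. The indexing in Proposition~\ref{prop:TriangulationStep}, where the low indices $j<i$ pass through as $\sigma_j=\tau_j$ and are set to $\dummy$ when modified, does not by itself locate $\tau^{j_1}_k$ inside $|\f_k|$. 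The first thing I would try is to strengthen the induction hypothesis to the following: for $k\le i$, the simplex $\tau^i_k$ is either $\dummy$ or contained in $\Cup_{l\le k}|\f_l|$. This holds at the base case after a reindexing of $\bar\delta$. It is preserved by the step, because indices $k<i$ are either kept unchanged or replaced by $\dummy$, and the new index $k=i$ can be chosen as a piece of a simplex of $\f_{i+1}\circ\bar\delta$ or of a dummy through the choice of the bijection $p_i$. With this strengthening, disjointness from the later charts $\f_j$, $j\ge j_1$, would follow from the hypothesis, provided the hypothesis is read as covering all $l\le k$. If that reading fails, I would instead adjust $p_i$ so that only dummies occupy the new low indices, and recheck that \eqref{eq:tauistaup} still holds.
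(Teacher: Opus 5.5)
Your construction of $\mu_j$ is the paper's: start from $(\f_0\circ\delta_k)_k$ and apply $f_0,\dots,f_{j-1}$ from Proposition~\ref{prop:TriangulationStep}, with Borelness by composition. You are also right that \eqref{eq:tauistaup} alone does not yield the ``moreover'' clause, even though the paper's proof simply cites it. The condition \eqref{eq:tauistaup} needs $|\tau^i_k|\cap|\f_{i+1}|=\es$, the hypothesis only concerns $|\f_k|$, and nothing places $\tau^i_k$ inside $|\f_k|$.

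Your repair, however, does not close this gap.
\begin{itemize}
\item With the invariant indexed by $\Cup_{l\le k}|\f_l|$, the entry that becomes protected at stage $i+1$ (index $i$ of $\bar\tau^{i+1}$) must be $\dummy$ or lie in $\Cup_{l\le i}|\f_l|$. A piece of a simplex of $\f_{i+1}\circ\bar\delta$ is exactly the choice that breaks it.
\item The invariant version needs all charts $\f_l$, $l\le k$, to be disjoint from the later ones. As you note yourself, this is stronger than the stated hypothesis.
\item The dummy fallback satisfies the letter of the clause only by making $\tau^j_k=\dummy$ for all $k<j$. The limit formed in Theorem~\ref{thm:GetTrianulation} would then be the empty complex.
\end{itemize}
The invariant is in fact unnecessary. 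Index $k$ is filled once, at stage $k+1$, by a simplex of a triangulation of $\Cup_{l\le k+1}|\overset{\circ}\f_l|$. For $i>k$ the construction gives $\tau^{i+1}_k\in\{\tau^i_k,\dummy\}$, hence $\tau^i_k\in\{\tau^{k+1}_k,\dummy\}$. So \eqref{eq:tauistaup} gives the clause whenever $|\f_l|\cap|\f_j|=\es$ for all $l\le k+1$ and $j>j_1$. This is exactly what \ref{item:m3}, applied to the compact set $\Cup_{l\le k+1}|\f_l|$, supplies in Theorem~\ref{thm:GetTrianulation}. With a hypothesis on $\f_k$ alone, nothing prevents $\tau^{k+1}_k\subset|\f_{k+1}|$ from meeting a later chart and being replaced by $\dummy$.

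Your Heine-Borel check for $\mu_0(\bar\f)$ is also wrong. For a closed ball $\overline{B}$, the set $\f_0^{-1}[\overline{B}\cap|\f_0|]$ is a compact subset of $\bDelta^n$, not of $\obDelta^n$. Once the ball contains $|\f_0|$, it meets infinitely many $\f_0\circ\delta_k$. The same happens for every $\bar\tau^i$, whose realization is bounded but in general not compact, so these sequences are not in $\TTT$. Since $\TT^{i,j}_n$ is by definition a subset of $\MMM^{PL,\le j}_n\times\TTT$, no reading of Definition~\ref{def:Tiin} rescues this; the paper's proof has the same defect. The remedy is to let the intermediate complexes range over sequences satisfying only conditions \ref{def:simplicial_complex-1}--\ref{def:simplicial_complex-3} of Definition~\ref{def:simplicial_complex}, dropping the Heine-Borel clause of Definition~\ref{def:SpaceOfTTT}. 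You then have to recheck that this space, $\TT^{i,j}_n$, and the maps $f_i$ are still Borel there.
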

\begin{proof}
  Let $f_k$ be the maps given by Proposition
  \ref{prop:TriangulationStep}, $k\le j$.  As before, let $\bar\delta$
  be the standard triangulation of $\obDelta^n$. Let
  $\bar\tau^0=(\f_0\circ \delta_j)_{j\in\N}$.  Then
  $(\bar\f,\bar \tau^0)\in \TT^{0,j}_n\subset \TT^{0,1}_n$. Suppose
  that $\bar \tau^i$ has been defined for $i<j$ such that
  $(\bar\f,\bar \tau^i)\in\TT^{i,j}_n$. Then define $\bar \tau^{i+1}$
  be such that $(\bar\f,\bar \tau^{i+1})=f_i(\bar\f,\bar \tau^i)$.
  Note that $\TT^{i,j}_n\subset\TT^{i,i+1}_n$ and because the first
  coordinate stays unchanged in the application of $f_i$, we also have
  that
  $f_i(\bar\f,\bar \tau^i)\in \TT^{i+1,j}_n \subset \TT^{i+1,i+1}_n$.
  Now we have
  $\bar \tau^j=(\pr_2\circ f_j\circ\cdots\circ f_0)(\bar\f,\bar
  \tau^0)$ where $\pr_2$ is the projection to the second
  coordinate. All the functions in the composition are Borel, so
  $(\bar\f,\bar\tau^0)\mapsto \bar\tau^j$ is Borel.  But also
  $\bar\f\mapsto\bar\tau^0$ is Borel.  Let $\mu_j$ be the function
  $\bar\f\mapsto\bar\tau^j$.  The ``moreover''-part follows from
  condition~\eqref{eq:tauistaup}.
\end{proof}

\begin{thm}[Borel triangulation of PL-manifolds without boundary]%
  \label{thm:GetTrianulation}
  There is a Borel $\xi\colon \MMM^{PL}_n\to \TTT$ such that for all
  $\bar\f\in \MMM^{\PL}_n$, $\xi(\bar\f)$ is a compatible
  triangulation of~$M(\bar\f)$.
\end{thm}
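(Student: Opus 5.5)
The plan is to define $\xi$ as a limit of the finite-stage maps $\mu_j$ from Lemma~\ref{lemma:inductiveTriangulation}. For $\bar\f\in\MMM^{PL}_n\subset\MMM^{PL,\le j}_n$, each $\mu_j(\bar\f)=\bar\tau^j$ triangulates $U_j:=\Cup_{k\le j}|\overset{\circ}\f_k|$. By the ``moreover'' clause, once the charts are eventually disjoint from a given simplex, the $k$-th entry $\tau^j_k$ stabilizes in $j$. So I will set
\[
\xi(\bar\f)_k:=\lim_{j\to\infty}\tau^j_k ,
\]
where the limit means eventual constancy. The work splits into three steps: show the limit exists, show the limiting sequence is a compatible Heine-Borel triangulation of $M(\bar\f)$, and show $\xi$ is Borel.

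\textbf{Existence of the limit.} Fix $k$. The set $|\f_k|$ is compact, so by \ref{item:m3} only finitely many charts meet it. Hence there is $j_1>k$ such that $|\f_k|\cap|\f_j|=\es$ for all $j\ge j_1$. By condition~\eqref{eq:tauistaup} of Proposition~\ref{prop:TriangulationStep}, $\tau^{j}_k=\tau^{j_1}_k$ for all $j>j_1$. One point needs checking: \eqref{eq:tauistaup} is phrased with $|\tau_j|\cap|\f_{i+1}|$, and here I need the analogous statement for the simplex $\tau^j_k$ itself. The simplex $\tau^{j}_k$ lies in some chart interior $|\overset{\circ}\f_m|$, and that simplex, not just $|\f_k|$, must become disjoint from all later charts. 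To handle this, I will apply \ref{item:m3} to the compact set $|\tau^{j_1}_k|$ directly, choosing $j_2$ with $|\tau^{j_1}_k|\cap|\f_j|=\es$ for $j\ge j_2$. By induction using \eqref{eq:tauistaup}, $\tau^j_k$ is then frozen for all $j\ge j_2$. This is the step I expect to be the main obstacle. The freezing is only guaranteed for indices below the current stage $i$ in Proposition~\ref{prop:TriangulationStep}, so I must check that $k<i$ eventually. That holds because $p_i(j)=(j,1)$ for $j<i$.

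\textbf{Triangulation property.} Let $T=\{\xi(\bar\f)_k\}$. To check Definition~\ref{def:simplicial_complex}, take finitely many simplexes of $T$. They are all frozen from some stage $j$ on, so they all belong to the complex $\mu_j(\bar\f)$, and therefore closure under faces and proper intersection hold. Coverage works the same way: a point $x\in M(\bar\f)$ lies in some $U_j$, and using a compact neighbourhood $C$ of $x$ together with \ref{item:m3}, the simplexes of $\mu_{j'}(\bar\f)$ meeting $C$ eventually stabilize. Local finiteness and the Heine-Borel requirement from Definition~\ref{def:SpaceOfTTT} follow from the Heine-Borel property of $M(\bar\f)$ together with \ref{item:m3p}: a ball meets only finitely many charts, and hence only finitely many simplexes. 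PL-compatibility with each $\f_m$ is inherited, since each frozen simplex satisfies the $\TT^{i,i}_n$ condition of Definition~\ref{def:Tiin}. Compatibility with an arbitrary chart then follows from the pairwise local PL-compatibility of the charts, as in the first paragraph of the proof of Proposition~\ref{prop:TriangulationStep}.

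\textbf{Borelness.} Each $\bar\f\mapsto\tau^j_k$ is Borel. On the Borel set where $(\tau^j_k)_j$ is eventually constant, the limit map is Borel, since
\[
\xi(\bar\f)_k\in B\iff\exists j_0\,\forall j\ge j_0\ (\tau^j_k\in B).
\]
A sequence of Borel coordinate maps gives a Borel map into $\SSS$, and its range lies in $\TTT$ by the previous step. Therefore $\xi\colon\MMM^{PL}_n\to\TTT$ is Borel.
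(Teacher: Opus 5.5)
Your plan is the paper's proof: take the coordinatewise eventual value of $\mu_j(\bar\f)$ from Lemma~\ref{lemma:inductiveTriangulation}, get stabilization from \ref{item:m3} and \eqref{eq:tauistaup}, inherit the complex axioms locally, and compute $\xi^{-1}N=\Cap_k\Cup_{j_1}\Cap_{j\ge j_1}\zeta_{jk}^{-1}N_k$.

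You are right that the ``moreover'' clause is phrased with the chart $|\f_k|$, while \eqref{eq:tauistaup} concerns the simplex $|\tau_k|$. However, your repair is circular. Choosing $j_2$ from $|\tau^{j_1}_k|$ freezes the $k$-th entry only if that entry is still $\tau^{j_1}_k$ at stage $j_2$. But \eqref{eq:tauistaup} says nothing about an entry that meets $|\f_{i+1}|$, so the entry may change to a new simplex meeting a later chart, and so on. What actually gives stabilization is the explicit enumeration in the proof of Proposition~\ref{prop:TriangulationStep}. For $k<i$ one has $\tau'_k=\sigma^0_k\in\{\tau_k,\dummy\}$. So for $i>k$ the $k$-th entry changes at most once (to $\dummy$, which then persists), and the coordinatewise limit always exists.

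Once stabilization is this cheap, the real gap is coverage. Your claim that the simplexes meeting a compact $C$ eventually stabilize is about \emph{sets} of simplexes, but $\xi$ is defined index by index. A simplex at an index $\ge i$ of $\bar\tau^i$ is not protected by \eqref{eq:tauistaup}: it is re-indexed through $p_i$, which the paper constrains only on $\{0,\dots,i-1\}$.

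Take the permitted choice $p_i(i+2t)=(i+t,1)$, $p_i(i+2t+1)=(t,0)$. A kept simplex at index $m\ge i$ becomes $\sigma_{2m-i+1}$ and lands at index $4m-3i+2\ge i+2$, so it never reaches a protected index. The coordinatewise limit then keeps at most one new simplex per stage, and is not even closed under faces.

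You need one of two fixes.
\begin{enumerate-(i)}
\item Choose the $p_i$ so that every surviving simplex reaches a protected index after finitely many stages.
\item More simply, let $\xi(\bar\f)$ list at slot $(j,k)$ (via a fixed bijection $\N\to\N^2$) the simplex $\tau^j_k$ if it is equivalent to an entry of $\mu_{j'}(\bar\f)$ for every $j'\ge j$ and to no entry listed at an earlier slot, and $\dummy$ otherwise.
\end{enumerate-(i)}
With the second fix, these are countable Boolean combinations of Borel conditions in $\bar\f$, so $\xi$ is Borel. Your set-level stabilization then yields coverage and the complex axioms, using that Bing's step alters only simplexes meeting $|\f_{i+1}|$ and places new simplexes inside those or inside $|\f_{i+1}|$. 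Applied to $\overline{B(u_0,r)}\cap M(\bar\f)$, it also yields the Heine--Borel condition. Note that ``a ball meets finitely many charts'' does not by itself give ``finitely many simplexes'', since each chart interior carries infinitely many.

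The paper's own proof passes over this point too (``locally inherited from $\bar\tau^j$''). So your sketch matches its level of detail, but as written neither argument shows that $\xi(\bar\f)$ covers $M(\bar\f)$.
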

\begin{Remark}
    We defined the term ``compatible''
    at the end of Definition~\ref{def:Compatible}.
\end{Remark}
\begin{proof}
  Let $\bar\f\in \MMM^{PL}_n$.  For all $i$, let $\bar\tau^i=(\tau^i_k)_{k \in \N}$ be as
  given by Lemma~\ref{lemma:inductiveTriangulation}.  Then
  $\bar\tau^i$ is obtained in a Borel way from $\bar\f$. This is
  possible, because $\MMM^{PL}_n\subset\MMM^{PL,\le i}_n$ for all
  $i\in\N$.  Let $\bar\tau$ be defined so that for all $k\in\N$,
  $\tau_k$ equals to $\tau^i_k$ for all large enough $i$.  We will
  show that $\xi\colon \bar\f\mapsto \bar\tau$ is well-defined and
  Borel, and $\bar\tau$ is a triangulation of $M(\bar\f)$.  Since
  $\bar\f$ satisfies condition \ref{item:m3}, for all $k$ there is
  $j_1$ such that for all $j>j_1$ we have $|\f_k|\cap |\f_j|=\es$.
  Therefore $\tau^j_k=\tau^{j_1}_k$ for all $j>j_1$. So $\tau_k$ is
  well-defined. Also, for large enough $j$ we have
  $(\tau_k)_{k\le i}=(\tau^j_k)_{k\le i}$, so it is easy to check that
  the conditions of triangulation are satisfied for $\bar\tau$ as they
  get locally inherited from $\bar\tau^j$.  Denote
  $Z=\Cup_{m\in\N}\Emb(\bDelta^m,\U)$.  Let $N\subset Z^\N$ be a basic
  open set of the form $N=N_1\times\cdots\times N_i\times Z^\N$.
  Recall that $\TTT\subset Z^\N$, so it is enough to show that
  $\xi^{-1}N$ is Borel. Clearly $\bar\f\in\xi^{-1}N$ if and only if
  $(\xi(\bar \f))_k=\tau_k\in N_k$ for $k\le i$ if and only if for all
  $k\le i$ there is $j_1$ such that for all $j>j_1$,
  $\tau^j_k\in N_k$, where \(\tau^j_k\) is defined as before.  Thus,
  $$\xi^{-1}N=\Cap_{k=1}^i\Cup_{j_1\in\N}\Cap_{j\ge j_1}\zeta_{jk}^{-1}N_k$$
  where $\zeta_{jk}$ is the function $\bar\f\mapsto \tau^j_k$ which
  is Borel by the previous lemma. Thus, $\xi^{-1}N$ is also Borel.
\end{proof}

\begin{Def}\label{def:homeoPLnrel}
  We have previously defined $\homeo^{PL}$
  to be the PL-homeomorphism relation on $\TTT$.
  Let $\homeo^{PL}_n$ be the following relation
  on $\MMM^{PL}_n$. For $\bar\f,\bar\f'\in\MMM^{PL}_n$, 
  the relation
  $\bar\f\homeo^{PL}_n\bar\f'$ holds, if 
  there is a homeomorphism $h\colon M(\bar\f)\to M(\bar\f')$
  such that for all $\tau\colon\bDelta^n\to M(\bar\f)$,
  if $i$ and $j$ are such that $|\tau|\subset |\overset{\circ}{\f}_i|$,
  $|h\circ\tau|\subset |\overset{\circ}{\f}{}'_j|$
  and $\f_i^{-1}\circ\tau$ is PL, then
  $(\f'_j)^{-1}\circ h\circ\tau$ is also PL.
\end{Def}

\begin{Prop} 
  \label{prop:PLiffPL}
  For all $\bar\f,\bar\f'\in\MMM^{PL}_n$ the relation
  $\bar\f\homeo^{PL}_n\bar\f'$ holds if and only if
  $\xi(\bar\f)\homeo^{PL}\xi(\bar\f')$ where $\xi$ is from
  Theorem~\ref{thm:GetTrianulation}.
\end{Prop}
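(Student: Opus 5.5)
The plan is to reduce both directions to one bridging observation. For $\bar\f\in\MMM^{PL}_n$, the compatible triangulation $T=\xi(\bar\f)$ and the atlas $\bar\f$ determine the same PL structure. Concretely, a simplex $\tau\colon\bDelta^n\to M(\bar\f)$ with $|\tau|\subset|\overset{\circ}{\f}_i|$ has $\f_i^{-1}\circ\tau$ PL if and only if $\tau$ is PL with respect to $T$. By the latter we mean: for every simplex $\kappa\in T$ and every face-subdivision $\sigma$ of $\tau$ with $|\sigma|\subset|\kappa|$, the map $\kappa^{-1}\circ\sigma$ is PL, equivalently $\tau\colon\scl(\bDelta^n)\to T$ is a PL-embedding.

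The bridge should follow from Definition~\ref{def:Compatible}. Every simplex of $T$ lies in some $|\overset{\circ}{\f}_k|$ and is PL in the chart $\f_k$, and the charts are pairwise locally PL-compatible (Definition~\ref{def:MMMPL}). Compositions of PL maps on compact polyhedra are PL, and $|\tau|$ meets only finitely many simplexes of $T$. So we cover $|\tau|$ by finitely many $\kappa\in T$ and write $\kappa^{-1}\circ\tau=(\kappa^{-1}\circ\f_k)\circ(\f_k^{-1}\circ\f_i)\circ(\f_i^{-1}\circ\tau)$ on the relevant pieces. Each factor is PL: the first by compatibility of $T$, the second by local PL-compatibility restricted to a compact polyhedron (Lemma~\ref{fact:locallyPL-polyhedron}), and the third by hypothesis. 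The converse direction uses the same factorisation read backwards. Piecing together finitely many PL restrictions on a subdivision is standard.

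For ($\Rightarrow$), let $h$ witness $\bar\f\homeo^{PL}_n\bar\f'$, and set $T=\xi(\bar\f)$ and $T'=\xi(\bar\f')$. I will show that $h\colon T\to T'$ is a PL-homeomorphism. The approach is local. For each $\kappa\in T$ of top dimension, $\kappa$ is PL in some chart $\f_i$. Subdividing $\kappa$ finely enough (uniform continuity and compactness) makes each small simplex $\tau$ satisfy $|h\circ\tau|\subset|\overset{\circ}{\f}{}'_j|$ for some $j$. Definition~\ref{def:homeoPLnrel} then gives that $(\f'_j)^{-1}\circ h\circ\tau$ is PL, and by the bridge $h\circ\tau$ is PL into $T'$. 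Hence $h$ restricted to each simplex of $T$ is PL into $T'$. Since $h$ is a homeomorphism and $T$ is locally finite, a PL map that is a homeomorphism with PL restrictions on a locally finite cover is a PL-homeomorphism. For this step I would cite the locally finite version of Hudson's Theorem~3.6 as in the proof of Theorem~\ref{thm:ClassificationOFPL-complexesByBasisSpaces}, or build common subdivisions simplex by simplex using Theorem~\ref{thm:CommonStellarSubdivision}.

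For ($\Leftarrow$), let $h\colon T\to T'$ be a PL-homeomorphism. Take $\tau$ with $|\tau|\subset|\overset{\circ}{\f}_i|$, $|h\circ\tau|\subset|\overset{\circ}{\f}{}'_j|$, and $\f_i^{-1}\circ\tau$ PL. By the bridge $\tau$ is PL into $T$, so $h\circ\tau$ is PL into $T'$ because compositions of PL maps are PL. The bridge applied to $\bar\f'$ then gives that $(\f'_j)^{-1}\circ h\circ\tau$ is PL. The main obstacle I expect is the bookkeeping in the bridge lemma and in the local-to-global gluing in ($\Rightarrow$). In particular, the domains $(\overset{\circ}{\f}_i)^{-1}|\overset{\circ}{\f}_k|$ are open rather than compact, so one must restrict to compact polyhedra inside them before invoking local PL-ness.
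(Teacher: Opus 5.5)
Your proposal is correct and follows essentially the same route as the paper. For ($\Rightarrow$) you subdivide $\xi(\bar\f)$ finely, apply the definition of $\homeo^{PL}_n$ to the small simplexes, and pass from chart-PL to PL with respect to $\xi(\bar\f')$ via compatibility; for ($\Leftarrow$) you move a chart-PL simplex through $h$ using compatibility of both triangulations. Your version is more careful than the paper's: the paper leaves the chart-versus-triangulation translation (your ``bridge'') and the local-to-global gluing to ``it is easy to see'', and where it asks each $h\sigma_i$ to lie in a single simplex of $\xi(\bar\f')$, you only require $h\circ\tau$ to lie in an open chart $|\overset{\circ}{\f}{}'_j|$, which a Lebesgue-number argument gives directly.
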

\begin{proof} Suppose first that $\bar\f\homeo^{PL}_n\bar\f'$, and denote by $h$ the homeomorphism witnessing this. Let $\xi(\bar \f)=(\tau_l)_{l \in \N}$ and $\xi(\bar \f')=(\tau'_j)_{j \in \N}$. By the fact that $\xi(\bar \f)$ is a compatible triangulation of $M(\bar \f)$, we have that for all $l$ there is $k$ such that $|\tau_l| \subset |\overset{\circ}{\f}_k|$. Now, define a subdivision $(\sigma_i)_{i \in \N}$ of $\xi(\bar \f)$ fine enough so that for each $i$ there is $j$ such that $|h\sigma_i|\subset |\tau'_j|$. Using that $\xi(\bar \f')$ is a compatible triangulation of $M(\bar \f')$, it then follows that for each $i$ there are $l$, $k$, $j$, and $k'$ such that $|\sigma_i| \subset |\tau_l| \subset |\overset{\circ}{\f}_k|$ and $|h\sigma_i|\subset |\tau'_j| \subset |\overset{\circ}{\f}{}'_{k'}|$. Since $\f_k^{-1} \circ \sigma_i$ is PL, we have that $(\f'_{k'})^{-1} \circ h \circ \sigma_i$ is PL as well. Then $h$ witnesses that $\xi(\bar\f)\homeo^{PL}\xi(\bar\f')$. 

Conversely, let $h:M(\bar \f) \to M(\bar \f')$ witness $\xi(\bar\f)\homeo^{PL}\xi(\bar\f')$, and let $\tau\colon\bDelta^n\to M(\bar\f)$,
 $i$ and $j$ be such that $|\tau|\subset |\overset{\circ}{\f}_i|$,
  $|h\circ\tau|\subset |\overset{\circ}{\f}{}'_j|$
  and $\f_i^{-1}\circ\tau$ is PL. We want to show that
  $(\f'_j)^{-1}\circ h\circ\tau$ is PL. Notice that $|\tau|$ is a compact polyhedron in $\xi(\bar\f)$, and thus $|h \circ \tau|$ is a compact polyhedron in $\xi(\bar\f')$. Using the fact $\xi(\bar\f')$ is a compatible triangulation of $M(\f')$,  it is then easy to see that $(\f'_j)^{-1}\circ h\circ\tau$ is PL.
\end{proof}

\begin{Cor}[$\homeo^{PL}_n\,\le_B\ \cong_\AAA$]
\label{cor:HomeoPL_on_Man_red_cong}
  There is a Borel map $\eta\colon \MMM^{PL}_n\to\AAA$ which reduces
  $\homeo^{PL}_n$ to~$\cong_\AAA$.
\end{Cor}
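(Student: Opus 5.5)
We take $\eta=\zeta\circ\xi$. Here $\xi\colon \MMM^{PL}_n\to\TTT$ is the Borel triangulation map of Theorem~\ref{thm:GetTrianulation}, and $\zeta\colon\TTT\to\AAA$ is the Borel reduction of $\homeo^{PL}$ to $\cong_\AAA$ from Theorem~\ref{thm:BorelClassificationOfPLtoISO}.

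Borelness is immediate. By Lemma~\ref{lemma:AlgPLManifoldsBorel}, $\MMM^{PL}_n$ is a Borel subset of the standard Borel space $\MMM_n$, so it carries a standard Borel structure. The map $\xi$ is Borel on it with values in $\TTT$, and $\zeta$ is Borel on all of $\TTT$. Hence the composition $\eta$ is Borel.

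For the reduction property, fix $\bar\f,\bar\f'\in\MMM^{PL}_n$. By Proposition~\ref{prop:PLiffPL}, $\bar\f\homeo^{PL}_n\bar\f'$ holds iff $\xi(\bar\f)\homeo^{PL}\xi(\bar\f')$. Since $\zeta$ is a reduction, the latter holds iff $\zeta(\xi(\bar\f))\cong_\AAA\zeta(\xi(\bar\f'))$. Chaining the two equivalences gives
\[
\bar\f\homeo^{PL}_n\bar\f'\iff \eta(\bar\f)\cong_\AAA\eta(\bar\f').
\]

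The one point needing care is that $\xi(\bar\f)$ really lies in $\TTT$, i.e.\ that it is a Heine-Borel simplicial complex in $\U$, because $\zeta$ is defined only on $\TTT$. This follows from $\xi$ being a triangulation of $M(\bar\f)$, which is Heine-Borel by the definition of $\MMM_n$, combined with the local finiteness condition~\ref{item:m3}. Together these ensure that only finitely many simplexes meet each ball $B_\U(u_0,r)$. Theorem~\ref{thm:GetTrianulation} already asserts $\xi\colon\MMM^{PL}_n\to\TTT$, so beyond this check no obstacle remains, and the corollary is a direct composition of the earlier results.
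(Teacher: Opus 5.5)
Your proof is correct and is exactly the paper's argument: it sets $\eta=\zeta\circ\xi$, gets Borelness from the composition, and gets the reduction property from Proposition~\ref{prop:PLiffPL} combined with $\zeta$ reducing $\homeo^{PL}$ to $\cong_\AAA$. Your extra check that $\xi(\bar\f)\in\TTT$ is already covered by the codomain in Theorem~\ref{thm:GetTrianulation}; argued directly, it would rest on the local finiteness of the triangulation rather than on the chart condition~\ref{item:m3}.
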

\begin{proof}
    Let $\zeta\colon \TTT\to \AAA$ be the 
    reduction given by Theorem~\ref{thm:BorelClassificationOfPLtoISO}
    and $\xi\colon \MMM^{PL}_n\to \TTT$
    the function given by 
    Theorem~\ref{thm:GetTrianulation}.
    Let $\eta=\zeta\circ\xi$. It is a composition of
    Borel function and hence Borel. By Proposition~\ref{prop:PLiffPL} 
    it reduces $\homeo^{PL}_n$ to $\cong_{\AAA}$.
\end{proof}

\vspace{10pt}

\subsubsection{$3$-manifolds without boundary up to homeomorphism}
\label{ssec:BorelTriangulation}

In this section we show that $3$-manifolds without boundary are classifiable by countable structures. 
Suppose $A$ and $B$ are metric spaces, $f,g\colon A\to B$ are some
functions, and $\e\in\Q_+$ is a number.  We say that $f$ is an
\textbf{$\e$-approximation of $g$}, if $d(f(x),g(x))<\e$ for all
$x\in A$.  Let $S$ and $T$ be simplicial complexes. Recall that a map
\(f\colon S \to T\) is a \textbf{PL-embedding} if $f$ is a
PL-homeomorphism from $S$ onto a polyhedron in $T$.  It is a
$\Q$-PL-embedding, if $f$ is a $\Q$-PL-homeomorphism from $S$ to some
$\Q$-polyhedron in~$T$.

\begin{Lemma}[$\Q$-PL-approximation]\label{lemma:QPLApprox}
  Let $S$ and $T$ be finite complexes in metric spaces $(X,d)$ and
  $(Y,d')$ respectively, $S_1$ a subcomplex of $S$ and
  $f\colon S\to T$ a PL-embedding such that $f\rest |S_1|$ is a
  $\Q$-PL-embedding of $S_1$ into~$T$.  Suppose that $\e>0$. Then
  there is $g\colon S\to T$ such that $g$ is an $\e$-approximation of
  $f$, $g$ is a $\Q$-PL-embedding from $S$ to $T$, and
  $g\rest S_1=f\rest S_1$.
\end{Lemma}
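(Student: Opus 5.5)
We will obtain $g$ by moving the vertices of a triangulation on which $f$ is simplicial to nearby points of $\Q(T)$. The vertices lying in $|S_1|$, where $f$ is already rational, stay fixed.

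First, set up the triangulations. Since $f\rest|S_1|$ is a $\Q$-PL-embedding, there are $\Q$-subdivisions $S_1'\subd_\Q S_1$ and $T_1'\subd_\Q T$ such that $f$ is simplicial from $S_1'$ onto a subcomplex of $T_1'$. Since $f$ is a PL-embedding, there are subdivisions $S''\subd S$ and $T''\subd T$ with $f$ simplicial from $S''$ onto a subcomplex of $T''$. By Proposition~\ref{prop:CommonQsubd} (in its finite form) and Lemma~\ref{lemma:ExtedingSubdivisions}, we can pass to common subdivisions. This gives a subdivision $S'$ of $S$ with the following properties: $S'\rest|S_1|$ is a $\Q$-subdivision of $S_1$; $f$ is simplicial from $S'$ onto a subcomplex of a subdivision $T'$ of $T$; and $f[V(S')\cap|S_1|]\subset\Q(T)$. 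We may additionally refine $S'$ away from $|S_1|$ as needed; since $S$ is finite, all of this involves only finitely many stellar moves.

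Second, perturb the remaining vertices. Enumerate the finitely many vertices $v_0,\dots,v_{m-1}$ of $S'$ not in $|S_1|$. The plan is to replace each $v_i$ by a point $v_i'\in\Q(S)$ with $\Carr_S(v_i')=\Carr_S(v_i)$, and each image $f(v_i)$ by a point $w_i\in\Q(T)$ with $\Carr_T(w_i)=\Carr_T(f(v_i))$. Such points exist arbitrarily close to the originals by Lemma~\ref{lemma:FindPointsQ} applied to the open simplex interiors. Then define $g$ as the simplicial map sending the perturbed complex (the vertices $v_i'$ together with the unchanged $S_1$-vertices) linearly onto the corresponding perturbed image complex. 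This can be done one vertex at a time using the adjustment in Lemma~\ref{lemma:Collection2}\ref{fact:Adjust} and Proposition~\ref{prop:ExtendSimplicial}, which control both the simplicial structure and the displacement. Since the vertices of $S_1$ are untouched and the star of a vertex $v_i\notin|S_1|$ meets $|S_1|$ only in simplices linear on both sides, we get $g\rest|S_1|=f\rest|S_1|$. By Lemma~\ref{lemma:QsubdivisionIFF}, the new complexes are $\Q$-subdivisions of $S$ and of $T$, so $g$ is a $\Q$-PL-embedding.

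The main obstacle is ensuring that after moving the vertices the result is still a simplicial complex and $g$ is still injective, while $d(g(x),f(x))<\e$. We handle both by taking the perturbations small. Each simplex of $S'$ and of $f S'$ is nondegenerate, and the combinatorial pattern of the complex is open in vertex positions, because there are finitely many simplices and embeddedness of a finite complex in a fixed simplex of $T$ is stable under small perturbation. So for a sufficiently small $\delta$, moving every vertex by less than $\delta$ within its carrier keeps all simplices embedded with disjoint interiors. Uniform continuity on the compact space $|S|$ then gives $d(g(x),f(x))<\e$. Keeping each point inside its original carrier of $S$ and $T$ is essential: it guarantees that the linear structure is measured in the same simplex of $S$ and $T$, so the perturbed maps remain rectilinear relative to $S$ and $T$.
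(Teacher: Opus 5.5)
\paragraph{Gap in your first step.} You ask for one subdivision $S'$ of $S$ on which $f$ is simplicial and whose restriction to $|S_1|$ is a $\Q$-subdivision of $S_1$. Your second step then keeps every vertex in $|S_1|$ fixed. Such an $S'$ need not exist, and Proposition~\ref{prop:CommonQsubd} and Lemma~\ref{lemma:ExtedingSubdivisions} cannot produce it. Any common subdivision with a subdivision $S''$ on which $f$ is simplicial retains the vertices of $S''$ lying in $|S_1|$, and these may be irrational.

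Here is a concrete counterexample. Let $S=T=\scl(\Delta[a,b,c])$ in $\R^2$ with $a=(0,0)$, $b=(1,0)$, $c=(0,1)$, and $S_1=\scl(\Delta[a,b])$. Put $p=(1/\sqrt{2},0)$, $q_1=(0.3,0.2)$, $q_2=(0.8,0.15)$. Let $f$ be simplicial on the triangulation
\[
\Delta[a,p,q_1],\ \Delta[p,q_2,q_1],\ \Delta[p,b,q_2],\ \Delta[a,q_1,c],\ \Delta[q_1,q_2,c],\ \Delta[q_2,b,c].
\]
It fixes $a,b,c,p$ and sends $q_1\mapsto q_1+(\delta,0)$, $q_2\mapsto q_2+(0,\delta)$ for a small $\delta>0$.

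\begin{itemize}
\item $f$ is a PL-embedding, and $f\rest|S_1|=\id$ is a $\Q$-PL-embedding.
\item The affine maps of $f$ on $\Delta[a,p,q_1]$ and on $\Delta[p,b,q_2]$ both fix the line $ab$ but differ, so $f$ is affine on no neighbourhood of $p$.
\item If $p$ were interior to an edge of a subdivision, that boundary edge would lie in a single triangle, which would be a neighbourhood of $p$ on which $f$ is affine. Hence $p$ is a vertex of every subdivision on which $f$ is simplicial.
\item But $p\notin\Q(S)$.
\end{itemize}

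Since you hold the $S_1$-vertices fixed, your $g$ keeps the vertex $p$ and is still non-affine there, so it is not a $\Q$-PL-embedding. The paper's own proof makes the same reduction: it invokes Theorem~\ref{thm:QPL-homeo} to assume $S'_1$ is rational, and then imposes $q_i=s_i$ for $s_i\in|S_1|$. So this example affects the paper's argument too, and the paper does not supply the missing step.

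\paragraph{How to repair it.} Move the vertices in $|S_1|$ as well, but along $f$.
\begin{itemize}
\item Choose $S'$ so that $S'\rest|S_1|$ refines $S_1'$, by taking a common subdivision of $S''$ with an extension of $S_1'$ to $S$.
\item Move each $v\in V(S')\cap|S_1|$ to a point $v'\in\Q(S_1')$ in the open simplex of $S_1'$ containing $v$, and set $g(v')=f(v')$.
\item Then $f(v')\in\Q(T)$ by Lemmas~\ref{lem:SimplicialQQ} and~\ref{lemma:QsubdivisionIFF}.
\item Since $f$ is affine on each simplex of $S_1'$, the resulting simplicial $g$ still equals $f$ on $|S_1|$.
\end{itemize}

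\paragraph{Two smaller points.}
\begin{itemize}
\item On the $T$-side you must perturb all vertices of $T'$, not only those of $fS'$. Only then do you get a $\Q$-subdivision of $T$ that contains the image of $g$ as a subcomplex. The image complex alone is not a subdivision of $T$.
\item Lemma~\ref{lemma:Collection2}\ref{fact:Adjust} and Proposition~\ref{prop:ExtendSimplicial} concern stellar centres, not arbitrary vertices of $S'$. The paper uses them only after rewriting the subdivisions as $S*\bar s$ and $T*\bar t$ (Theorem~\ref{thm:CommonStellarSubdivision}). It then moves the centres with Lemma~\ref{lemma:Adjust}, which preserves the combinatorics automatically because $T*z$ depends only on $\Carr_T(z)$. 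Your alternative, openness of linear embeddings of a finite complex plus a degree argument for covering, is a legitimate substitute once the $S_1$ issue is fixed. But cite that argument rather than those lemmas.
\end{itemize}
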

\begin{proof}
  Let $S,T,f,S_1$, and $\e$ be as in the assumptions.  Let $S'\subd S$
  and $T'\subd T$ be subdivisions such that $f$ is simplicial from
  $S'$ to a subcomplex of $T'$.  Let $S'_1=S'\rest S_1$.  By Theorem
  \ref{thm:QPL-homeo} we can assume that $S'_1$ is a $\Q$-subdivision
  of $S_1$ and $fS'_1$ a $\Q$-polyhedron in~$T$.  By the methods of
  Section~\ref{ssec:PointAdjustment} there are $\bar s\subset S$ and
  $\bar t\subset T$ such that $S*\bar s$ is a subdivision of $S'$,
  $S_1*\bar s$ is a $\Q$-subdivision of $S_1$ (i.e.
  $\bar s\cap S_1\subset \Q(S_1)$), $T*\bar t$ is a subdivision of
  $T'$ and $\bar t\cap |fS_1|\subset \Q(T)$. Moreover $f$ is
  simplicial from $S*\bar s$ to a subcomplex of $T*\bar t$. Let $L$ be
  a Lipschitz-constant for $f$ which exists by the finiteness of $S$.
  Apply Lemma~\ref{lemma:Adjust} to find $\bar q\subset S$ and
  $\bar q'\subset T$ satisfying the following:
    \begin{enumerate-(1)}
    \item $l(\bar q)=l(\bar s)$, $l(\bar q')=l(\bar t)$,
    \item $d(q_i,s_i)<\e/(2L)$, $d(q'_i,t_i)<\e/2$,
    \item there are simplicial $\chi\colon S*\bar s\to S*\bar q$
    and $\chi'\colon T*\bar t\to T*\bar q'$,
    \item for all $i$ such that $s_i\in |S_1|$, $q_i=s_i$
    and for all $i$ such that $t_i\in f|S_1|$, $q'_i=t_i$.
    \label{lastcondition_def_of_qq}
    \end{enumerate-(1)}
    Let $g=\chi'\circ f\circ \chi^{-1}$.
    Then $g$ is simplicial from $S*\bar q$ to a subcomplex
    of $T*\bar q'$, so it is a $\Q$-PL-embedding. By the choice
    of $L$ it is easy to see that $g$ is an $\e$-approximation
    of $f$ and by condition \ref{lastcondition_def_of_qq},
    $\chi\rest |S_1|$ and
    $\chi'\rest f|S_1|$ are identity maps, so
    $f\rest S_1=g\rest S_1$.
\end{proof}

The following is a Borel version of \cite[Theorem~1']{Bing54}, a
result which is essentially due to Moise \cite{Mo52} (where it is
stated in a less convenient form).

A \textbf{triangulated $n$-manifold with boundary} is a simplicial complex
$T$ such that $|T|$ is a $n$-manifold with boundary. It is \textbf{compact}
if $T$ is finite (equivalently, if $|T|$ is compact).

\begin{Lemma}[A Borel version of a Bing-Moise lemma]%
  \label{lemma:MoiseLemma4}
  Let $X$ be the set of tuples
  $$(K_1,K_2,f,\e)\in \TTT(\U)^2\times \PartEmb(\U,\bDelta^3)\times \Q_+$$
  such that
  \begin{enumerate-(1)}
  \item $K_1$, $K_2$, and $K_1\cup K_2$ are triangulated compact
    $3$-manifolds with boundary \label{item:mm1}
  \item $L=K_1\cap K_2$ is a triangulated $2$-manifold with boundary \label{item:mm2}
  \item $f$ is an embedding $f\colon |K_1\cup K_2|\to \bDelta^3$ \label{item:mm3}
  \item $\e\in\Q_+$. \label{item:mm4}
  \end{enumerate-(1)}
  Then $X$ is Borel and there is a Borel function
  $\xi\colon X\to \Q_+$ such that if $\gamma=\xi(K_1,K_2,f,\e)$, then
  each $\Q$-PL-embedding $g\colon K_1\to \obDelta^3$ which is a
  $\gamma$-approximation of $f\rest K_1$ can be extended to a
  $\Q$-PL-embedding $g'\colon K_1\cup K_2\to \bDelta^3$ which is an
  $\e$-approximation of $f$ on $K_1\cup K_2$.
\end{Lemma}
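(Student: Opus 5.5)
The proof splits into two independent tasks: showing that $X$ is Borel, and producing $\xi$. For the first, conditions \ref{item:mm3} and \ref{item:mm4} are easy. By Lemma~\ref{lemma:r_C}\ref{lemma:DomainOfEmb} the map $f\mapsto\dom(f)$ is Borel, and the realization map $\bar\tau\mapsto|\bar\tau|$ is Borel (as in the proof of Theorem~\ref{thm:PLtoBasis}). Hence ``$\dom(f)=|K_1|\cup|K_2|$'' is a Borel condition, by Fact~\ref{fact:F(X)}. For \ref{item:mm1} and \ref{item:mm2} we use finiteness. Being finite means that all but finitely many entries are $\dummy$, which is Borel. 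The union $K_1\cup K_2$ being a complex, and $L=K_1\cap K_2$ being a subcomplex, are countable Boolean combinations of the sets $F_{ij},G_{ij},U_{ij}$ from the proof that $\TTT(Z)$ is Borel. Whether a finite complex is a $3$-manifold with boundary depends only on the combinatorial type of the complex: every vertex link must be a PL $2$-sphere or $2$-disk, and this is decidable for a finite combinatorial structure. So the condition is a countable union, over the finitely many relevant abstract complexes, of the Borel sets ``$K$ is simplicially isomorphic to this abstract complex'', and is therefore Borel. The same argument handles $L$ and $2$-manifolds with boundary.

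For $\xi$, we define $\gamma$ through a countable search instead of constructing it by hand. For $\gamma\in\Q_+$, let $P_\gamma$ be the set of tuples $(K_1,K_2,f,\e)\in X$ with the following property: every $\Q$-PL-embedding $g\colon K_1\to\obDelta^3$ that $\gamma$-approximates $f\rest K_1$ extends to a $\Q$-PL-embedding $g'\colon K_1\cup K_2\to\bDelta^3$ that $\e$-approximates $f$. Classical Bing--Moise \cite[Theorem~1']{Bing54}, combined with Lemma~\ref{lemma:QPLApprox}, shows that for each tuple some $\gamma$ works. Bing--Moise gives a PL extension $g'$ that $\e/2$-approximates $f$ and agrees with $g$ on $K_1$. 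Since $g$ is a $\Q$-PL-embedding, Lemma~\ref{lemma:QPLApprox} with $S_1=K_1$ makes $g'$ a $\Q$-PL-embedding without changing it on $K_1$. Fix an enumeration $(\gamma_k)$ of $\Q_+$ and set $\xi(x)=\gamma_k$ for the least $k$ with $x\in P_{\gamma_k}$. Then $\xi$ is Borel provided every $P_\gamma$ is Borel.

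The crux is therefore the Borelness of $P_\gamma$. The key point is that $\Q$-PL-embeddings of a fixed finite complex form a countable family once the target vertices are rational. Each such $g$ is simplicial from $K_1*\bar s$ onto a subcomplex of a $\Q$-subdivision of $\scl(\bDelta^3)$, where $\bar s\subset\Q(K_1)$ is finite (Fact~\ref{fact:CanChooseFiniteStellarSubd}). Such a $g$ is determined by the finite sequence $\bar s$, which can be chosen from the Borel enumeration of $\Q(K_1)$ used in the proof of Theorem~\ref{thm:PLtoBasis}, together with the images of the vertices, which are finitely many points of $\Q^3$. Up to this countable coding, the quantifiers ``for all $g$'' and ``there is $g'$'' range over $\N$. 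The remaining relations are Borel for each fixed code:
\begin{itemize}
\item ``the code defines an embedding'', via the face and intersection relations of Fact~\ref{fact:F(X)};
\item ``$g$ $\gamma$-approximates $f\rest K_1$'', via a sup over the dense set $\Q(K_1)$, using Lemma~\ref{lemma:SupmetricHomeoEmb};
\item ``$g'$ extends $g$''.
\end{itemize}

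I expect the main obstacle to be the strict inequality and the openness in $\obDelta^3$. These need care because approximation is defined with a strict bound $d<\gamma$ while compactness only yields non-strict bounds. I will handle this by quantifying over rational $\gamma'<\gamma$ and using compactness of $|K_1|$, so that the sup is attained and expressible over a countable dense set. Once this is done, $P_\gamma$ is a countable Boolean combination of Borel sets, which completes the proof.
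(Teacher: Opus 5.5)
Your proposal is correct and follows the paper's route. You show $X$ is Borel through a countable combinatorial characterization of triangulated compact $3$-manifolds (you use vertex links, the paper uses stars being $\Q$-PL-homeomorphic to $\bDelta^3$). You get Borelness of the set of admissible $\gamma$ from the countability of $\Q$-PL-embeddings of compact complexes, and non-emptiness from \cite[Theorem~1$'$]{Bing54} together with Lemma~\ref{lemma:QPLApprox}. The one real difference is that you take the least admissible $\gamma_k$ in an enumeration of $\Q_+$ instead of invoking the uniformization theorem \cite[Theorem~18.10]{Kec95}; this is legitimate, and simpler, because the target $\Q_+$ is countable.
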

\begin{proof}
  According to Moise~\cite{Mo52}, a complex is a triangulated
  $3$-manifold with boundary if and only if the
  star of every vertex is PL-homeomorphic to $\bDelta^3$. By
  Theorem~\ref{thm:QPL-homeo} we can w.l.o.g. change PL-homeomorphic
  to $\Q$-PL-homeomorphic.  Thus, to say that $K\in \TTT(\U)$ is a
  triangulated $3$-manifold with boundary is to say that for all
  $v\in V(K)$, there are $\Q$-subdivisions of $\St_K(v)$ and of
  $\bDelta^3$, and a simplicial map between them. All quantifiers are
  countable and the set of simplicial maps between given finite
  triangulations is in fact a finite set. To say that $K$ is compact
  is to say that all but finite number of elements in the sequence are
  dummy simplexes. This verifies that condition \ref{item:mm1} is Borel. Analogously, noticing that $|K_1 \cap K_2|$ is compact, one can show that condition  \ref{item:mm2} is Borel as well. The condition \ref{item:mm3} just says
  that the domain of $f$ must be equal to $K_1\cup K_2$ which is Borel
  by Lemma~\ref{lemma:r_C}\ref{lemma:DomainOfEmb}.
  
  Let $A$ be the set of tuples $(K_1,K_2,f,\e,\gamma)\in X\times\Q_+$
  where $(K_1,K_2,f,\e)\in X$ and $\gamma$ satisfies the conclusion.
  The set of pairs $(f,g)$ such that $g$ is a $\gamma$-approximation
  of $f$ is an open set (this is an application of
  Lemma~\ref{lemma:SupmetricHomeoEmb}). All quantifiers in the
  condition for $\gamma$ are countable (note that $K_1$ and
  $\bDelta^3$ are compact, so the number of $\Q$-PL-embeddings is
  countable). Thus, $A$ is Borel.  For each $(K_1,K_2,f,\e)\in X$ the
  section $A(K_1,K_2,f,\e):=\{\gamma\mid (K_1,K_2,f,\e,\gamma)\in A\}$
  is obviously countable. The section is non-empty by
  \cite[Theorem~1']{Bing54} and an application of Lemma
  \ref{lemma:QPLApprox}. Therefore, by \cite[Theorem 18.10]{Kec95}
  there is a Borel function as desired.
\end{proof}

The following is, again, a Borel version of 
results of Moise and Bing, see
\cite[Theorem~1]{Mo52a} and \cite[Theorem~2']{Bing54}.

\begin{Lemma}[Borel $\e$-extension lemma]%
  \label{lemma:GetTheApprox}
  Let $X$ be as in Lemma~\ref{lemma:MoiseLemma4}. Let
  $Z\subset X\times \PartEmb(\U,\bDelta^3)$ be the set of those
  $(K_1,K_2,f,\e,f')$ such that $f'\colon K_1\to \obDelta^3$ is a
  $\Q$-PL-embedding which is a $\gamma$-approximation of $f$ where
  $\gamma=\xi(K_1,K_2,f,\e)$ and $\xi$ is from
  Lemma~\ref{lemma:MoiseLemma4}. Then $Z$ is Borel and there is a
  Borel map
  $$\eta_1\colon Z\to \PartEmb(\U,\bDelta^3)$$
  such that if $g=\eta_1(K_1,K_2,f,\e,f')$, then $g$ extends $f'$ to
  $K_1\cup K_2$ and $g$ is a $\Q$-PL-embedding
  and an $\e$-approximation of~$f$.
\end{Lemma}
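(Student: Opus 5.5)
The plan mirrors the proof of Lemma~\ref{lemma:MoiseLemma4}: show that the relevant sets are Borel, check that every section of the relation between inputs and admissible outputs is countable and non-empty, and then apply the uniformization theorem for Borel sets with countable sections \cite[Theorem 18.10]{Kec95}.

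\textbf{Step 1: $Z$ is Borel.} $X$ is Borel by Lemma~\ref{lemma:MoiseLemma4}, and $\xi$ is Borel, so $(K_1,K_2,f,\e)\mapsto\gamma$ is Borel. The condition $\dom(f')=|K_1|$ is Borel by Lemma~\ref{lemma:r_C}\ref{lemma:DomainOfEmb}. The condition $|f'|\subset\obDelta^3$ says $r_{|K_1|}(f')\cap\partial\bDelta^3=\es$, which is Borel by Lemma~\ref{lemma:r_C}\ref{lemma:r_C-3} and Fact~\ref{fact:K(X)}. Being a $\gamma$-approximation of $f\rest K_1$ is a countable conjunction of closed conditions, via Lemma~\ref{lemma:SupmetricHomeoEmb} and a countable dense subset of $|K_1|$. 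The remaining condition is that $f'$ is a $\Q$-PL-embedding of $K_1$ into $\bDelta^3$. By Fact~\ref{fact:CanChooseFiniteStellarSubd}, this holds iff there are finite $\bar z\subset\Q(K_1)$ and $\bar w\subset\Q^*\cap\bDelta^3$ such that $f'$ is simplicial from $K_1*\bar z$ onto a subcomplex of $\scl(\bDelta^3)*\bar w$. The witnesses are countable, and $\Q(K_1)$ is enumerated in a Borel way as in the proof of Theorem~\ref{thm:PLtoBasis}. For fixed witnesses, a simplicial map is determined by the images of the finitely many vertices, which lie in the countable set $\Q^*$. So the condition becomes a countable disjunction over finite vertex assignments $\alpha$ of the statement ``$f'$ equals the linear extension of $\alpha$ on each simplex''. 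That statement is Borel because it is an equality of points of $\PartEmb$, and the linear extension is a Borel (indeed continuous) function of the Borel-enumerated simplexes.

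\textbf{Step 2: the admissible outputs.} Let $A\subset Z\times\PartEmb(\U,\bDelta^3)$ consist of the tuples $(z,g)$ such that $g\colon|K_1\cup K_2|\to\bDelta^3$ is a $\Q$-PL-embedding, $g$ extends $f'$, and $g$ is an $\e$-approximation of $f$. Borelness follows as in Step 1. Being a $\Q$-PL-embedding is again a countable disjunction over finite stellar witness sequences and rational vertex assignments. Extension is the closed condition $f'\subset g$ on graphs (Fact~\ref{fact:K(X)}\ref{prop:K(X)-2}), and $\e$-approximation is Borel as before.

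\textbf{Step 3: countable, non-empty sections and uniformization.} Each section $A_z$ is countable, since every $\Q$-PL-embedding of the finite complex $K_1\cup K_2$ is coded by finitely many rational data. Each section is non-empty by the defining property of $\gamma=\xi(K_1,K_2,f,\e)$ in Lemma~\ref{lemma:MoiseLemma4}. That lemma requires $g\colon K_1\to\obDelta^3$ to be a $\Q$-PL-embedding $\gamma$-approximating $f\rest K_1$, and this is exactly what membership in $Z$ asserts. Hence, by \cite[Theorem 18.10]{Kec95}, $A$ has a Borel uniformization, and this uniformization is the desired map $\eta_1$.

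The main obstacle is making Step 1 rigorous. One must check that the $\Q$-PL-embedding condition really reduces to countably many \emph{rational} simplicial witnesses, with no extra adjustment needed. This uses Theorem~\ref{thm:QPL-homeo} together with Fact~\ref{fact:CanChooseFiniteStellarSubd} to pass from arbitrary $\Q$-subdivisions to finite stellar $\Q$-subdivisions. One must also verify that ``the linear interpolation of a vertex assignment on $K_1*\bar z$'' depends on $(K_1,\bar z)$ in a Borel way. If the second check proves awkward, I would instead use the $K_\sigma$ description of $\Emb_{PL}$ from Theorem~\ref{thm:PLK_sigma}, strengthened by the rationality requirement on vertices.
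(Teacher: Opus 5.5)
Your proposal is correct and follows the paper's proof. Both show that $Z$ is Borel, define the Borel set $A$ of admissible outputs $g$, note that its sections are countable (a $\Q$-PL-embedding of a finite complex is coded by finitely many rational data) and non-empty (by the defining property of $\gamma$ from Lemma~\ref{lemma:MoiseLemma4}), and then uniformize via \cite[Theorem 18.10]{Kec95}. Your Step~1 is more thorough than the paper's. The paper proves $Z$ Borel only through the map $(K_1,K_2,f,\e,f')\mapsto(d(f',f),\xi(K_1,K_2,f,\e))$, and leaves the $\Q$-PL-embedding condition implicit, both in $Z$ and in $A$, even though its countability argument needs it.
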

\begin{proof}
    By Lemma~\ref{lemma:MoiseLemma4}, $X$ is a Borel set and $\xi$ is
    a Borel function. Let
    $$j\colon X\times\PartEmb(\U,\bDelta^3)\to \R\times\Q_+$$
    be the function given by
    $j(K_1,K_2,f,\e,f')=(d(f',f),\xi(K_1,K_2,f,\e))$.  By the above,
    $j$ is Borel, and $Z=j^{-1}\{(x,y)\in \R\times\Q_+\mid x<y\}$, so
    $Z$ is Borel.  Let $A$ be the set of tuples $(K_1,K_2,f,\e,f',g)$
    such that $(K_1,K_2,f,\e,f')\in Z$ and $g$ satisfies the
    conclusion, i.e.  $g$ extends $f'$ to $K_1\cup K_2$ and $g$ is an
    $\e$-approximation of $f$.  Let us show that $A$ is Borel.  Let
    $S_1$ be the set of those tuples $(K_1,K_2,f,\e,f',g)$ for which
    $g$ is an extension of $f'$. Let $S_2$ be the set of those tuples
    $(K_1,K_2,f,\e,f',g)$ for which $g$ is an $\e$-approximation of
    $f$.  Clearly $S_1$ and $S_2$ are both Borel, and $A=S_1\cap
    S_2$. For a fixed $z\in Z$, let
    $A(z)=\{g\in \PartEmb(\U,\bDelta^3): (z,g) \in A\}$.  We will show that the
    section $A(z)$ is countable and non-empty. The result will then
    follow from \cite[Theorem 18.10]{Kec95}.  It is non-empty by the
    choice of $f'$ and Lemma~\ref{lemma:MoiseLemma4}.  It is
    countable, because the number of $\Q$-PL-embeddings from the
    compact $K_1\cup K_2$ to the compact $\bDelta^3$ is countable.
\end{proof}

\begin{Lemma}[Borel $\e$-approximation on compact polyhedra]%
  \label{lemma:Approx1}
  Let $Z$ be the set of tuples
  $$(K,f,\e)\in \TTT\times \PartEmb(\U,\bDelta^3)\times\Q_+$$
  where $K$ is a triangulated compact $3$-manifold with
  boundary, $f\colon |K|\to \obDelta^3$ is an embedding, and $\e\in \Q_+$.  Then
  $Z$ is Borel and there is a Borel map
  $$\eta\colon Z\to \PartEmb(\U,\bDelta^3)$$
  such that if $f'=\eta(K,f,\e)$, then $f'$ is a $\Q$-PL-embedding
  $|K|\to \obDelta^3$ which is an $\e$-approximation of~$f$.
\end{Lemma}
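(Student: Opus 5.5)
We follow the pattern of Lemmas~\ref{lemma:MoiseLemma4} and \ref{lemma:GetTheApprox}: show that $Z$ is Borel, define a Borel set $A$ of admissible outputs whose sections are countable and non-empty, and then apply the Lusin--Novikov uniformization \cite[Theorem 18.10]{Kec95}.

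\textbf{Step 1: $Z$ is Borel.} The condition that $K$ is a triangulated compact $3$-manifold with boundary was already shown to be Borel in the proof of Lemma~\ref{lemma:MoiseLemma4}. It says that all but finitely many entries are dummy simplexes and that every vertex star is $\Q$-PL-homeomorphic to $\bDelta^3$. The condition $\dom(f)=|K|$ is Borel by Lemma~\ref{lemma:r_C}\ref{lemma:DomainOfEmb}, together with the Borelness of $\bar\tau\mapsto|\bar\tau|$ from the proof of Theorem~\ref{thm:PLtoBasis}. The condition that the image lies in $\obDelta^3$ amounts to $\pr_{\bDelta^3}(f)\cap\partial\bDelta^3=\es$. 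This is a closed condition on the compact set $\pr_{\bDelta^3}(f)$, by Lemma~\ref{lemma:r_C}\ref{lemma:r_C-1} and Fact~\ref{fact:K(X)}.

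\textbf{Step 2: the set of admissible outputs.} Let $A$ be the set of $(K,f,\e,g)$ such that $(K,f,\e)\in Z$, $g$ is a $\Q$-PL-embedding $|K|\to\obDelta^3$, and $g$ is an $\e$-approximation of $f$.
\begin{itemize}
\item The approximation condition is Borel: $d(f,g)<\e$ is open in the sup-metric, and the sup-metric is equivalent to the Hausdorff topology by Lemma~\ref{lemma:SupmetricHomeoEmb}.
\item For fixed $K$, the set of $\Q$-PL-embeddings $|K|\to\obDelta^3$ is countable. Each one is determined by a finite $\bar z\subset\Q(K)$, a finite $\Q$-subdivision of $\bDelta^3$, and a simplicial map between finite complexes. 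All such data can be enumerated in a Borel way from $K$, exactly as $\Q(T)$ and $\subd$ were enumerated in the proof of Theorem~\ref{thm:PLtoBasis}.
\item Hence ``$g$ is a $\Q$-PL-embedding of $K$'' is a countable union of Borel conditions of the form ``$g$ equals the $n$-th enumerated map''.
\end{itemize}
So $A$ is Borel, and each section $A(K,f,\e)$ is countable.

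\textbf{Step 3: non-emptiness, the main obstacle.} We must show that every topological embedding $f\colon|K|\to\obDelta^3$ of a compact triangulated $3$-manifold admits a $\Q$-PL $\e$-approximation. This is the classical Moise--Bing approximation theorem for embeddings of $3$-manifolds; see \cite[Theorem~2']{Bing54} and \cite{Mo52a}. It yields a PL-embedding $\e/2$-close to $f$. Since $|K|$ is compact, we may shrink the error so that the image stays inside the open simplex. Lemma~\ref{lemma:QPLApprox}, applied with $S_1=\es$ and target a finite subcomplex of a triangulation of $\bDelta^3$ containing the image, then upgrades this to a $\Q$-PL-embedding within $\e/2$.

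If a direct citation is not available in the required form, we would instead argue by induction over the simplexes of $K$. We would add $3$-simplexes one at a time and use Lemma~\ref{lemma:MoiseLemma4} to extend the approximation from $K_1$ to $K_1\cup K_2$ with $K_2$ a single simplex. The tolerances are chosen backwards via the function $\xi$ of that lemma, and the base case is a single tetrahedron, where a rectilinear $\Q$-approximation of the vertices suffices.

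\textbf{Step 4: conclusion.} With the sections of $A$ countable and non-empty, \cite[Theorem 18.10]{Kec95} gives a Borel uniformizing function $\eta\colon Z\to\PartEmb(\U,\bDelta^3)$ with the required properties.
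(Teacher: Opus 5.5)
Your main argument is essentially the paper's: you show that $Z$ and the set $A$ of admissible outputs are Borel with countable sections, get non-emptiness from Moise's approximation theorem \cite[Theorem~1]{Mo52a} combined with Lemma~\ref{lemma:QPLApprox}, and obtain $\eta$ from \cite[Theorem~18.10]{Kec95}. Drop the fallback in Step~3, however. Attaching a single tetrahedron along a $2$-manifold subcomplex of its boundary (a disc, unless it is the whole sphere) does not change the homeomorphism type, so starting from one tetrahedron you can never reach, e.g., a solid torus; also, the vertex-wise affine base case approximates $f$ only on simplexes whose $f$-image is small.
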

\begin{proof}
  By similar arguments as in the proof of
  Lemma~\ref{lemma:MoiseLemma4} one can show that $Z$ is Borel.  Let
  $A$ be the set of those tuples
  $(K,f,\e,f')\in Z\times\PartEmb(\U,\bDelta^3)$ such that
  $(K,f,\e)\in Z$ and $f'$ satisfies the conclusion of the lemma.
  Again, by similar arguments as above, $A$ is Borel. For each
  $(K,f,\e)\in Z$ the sections
  $$A(K,f,\e):=\{f'\in\PartEmb(\U,\bDelta^3)\mid (K,f,\e,f')\in A\}$$
  are countable. They are non-empty by \cite[Theorem 1]{Mo52a}
  and an application of Lemma~\ref{lemma:QPLApprox}.
  Thus by \cite[Theorem 18.10]{Kec95} there is a Borel function as desired.
\end{proof}

\begin{Lemma}[Borel exhaustion lemma]%
  \label{lemma:Exh}
  For every $i$, there is a Borel $\xi_i\colon \MMM^{PL,\le i}_3\to \TTT^{\N}$
  such that if $(C_j)_{j\in\N}=\xi_i(\bar\f)$, then the following conditions hold:
  \begin{enumerate-(1)}
  \item $\Cup_{j\in\N} |C_j|=U:=|\overset{\circ}\f_{i+1}|\cap \Cup_{j\le i}|\overset{\circ}\f_j|$,
  \item for all $j$, $C_j$ is PL-compatible with $\f_k$ for $k\le i$,
  \item for all $j$, $C_j$ is a triangulated compact $3$-manifold with boundary,
  \item for all $j$, $C_j\cap C_{j+1}$ is a triangulated $2$-manifold with boundary,
  \item for all $j$, $C_j\cap C_{k}=\es$ for all $j,k$ with $|j-k|>1$ \label{def:Exhaustion5}.
  \end{enumerate-(1)}
\end{Lemma}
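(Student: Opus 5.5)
The plan is to construct the exhaustion explicitly inside the parameter space $\bDelta^3$ of the chart $\f_{i+1}$ and then push it forward. The construction uses only countably many choices from a fixed countable family of rational combinatorial data, so Borelness reduces to checking countably many Borel conditions. This is the same pattern as in Lemmas~\ref{lemma:MoiseLemma4}--\ref{lemma:Approx1}, where Borel selection follows from \cite[Theorem 18.10]{Kec95} once the sections are countable and non-empty.

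First, set $V:=(\overset{\circ}\f_{i+1})^{-1}[U]$, an open subset of $\obDelta^3$. The map $\bar\f\mapsto V$ is Borel in the appropriate sense: for a fixed compact $\Q$-polyhedron $P\subset\obDelta^3$, the condition ``$P\subset V$'' is equivalent to $\f_{i+1}[P]\cap \Cap_{j\le i}\ldots$ being covered by $\Cup_{j\le i}|\overset{\circ}\f_j|$. By compactness this is a countable union over finite subfamilies and shrunken simplexes $\bDelta^3_\e$, exactly as in the proof of Proposition~\ref{prop:ManifoldsBorel}; hence it is Borel by Facts~\ref{fact:K(X)} and \ref{fact:F(X)}.

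Fix an enumeration $(P_m)_{m\in\N}$ of all compact $\Q$-polyhedra in $\obDelta^3$ that are triangulated compact $3$-manifolds with boundary; these are subcomplexes of $\Q$-subdivisions of $\scl(\bDelta^3)$, so there are countably many. Build a nested sequence $D_0\subset \int D_1\subset D_1\subset\cdots$ with each $D_k=P_{m_k}\subset V$, with $\Cup_k D_k=V$, and with each $D_k$ also containing $P_k$ if $P_k\subset V$. The index $m_k$ is chosen as the least one for which these Borel conditions hold, so $\bar\f\mapsto (m_k)_k$ is Borel. Existence follows by taking a fine $\Q$-subdivision and a regular neighbourhood (second derived neighbourhood) of a compact subset, which is a PL $3$-manifold with boundary. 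Then set $C_0:=D_0$ and $C_j:=\overline{D_j\setminus D_{j-1}}$, triangulated by the subcomplex of a common $\Q$-subdivision; one exists by Proposition~\ref{prop:CommonQsubd}, and it can be found as the least index in a fixed enumeration. Disjointness for $|j-k|>1$ holds because $D_{j-1}\subset\int D_j$. The sets $C_j\cap C_{j+1}=\partial D_j$ are closed surfaces, hence triangulated $2$-manifolds.

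The main obstacle is guaranteeing that each $C_j$ is a $3$-manifold with boundary. An annular region $\overline{D_j\setminus D_{j-1}}$ between nested PL $3$-manifolds with $D_{j-1}\subset\int D_j$ is a PL manifold with boundary, since $\partial D_{j-1}$ is bicollared in PL. To make this robust I would require in the selection that the link condition (the star of every vertex is $\Q$-PL-homeomorphic to $\bDelta^3$, as in the proof of Lemma~\ref{lemma:MoiseLemma4}) holds for both $C_j$ and $C_j\cap C_{j+1}$. This is a Borel condition and is satisfiable by the regular-neighbourhood argument. Once $(C_j)$ is fixed, push forward by $\f_{i+1}$ to obtain $\f_{i+1}\circ\sigma$ for the simplexes $\sigma$ of $C_j$, which is Borel by the composition argument in Proposition~\ref{prop:TriangulationStep}. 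PL-compatibility with $\f_k$ for $k\le i$ follows from local PL-compatibility of $\f_{i+1}$ with $\f_k$, which holds because $\bar\f\in\MMM^{PL,\le i}_3$ (applied with indices up to $i+1$, so one may need to take the domain to be $\MMM^{PL,\le i+1}_3$) and because each $C_j$ is a compact polyhedron inside $V$.
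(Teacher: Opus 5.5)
There is a genuine gap at condition (2). You build the exhaustion in the parameter space of $\f_{i+1}$, so every simplex of $C_j$ has the form $\f_{i+1}\circ\sigma$ with $\sigma$ rectilinear in $\obDelta^3$. PL-compatibility with $\f_k$ for $k\le i$ then amounts to $\f_k^{-1}\circ\f_{i+1}\circ\sigma$ being PL, and nothing gives you this. The hypothesis $\bar\f\in\MMM^{PL,\le i}_3$ only says that $\f_0,\dots,\f_i$ are pairwise locally PL-compatible. The chart $\f_{i+1}$ is an arbitrary topological embedding, so $\f_{i+1}[C_j]$ need not be a polyhedron in the PL structure determined by $\f_0,\dots,\f_i$.

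Your suggested remedy, enlarging the domain to $\MMM^{PL,\le i+1}_3$, proves a different statement, and that statement is useless where the lemma is needed. In Proposition~\ref{thm:PL-compatible_step} the input lies in $\MMM^{PL,\le i}_3$, and the whole point is to replace $\f_{i+1}$ by a compatible chart. There the $C_j$ are fed, through $f=\f_{i+1}^{-1}\rest U$, into Lemmas~\ref{lemma:MoiseLemma4}--\ref{lemma:Approx1}. The final step uses that the resulting $g$ is a $\Q$-PL-embedding \emph{from the triangulation $T_i$} of $\Cup_{k\le i}|\overset{\circ}\f_k|$, so the $C_j$ must be polyhedra of $T_i$. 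With your $C_j$, the map $f\rest C_j$ is already rectilinear in $\f_{i+1}$-coordinates. The approximation step then becomes vacuous ($g=f$ works), and one gets back $\f'_{i+1}=\f_{i+1}$, which is not compatible. The argument is circular.

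The repair is to run your construction in the correct PL structure:
\begin{enumerate-(1)}
\item Take the Borel triangulation $T_i$ of $\Cup_{k\le i}|\overset{\circ}\f_k|$, as in Lemma~\ref{lemma:inductiveTriangulation}; this only needs $\bar\f\in\MMM^{PL,\le i}_3$.
\item Enumerate its compact $\Q$-polyhedra in a Borel way, as in the proof of Theorem~\ref{thm:PLtoBasis}.
\item Select the nested $D_k$ among those contained in $U$. For a compact $P$ the test $P\subset|\overset{\circ}\f_{i+1}|$ is Borel, because it holds iff $P\subset\f_{i+1}[\bDelta^3_\e]$ for some $\e\in\Q_+$.
\end{enumerate-(1)}
Compatibility of the $C_j$ with each $\f_k$, $k\le i$, then follows from the compatibility of $T_i$ together with local PL-compatibility among $\f_0,\dots,\f_i$. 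The rest of your argument goes through essentially unchanged: regular neighbourhoods, $D_{k-1}\subset\int D_k$, $C_j=\overline{D_j\setminus D_{j-1}}$, and least-index selection.

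With that change your route is a legitimate and more explicit alternative to the paper's one-line appeal to \cite[Theorem 18.10]{Kec95} with non-emptiness from Moise. Its explicit least-index choice also makes the countability of choices at each step transparent. One minor slip: the link condition for $C_j\cap C_{j+1}$ should compare stars with $\bDelta^2$, not $\bDelta^3$.
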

\begin{proof}
  Use the same technique as in the previous proofs which relies on
  \cite[Theorem 18.10]{Kec95} to show the existence of the Borel
  function.  The non-emptiness of the sections in this case is
  standard, see e.g. the proof of \cite[Theorem 2]{Mo52}. 
\end{proof}

Now we are ready to prove the following.

\begin{prop}[Borel extension of triangulation]%
  \label{thm:PL-compatible_step}
  There is a Borel
  $f_i\colon \MMM^{\PL,\le i}_3\to \MMM^{\PL,\le i+1}_{3}$ such that
  \begin{equation}
    \text{if 
      $\bar\f'=f_i(\bar\f)$, then
      for all $j\ne i+1$ we have $\f_j=\f'_j$, and
      $|\f_{i+1}|=|\f'_{i+1}|$.}\label{eq:conditionofthmPLcomp}
  \end{equation}
\end{prop}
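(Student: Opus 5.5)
We will replace the chart $\f_{i+1}$ by a new chart $\f'_{i+1}$ with the same image, chosen so that it is locally PL-compatible with $\f_0,\dots,\f_i$. All other charts stay unchanged, which gives \eqref{eq:conditionofthmPLcomp} at once. Since $\f'_{i+1}\circ\f_{i+1}^{-1}$ will be a homeomorphism of $|\f_{i+1}|$ onto itself, conditions \ref{item:m1}--\ref{item:m3} and the Heine-Borel property are preserved. Hence $f_i(\bar\f)\in\MMM_3$, and we only need to arrange PL-compatibility of the new chart with the first $i$ charts.

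The construction follows the classical Moise--Bing proof that a locally PL atlas can be extended, made Borel step by step. First apply the Borel exhaustion lemma (Lemma~\ref{lemma:Exh}) to $\bar\f$. This yields Borel-ly a sequence $(C_j)_{j\in\N}$ of compact triangulated $3$-manifolds with boundary exhausting $U=|\overset{\circ}\f_{i+1}|\cap\Cup_{j\le i}|\overset{\circ}\f_j|$. The sequence is PL-compatible with $\f_0,\dots,\f_i$, consecutive members meet in triangulated surfaces, and non-consecutive members are disjoint. Write $h=\f_{i+1}^{-1}$, an embedding of $|\f_{i+1}|$ onto $\bDelta^3$, whose restriction to each $|C_j|$ lies in $\obDelta^3$.

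Next choose tolerances $\e_j\in\Q_+$ decreasing fast enough, and also below the distance from $h|C_j|$ to $\partial\bDelta^3$, so that the limit map stays injective and equals $h$ on $\partial\bDelta^3$. We then build recursively $\Q$-PL embeddings $g_j\colon |C_0\cup\dots\cup C_j|\to\obDelta^3$ with $g_{j+1}$ extending $g_j$ and being an $\e_j$-approximation of $h$. Start with $g_0=\eta(C_0,h\rest|C_0|,\e_0)$ from Lemma~\ref{lemma:Approx1}. For the inductive step, take $K_1=C_0\cup\dots\cup C_j$ and $K_2=C_{j+1}$. Here $K_1\cap K_2=C_j\cap C_{j+1}$ by \ref{def:Exhaustion5}, so the tuple lies in the set $X$ of Lemma~\ref{lemma:MoiseLemma4}. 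Let $\gamma=\xi(K_1,K_2,h,\e_{j+1})$. To make $g_j$ a $\gamma$-approximation we choose the tolerances adaptively, that is, $\e_j$ is taken smaller than the $\gamma$ computed for the next step, and this choice is itself Borel since $\xi$ is Borel. Then the Borel $\e$-extension lemma (Lemma~\ref{lemma:GetTheApprox}) gives $g_{j+1}=\eta_1(\dots,g_j)$. Every map in this recursion is Borel, so $\bar\f\mapsto(g_j)_j$ is Borel. Set $g=\Cup_j g_j$ on $U$ and $g=h$ on $|\f_{i+1}|\setminus U$.

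The main obstacle is checking that $g$ is a homeomorphism of $|\f_{i+1}|$ onto $\bDelta^3$, i.e.\ continuity and surjectivity near $\partial U$ and near $|\f_{i+1}|\setminus U$. The approach is to let $\e_j\to0$ and additionally bound $\e_j$ by the distance of $C_j$ to the frontier of $U$ in $|\f_{i+1}|$. This is the standard argument in \cite[Theorem~2]{Mo52}: a map that approximates $h$ with error tending to zero at the frontier extends continuously by $h$, and injectivity and surjectivity follow from the nesting and the local finiteness of the $C_j$. Once this holds, $\f'_{i+1}:=g^{-1}$ is a chart with the same image. On $U$ it is PL-compatible with $\f_k$ for $k\le i$, because each $C_j$ is PL-compatible with $\f_k$ and $g$ is $\Q$-PL on $C_j$; local PL-ness only needs to be checked on compact polyhedra in $U$, which by compactness meet finitely many $C_j$ (Lemma~\ref{fact:locallyPL-polyhedron}). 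Finally, Borelness of $\bar\f\mapsto\graph(\f'_{i+1})$ follows because $\graph(g)$ is the closure of $\Cup_j\graph(g_j)$ together with $\graph(h)$. By Lemma~\ref{lemma:SeqClosureBorel}, Fact~\ref{prop:selection_thm} and the fact that inverting a graph is a continuous coordinate swap, this is Borel in the sequence $(g_j)_j$ and $\f_{i+1}$.
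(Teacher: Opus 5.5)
Your argument breaks down in the choice of tolerances, exactly at the step you flag as the main obstacle. You extend along a single chain, with $K_1=C_0\cup\dots\cup C_j$ and $K_2=C_{j+1}$, so every $g_{j+1}$ restricts to $g_0$ on $C_0$. But Lemma~\ref{lemma:GetTheApprox} only provides a \emph{uniform} bound: $g_{j+1}$ is an $\varepsilon_{j+1}$-approximation of $h$ on all of $K_1\cup K_2$. Hence $\varepsilon_{j+1}>\max_{|C_0|}d(g_0,h)$ for every $j$. This is a fixed positive number unless $h$ happens to be $\Q$-PL on $C_0$. More generally, the sup-error of $g_j$ over its domain can only grow with $j$.

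So the $\varepsilon_j$ cannot tend to $0$, let alone be bounded by the distance of $C_j$ to the frontier of $U$. The continuity and surjectivity of $g$ at the frontier then have nothing to rest on. The ``adaptive'' choice does not help either. $\varepsilon_j$ must lie below $\xi(K_1,K_2,h,\varepsilon_{j+1})$, which needs $\varepsilon_{j+1}$, which needs $\varepsilon_{j+2}$, and so on, so no forward (let alone Borel) recursion determines the tolerances. Moreover, the extension restricts to the given map on $K_1$, so an admissible $\gamma$ is in effect at most $\varepsilon_{j+1}$. This constraint therefore pushes the tolerances up rather than down. In a sequential chain every extension step degrades accuracy, and nothing later can compensate.

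The missing idea is to break the chain, as the paper does with an even/odd scheme. Let $V=hU$ and $D_j=C_j\cup C_{j+1}\cup C_{j+2}$.
\begin{enumerate-(1)}
\item First fix $\varepsilon_j=\min_{k\le j}\frac12 d(h|C_k|,\partial V)$ and $\gamma_j=\xi(C_j\cup C_{j+2},C_{j+1},h\rest D_j,\varepsilon_{j+1})$. Here $C_j\cap C_{j+2}=\es$ by condition~\ref{def:Exhaustion5} of Lemma~\ref{lemma:Exh}, so $K_1$ is a disjoint union of two pieces.
\item Then approximate \emph{independently} on each even piece: $f'_{2i}=\eta(C_{2i},h\rest C_{2i},\min_{k\le 2i}\gamma_k)$ by Lemma~\ref{lemma:Approx1}.
\item Only afterwards fill each odd piece, by applying Lemma~\ref{lemma:GetTheApprox} to $f'_{2i}\cup f'_{2i+2}$ with target $\varepsilon_{2i+1}$.
\end{enumerate-(1)}
Every part of $g$ is then produced by a fixed three-stage computation on three consecutive pieces, and errors do not accumulate. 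One gets $d(h(x),g(x))<\frac12 d(h(x),\partial V)$ on $U$. This yields $gU=V$ and lets $g^{-1}$ glue continuously with $\f_{i+1}$ outside $V$.
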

\begin{proof}
  Let $\bar\f\in\MMM^{\PL,\le i}_3$. Using the same inductive
  argument as in the first part of the proof of
  Theorem~\ref{thm:GetTrianulation}, obtain, in a Borel way, a
  triangulation $T_i$ of $\Cup_{k\le i}|\overset{\circ}\f_i|$.  Let
  $$U=|\overset{\circ}\f_{i+1}|\cap \Cup_{k\le i}|\overset{\circ}\f_k|.$$
  Let $f\colon U\to \obDelta^3$ be given by $f:=\f_{i+1}^{-1}\rest
  U$. Let $V=fU$. In a Borel way obtain an exhaustive sequence of compact polyhedra
  $(C_i)_{i\in\N}$ as in Lemma~\ref{lemma:Exh}.
  Let $\e_i=\min_{j\le i}\frac{1}{2}d(fC_j,\partial V)$. It is easy to see that
  $(\e_i)_{i\in\N}$ is obtained in a Borel way from $\bar\f$.
  Denote $D_i=C_i\cup C_{i+1}\cup C_{i+2}$.
  By
  Lemma~\ref{lemma:MoiseLemma4} obtain
  $$\gamma_i=\xi(C_i\cup C_{i+2},C_{i+1},f\rest D_i,\e_{i+1}).$$
  Then $\gamma_i$ is such that every $\gamma_i$-approximation of
  $f\rest (C_i\cup C_{i+2})$ can be extended to an
  $\e_{i+1}$-approximation of $f\rest D_i$.  By
  Lemma~\ref{lemma:Approx1} obtain, in a Borel way, for even indices:
  $$f'_{2i}:=\eta(C_{2i}, f\rest C_{2i}, \min_{j\le 2i}\gamma_j).$$
  Then $f'_{2i}$ is a $\gamma_j$-approximation of $f\rest C_{2i}$
  for all $j\le 2i$.
  Now $f'_{2i}\cup f'_{2i+2}$ is a $\gamma_{2i}$-approximation of $f\rest (C_{2i}\cup C_{2i+2})$.
  By Lemma~\ref{lemma:GetTheApprox} get for all odd indices:
  $$g_{2i+1}:=\eta_1(C_{2i}\cup C_{2i+2},C_{2i+1},f\rest D_{2i},\e_{2i+1},f'_{2i}\cup f'_{2i+2}).$$
  Then $g_{2i+1}$ extends $f'_{2i}\cup f'_{2i+2}$ to $D_{2i}$ and is
  an $\e_{2i+1}$-approximation of $f\rest D_{2i}$.
  For all $i$ we have $\dom(g_{2i+1})\cap\dom(g_{2(i+1)+1})=C_{2i+2}$
  and $g_{2i+1}\rest C_{2i+2}=f'_{2i+2}=g_{2(i+1)+1}\rest C_{2i+2}$. If $|i-j|>1$,
  then $\dom(g_{2i+1})\cap \dom(g_{2j+1})=D_{2i}\cap D_{2j}=\es$
  (to see this, apply definition of $D_i$ and condition \ref{def:Exhaustion5} of Lemma~\ref{lemma:Exh}).
  Let
  $$g:=\Cup_{i\in \N}g_{2i+1}.$$
  Then $g$ is a well-defined $\Q$-PL-embedding from $U$ to
  $\obDelta^3$. Let us show that for all $x\in U$,
  \begin{equation}
    d(f(x),g(x))<\frac{1}{2}d(f(x),\partial V).\label{eq:eApprox}
  \end{equation}
  Let $x\in U$ and let $j$ be such that $x\in C_j$.  If $j$ is odd,
  let $i$ be such that $j=2i+1$. Then $g(x)=g_{2i+1}(x)$. Since
  $g_{2i+1}$ is a $\e_{2i+1}$-approximation of $f$, we have using the
  definition of $\e_{2i+1}$:
  $$d(f(x),g(x))=d(f(x),g_{2i+1}(x))<\e_{2i+1}\le \frac{1}{2}d(fC_{2i+1},\partial V)< d(f(x),\partial V).$$
  The last inequality follows, because
  $d(fC_{2i+1},\partial V)=\inf_{y\in C_{2i+1}}d(f(y),\partial V)$,
  and $x\in C_{2i+1}$.  If $j$ is even, let $i$ be such that
  $j=2i$. We again have $g(x)=g_{2i+1}(x)$, because
  $C_{2i}\subset D_{2i}=\dom(g_{2i+1})$. Similarly as above, we have
  $$d(f(x),g(x))=d(f(x),g_{2i+1}(x))<\e_{2i+1}\le \frac{1}{2}d(fC_{2i},\partial V)< d(f(x),\partial V).$$
  The second to last inequality follows from
  definition of $\e_{2i+1}$, because $2i<2i+1$.
  By \eqref{eq:eApprox}, $gU=V$. We made sure that $g$ is obtained from
  $\bar\f$ in a Borel way. 
  Now define $\f_{i+1}'$ by
  $$\f'_{i+1}(x)=
    \begin{cases}
      g^{-1}(x),&\text{ if }x\in V\\
      \f_{i+1}(x),&\text{ otherwise.}
    \end{cases}
  $$
  By \eqref{eq:eApprox} this function is continuous and, in fact, an
  embedding.  For $j\ne i+1$ let $\f_{j}':=\f_j$.  Then
  $f_i(\bar\f)=\bar\f'$ is as desired. Indeed,
  $(\f_{i+1}')^{-1}\circ(\f_j')=g\circ \f_j$.  But $\f_j$ is a
  $\Q$-PL-embedding into $T_i$ (defined at the beginning of this
  proof) and $g$ is a $\Q$-PL-embedding from $T_i$ into $\obDelta^3$.
  Therefore $g\circ \f_j$ is a $\Q$-PL-embedding.  Symmetrically for
  $(\f_j')^{-1}\circ \f'_{i+1}$.
\end{proof}

\begin{thm}[Get PL-compatible atlas]\label{thm:GetPLcomp}
  There is a Borel map $\zeta\colon \MMM_3\to\MMM_3^{PL}$ such that
  $M(\bar\f)=M(\zeta(\bar \f))$ for all $\bar\f\in\MMM_3$.
\end{thm}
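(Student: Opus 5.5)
We build $\zeta$ as a limit of the maps $f_i\colon \MMM^{\PL,\le i}_3\to \MMM^{\PL,\le i+1}_3$ from Proposition~\ref{thm:PL-compatible_step}. First, every $\bar\f\in\MMM_3$ already lies in $\MMM^{\PL,\le 0}_3$, since a single chart is trivially locally PL-compatible with itself. So for $\bar\f\in\MMM_3$ we define $\bar\f^0:=\bar\f$ and $\bar\f^{i+1}:=f_i(\bar\f^i)$. By induction $\bar\f^i\in\MMM^{\PL,\le i}_3$, and each map $\bar\f\mapsto\bar\f^i$ is Borel, being a finite composition of Borel maps. By \eqref{eq:conditionofthmPLcomp}, passing from $\bar\f^i$ to $\bar\f^{i+1}$ changes only the $(i+1)$-st chart and keeps its image $|\f_{i+1}|$. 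Hence for every $k$ the sequence $(\f^i_k)_{i\in\N}$ is constant for $i\ge k$, equal to $\f^k_k$. We set $\zeta(\bar\f):=(\f^k_k)_{k\in\N}$.

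Next we check that $\zeta(\bar\f)\in\MMM^{PL}_3$ and that $M(\zeta(\bar\f))=M(\bar\f)$.
\begin{enumerate-(i)}
\item Since $|\f^k_k|=|\f_k|$ for all $k$, the images are unchanged. The interiors are also unchanged: $\f^k_k$ is a homeomorphism $\bDelta^3\to|\f_k|$, so it maps $\partial\bDelta^3$ onto the topological frontier of $|\f_k|$, which we identify via invariance of domain in the manifold $M(\bar\f)$.
\item Consequently $M(\zeta(\bar\f))=M(\bar\f)$, and conditions \ref{item:m1}--\ref{item:m3} together with the Heine-Borel property carry over verbatim, so $\zeta(\bar\f)\in\MMM_3$.
\item For local PL-compatibility, take $j<k$. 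Charts $j$ and $k$ of $\zeta(\bar\f)$ coincide with those of $\bar\f^k$, because neither is modified after stage $k$. Since $\bar\f^k\in\MMM^{\PL,\le k}_3$, the two charts are locally PL-compatible.
\end{enumerate-(i)}
Thus $\zeta(\bar\f)\in\Cap_i\MMM^{\PL,\le i}_3=\MMM^{PL}_3$.

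For Borelness we argue as in the end of the proof of Theorem~\ref{thm:GetTrianulation}. $\SSS_3=\Emb(\bDelta^3,\U)^\N$ carries the product Borel structure, so it suffices that each coordinate map $\bar\f\mapsto\zeta(\bar\f)_k=\f^k_k$ is Borel. This holds because it is the composition of the Borel map $\bar\f\mapsto\bar\f^k$ with the projection to the $k$-th coordinate. Unlike in Theorem~\ref{thm:GetTrianulation}, no $\liminf$-type union is needed, because coordinate $k$ stabilises at a stage determined in advance.

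The main obstacle is the justification in (i) that the interiors $|\overset{\circ}\f_k|$ are preserved, since \eqref{eq:conditionofthmPLcomp} only asserts equality of the images $|\f_{i+1}|$. We would settle this either by invariance of domain, which identifies $|\overset{\circ}\f_k|$ with the manifold interior of $|\f_k|$ independently of the parametrization, or directly from the construction in Proposition~\ref{thm:PL-compatible_step}. There $\f'_{i+1}$ agrees with $\f_{i+1}$ outside $V\subset\obDelta^3$ and maps $V$ onto $U$, which gives the equality of interiors outright.
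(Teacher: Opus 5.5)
Your proposal is correct and follows essentially the same route as the paper. The paper also defines $\zeta$ coordinatewise as the stabilized value of $f_i\circ\cdots\circ f_0$, which is well-defined by \eqref{eq:conditionofthmPLcomp}, and gets Borelness because each basic preimage is a finite intersection $\Cap_{j\le i}\xi_j^{-1}N_j$ of preimages under finite compositions of Borel maps. Your extra checks, that the interiors $|\overset{\circ}{\f}_k|$ are preserved and that $\zeta(\bar\f)$ actually lands in $\MMM^{PL}_3$, supply details the paper asserts without comment.
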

\begin{proof}
  Consider the maps
  $f_i\colon \MMM_3^{PL,\le i}\to \MMM_{3}^{PL,\le i+1}$ given by
  Proposition~\ref{thm:PL-compatible_step}.  Note that
  $\MMM_3^{PL,\le 0}=\MMM_3$. Let $\zeta\colon \MMM_3\to\MMM^{PL}_3$
  be defined as the limit of the functions $f_i$ given by
  $\zeta(\bar\f)=\bar\f'$ where for all $i$,
  $\f'_i=(f_i\circ\cdots\circ f_0)(\bar\f)_{i}$.  By
  \eqref{eq:conditionofthmPLcomp}, $\zeta$ is well-defined and
  satisfies $M(\bar\f)=M(\zeta(\bar\f))$. Let us check that it is
  Borel.  Recall that $\MMM_3^{PL}\subset \SSS_3(\U)$ and
  $\SSS_3(\U)=\Emb(\bDelta^3,\U)^\N$.  It is enough to show that
  $\zeta^{-1}N$ is a Borel set for a basic open
  $N\subset \Emb(\bDelta^3,\U)^\N$.  So let $N$ be of the form
  $$N=N_1\times \cdots\times N_i\times \Emb(\bDelta^3,\U)^\N$$
  where $N_j$ is open in $\Emb(\bDelta^3,\U)$ for $j\le i$.  Then
  $\bar \f\in \zeta^{-1}(N)$ if and only if $\zeta(\bar\f)\in N$ if
  and only if $\zeta(\bar\f)_j\in N_j$ for all $j\le i$ if and only if
  $(f_j\circ\cdots\circ f_0)(\bar\f)_j\in N_j$ for all $j\le i$ if and
  only if $\bar\f\in \xi_j^{-1}(N_j)$ for all $j\le i$ where
  $\xi_j=\pr_j\circ f_j\circ\cdots\circ f_0$. Thus, we have
  $\zeta^{-1}N=\Cap_{j\le i}\xi_j^{-1}N_j$.  Clearly $\xi_j$ is a Borel
  function, so it follows that $\zeta^{-1}N$ is Borel.
\end{proof}

Finally the main theorem of this section:

\begin{thm}[Borel triangulation of 3-manifolds]%
  \label{thm:BorelTriangulation}
  There is a Borel
  $\eta\colon \MMM_3\to \TTT$ such that for all $\bar\f\in\MMM_3$,
  $\eta(\bar\f)$ is a triangulation of $M(\bar\f)$.
\end{thm}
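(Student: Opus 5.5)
The natural plan is to compose the two Borel maps already constructed. I will set $\eta:=\xi\circ\zeta$, where $\zeta\colon\MMM_3\to\MMM_3^{PL}$ is the map of Theorem~\ref{thm:GetPLcomp} and $\xi\colon\MMM^{PL}_3\to\TTT$ is the map of Theorem~\ref{thm:GetTrianulation} with $n=3$. A composition of Borel maps between standard Borel spaces is Borel; here $\MMM_3$ is standard Borel by Proposition~\ref{prop:ManifoldsBorel}, and $\MMM_3^{PL}$ is a Borel subset of it by Lemma~\ref{lemma:AlgPLManifoldsBorel}. So $\eta$ will be Borel, provided $\zeta$ takes values in $\MMM^{PL}_3$ and not merely in some $\MMM^{PL,\le i}_3$.

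To confirm this, I will check that the limit atlas $\bar\f'=\zeta(\bar\f)$ lies in every $\MMM^{PL,\le i}_3$. By \eqref{eq:conditionofthmPLcomp}, the step $f_j$ changes only the chart with index $j+1$. Hence $(\f'_0,\dots,\f'_i)$ agrees with the first $i+1$ charts of $(f_{i-1}\circ\cdots\circ f_0)(\bar\f)\in\MMM^{PL,\le i}_3$, so these charts are pairwise locally PL-compatible. Membership in $\MMM_3$ also has to be checked. The images satisfy $|\f'_j|=|\f_j|$, and $\f'_j$ agrees with $\f_j$ outside the open region $V$ on which it was modified, where it is an embedding. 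Therefore the conditions \ref{item:m1}--\ref{item:m3} and the Heine--Borel property transfer directly from $\bar\f$, since they depend only on the images $|\f'_j|$ and on the interiors $|\overset{\circ}\f{}'_j|=\f_j$-images of $\obDelta^3$. In particular $M(\zeta(\bar\f))=M(\bar\f)$, as Theorem~\ref{thm:GetPLcomp} asserts.

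Next, Theorem~\ref{thm:GetTrianulation} says that $\xi(\zeta(\bar\f))$ is a compatible triangulation of $M(\zeta(\bar\f))$. In particular its realization $|\xi(\zeta(\bar\f))|$ equals $M(\zeta(\bar\f))=M(\bar\f)$, so $\eta(\bar\f)$ is a triangulation of $M(\bar\f)$. The Heine--Borel requirement for membership in $\TTT$ is inherited because $M(\bar\f)$ is Heine--Borel and the triangulation is locally finite. This completes the argument, given the earlier results.

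The step I expect to be the real obstacle, already absorbed into Theorem~\ref{thm:GetPLcomp}, is that the chart-wise modification in Proposition~\ref{thm:PL-compatible_step} keeps each modified chart an embedding of $\bDelta^3$ whose interior image is unchanged, so that openness \ref{item:m2} survives. In writing the proof I would verify this explicitly from \eqref{eq:eApprox}. That estimate gives $g U=V$, and because the modification is supported inside $V\subset\obDelta^3$, $\f'_{i+1}$ coincides with $\f_{i+1}$ near $\partial\bDelta^3$. Hence $\f'_{i+1}$ is a homeomorphism of $\bDelta^3$ onto $|\f_{i+1}|$ that preserves the interior image.
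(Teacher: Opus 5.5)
Your proposal is correct and is exactly the paper's proof: set $\eta=\xi\circ\zeta$, with $\zeta$ from Theorem~\ref{thm:GetPLcomp} and $\xi$ from Theorem~\ref{thm:GetTrianulation}; the extra checks you list are already contained in the statements of those two theorems. One minor correction to your side remark: $V$ may accumulate at $\partial\bDelta^3$, so the modified chart $\f'_{i+1}$ agrees with $\f_{i+1}$ on $\bDelta^3\setminus V\supset\partial\bDelta^3$ but not necessarily on a neighbourhood of $\partial\bDelta^3$, and continuity across $\partial V$ is what \eqref{eq:eApprox} provides.
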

\begin{proof}
  Let $\zeta\colon \MMM_3\to\MMM^{PL}_3$ be as given by
  Theorem~\ref{thm:GetPLcomp}.  Let $\xi\colon \MMM^{PL}_3\to \TTT$ be
  the function given by Theorem~\ref{thm:GetTrianulation}.  Let
  $\eta=\xi\circ\zeta$. Then $\eta$ is as desired.
\end{proof}

\begin{thm}[$\homeo_3\ \le_B\ \cong_\AAA$]
   \label{thm:MainBorelReduction3}
  There is a Borel $\xi\colon \MMM_3\to \AAA$ 
  which reduces $\homeo_3$ to $\cong_\AAA$.
\end{thm}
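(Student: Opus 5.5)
The plan is to take $\xi:=\zeta\circ\eta$, where $\eta\colon \MMM_3\to \TTT$ is the Borel triangulation map of Theorem~\ref{thm:BorelTriangulation} and $\zeta\colon \TTT\to\AAA$ is the Borel reduction of $\homeo^{PL}$ to $\cong_\AAA$ from Theorem~\ref{thm:BorelClassificationOfPLtoISO}. A composition of Borel maps is Borel, so $\xi$ is Borel. It then remains to check that $\xi$ is a reduction, i.e.\ that $\bar\f\homeo_3\bar\f'$ holds if and only if $\xi(\bar\f)\cong_\AAA\xi(\bar\f')$.

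First we verify that $\eta$ lands in $\TTT$, that is, in \emph{Heine-Borel} complexes in $\U$. For this, note that $|\eta(\bar\f)|=M(\bar\f)$ is Heine-Borel by the definition of $\MMM_3$. Moreover, condition~\ref{item:m3} together with the local finiteness of the triangulation gives the required eventual disjointness of simplexes from balls $B(z_0,n)$. By Theorem~\ref{thm:BorelTriangulation}, $\eta(\bar\f)$ is a triangulation of $M(\bar\f)$, which is a $3$-manifold without boundary by Lemma~\ref{lemma:FindEveryManifold}.

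For the reduction property we argue as follows. Suppose $\bar\f\homeo_3\bar\f'$, so that $M(\bar\f)\homeo M(\bar\f')$. Then $\eta(\bar\f)$ and $\eta(\bar\f')$ are triangulations of homeomorphic $3$-manifolds. By the Moise--Bing Hauptvermutung (Fact~\ref{fact:MoiseBing}) they are PL-homeomorphic, so $\eta(\bar\f)\homeo^{PL}\eta(\bar\f')$, and Theorem~\ref{thm:BorelClassificationOfPLtoISO} gives $\xi(\bar\f)\cong_\AAA\xi(\bar\f')$. Conversely, if $\xi(\bar\f)\cong_\AAA\xi(\bar\f')$, then the same theorem yields $\eta(\bar\f)\homeo^{PL}\eta(\bar\f')$. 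A PL-homeomorphism is in particular a homeomorphism $|\eta(\bar\f)|=M(\bar\f)\to M(\bar\f')=|\eta(\bar\f')|$, hence $\bar\f\homeo_3\bar\f'$.

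The substantive difficulty has already been isolated in Theorem~\ref{thm:BorelTriangulation}, namely the Borel choice of triangulation. At this point the only step needing care is the bookkeeping check that the output of $\eta$ satisfies the Heine-Borel clause in the definition of $\TTT$, so that $\zeta$ may legitimately be applied. I expect this to follow directly from the Heine-Borel requirement built into $\MMM_3$ and from condition~\ref{item:m3}.
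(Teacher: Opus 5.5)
Your proposal is correct and is essentially the paper's proof: set $\xi=\zeta\circ\eta$, use Fact~\ref{fact:MoiseBing} for the forward direction, and use the fact that a PL-homeomorphism is a homeomorphism for the converse. Your separate Heine--Borel check is unnecessary, since Theorem~\ref{thm:BorelTriangulation} already gives $\eta$ with values in $\TTT$; moreover, as sketched it would not rule out a simplex being repeated infinitely often in the enumeration.
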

\begin{proof}
  Let $\zeta\colon \TTT\to \AAA$ be the reduction
  $\homeo^{PL}\ \le_B\ \cong_\AAA$ given by
  Theorem~\ref{thm:BorelClassificationOfPLtoISO}.  Let
  $\eta\colon \MMM_3\to \TTT$ be the function given by Theorem
  \ref{thm:BorelTriangulation} which gives a triangulation for a given
  $3$-manifold.  Let $\xi=\zeta\circ\eta$. Suppose
  $\bar\f,\bar\f'\in\MMM_3$. Then they are homeomorphic if and only if
  $\eta(\bar\f)$ is PL-homeomorphic to $\eta(\bar\f')$ (follows from
  Fact~\ref{fact:MoiseBing}) if and only if $\zeta(\eta(\bar\f))$ is
  isomorphic to $\zeta(\eta(\bar\f'))$ (follows from the choice
  of~$\zeta$).
\end{proof}

\begin{remark}
  With some extra work we could prove that $\homeo_3$ is Borel
  reducible to the isomorphism relation on partial orders using
  Proposition \ref{prop:BasisToPPP}, but this also follows from the
  fact that partial orders are classifiable by countable structures by
  classical results~\cite{Friedman1989}.
\end{remark}

\vspace{10pt}

\subsubsection{\(2\)-manifolds without boundary up to homeomorphism}\label{sec:2manifolds}


In this section, we give a proof for the classification of
\(2\)-manifolds without boundary by countable structures. This classification was already
established in \cite{BS25} using methods that rely essentially on the
two-dimensional setting and on specific properties of surfaces. Notice
that from Corollary \ref{cor:HomeoPL_on_Man_red_cong} we have in
particular that PL \(2\)-manifolds are classifiable by countable
structures. The step we were missing to prove that
\(2\)-manifolds without boundary are classifiable by countable
structures is showing that one can get a triangulation of
\(2\)-manifolds without boundary in a Borel way. This has been done in
\cite{BS25}. Adapting our parametrization of \(2\)-manifolds to the
one given in \cite{BS25}, the arguments developed in this paper for PL
\(2\)-manifolds yield the following result.

\begin{thm}[$\homeo_2\ \le_B\ \cong_\AAA$]
   \label{thm:MainBorelReduction2}
  There is a Borel $\xi\colon \MMM_2\to \AAA$ 
  which reduces $\homeo_2$ to $\cong_\AAA$.
\end{thm}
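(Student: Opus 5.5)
The reduction $\xi$ will be a composite, exactly as in the proof of Theorem~\ref{thm:MainBorelReduction3}: a Borel triangulation map $\eta_2\colon \MMM_2\to\TTT$ followed by the reduction $\zeta\colon\TTT\to\AAA$ of $\homeo^{PL}$ to $\cong_\AAA$ from Theorem~\ref{thm:BorelClassificationOfPLtoISO}. Correctness is then immediate from Fact~\ref{fact:MoiseBing}. Two surfaces $M(\bar\f)$ and $M(\bar\f')$ are homeomorphic if and only if their triangulations $\eta_2(\bar\f)$ and $\eta_2(\bar\f')$ are PL-homeomorphic. By the choice of $\zeta$, this holds if and only if $\zeta(\eta_2(\bar\f))\cong_\AAA\zeta(\eta_2(\bar\f'))$. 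So all the work lies in producing $\eta_2$.

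To obtain $\eta_2$, I will import \cite[Theorem 4.16]{BS25}, which gives a Borel assignment of a triangulation to a $2$-manifold without boundary in their parametrization. The steps, in order, are as follows.

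\begin{enumerate-(1)}
\item Fix the standard Borel space $Y$ of surfaces used in \cite{BS25} and its Borel triangulation map $t\colon Y\to(\text{triangulations})$.
\item Construct a Borel map $\rho\colon\MMM_2\to Y$ with $M(\bar\f)\homeo$ (the surface coded by) $\rho(\bar\f)$. A natural choice is to pass the atlas $\bar\f$ directly, since the charts $\f_i\colon\bDelta^2\to\U$ satisfying \ref{item:m1}--\ref{item:m3} are essentially the chart data used in \cite{BS25}. The transition maps and overlaps are then read off Borel-ly via Lemma~\ref{lemma:r_C} and Fact~\ref{fact:F(X)}, exactly as in the proof of Proposition~\ref{prop:ManifoldsBorel}.
\item Convert the output of $t$ into an element of $\TTT$. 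I will re-enumerate its simplexes as a sequence, padding with $\dummy$. The realization is placed in $\U$ through the Heine-Borel embedding already supplied by $M(\bar\f)\subset\U$, so the Heine-Borel clause of Definition~\ref{def:SpaceOfTTT} holds.
\end{enumerate-(1)}

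The composite $\eta_2$ is Borel as a composition of Borel maps.

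An alternative for step (2), if the parametrizations do not match cleanly, is to copy the $3$-dimensional route. First make the atlas PL-compatible in a Borel way, as in Theorem~\ref{thm:GetPLcomp}, replacing the Moise--Bing approximation theorems by their (easier) $2$-dimensional analogues, namely Schoenflies-type approximation of embeddings by PL ones. Then apply Theorem~\ref{thm:GetTrianulation}, which already works for general $n$. The uniformization through \cite[Theorem 18.10]{Kec95} goes through verbatim, because the sections are again countable sets of $\Q$-PL embeddings.

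The main obstacle is step (2)/(3): matching the two codings. Concretely, one must check that the translation between $\MMM_2$ and the space of \cite{BS25} is Borel in both directions, and that the triangulation they output is a genuine element of $\TTT$, i.e.\ a locally finite rectilinear-face complex with the Heine-Borel property. I would first try to show that their coding is computed from a countable dense set of chart data, which can be read Borel-ly from $\bar\f$ using the rational points $\f_i[\Q^*]$.
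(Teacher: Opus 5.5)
Your overall plan matches the paper's. Translate the atlas into the chart data of \cite{BS25}, apply their Borel triangulation, land in $\TTT$, and compose with Theorem~\ref{thm:BorelClassificationOfPLtoISO}, with Fact~\ref{fact:MoiseBing} giving correctness. Your step (2) is essentially the paper's map $\xi_0$, built from overlaps and transitions $(\overset{\circ}{\f_j})^{-1}\circ\overset{\circ}{\f_i}$.

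The gap is step (3). As the paper uses it, the output of \cite{BS25} is an element of $\mathcal{S}_{\mathrm{poly}}$: a purely combinatorial complex coded by $0$--$1$ arrays, whose realization is homeomorphic to the surface. It comes with no homeomorphism onto $M(\bar\f)$, let alone a Borel-chosen one. So nothing lets you ``place the realization in $\U$ through $M(\bar\f)\subset\U$''. An element of $\TTT$ is a sequence of actual embeddings $\tau_i\colon\bDelta^m\to\U$, and routing them through $M(\bar\f)$ would require a Borel selection of a homeomorphism $|S|\to M(\bar\f)$. A naive intrinsic realization fails too. With vertices $e_n$ in $\ell^1$ or $\ell^2$, all vertices lie in a bounded set, so infinitely many simplexes meet a bounded ball and the Heine-Borel clause of Definition~\ref{def:SpaceOfTTT} fails. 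Your suggested remedy, reading the coding from the rational chart points $\f_i[\Q^*]$, addresses step (2), not this.

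The missing idea is a canonical realization whose metric forces the Heine-Borel property, and the paper supplies it. Realize the abstract complex $K$ among finitely supported sequences in $[0,1]^{\N}$, sending $s=\{i_0,\dots,i_k\}\in K$ to the affine simplex $f_s$ spanned by $e_{i_0},\dots,e_{i_k}$. Use the weighted metric $\delta((x_n),(y_n))=\sum_n n|x_n-y_n|$, and set $\tau_s=\iota\circ f_s$ for a fixed isometric embedding $\iota$ into $\U$. The weights push every simplex whose vertices all have large index far from a base vertex. Hence a ball of radius $r$ meets only simplexes containing the base vertex or a vertex of index $<r$, and there are finitely many of these by local finiteness. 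Thus $(\tau_s)_{s\in K}\in\TTT$, and $S\mapsto(\tau_s)_{s\in K}$ is Borel. This realization is only homeomorphic to $M(\bar\f)$, which suffices, since Fact~\ref{fact:MoiseBing} turns homeomorphism of the realizations into PL-homeomorphism.

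Your alternative route would avoid the issue: a $2$-dimensional version of Theorem~\ref{thm:GetPLcomp} followed by Theorem~\ref{thm:GetTrianulation} outputs simplexes lying in $M(\bar\f)$ itself. However, it depends on $2$-dimensional Borel analogues of Lemmas~\ref{lemma:MoiseLemma4}, \ref{lemma:Approx1} and \ref{lemma:Exh}, which you assert rather than establish.
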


Before proving the theorem, we state some definitions from \cite{BS25}. 

\begin{defn}\cite[Definitions 3.3, 3.5, and 3.8, Example 3.4, Section 4.2]{BS25}\label{def:bs25}
    Let Top be the collection of homeomorphisms between open subsets of \(\R^2\). The standard Borel space $\MMM_2^*$ of \(2\)-manifolds without boundary is given by all pairs $(\UU,c)$, where
    \begin{itemize}
        \item $\UU = \langle U_{i,j} \mid (i, j) \in \N^2\rangle$ is a family of open subsets of \(\R^2\),
    \item $c = \langle \phi_{i,j} : U_{i,j} \to U_{j,i} \mid (i, j)\in \N^2\rangle \subset \text{Top}$,
    \end{itemize}
    such that the following conditions hold:
    \begin{itemize}
    \item $U_{i,j} \subset U_i := U_{i,i}$ for all $(i, j) \in \N^2$,
    \item $\phi_{i,i} = \id_{U_i} : U_i \to U_i$ for all $i \in \N$,
    \item $\phi_{i,j} = \phi^{-1}_{j,i}$ for all $(i, j) \in \N^2$,
    \item $\phi^{-1}_{i,j} [U_{j,i} \cap U_{j,k}] \subset U_{i,k}$ and $\phi_{j,k} \circ \phi_{i,j}
    \restriction (\phi^{-1}_{i,j}[U_{j,i}\cap U_{j,k} ]) = \phi_{i,k}\restriction (\phi^{-1}_{i,j}[U_{j,i} \cap U_{j,k}])$ for
    all $i, j, k \in \N$, and
  \item $\coprod_{i \in \N} U_i/\sim$ is Hausdorff, where \(\sim\) is
    the equivalence relation defined on the disjoint union
    $\coprod_{i \in \N} U_i$ of the $U_i$'s by $x \sim y$ for
    $x \in U_i$ and $y \in U_j$ if and only if \(\phi_{i,j}(x)=y\).
    \end{itemize}

    The standard Borel space \(\mathcal{S}\) of all \(2\)-simplicial complexes is the set of all triples $S = (S_0, S_1, S_2) \in \{0, 1\}^\N\times \{0, 1\}^{\N^2}\times \{0, 1\}^{\N^3}$
    such that:
    \begin{itemize}
        \item $S_1(i, j) = S_1(j, i)$ for all $i, j \in \N$;
        \item $S_2(i, j, k) = S_2(i, k, j) = S_2(j, i, k) = S_2(j, k, i) = S_2(k, i, j) = S_2(k, j, i)$ for all $i, j, k \in \N$;
        \item $S_2(i, j, k) = 1$ implies $S_1(i, j) = S_1(i, k) = S_1(j, k) = 1$ for all $i, j, k \in \N$;
        \item $S_1(i, j) = 1$ implies $S_0(i) = S_0(j) = 1$ for all $i, j \in \N$; and
        \item for all $i \in \N$, there are at most finitely many $j$ such that $S_2(i, j) = 1$.
    \end{itemize}
    So an element $S =
    (S_0, S_1, S_2)$ can be identified with the abstract simplicial complex given by
    $K = \{n \mid S_0(n) = 1\}$, 
    where $\{i\}$ is a $0$-simplex in $K$ if and only if
    $S_0(i)=1$, $\{i, j\}$ is a $1$-simplex in $K$ if and only if
    $S_1(i, j) = 1$, and $\{i, j, k\}$ is a $2$-simplex in $K$ if and only if $S_2(i, j, k) = 1$.
    The Borel subset \(\mathcal{S}_{\text{poly}}\) of all elements of \(\mathcal{S}\) for which their realization are surfaces are those such that:
\begin{itemize}
    \item every $1$-simplex is in exactly two $2$-simplices,
    \item the $1$-simplices and $2$-simplices which contain a given vertex $x$ can be written as $a_1, \dots, a_m$ and $A_1,\dots, A_m$, respectively, for $m \geq 3$, so that $a_1 = A_1 \cap A_m$ and $a_i = A_i \cap A_{i-1}$ for $2 \leq i \leq m$, and
    \item \(S\) is connected (i.e., it cannot be written as a union of two disjoint subcomplexes).
\end{itemize}
\end{defn}
\begin{proof}[Proof of Theorem \ref{thm:MainBorelReduction2}]
  We first show that \(\homeo_2\ \le_B\ \homeo_2^*\), where
  \(\homeo_2^*\) is the homeomorphism relation on the space of
  \(2\)-manifolds \(\MMM_2^*\) given in Definition \ref{def:bs25}.
  Fix an isometry \(\iota\colon \R^2 \to \U\). For each
  $\bar \f \in \MMM_2$ and each \(i \in \N\), define
  \(U_i=\ran (\iota^{-1}\circ \overset{\circ}{\f_i})\). Then consider
  the map $\xi_0\colon \MMM_2 \to \MMM_2^*$ given by
  $\xi_0(\bar \f)=(\langle U_{i,j} \mid (i, j) \in \N^2\rangle,
  \langle \phi_{i,j} \mid (i, j)\in \N^2\rangle)$, where
\begin{itemize}
    \item \(U_{i,j}=U_i \cap U_j\) for all \(i,j \in \N\), and
    \item \(\phi_{i,j}=(\overset{\circ}{\f_j})^{-1} \circ \overset{\circ}{\f_i}\) for all \(i,j \in \N\).
\end{itemize}   
It is easy to see that \(\xi_0\) is a Borel reduction of \(\homeo_2\) to \(\homeo_2^*\).

    By \cite{BS25}, there is a Borel reduction \(\homeo_2^*\ \le_B\ \cong_{\mathcal{S}_{\text{poly}}}\). We will 
    show that \(\cong_{\mathcal{S}_{\text{poly}}}\ \le_B\ \homeo^{PL}\). 
    Then it is enough to apply Theorem \ref{thm:BorelClassificationOfPLtoISO} to obtain the desired result. 
   
    To prove that \(\cong_{\mathcal{S}_{\text{poly}}}\ \le_B\ \homeo^{PL}\), recall that each $S \in \mathcal{S}_{\text{poly}}$ can be identified with the abstract simplicial complex $K= \{n \mid S_0(n) = 1\}$. We define its geometric realization $\RR(K)$ by
    \[
\RR(K)=\Big\{(x_n) \in [0,1]^{\N} \mid x_n \geq 0,\ \sum_{i \in \N} x_i=1,\ \supp((x_n)) \text{ is finite},\ \supp((x_n)) \in K\Big\},
    \]
    where \(\supp((x_n))=\{n \in \N \mid x_n \neq 0\}\). 
    So for each \(s=\{i_0, \dots, i_k\} \in K\), we define \[
    \RR(s)=\{(x_n) \in [0,1]^{\N} \mid x_j=0 \text{ if } j \notin s, \sum_{j \in s} x_j=1\}.
    \] 
    Clearly, $\RR(K)=\Cup_{s \in K} \RR(s)$. 
    Let $\delta$ be the metric on the subset \(X\) of $[0,1]^{\N}$ of all sequences with finite support
    defined by
    $$\delta((x_n),(y_n))=\sum_{n\in\N}n |x_n-y_n|.$$
    Note that by the finiteness of the support of the sequences, $\delta$
    is always well-defined. Then each complex \(K\) is Heine-Borel
    by the local finiteness of complexes, i.e., if \((s_i)_{i \in \N}\) is an enumeration of the elements of \(K\), then for all \(n \in \N\), there is \(m \in \N\) such that for all $k > m$ we have $B(x, n)\cap \RR(s_k) = \es$ for some fixed point \(x \in X\).

    Now, if \(s=\{i_0, \dots, i_k\} \in K\), define \(f_s\) as the unique affine map which sends every vertex \(\ee_j \in \bDelta^k\) to the element \(e_{i_j}=(0, \dots,0,1,0,\dots) \in X\) whose coordinates are all equal to \(0\) except for the \(j\)-th one which is \(1\).
    Fix an isometry \(\iota\colon X \to \U\). Then for each \(s \in K\), let \(\tau_s= \iota \circ f_s \in \Emb(\bDelta^k,\U)\). We show that \((\tau_s)_{s \in K} \in \TTT\). It is easy to prove that $(\tau_s)_{s \in K}$ satisfies properties \ref{def:simplicial_complex-1}--\ref{def:simplicial_complex-3} of Definition \ref{def:simplicial_complex}. It remains to show that \((\tau_s)_{s \in K}\) has the Heine-Borel property. This easily follows from the fact that \(K\) is Heine-Borel in the sense of the previous paragraph and \(\iota\) is an isometry. 
    
    We finally define \(\xi_1\colon \mathcal{S}_{\text{poly}} \to \TTT\) by $\xi_1(S)=(\tau_s)_{s \in K}$, which gives the desired Borel reduction. 
\end{proof}

\vspace{10pt}

\subsubsection{Some classes of \(n\)-manifolds with boundary}\label{sec:with_boundary}

In this section we deal with the Borel classification of some classes of $n$-manifolds with boundary. First, we need to find a suitable parametrization of all \(n\)-manifolds with boundary. This is done analogously to the space of \(n\)-manifolds without boundary introduced in Section~\ref{ssec:SpaceOfManifolds}. 
Denote by $\ubDelta^n$ the \(n\)-simplex \(\bDelta^n\) regarded as a subset of the closed half-space \(\R_+^{n+1}\) of \(\R^{n+1}\), with all topological operations on \(\ubDelta^n\) taken relative to \(\R_+^{n+1}\). For example, its interior $\uobDelta^n$ is the union of $\obDelta^n$ with the interior (in the sense of Definition \ref{def:Faces}) of the \((n-1)\)-face of \(\bDelta^n\) given by the convex hull of \(\{\ee_0, \dots, \ee_{n-1}\}\). 

\begin{defn}
  Let \(\SSS_n^\partial\) be the subset of all sequences \(\bar \f=(\f_i)_{i \in \N}\) in \((\Emb(\bDelta^n,\U) \cup \Emb(\ubDelta^n,\U) \cup \{\dummy\} )^{\N}\) with at least one component in \(\Emb(\ubDelta^n,\U)\). 
  Set $M(\bar\f)=|\bar\f|=\Cup_{i\in\N}|\f_i|$.   
    We say that \(\bar \f \in \SSS_n^\partial\) is an \(n\)-manifold with boundary if and only if $(M(\bar\f),\bar \f)$ satisfies conditions \ref{item:m1}--\ref{item:m3} and  $M(\bar\f)$
  is Heine-Borel. We denote the space of all \(n\)-manifolds with boundary by \(\MMM_n^\partial\).
    
    Let $\MMM_n^{\partial, PL}\subseteq \MMM_n^\partial$ be the set of those
  $(\f_i)_{i \in \N}$ such that for all $i,j$, $\f_i$ and $\f_j$ are locally PL-compatible.
\end{defn}

Notice that in the  previous definition each \(\f_i\) is an embedding of either \(\bDelta^n\) or \(\ubDelta^n\) into \(\U\) and so \(\overset{\circ}\f_i\) is either \(\f_i\rest \obDelta^n\) or \(\f_i\rest \uobDelta^n\). Similarly to Lemma \ref{lemma:FindEveryManifold}, one can prove that if $M$ is an $n$-manifold with boundary, then there is $\bar\f\in\MMM_n^\partial$ such
that $M$ is homeomorphic to $M(\bar\f)$. Conversely, for every
  $\bar\f\in\MMM_n^\partial$, $(M(\bar\f),\bar\f)$ is an $n$-manifold with boundary.

\begin{thm}
    $\MMM_n^\partial$ and $\MMM_n^{\partial, PL}$ are standard Borel spaces.  
\end{thm}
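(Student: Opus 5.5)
We will mirror the proofs of Proposition~\ref{prop:ManifoldsBorel} and Lemma~\ref{lemma:AlgPLManifoldsBorel}, working inside the ambient standard Borel space
$$Y:=\big(\Emb(\bDelta^n,\U)\cup\Emb(\ubDelta^n,\U)\cup\{\dummy\}\big)^{\N},$$
with the disjoint-union Borel structure on each factor. Here $\Emb(\ubDelta^n,\U)$ is just $\Emb(\bDelta^n,\U)$ as a set, so it is standard Borel by Lemma~\ref{lemma:EmbIsClosed}; only the meaning of $\overset{\circ}{\f}_i$ and $\partial\f_i$ changes. The steps are as follows.

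\textbf{Step 1: $\SSS_n^\partial$ is Borel in $Y$.} The set $\SSS_n^\partial$ is the countable union over $i$ of the sets ``the $i$-th coordinate lies in the $\ubDelta^n$-summand''. Each of these is Borel.

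\textbf{Step 2: the manifold conditions.} We define $\MMM_n'^{\partial}$ by the analogues of \ref{item:m1p}--\ref{item:m3p}, with dummy coordinates simply ignored.
\begin{enumerate-(i)}
\item Condition \ref{item:m1p} now reads $|\partial\f_i|\subseteq\Cup_{j\le m}|\overset{\circ}\f_j|$, where for a boundary chart $\partial\f_i$ is the restriction of $\f_i$ to the union of the faces of $\bDelta^n$ other than the interior of the face spanned by $\ee_0,\dots,\ee_{n-1}$. This set is not compact, but its closure $\partial\bDelta^n$ is, and $\bDelta^n\setminus\f_i^{-1}|\overset{\circ}\f_j|$ is compact. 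So the condition can be rewritten as the empty intersection of compact sets exactly as before, except that the $(n-1)$-face piece is discarded. I will take $\partial\f_i:=\f_i\rest(\partial\bDelta^n\setminus\text{relint of that face})$ and note that its closure equals $|\f_i\rest\partial\bDelta^n|$. The covering is then checked with the closed face included: the closed face is covered by the interior of $\f_i$ itself together with its boundary, so it suffices to require covering of $\f_i[\partial\bDelta^n\setminus \text{relint}]$, which is compact.
\item Condition \ref{item:m3p} is unchanged.
\item Condition \ref{item:m2p} is replaced by a condition $(*)$ of the same form. We use $\ubDelta^n_\e:=\{x\in\uobDelta^n\mid d(x,\partial\ubDelta^n)\ge\e\}$, where $\partial\ubDelta^n$ is the boundary relative to the half-space, i.e.\ the union of the other $n$ faces. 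Quantifying over rational points of $\f_i[\ubDelta^n_\e]$ and of $\overset{\circ}\f_j\setminus|\f_i|$ gives a countable, hence Borel, condition.
\end{enumerate-(i)}
The proof that $(*)$ is equivalent to openness, and that $\MMM_n'^\partial=\MMM_n^\partial$, goes through verbatim. The one fact used there is that the relevant domain minus a compact set is open and that rational points are dense in it. Both still hold in $\uobDelta^n$, since it is open in the half-space and $\Q^*$ is dense in it.

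\textbf{Step 3: the PL part.} For $\MMM_n^{\partial,PL}$ we repeat Lemma~\ref{lemma:AlgPLManifoldsBorel}. Local PL-compatibility of $\f_i,\f_j$ means that the transition maps are PL on every compact $\Q$-polyhedron in their (relatively open) domain in $\ubDelta^n$. The proof of Lemma~\ref{fact:locallyPL-polyhedron} uses only openness in the ambient simplex and fine $\Q$-subdivisions of $\bDelta^n$, so it holds for domains open in $\ubDelta^n$. Then $A_T$ and $F_T$ are Borel by the same arguments via Lemma~\ref{lemma:r_C}, and the PL condition is Borel by Theorem~\ref{thm:PLK_sigma}. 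Intersecting over the countably many pairs $i,j$ finishes the proof. Standardness follows because Borel subsets of standard Borel spaces are standard.

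The main obstacle is Step 2: correctly setting up the interior/boundary bookkeeping for $\ubDelta^n$, so that the compactness argument for \ref{item:m1p} and the $\e$-shrinking argument for \ref{item:m2p} remain valid near the free face.
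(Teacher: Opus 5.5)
Your proposal is correct and follows the paper's route. The paper's proof only sketches that $\SSS_n^\partial$ is standard Borel via Lemma~\ref{lemma:EmbIsClosed} and that the arguments of Proposition~\ref{prop:ManifoldsBorel} and Lemma~\ref{lemma:AlgPLManifoldsBorel} carry over, and you fill in that sketch in more detail.

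There is one harmless slip in Step~2(i). The set $\partial\bDelta^n$ minus the relative interior of the free face is already compact: it is the union of the $n$ closed facets containing $\ee_n$, i.e.\ the set $\partial\ubDelta^n$ you use in Step~2(iii). So your claims that it is not compact and that its image has closure $|\f_i\rest\partial\bDelta^n|$ are wrong. However, the covering condition you end up imposing is the right one, and the compactness argument goes through verbatim.
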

\begin{proof}
Using Lemma \ref{lemma:EmbIsClosed}, one can show that $\SSS_n^\partial$ is a standard Borel space. Then it is enough to use an argument similar to Proposition \ref{prop:ManifoldsBorel} and Lemma \ref{lemma:AlgPLManifoldsBorel}, respectively, to prove that  \(\MMM_n^\partial\) and  \(\MMM_n^{\partial,PL}\) are Borel subsets of \(\SSS_n^\partial\).   
\end{proof}

\begin{thm}[Borel triangulation of PL-manifolds with boundary]%
  \label{thm:GetTrianulation2}
  There is a Borel $\xi\colon \MMM^{\partial,PL}_n\to \TTT$
  such that for all $\bar\f\in \MMM^{\partial,\PL}_n$, $\xi(\bar\f)$ is a compatible
  triangulation of~$M(\bar\f)$.
\end{thm}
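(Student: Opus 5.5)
The plan is to copy the proof of Theorem~\ref{thm:GetTrianulation} almost word for word. The only new feature is that some charts are now modelled on $\ubDelta^n$ instead of $\bDelta^n$.

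\textbf{Step 1: the auxiliary sets.} First I define the analogues of the auxiliary sets.
\begin{enumerate-(1)}
\item Let $\MMM^{\partial,PL,\le i}_n$ be the set of $\bar\f\in\MMM^\partial_n$ whose first $i$ charts are pairwise locally PL-compatible.
\item Define $\TT^{\partial,i,j}_n$ as in Definition~\ref{def:Tiin}. Here $\overset{\circ}\f_k$ means $\f_k\rest\uobDelta^n$ when $\f_k\in\Emb(\ubDelta^n,\U)$, and dummy charts are ignored.
\end{enumerate-(1)}
Borelness of these sets is shown exactly as in Lemmas~\ref{lemma:AlgPLManifoldsBorel} and \ref{lemma:TTijIsBorel}. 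We use Theorem~\ref{thm:PLK_sigma} and Corollary~\ref{cor:FromCompactToNonCompactKsigmaPL} for the PL-embedding condition. The domains involved are compact $\Q$-polyhedra in $\bDelta^n$, now possibly meeting the distinguished face. The membership ``is a chart of boundary type'' is clopen in the disjoint-union Borel structure.

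\textbf{Step 2: a standard triangulation of $\uobDelta^n$.} Besides the standard triangulation $\bar\delta$ of $\obDelta^n$, I fix a rectilinear triangulation $\bar\delta^\partial$ of $\uobDelta^n$. It is infinite and locally finite, and it accumulates only at the faces of $\bDelta^n$ other than the conv$\{\ee_0,\dots,\ee_{n-1}\}$-face. Such a triangulation is obtained by coning or Whitney-type subdivision towards the removed faces. When pulling back via a chart I use $\bar\delta$ or $\bar\delta^\partial$ according to the type of $\f_{i+1}$. This choice is Borel, since it depends only on which summand $\f_{i+1}$ lies in.

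\textbf{Step 3: the Borel extension step.} Next I prove the analogue of Proposition~\ref{prop:TriangulationStep}. This is a Borel map $f_i\colon \TT^{\partial,i,i+1}_n\to\TT^{\partial,i+1,i+1}_n$ satisfying condition \eqref{eq:tauistaup}. As there, I take $h$ to be the identity, which is legitimate because the charts are already PL-compatible. I then run Bing's explicit local construction \cite[Theorem 7]{Bi59} gluing $T_1$ (the pullback of $\bar\delta$ or $\bar\delta^\partial$) and $T_2=\bar\tau$, with the same bookkeeping enumeration via $\bar\sigma$, $\bar\pi$ and $p_i$. This step is the main obstacle. One must check that Bing's piecewise construction, which subdivides simplexes of $W_2\cup V_2$ one at a time, still works near boundary points. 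There the neighborhoods are half-balls, and the simplexes of $T_1$ and $T_2$ meet $\partial M(\bar\f)$ in faces.

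I would handle this as follows. Bing's construction is a local, finite, rectilinear operation inside each simplex, applied to a configuration of finitely many polyhedra. The boundary simplexes are simply simplexes whose faces lie in $\partial M$, and PL-compatibility guarantees that the traces of $T_2$ in a chart of $T_1$ are polyhedra there. So the construction never needs an open neighborhood on both sides of the boundary. If that argument fails, the fallback is to double along the boundary face locally: reflect $\ubDelta^n$ across its distinguished face, apply the boundaryless construction, and restrict.

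\textbf{Step 4: iterate and pass to the limit.} Finally I iterate the step maps as in Lemma~\ref{lemma:inductiveTriangulation}, obtaining Borel $\mu_j$. I then take the eventual-limit $\bar\tau$ as in Theorem~\ref{thm:GetTrianulation}, using \ref{item:m3} to ensure stabilisation. Borelness of $\xi$ follows by the same computation $\xi^{-1}N=\Cap_k\Cup_{j_1}\Cap_{j\ge j_1}\zeta_{jk}^{-1}N_k$. Compatibility and the covering property are inherited locally from the approximants.
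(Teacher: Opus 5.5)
Your proposal is correct and follows the same route as the paper, whose proof only remarks that the arguments of Proposition~\ref{prop:TriangulationStep} and Theorem~\ref{thm:GetTrianulation} carry over to charts modelled on $\ubDelta^n$. Your steps are exactly the modifications that remark leaves implicit: a fixed standard triangulation of $\uobDelta^n$ for boundary-type charts, Bing's gluing with $h$ the identity (justified by PL-compatibility), and the same eventual-limit argument for Borelness.
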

\begin{proof}
    It follows from similar arguments to those of the proofs of Lemma \ref{prop:TriangulationStep} and Theorem \ref{thm:GetTrianulation}.
\end{proof}

We now say that $\bar\f,\bar\f'\in\MMM^{\partial, PL}_n$ are PL-homeomorphic, and write $\bar\f\homeo^{\partial,PL}_n\bar\f'$, if 
  there is a homeomorphism $h\colon M(\bar\f)\to M(\bar\f')$
  such that for all $\tau\colon\bDelta^n\to M(\bar\f)$,
  if $i$ and $j$ are such that $|\tau|\subset |\overset{\circ}{\f}_i|$,
  $|h\circ\tau|\subset |\overset{\circ}{\f}{}'_j|$
  and $\f_i^{-1}\circ\tau$ is PL, then
  $(\f'_j)^{-1}\circ h\circ\tau$ is also PL. 
  Using  Theorems \ref{thm:GetTrianulation2} and \ref{thm:BorelClassificationOfPLtoISO} we obtain the following result which says that PL \(n\)-manifolds are classifiable by countable structures.

\begin{thm}[$\homeo^{\partial,PL}\,\le_B\ \cong_\AAA$]
\label{thm:PLman_withboundary}
    There is a Borel map
    $\eta\colon \MMM^{\partial,PL}_n\to\AAA$ 
    which reduces $\homeo^{\partial,PL}_n$
    to~$\cong_\AAA$.
\end{thm}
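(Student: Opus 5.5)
The plan is to compose two Borel maps, exactly as in the proof of Corollary~\ref{cor:HomeoPL_on_Man_red_cong}. Let $\xi\colon \MMM^{\partial,PL}_n\to \TTT$ be the Borel map of Theorem~\ref{thm:GetTrianulation2}, which sends an atlas to a compatible triangulation of $M(\bar\f)$. Let $\zeta\colon\TTT\to\AAA$ be the Borel reduction of $\homeo^{PL}$ to $\cong_\AAA$ from Theorem~\ref{thm:BorelClassificationOfPLtoISO}. Set $\eta=\zeta\circ\xi$. This is Borel as a composition of Borel maps, and its range lies in $\AAA$.

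It remains to show that for $\bar\f,\bar\f'\in\MMM^{\partial,PL}_n$ we have $\bar\f\homeo^{\partial,PL}_n\bar\f'$ if and only if $\xi(\bar\f)\homeo^{PL}\xi(\bar\f')$. This is the analogue of Proposition~\ref{prop:PLiffPL}, and I would prove it by repeating that argument verbatim. Charts may now be of the form $\ubDelta^n$, but the argument never used the shape of the chart domain. It used only three things:
\begin{enumerate-(i)}
\item every simplex of a compatible triangulation lies in some $|\overset{\circ}\f_k|$, with $\f_k^{-1}\circ\tau_l$ PL;
\item compositions of PL maps are PL;
\item one can subdivide finely enough that images land in simplexes of the target triangulation.
\end{enumerate-(i)}

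For the forward direction, take a homeomorphism $h$ witnessing $\homeo^{\partial,PL}_n$. Subdivide $\xi(\bar\f)$ so that each simplex $\sigma_i$ maps into a simplex $\tau'_j$ of $\xi(\bar\f')$, which in turn sits in a chart $\f'_{k'}$. Then $(\f'_{k'})^{-1}\circ h\circ\sigma_i$ is PL. Compatibility, via the composition $(\tau'_j)^{-1}\circ\f'_{k'}$ restricted appropriately, then shows that $h$ is piecewise linear on each $\sigma_i$ relative to $\xi(\bar\f')$. Local finiteness assembles these pieces into a global PL-homeomorphism.

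For the converse, let $h$ be a PL-homeomorphism between the triangulations, and let $\tau$ be a simplex lying in charts with $\f_i^{-1}\circ\tau$ PL. Then $|\tau|$ is a compact polyhedron of $\xi(\bar\f)$, so $h\circ\tau$ is PL into $\xi(\bar\f')$. Compatibility of $\xi(\bar\f')$ with $\f'_j$ then makes $(\f'_j)^{-1}\circ h\circ\tau$ PL.

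The main obstacle is the point the paper glosses over. We must check that ``PL'' relative to a chart defined on $\ubDelta^n$ behaves exactly as in the interior case. In particular, we need that local PL-ness of a map into the half-space chart is tested on compact polyhedra meeting the boundary face, and that Lemma~\ref{fact:locallyPL-polyhedron} adapts to open subsets of $\uobDelta^n$. I would handle this by embedding $\ubDelta^n$ into the rectilinear triangulation of $\R^{n+1}_+$ and using Lemma~\ref{lem:ExtensionTo_bDelta} with $\uobDelta^n$ in place of $\obDelta^n$; the proof goes through unchanged because only compactness and rectilinear common subdivisions are used. Once this is settled, the equivalence holds, and $\eta$ reduces $\homeo^{\partial,PL}_n$ to $\cong_\AAA$.
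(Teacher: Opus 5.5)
Your proposal is the paper's argument. The paper sets $\eta=\zeta\circ\xi$, with $\xi$ from Theorem~\ref{thm:GetTrianulation2} and $\zeta$ from Theorem~\ref{thm:BorelClassificationOfPLtoISO}, and the reduction property comes from the boundary analogue of Proposition~\ref{prop:PLiffPL}, which the paper leaves implicit and you obtain by rerunning that proof.

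The one step to tighten is the subdivision in which each $h\sigma_i$ lies inside a single $\tau'_j$ (the paper's proof is equally loose here). Such a subdivision does not come from fineness alone. It does exist, because $h$ is PL in charts and the charts are locally PL-compatible, so the sets $h^{-1}|\tau'_j|$ are polyhedra in $\xi(\bar\f)$.
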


We are now able to get a classification by countable structures for \(3\)-manifolds with boundary as well. The main observation is that the boundary \(\partial M\) of a \(3\)-manifold \(M\) with boundary is a \(2\)-manifold without boundary and that one can extend a triangulation of the latter to the whole \(M\) in a Borel way. 

We denote by \(\homeo^\partial_3\) the homeomorphism relation between elements of \(\MMM_3^\partial\).
Given $\bar\f\in\MMM_3^\partial$,
one canonically obtains $\bar\psi\in \MMM_2$
such that $\partial M(\bar\f)=M(\bar\psi)$.
This assignment $\bar\f\mapsto\bar\psi$ 
is Borel.
Let $\partial\colon \MMM_3^\partial\to \MMM_2$
be this Borel map. Recall now that every topological \(3\)-manifold \(M\) with boundary admits a collar, that is, an open neighborhood of \(\partial M\) homeomorphic to \(\partial M \times [0,1)\). Denote by \(c:\MMM_2 \to \MMM_3^\partial\) the Borel map assigning to each \(M\) the element \(M \times [0,1)\). 

\begin{thm}[$\homeo_3^\partial\ \le_B\ \cong_\AAA$]
\label{thm:3man_withboundary}
  There is a Borel $\xi\colon \MMM_3^\partial\to \AAA$ 
  which reduces $\homeo_3^\partial$ to $\cong_\AAA$.
\end{thm}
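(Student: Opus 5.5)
We reduce to the PL case. Since two $3$-manifolds with boundary are homeomorphic iff any of their triangulations are PL-homeomorphic (Fact~\ref{fact:MoiseBing}), it suffices to construct a Borel map $\eta^\partial\colon \MMM_3^\partial\to\TTT$ with $\eta^\partial(\bar\f)$ a triangulation of $M(\bar\f)$. We then set $\xi=\zeta\circ\eta^\partial$, where $\zeta\colon\TTT\to\AAA$ is the reduction of Theorem~\ref{thm:BorelClassificationOfPLtoISO}. The reduction property follows exactly as in the proof of Theorem~\ref{thm:MainBorelReduction3}.

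\textbf{Step 1: triangulate the boundary.} Apply the Borel map $\partial\colon\MMM_3^\partial\to\MMM_2$ to obtain $\bar\psi$ with $M(\bar\psi)=\partial M(\bar\f)$. Then triangulate $\partial M(\bar\f)$ in a Borel way, using the Borel triangulation of $2$-manifolds from \cite{BS25} as in the proof of Theorem~\ref{thm:MainBorelReduction2}. Via the map $\xi_1$ of that proof, this yields an element of $\TTT$ together with charts, hence a PL atlas on the boundary. Products of simplexes with $[0,1)$, subdivided canonically, give a Borel PL atlas on the collar $c(\bar\psi)=\partial M\times[0,1)$.

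\textbf{Step 2: build a PL-compatible atlas on all of $M(\bar\f)$.} We use a Borel collar embedding $\partial M\times[0,1)\to M$, obtained by Borel selection (\cite[Theorem 18.10]{Kec95}), as in Lemmas~\ref{lemma:MoiseLemma4}--\ref{lemma:Exh}. Its existence is the classical collaring theorem, and the relevant sections can be arranged to be countable by restricting to suitable rationally parametrized embeddings. We then replace each chart $\f_i$ by charts of three kinds: interior charts $\obDelta^3$ mapping into $M\setminus\partial M$, and boundary charts pulled back from the collar atlas. The interior part $M\setminus \partial M$, or an open set containing the complement of a shrunken collar, is a $3$-manifold without boundary. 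We make its atlas PL-compatible with the collar charts by running the inductive adjustment of Proposition~\ref{thm:PL-compatible_step} and Theorem~\ref{thm:GetPLcomp}, starting with the (already PL) collar charts as the first charts and never modifying them. This works because Proposition~\ref{thm:PL-compatible_step} only changes the chart $\f_{i+1}$ and leaves $|\f_{i+1}|$ unchanged. The result is an element of $\MMM_3^{\partial,PL}$.

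\textbf{Step 3: triangulate.} Apply Theorem~\ref{thm:GetTrianulation2} to obtain a compatible triangulation in $\TTT$. Borelness of the composite follows because each step is a Borel map, and the limit constructions are handled exactly as in Theorems~\ref{thm:GetTrianulation} and \ref{thm:GetPLcomp}: each coordinate stabilizes by local finiteness.

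\textbf{Main obstacle.} The hardest part is Step 2, namely producing the collar and gluing it to the interior atlas in a Borel fashion. For the collar, the first approach is to avoid a topological collar altogether: choose, by Borel uniformization, for each boundary chart a PL-compatible thickening, i.e.\ an embedding of $\ubDelta^3$ whose boundary face agrees with the triangulated $\partial M$. The justification is Moise's theorem that a triangulation of $\partial M$ extends to a neighborhood, and this is what guarantees nonempty sections. Extending the Moise--Bing approximation lemmas to the relative setting, where boundary values are fixed, is the technical core of the argument. Here Lemma~\ref{lemma:QPLApprox} with a fixed subcomplex $S_1$ should supply the needed "$g\rest S_1=f\rest S_1$" property.
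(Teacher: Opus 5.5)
Your proposal follows the paper's route: Borel-triangulate $\partial M$ via \cite{BS25}, extend over the product collar, extend to all of $M$ with the machinery of Section~\ref{ssec:BorelTriangulation}, and compose with Theorem~\ref{thm:BorelClassificationOfPLtoISO}; you are also more explicit than the paper about placing the collar inside $M$ and about making the interior charts PL-compatible with it (though the infinitely many collar charts cannot literally all come first, so you should order the atlas so that each interior chart follows the finitely many collar charts it meets, and skip the adjustment step at collar charts). The one step that neither you nor the paper actually justifies is the Borel choice of the collar embedding, and your countable-sections justification does not work as stated: before a PL structure near $\partial M$ exists there is no evident countable class of ``rationally parametrized'' collar embeddings, and Lemma~\ref{lemma:QPLApprox} only supplies the final PL-to-$\Q$-PL step, so what is really needed is a constructive (Borel) version of Brown's collaring theorem or a relative Moise--Bing approximation theorem.
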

\begin{proof}
    Let \(M \in \MMM_3^\partial\). By \cite{BS25} and the proof of Theorem \ref{thm:MainBorelReduction2}, there is a Borel procedure producing a triangulation \(T_{\partial(M)}\) of \(\partial(M)\). Using the standard triangulation of products of simplicial complexes, \(T_{\partial(M)}\) extends canonically to a triangulation of the collar \(c(\partial(M))\). Arguing as in the first part of the proof of Theorem \ref{thm:GetTrianulation}, this triangulation can then be extended to a triangulation \(T\) of the whole manifold \(M\). Both extension procedures are Borel, and therefore yield a Borel map
    \(
    \eta\colon \MMM_3^\partial \to \TTT
    \)
    assigning to each \(M\in\MMM_3^\partial\) a triangulation \(T\).
    Then we have that \(\bar \f,\bar \f' \in \MMM_3^\partial\) are homeomorphic if and only if \(\eta(\bar \f)\) and \(\eta(\bar\f')\) are PL-homeomorphic, and by Theorem \ref{thm:BorelClassificationOfPLtoISO} we obtain the desired reduction. 
\end{proof}






\vspace{10pt}

\subsection{Borel classification of open subsets of~$\R^2$ and $\R^3$ up to homeomorphism}
\label{ssec:OpenSubsets}


In this section we will prove that there is a Borel reduction of the
homeomorphism relation on open subsets of $\R^n$ for $n\in \{2,3\}$ to
the isomorphism relation on sorted complemented algebras.  First, let
us define the Borel space of open subsets of $\R^n$.

\begin{Def} \label{def:SpaceOfOpenSets}
  Let $O(\R^n)$ be the set of open subsets of~$\R^n$. Let $K(S^n)$ be
  the space of compact subsets of the $n$-dimensional sphere $S^n$.
  Let $h\colon \R^n\to S^n$ be the canonical embedding given by the
  inverse of the stereographic projection, and denote by
  $\infty\in S^n$ the only point not in the range of $h$. Then the set
  $K^{\infty}(S^n):=\{K\in K(S^n)\mid\infty\in K\}$ is a closed subset
  of $K(S^n)$ and hence a compact Polish space.  Let
  $g\colon O(\R^n) \to K^{\infty}(S^n)$ be defined by
  $g(O)=S^n\setminus h[O]$. 
  We equip $O(\R^n)$ with the metric
  $$d(O,O')=d_H(g(O),g(O'))$$
  where $d_H$ is the Hausdorff metric on $K^{\infty}(S^n)$.
  This makes $O(\R^n)$ into a compact space.
  Let $\homeo_n^O$ be the homeomorphism relation on~$O(\R^n)$.
\end{Def}

The following is an application of the Whitney Decomposition Theorem.

\begin{prop}\label{prop:FromOpenToMan}
  There is a Borel $\xi\colon O(\R^n)\to \MMM^{PL}_n$ such that for all
  $O\in O(\R^n)$, $O\homeo M(\xi(O))$. 
\end{prop}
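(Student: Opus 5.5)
The plan is to use the Whitney decomposition of an open set $O\subset\R^n$ into closed dyadic cubes, turn it in a canonical way into a locally finite rectilinear triangulation of $O$, and read off the atlas $\xi(O)$ from that triangulation. Everything is driven by countable, rational data, which is what will make the construction Borel.

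First, for each $k\in\Z$ let $\DD_k$ be the set of closed dyadic cubes of side $2^{-k}$. A cube $Q\in\DD_k$ is \emph{Whitney for $O$} if $3Q\subset O$ (the concentric cube with three times the side) and no dyadic ancestor of $Q$ has this property. The maximal such cubes form a locally finite cover $\mathcal W(O)$ of $O$ by cubes with disjoint interiors. The relation ``$3Q\subset O$'' is equivalent to $3Q\cap g(O)$-preimage being empty, i.e.\ $h[3Q]\cap g(O)=\es$ for the compact set $h[3Q]\subset S^n$. This is an open condition in $g(O)\in K^\infty(S^n)$, by Fact~\ref{fact:K(X)}. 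Hence for each of the countably many dyadic cubes $Q$, the set $\{O\mid Q\in\mathcal W(O)\}$ is a Boolean combination of open sets, and so it is Borel.

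Second, I triangulate the union. Adjacent Whitney cubes differ in size by a factor of at most $4$, but their faces do not match. So I use the standard canonical procedure: subdivide every face of every Whitney cube by barycentric (star) subdivision in increasing order of dimension, so that each face is coned from its barycenter over the already triangulated boundary, where that boundary includes the vertices of all smaller neighbouring cubes lying on it. This produces a rectilinear triangulation $T(O)$ of $O$ with vertices in $\Q^n$. Whether a given simplex $\Delta[\vv_0,\dots,\vv_k]$ with dyadic rational vertices belongs to $T(O)$ depends only on which cubes in a bounded neighbourhood are Whitney cubes. It is therefore a Borel condition on $O$, and it is local: finitely many cube-membership conditions suffice. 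Since $O$ is open, the triangulation is locally finite in $O$.

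Third, I turn this into an element of $\MMM^{PL}_n$. Enumerate the countably many rational $n$-simplexes $\sigma_0,\sigma_1,\dots$ of $\R^n$. Using the star of each vertex $v$ of $T(O)$, choose a rational rectilinear $n$-simplex $\f_v\colon\bDelta^n\to O$ whose interior contains $\St_{T(O)}(v)$, or at least contains $v$ together with the interior of its open star. The choice should be canonical, for instance the least index $i$ such that $\sigma_i$ contains the closed star in its interior and lies inside the union of the Whitney cubes adjacent to $v$. Listing the $\f_v$ in a fixed order of rational points and composing with the isometry $\iota\colon\R^n\to\U$ gives $\xi(O)$. The charts are affine, so all transition maps are affine, and the atlas is therefore locally PL-compatible. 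Conditions \ref{item:m1}--\ref{item:m3} follow from local finiteness and from the fact that every point lies in the open star of a vertex. Each coordinate of $\xi(O)$ is determined by countably many Borel conditions of the kind above, so $\xi$ is Borel in the same way as in the proof of Theorem~\ref{thm:GetPLcomp}.

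The step I expect to be the main obstacle is the Heine–Borel requirement in Definition~\ref{def:SpaceOfManifolds}. The induced Euclidean metric on $O$ is not Heine–Borel, because bounded closed sets can accumulate at $\partial O$. I would fix this by changing the embedding rather than the charts. Use the Borel map $O\mapsto \rho_O(x)=d(x,\R^n\setminus O)$, which is continuous in $O$ where it is defined, and embed $O$ into $\R^{n+1}$ via $x\mapsto(x,1/\rho_O(x))$. Then place $\R^{n+1}$ isometrically in $\U$. The image is closed and Heine–Borel, and it is homeomorphic to $O$. Composing the affine charts with this graph map keeps transition maps of the form $\f_j^{-1}\circ\f_i$, which are still affine. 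What has to be checked is that this composed map is still an embedding of $\bDelta^n$ and that it depends on $O$ in a Borel way. Both follow from the continuity of $(O,x)\mapsto\rho_O(x)$ together with Lemma~\ref{lemma:SupmetricHomeoEmb}.
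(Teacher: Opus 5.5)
Your Step 3 has a genuine gap. You never show that the affine chart $\f_v$ exists, and in simple configurations it does not.

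Take $n=2$ and $O=(-3,3)^2$. The four unit squares with a corner at $v=0$ are Whitney in your sense: for instance $3[0,1]^2=[-1,2]^2\subset O$, while for the parent $3[0,2]^2=[-2,4]^2\not\subset O$. These squares tile a neighbourhood of every face through $v$, so none of those faces is subdivided further. Hence the closed star of $v$ in your coned subdivision is exactly $[-\frac12,\frac12]^2$, while the union of the Whitney squares adjacent to $v$ is $[-1,1]^2$. A triangle inside a square of side $2$ has area at most $2$, and a triangle containing a unit square in its interior has area strictly larger than $2$. So your ``least index $i$'' does not exist.

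Relaxing the requirement to just $\sigma_i\subset O$ does not fix this in general. For $O=(-2.01,2.01)^n$ the $2^n$ unit cubes at the origin are again Whitney, and the star is $[-\frac12,\frac12]^n$. Compare the regular simplex circumscribed about the ball of radius $\frac12$ with the regular simplex inscribed in the ball of radius $a\sqrt n$. The volume comparison shows that a simplex containing $[-\frac12,\frac12]^n$ cannot lie in $[-a,a]^n$ unless $a\ge\sqrt n/2$, which fails for $n\ge 17$. The proposition is stated for every $n$, and even for $n\le 3$ you give no quantitative argument that a suitable simplex fits inside $O$. So the atlas is not actually constructed.

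The natural repair is to drop affine charts, and this is the paper's route. It does not triangulate at all; it uses the slightly enlarged Whitney cubes $C[1/8]$ themselves as charts, each parametrized by a fixed PL homeomorphism from $\bDelta^n$. Their interiors cover $O$, local finiteness is inherited from the Whitney family, and transition maps are compositions of PL maps, hence PL. With your normalization $3Q\subset O$, any enlargement factor below $3$ stays inside $O$.

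Your Step 2 then becomes unnecessary, which helps, because as written it is imprecise. A face of a large cube that is tiled by faces of smaller neighbours must not itself be coned from its barycenter; one has to triangulate the common refinement by minimal faces.

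Your Step 4 addresses something the paper's proof passes over in silence: the charts must land in $\U$ with $M(\xi(O))$ Heine--Borel, which the Euclidean copy of $O$ is not. The graph of $1/\rho_O$ is closed in $\R^{n+1}$, leaves transition maps unchanged, and depends on $O$ in a Borel way. So it combines directly with the cube charts.

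Finally, two edge cases need separate treatment, and the paper ignores them too. For $O=\R^n$ your family of maximal cubes is empty. For $O=\es$ there is no $M(\bar\f)$ homeomorphic to it, so this case has to be excluded.
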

\begin{proof}
  For a closed cube $C\subset\R^n$ and $\delta\in\R_+$, denote by
  $C[\delta]$ the closed cube whose side length is $(1+\delta)$ times
  that of~$C$.
  Let $O\in O(\R^n)$.
  The Whitney Decomposition Theorem 
  (for a precise statement that we rely on, see \cite[Appendix J]{grafakos2014classical})
  gives a collection 
  of dyadic cubes $\CC(O)$ such that 
  $$D=\{C[1/8]\mid C\in\CC(O)\}$$
  is a countable cover of $O$ such that for every $x\in O$ there
  is $\e>0$ such that $B(x,\e)$ intersects only finitely many
  elements of $D$. Suppose $D=(C_i)_{i\in\N}$ is an enumeration of $D$. 
  Every $C[1/8]$ is canonically PL-homeomorphic
  to $\bDelta^n$ with respect to a rectilinear triangulation of both $\bDelta^n$ and $\R^n$, so for each $i$ fix $\f_i\colon \bDelta^n\to C_i$
  to be such a canonical PL-homeomorphism. Clearly $\bar\f$
  satisfies \ref{item:m1}--\ref{item:m3} for $O=M$
  and the maps $\f_i$, $\f_j$ are PL-compatible
  for $i,j\in\N$.
  It remains to show that $\CC(O)$ can be obtained from $O$ in a constructive
  way so that the map $O\mapsto \bar\f$ is, in fact, Borel.
  We refer the reader to the proof of Whitney Decomposition
  in \cite[Appendix J]{grafakos2014classical} which gives a 
  sufficiently constructive proof for this purpose.
\end{proof}


\begin{Cor}[$n=2,3$, $\homeo^O_n\ \le_B\ \cong_\AAA$]%
  \label{cor:RedFromOpenToMan}
  Let $n\in\{2,3\}$.
  There is a Borel map 
  $\xi\colon O(\R^n)\to \AAA$ reducing $\homeo_n^O$
  to~$\cong_\AAA$.
\end{Cor}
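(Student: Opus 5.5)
The plan is to compose Borel maps that are already available. Proposition~\ref{prop:FromOpenToMan} gives a Borel map $\xi_0\colon O(\R^n)\to\MMM^{PL}_n$ with $O\homeo M(\xi_0(O))$. Theorem~\ref{thm:GetTrianulation} gives a Borel map $\xi_1\colon\MMM^{PL}_n\to\TTT$ such that $\xi_1(\bar\f)$ is a compatible triangulation of $M(\bar\f)$. Theorem~\ref{thm:BorelClassificationOfPLtoISO} gives a Borel reduction $\zeta\colon\TTT\to\AAA$ of $\homeo^{PL}$ to $\cong_\AAA$. I would define $\xi:=\zeta\circ\xi_1\circ\xi_0$; it is Borel as a composition of Borel maps.

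To show that $\xi$ is a reduction, take $O,O'\in O(\R^n)$. Then $O\homeo O'$ holds if and only if $M(\xi_0(O))\homeo M(\xi_0(O'))$, since each of $O$, $O'$ is homeomorphic to its image manifold. These manifolds are non-compact or compact $n$-manifolds without boundary with $n\in\{2,3\}$, and $\xi_1(\xi_0(O))$, $\xi_1(\xi_0(O'))$ are triangulations of them. By Fact~\ref{fact:MoiseBing}, the manifolds are homeomorphic if and only if these triangulations are PL-homeomorphic. By the choice of $\zeta$, that holds if and only if $\xi(O)\cong_\AAA\xi(O')$.

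Two small points need checking. First, $\xi_1(\xi_0(O))$ must be a Heine-Borel complex lying in $\TTT$. This is guaranteed by the codomain of Theorem~\ref{thm:GetTrianulation}, once the charts from Proposition~\ref{prop:FromOpenToMan} are embedded isometrically into $\U$ so that $M(\bar\f)$ is Heine-Borel. I would arrange this by using a Heine-Borel metric on $O$ (Fact~\ref{fact:locally_compact_HB}), chosen uniformly; for instance, the graph embedding $x\mapsto(x,1/d(x,\R^n\setminus O))$ composed with a fixed isometry into $\U$. I expect this to be the main technical point. The second point is the degenerate case $O=\es$, which can be sent to a fixed code separately, since the Borel set $\{\es\}$ is a single class.

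Alternatively, one could bypass $\xi_1$ for $n=3$ by using Theorem~\ref{thm:MainBorelReduction3} directly. For $n=2$ one could go through Theorem~\ref{thm:MainBorelReduction2}, after noting that $\MMM^{PL}_n\subset\MMM_n$. Either route yields the corollary.
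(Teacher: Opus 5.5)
Your proposal is correct and is essentially the paper's proof. The paper composes Proposition~\ref{prop:FromOpenToMan} with Corollary~\ref{cor:HomeoPL_on_Man_red_cong}, which is exactly your $\zeta\circ\xi_1$, and invokes Fact~\ref{fact:MoiseBing}; it passes through $\homeo^{PL}_n$, whereas you go straight to PL-homeomorphism of the triangulations.

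Your Heine-Borel re-embedding and your separate treatment of $O=\es$ patch points the paper passes over. For $O=\es$, make sure the fixed code lies outside every class in the range, e.g.\ $\zeta(\bar\tau_0)$ for a fixed $\bar\tau_0\in\TTT$ triangulating the compact $S^n$.
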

\begin{proof}
  Let $\xi_1\colon O(\R^n)\to \MMM^{PL}_n$ be as given by Proposition \ref{prop:FromOpenToMan},
  and $\xi_2\colon \MMM_n^{PL}\to\AAA$
  given by Corollary~\ref{cor:HomeoPL_on_Man_red_cong}. Let $\xi=\xi_2\circ\xi_1$.
  Suppose $O,O'\in O(\R^n)$. Then $O\homeo O'$
  if and only if $\xi_1(O)\homeo \xi_1(O')$
  (by the choice of $\xi_1$)
  if and only if $\xi_1(O)\homeo^{PL}_n\xi_1(O')$
  (by Fact~\ref{fact:MoiseBing} and the choice of~$n$)
  if and only if $\xi_2(\xi_1(O))\cong \xi_2(\xi_1(O'))$ by the choice of $\xi_2$.
\end{proof}




\vspace{10pt}

\subsection{Borel classification of Cantor sets in~$\R^3$ up to 
conjugacy}
\label{ssec:BorelClassCantor}


\begin{Def}
  Let $\CCC(\R^3)$ be the space of Cantor subsets of $\R^3$ defined by
  $$\CCC(\R^3)=\{K\in K(\R^3)\mid K\text{ is perfect and totally disconnected}\}.$$
  By \cite{GKB13} this is a Polish space. The conjugacy relation on
  $\CCC(\R^3)$ is defined by $C\simeq C'$ iff there is a homeomorphism
  $h\colon \R^3\to \R^3$ such that $h[C]=C'$.
\end{Def}

The following answers Question 1.1 of \cite{GKB13}.

\begin{thm}[$\simeq\ \le_B\ \cong_\AAA$]%
  \label{thm:CantorBorelRed}
  There is a Borel $f\colon \CCC(\R^3)\to \AAA$ which reduces 
  $\simeq$ to $\cong_\AAA$.
\end{thm}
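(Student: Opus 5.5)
The reduction will be the composite $f=\xi\circ g$, where $g\colon \CCC(\R^3)\to O(\R^3)$ is given by $g(C)=\R^3\setminus C$ and $\xi\colon O(\R^3)\to\AAA$ is the Borel reduction of $\homeo^O_3$ to $\cong_\AAA$ from Corollary~\ref{cor:RedFromOpenToMan}.

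\emph{Step 1: $g$ is Borel.} With the notation of Definition~\ref{def:SpaceOfOpenSets}, we have
\[
g(C)\mapsto S^3\setminus h[\R^3\setminus C]=h[C]\cup\{\infty\}.
\]
The map $K(\R^3)\to K^\infty(S^3)$, $C\mapsto h[C]\cup\{\infty\}$, is continuous. To see this, note first that $C\mapsto h[C]$ is continuous: $h$ is an embedding, so it induces a continuous map between the Vietoris hyperspaces on compact sets. Adding $\{\infty\}$ is continuous by Fact~\ref{fact:K(X)}\ref{prop:UnionContinuous}. Since $O(\R^3)$ carries precisely the metric pulled back from $K^\infty(S^3)$, the map $g$ is continuous, hence Borel, on the subspace $\CCC(\R^3)\subset K(\R^3)$.

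\emph{Step 2: $g$ is a reduction of $\simeq$ to $\homeo^O_3$.} We need
\[
C\simeq C' \iff \R^3\setminus C\homeo \R^3\setminus C'.
\]
This is exactly the equivalence established in the proof of Corollary~\ref{cor:CantorSets}. The forward direction is immediate, by restricting the ambient homeomorphism to the complement. The backward direction uses \cite[Theorem~4.1]{CM83} and the remark following it: a homeomorphism of the complements extends to $\R^3$ because Cantor sets are locally non-separating in $\R^3$, which holds by \cite[Thm~IV.4]{HurewiczWallman1941}.

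\emph{Step 3: conclusion.} Since $\xi$ is a Borel reduction, $f=\xi\circ g$ is Borel, and
\[
C\simeq C'\iff g(C)\homeo^O_3 g(C')\iff f(C)\cong_\AAA f(C').
\]

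The only step I expect to need care is the continuity of $C\mapsto h[C]\cup\{\infty\}$ together with the compatibility of the topology on $O(\R^3)$. Given Definition~\ref{def:SpaceOfOpenSets}, this is essentially built in; everything else is quoted from earlier results.
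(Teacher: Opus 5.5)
Your proposal is correct and is essentially the paper's proof. Both compose the complement map $C\mapsto\R^3\setminus C$ with the reduction of Corollary~\ref{cor:RedFromOpenToMan}, and both justify the reduction property by the equivalence from the proof of Corollary~\ref{cor:CantorSets}. Your continuity check via $C\mapsto h[C]\cup\{\infty\}$ into $K^{\infty}(S^3)$ simply spells out the step the paper calls straightforward.
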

\begin{proof}
  By Corollary~\ref{cor:RedFromOpenToMan} it is enough to show that
  there is a Borel reduction from $\simeq$ to the $\homeo$-relation on
  $O(\R^3)$.  As noted in the proof of Corollary \ref{cor:CantorSets}, the
  Cantor sets $C,C'$ are conjugate if and only if
  $\R^3\setminus C\homeo \R^3\setminus C'$. It is straightforward to see
  that the map $C\mapsto \R^3\setminus C$ is a Borel map from $\CCC(\R^3)$
  to $O(\R^3)$ (see Definition~\ref{def:SpaceOfOpenSets}).
\end{proof}


\vspace{10pt}
\changelocaltocdepth{1}

\section{Final results and remarks}
\label{sec:Final}
In this last section we show that many equivalence relations previously studied are Borel bireducible with the isomorphism on countable graphs, denoted by $\cong_{\mathcal{G}}$.
We start proving that the latter is Borel reducible to the homeomorphism between open subset of $\R^n$, for $n\ge 2$. The result is known, but we state it for the sake of completeness. 

\begin{prop}[$\cong_{\mathcal{G}}\ \le_B\ \homeo_n^{O}$]%
    \label{thm:CongToHomeo}
    $\cong_{\mathcal{G}}$ is Borel reducible to $\homeo_n^{O}$ for all $n\ge 2$.
\end{prop}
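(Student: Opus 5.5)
The plan is to compose two known reductions, as the introduction indicates. First, by Camerlo--Gao \cite{CamGao2001}, $\cong_{\mathcal{G}}$ is Borel bireducible with homeomorphism of compact totally disconnected (zero-dimensional) metrizable spaces, and by the classical embedding theorem every such space embeds into a fixed Cantor set $\mathcal{C}\subset\R$. Concretely, I will use the standard Borel reduction of countable graphs to countable Boolean algebras (Stone duality, as in \cite{CamGao2001}), and realize each countable Boolean algebra, in a Borel way, as the algebra of clopen subsets of a closed set $K\subseteq\mathcal{C}$. This gives a Borel map $G\mapsto K_G\in K(\mathcal{C})$ with $G\cong G'$ iff $K_G\homeo K_{G'}$. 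Next I fix a line segment in $\R^n$ ($n\ge2$) containing the image of $\mathcal{C}$, so that each $K_G$ becomes a compact totally disconnected subset of $\R^n$. To prevent trivial degeneracies, such as isolated points acting differently from non-isolated ones, I may first replace $K_G$ by $K_G\times\mathcal{C}$, or by $K_G\cup\mathcal{C}'$ for a fixed disjoint Cantor set $\mathcal{C}'$, in a way compatible with the reduction.

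The second step maps $K$ to the open set $O_K=\R^n\setminus K$. This map is clearly Borel, indeed continuous into $O(\R^n)$ with the metric of Definition~\ref{def:SpaceOfOpenSets}, since $g(O_K)=h[K]\cup\{\infty\}$. If $K\homeo K'$, the homeomorphism extends to an ambient homeomorphism of $\R^n$ by \cite[Theorem~4.1]{CM83}. Here we need that tame (lying in a line) compact zero-dimensional sets are locally non-separating, which follows from \cite[Thm~IV.4]{HurewiczWallman1941} as in Corollary~\ref{cor:CantorSets}. The extension restricts to a homeomorphism $O_K\homeo O_{K'}$.

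The main obstacle is the converse: from $O_K\homeo O_{K'}$ we must recover $K\homeo K'$. My plan is to use the end space, or Freudenthal compactification. Since $K$ is totally disconnected and non-separating, the ends of $O_K$ are in natural bijection with $K\cup\{\infty\}$. Moreover, the Freudenthal compactification of $O_K$ is $S^n$, with end space $h[K]\cup\{\infty\}$. A homeomorphism $O_K\to O_{K'}$ therefore induces a homeomorphism of end spaces $K\cup\{\infty\}\homeo K'\cup\{\infty\}$, where $\infty$ is an isolated point added to $K$. To eliminate the extra point, I will use the $\times\mathcal{C}$ modification so that $K_G$ is perfect; then $K\cup\{pt\}$ determines $K$, because the isolated point can be located intrinsically. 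Alternatively, I will reduce to the Camerlo--Gao class in which the invariant is insensitive to adjoining one isolated point. Checking that this bookkeeping still yields a reduction of $\cong_{\mathcal{G}}$ is the delicate point, and I would handle it by passing through a Boolean algebra invariant, namely the clopen algebra modulo finite sets, or equivalently by taking products with $\mathcal{C}$.
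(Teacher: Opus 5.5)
Your skeleton matches the paper's: Camerlo--Gao gives a reduction to homeomorphism of compact subsets of a Cantor set in a line, and then you take $K\mapsto \R^n\setminus K$. Your Freudenthal argument for the converse is a valid substitute for \cite[Theorem~4.1]{CM83}. The set $h[K]\cup\{\infty\}$ is compact, zero-dimensional and does not locally separate $S^n$, so $S^n$ is the Freudenthal compactification of $O_K$. Hence a homeomorphism $O_K\to O_{K'}$ extends to $S^n$ and carries $h[K]\cup\{\infty\}$ onto $h[K']\cup\{\infty\}$.

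The forward direction, however, is not justified. \cite[Theorem~4.1]{CM83} extends homeomorphisms of complements; it says nothing about extending an abstract homeomorphism $K\to K'$. Local non-separation is also not the relevant hypothesis. Every compact zero-dimensional subset of $\R^3$, Antoine's necklace included, is locally non-separating, yet no homeomorphism of $\R^3$ carries Antoine's necklace onto a standard Cantor set. What you need is tameness. Since your sets lie in a line in $\R^2$, use the planar theorem that homeomorphisms between compact totally disconnected subsets of $\R^2$ extend to $\R^2$ \cite[Chapter 13, Theorem 7]{Mo77}. Then take the product with the identity of $\R^{n-2}$; this is how the paper argues.

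Second, you correctly spot the extra end $\infty$, but the fixes you propose destroy the reduction.
\begin{itemize}
\item For every nonempty compact zero-dimensional $K$, the product $K\times\mathcal{C}$ is nonempty, compact, metrizable, zero-dimensional and perfect. By Brouwer's theorem it is homeomorphic to $\mathcal{C}$. So all the modified sets become homeomorphic, and so do their (tame) complements.
\item The map $K\mapsto K\sqcup\mathcal{C}'$ identifies $\{p\}$ with $\{p\}\sqcup\mathcal{C}$.
\item The clopen algebra modulo finite sets is the same for $\mathcal{C}$ and for $\mathcal{C}$ with one isolated point adjoined.
\end{itemize}
No modification is needed. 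Suppose $f\colon K\sqcup\{\infty\}\to K'\sqcup\{\infty\}$ is a homeomorphism with $f(\infty)=x'\neq\infty$. Then $x'$ is isolated in $K'$ and $x=f^{-1}(\infty)$ is isolated in $K$. Setting $g(x)=x'$ and $g=f$ on $K\setminus\{x\}$ gives a homeomorphism $K\to K'$.

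With this cancellation and the corrected forward step, the unmodified Camerlo--Gao sets $K_G$ already give the reduction.
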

\begin{proof}
    By \cite{CamGao2001}, $\cong_{\mathcal{G}}$
    is Borel reducible to the homeomorphism
    relation $\homeo$ on $K(2^\N)$. 
    Let $h\colon 2^\N\to \R$  be the standard
    ``$\frac{1}{3}$-Cantor set embedding''
    into $\R$ and let $\iota\colon\R\to\R^n$ be the standard inclusion. Let $\xi\colon K(2^\N)\to O(\R^n)$
    be given by 
    $\xi(K)=\R^n\setminus (\iota\circ h)[K].$
    This map is easily seen to be Borel. Let us show that it
    is a reduction. If $K\homeo K'$, then
    $(\iota\circ h)[K]\homeo (\iota\circ h)[K']$
    and this homeomorphism extends to $\R^2$
    by \cite[Chapter 13, Theorem 7]{Mo77}, which in turn trivially
    extends to $\R^n$. Thus, $\xi(K)\homeo^O_n \xi(K')$. If, on the other hand, 
    $\xi(K)\homeo^O_n \xi(K')$, then this homeomorphism extends
    to a homeomorphism between $(\iota\circ h)[K]$ and $(\iota\circ h)[K']$ by
    \cite[Theorem~4.1]{CM83} and the remark right after its proof. Conjugating with $\iota\circ h$ gives a homeomorphism from $K$ to~$K'$.
\end{proof}

\begin{Cor}
    For $n \in \{2,3\}$, the relations \(\homeo^O_n\), \(\homeo^{PL}_n\), \(\homeo^{\partial,PL}_n\), \(\homeo_n\), \(\homeo_3^\partial\), \(\cong_{\AAA}\),  and $\simeq$ are Borel bireducible with \(\cong_{\mathcal{G}}\).
\end{Cor}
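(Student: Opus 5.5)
The plan is to close a cycle of Borel reductions. Every relation in the list sits between $\cong_{\mathcal{G}}$ and $\cong_\AAA$, so it is enough to show that $\cong_{\mathcal{G}}$ reduces to each relation, that each relation reduces to $\cong_\AAA$, and that $\cong_\AAA \le_B \cong_{\mathcal{G}}$.

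For the upper bounds:
\begin{itemize}
\item $\homeo^O_n \le_B \cong_\AAA$ is Corollary~\ref{cor:RedFromOpenToMan}.
\item $\homeo^{PL}_n$ is handled by Corollary~\ref{cor:HomeoPL_on_Man_red_cong}, and $\homeo^{\partial,PL}_n$ by Theorem~\ref{thm:PLman_withboundary}.
\item $\homeo_2$ and $\homeo_3$ are Theorems~\ref{thm:MainBorelReduction2} and~\ref{thm:MainBorelReduction3}.
\item $\homeo_3^\partial$ is Theorem~\ref{thm:3man_withboundary}, and $\simeq$ is Theorem~\ref{thm:CantorBorelRed}.
\end{itemize}
$\AAA$ is a Borel class of countable structures in a countable language, closed under isomorphism. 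By the classical universality of graph isomorphism among isomorphism relations on Borel classes of countable structures (Friedman--Stanley), $\cong_\AAA \le_B \cong_{\mathcal{G}}$. Composing, every relation in the list reduces to $\cong_{\mathcal{G}}$.

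For the lower bounds, Proposition~\ref{thm:CongToHomeo} gives $\cong_{\mathcal{G}} \le_B \homeo^O_n$. The open sets $\R^n\setminus(\iota\circ h)[K]$ produced there are open subsets of $\R^n$, hence PL $n$-manifolds without boundary. Composing with the Borel map $\xi$ of Proposition~\ref{prop:FromOpenToMan} lands in $\MMM^{PL}_n\subset\MMM_n$. By Fact~\ref{fact:MoiseBing}, for $n\in\{2,3\}$ homeomorphism and PL-homeomorphism agree there, so this reduces $\cong_{\mathcal{G}}$ to $\homeo_n$ and to $\homeo^{PL}_n$.

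For the boundary versions, I would compose further with $M\mapsto M\times[0,1)$, the collar map $c$, which is Borel, with a PL product atlas in the PL case. The point to verify is that $M\times[0,1)\homeo M'\times[0,1)$ implies $M\homeo M'$. This holds because a homeomorphism of manifolds with boundary preserves the boundary by invariance of domain, and the boundary of $M\times[0,1)$ is $M\times\{0\}\cong M$.

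For $\simeq$, Gao--Kent--Banakh type results give the lower bound directly. Alternatively, I would send $K\in K(2^\N)$ to the Cantor set $(\iota\circ h)[K]\cup D$ in $\R^3$, with $D$ a fixed tame Cantor set added so the image is perfect. I would then argue via tame Cantor sets: tame Cantor sets are all conjugate, so this particular map may lose information. So for $\simeq$ I would rather cite \cite{GKB13}, which proves $\cong_{\mathcal{G}}\le_B\simeq$ (their Corollary 6.5 route for dimension 3). For $\cong_\AAA$, $\cong_{\mathcal{G}}\le_B\cong_\AAA$ follows by transitivity, since $\cong_{\mathcal{G}}\le_B\homeo_3\le_B\cong_\AAA$.

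The main obstacle I expect is the lower bound for $\simeq$ and for the boundary cases, where a self-contained reduction needs care. I would lean on the literature for $\simeq$ and on the collar argument for the boundary cases.
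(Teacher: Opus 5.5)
Your cycle of reductions is the paper's, with one case missing. The upper bounds come from the same results plus Friedman--Stanley. The lower bounds come from Proposition~\ref{thm:CongToHomeo} pushed through Proposition~\ref{prop:FromOpenToMan} and Fact~\ref{fact:MoiseBing}, with \cite{GKB13} for $\simeq$ (the paper uses Theorem~5.4 there). For $\homeo_3^\partial$ and $\homeo^{\partial,PL}_3$, your collar argument is sound: compose $\cong_{\mathcal G}\le_B\homeo_2$ with $c\colon\MMM_2\to\MMM_3^\partial$, and recover $M$ as the boundary of $M\times[0,1)$. It is in fact more explicit than the paper's appeal to ``the same reduction'', which by itself yields manifolds without boundary.

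The gap is $\homeo^{\partial,PL}_2$, which the statement includes (the case $n=2$). The collar raises dimension by one, so to land in $\MMM^{\partial,PL}_2$ you would have to start from $1$-manifolds. There are only countably many homeomorphism types of $1$-manifolds, while $\cong_{\mathcal G}$ has continuum many classes, so no reduction can pass through them. You need a genuinely $2$-dimensional way of creating boundary.

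One fix is to remove a disc. Let $C_K=(\iota\circ h)[K]\subset\R^2$ as in Proposition~\ref{thm:CongToHomeo}. Fix an open square $Q$ whose closure misses every $C_K$, and set $N_K=\R^2\setminus(C_K\cup Q)$, a PL $2$-manifold with boundary $\partial Q$.

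If $g\colon N_K\to N_{K'}$ is a homeomorphism, it maps $\partial Q$ onto $\partial Q$ by invariance of domain. Extending $g\rest\partial Q$ over $\overline Q$ by coning gives a homeomorphism $\R^2\setminus C_K\to\R^2\setminus C_{K'}$, so $K\homeo K'$ by the argument of Proposition~\ref{thm:CongToHomeo}.

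Conversely, if $K\homeo K'$, take an ambient $F$ with $F[C_K]=C_{K'}$, as in that proof. Then $F[\overline Q]$ and $\overline Q$ are closed discs in the connected surface $\R^2\setminus C_{K'}$. By the disc theorem (discs in surfaces are tame by Schoenflies), some self-homeomorphism $G$ of that surface carries $F[\overline Q]$ onto $\overline Q$. Hence $G\circ F$ maps $N_K$ onto $N_{K'}$, and Fact~\ref{fact:MoiseBing} upgrades this to a PL-homeomorphism.

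A Borel PL atlas for $K\mapsto N_K$ is routine. For example, use the Whitney cubes of $\R^2\setminus C_K$ as in Proposition~\ref{prop:FromOpenToMan}, modified on the finitely many cubes that meet $\overline Q$.
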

\begin{proof}
    By classical results we have 
    $\cong_\AAA\ \le_B\ \cong_{\mathcal{G}}$ (see \cite{Friedman1989}). Let $n \in \{2,3\}$. Then by Corollary \ref{cor:RedFromOpenToMan}, Proposition \ref{thm:CongToHomeo} and its proof, we obtain $\cong_{\mathcal{G}}\ \le_B\ \homeo^O_n\ \le_B\ \homeo^{PL}_n\ \le_B\ \cong_\AAA$, and so \(\homeo^O_n\), \(\homeo^{PL}_n\) and \(\cong_\AAA\) are Borel bireducible with \(\cong_{\mathcal{G}}\). 

    The same reduction defined in the proof of Proposition \ref{thm:CongToHomeo} shows that \(\cong_{\mathcal{G}}\ \le_B\ \homeo^{\partial,PL}_n\), \(\cong_{\mathcal{G}}\ \le_B\ \homeo_n\), \(\cong_{\mathcal{G}}\ \le_B\ \homeo_3^\partial\), and so by Theorems \ref{thm:PLman_withboundary}, \ref{thm:MainBorelReduction3}, \ref{thm:MainBorelReduction2},  and \ref{thm:3man_withboundary} we have that they are Borel bireducible.

    Finally,  by \cite[Theorem 5.4]{GKB13} and using Theorem~\ref{thm:CantorBorelRed} we obtain $\cong_{\mathcal{G}}\ \le_B\ \simeq\ \le_B\ \cong_{\mathcal{G}}$.
\end{proof}


\subsubsection*{Manifolds given not as an atlas}

We have shown that if we parametrize $n$-manifolds in terms of
countable atlases $\bar\f=(\f_i)_{i\in\N}$ satisfying
\ref{item:m1}--\ref{item:m3}, then they form a standard Borel space
and that for $n\in \{2,3\}$ there is a Borel reduction of
homeomorphism on $n$-manifolds into isomorphism of countable
structures. We also showed, that in the special case of open subsets
of $\R^n$, $n\in \{2,3\}$, we can find such a reduction from the space
$O(\R^n)$ of open subsets by first constructing an atlas satisfying
\ref{item:m1}--\ref{item:m3} and then applying the above result.  One
may ask, what about a general case, where the manifold is neither
given as an atlas nor as a subset of~$\R^3$ or $\R^2$. One answer is
that every $2$- and $3$-manifold can be isometrically embedded as a
closed subset of $\R^N$ for some large enough $N$ by the Nash
embedding theorem.  Thus, the Effros space $F(\R^N)$ contains a
homeomorphic copy of every $2$- and $3$-manifold. It is not clear to
the authors whether $\{F\in F(\R^N)\mid F\text{ is a 3-manifold}\}$ is
a Borel set, but it is possible to find $\MM_3\subset F(\R^N)$ such that
every 3-manifold is represented in $\MM_3$ up to homeomorphism. Let
$$\MM_3=\{F\in F(\R^N)\mid \forall q\in D(F)(B(q,1)\cap F\homeo \bDelta^3)\}.$$
Every $3$-manifold can be made smooth and Heine-Borel, and so the
metric can be locally modified such that $B(x,1)$ is
homeomorphic to $\bDelta^3$ for every point $x$. A similar trick was used by Hjorth and
Kechris \cite{HjoKec2000} to parametrize complex manifolds. Thus, $\MM_3$
contains a homeomorphic copy of every $3$-manifold. This set $\MM_3$ is a
Borel subset of $F(\R^N)$ and using a similar technique as in
Section~\ref{ssec:OpenSubsets} we can reduce the homeomorphism on it
to the homeomorphism relation on~$\MMM_3$. All of this of course
applies to $2$-manifolds as well.

\subsubsection*{Higher dimensional manifolds}

If $n>3$, then there are examples of triangulated non-compact $n$-manifolds $T$ and $T'$
such that $|T|$ and $|T'|$ are homeomorphic but not PL-homeomorphic. This is
an obstruction to the Moise-Bing
theorem (Fact~\ref{fact:MoiseBing}), 
and therefore to Theorems \ref{thm:ClassificationOfManifolds} and \ref{thm:MainBorelReduction3}.
If we produce basis spaces $(X,\beta)$ and $(X',\beta')$ from such triangulations
according to the construction of Definition \ref{def:BetaFromT}, then the resulting
basis spaces might be non-equivalent even though the generated topologies
are homeomorphic as are
$|T|$ and $|T'|$. 
Then also the corresponding SCA algebras will be non-isomorphic. 
For a simpler
example of non-equivalent basis spaces which generate the same topology, see Example~\ref{ex:BasisSpacesNonEq}.
So we have:

\begin{Question}
  Is the homeomorphism relation on non-compact $n$-manifolds Borel
  reducible to the isomorphism on countable graphs for $n>3$?
\end{Question}

The upper bound problem in the preceding question is also addressed in ongoing work of Gompf and Panagiotopoulos \cite{GP26}. They establish, in work in progress, that topological $n$-manifolds, for $n>1$, are classifiable by countable structures. In particular, their result would answer the first question above affirmatively. Their approach is substantially different from ours and applies in considerably greater generality.

The same obstruction also leaves open the converse direction for  PL $n$-manifolds. While PL $n$-manifolds are classifiable by countable structures for all dimensions $n$, our methods do not establish that graph isomorphism is a lower bound for PL-homeomorphism.

\begin{Question}
    Is the isomorphism on countable graphs Borel
  reducible to the PL-homeomorphism relation on PL $n$-manifolds for $n>3$?
\end{Question}

The same question can be formulated for Heine-Borel simplicial complexes.

\begin{Question}
    Is the isomorphism on countable graphs Borel
  reducible to the PL-homeomorphism on Heine-Borel simplicial complexes?
\end{Question}

\subsubsection*{Relationship to other invariants}

We have produced a complete invariant of the homeomorphism of
non-compact $2$- and $3$-manifolds in the form of the sorted
complemented algebras (SCA),
Theorem~\ref{thm:ClassificationOfManifolds}. The elements of the
algebra $A_M$ are essentially just interiors of the compact and
co-compact polyhedra in a manifold~$M$ with ``rational'' vertex points. Thus, at
least in theory, all classical invariants of algebraic topology should
be definable in $A_M$, or at least, if $A_M$ is enhanced with some
extra structure. For example we can add also the lower-dimensional
polyhedra to $A_M$, or consider the closed polyhedra instead of their
open interiors etc. As a logical question, we can ask, under which
circumstances is the structure $A_M$ fully described by the
given invariants? For example, we should be able to find a formula $\f$
(possibly in $L_{\infty\omega}$) such that $A_M\models \f$ if and only if
$M$ is simply connected. The Perelman-Poincaré theorem
suggests that if $M$ is a compact $3$-manifold without boundary,
then $\f$ fully determines the isomorphism type of~$A_M$.
More generally, we can state an open research program:

\begin{RP}
  Use the fact that properties of $M$ are definable in $A_M$ to build
  ``model theory'' of $3$-manifolds.
\end{RP}




\bibliographystyle{alpha}
\bibliography{bibliography}

\end{document}